\tikzset{%
    symbol/.style={%
        draw=none,
        every to/.append style={%
            edge node={node [sloped, allow upside down, auto=false]{$#1$}}}
    }
}
\newtheoremstyle{myplain}
  {\topsep}   
  {\topsep}   
  {\itshape}  
  {0pt}       
  {\bfseries\sffamily} 
  {.}         
  {5pt plus 1pt minus 1pt} 
  {}          
\newtheoremstyle{mydefinition}
  {\topsep}   
  {\topsep}   
  {\normalfont}  
  {0pt}       
  {\bfseries\sffamily} 
  {.}         
  {5pt plus 1pt minus 1pt} 
  {}          
\newtheoremstyle{myremark}
  {0.5\topsep}   
  {0.5\topsep}   
  {\normalfont}  
  {0pt}       
  {\sffamily} 
  {.}         
  {5pt plus 1pt minus 1pt} 
  {}          
\theoremstyle{mydefinition}
\newtheorem{definition}{$\blacktriangleright$ Definition}[subsection]
\theoremstyle{myplain}
\newtheorem{theorem}[definition]{$\blacktriangleright$ Theorem}
\newtheorem{lemma}[definition]{$\blacktriangleright$ Lemma}
\newtheorem{corollary}[definition]{$\blacktriangleright$ Corollary}
\newtheorem{proposition}[definition]{$\blacktriangleright$ Proposition}
\theoremstyle{myremark}
\newtheorem*{remark}{$\blacktriangleright$ Remark}
\newtheorem*{notation}{$\blacktriangleright$ Notation}
\newtheorem{example}[definition]{$\blacktriangleright$ Example}
\newtheorem*{convention}{$\blacktriangleright$ Convention}
\begin{document}
\title{\Huge \bf Categories with Dependence and Semantics of Type Theories}
\author{\Large \bf Norihiro Yamada \\
{\tt norihiro1988@gmail.com} \\
University of Oxford
}

\maketitle

\begin{abstract}
In order to capture the \emph{categorical} counterpart of the path from simple type theories (STTs) to dependent type theories (DTTs) that \emph{faithfully} reflect syntactic phenomena, we introduce a generalization of cartesian closed categories (CCCs), called \emph{cartesian closed categories with dependence (CCCwDs)}. 
Indeed, CCCwDs are a categorical concept as they are defined in terms of universal properties that naturally generalize those of CCCs; also, CCCwDs have a direct counterpart of terms, and moreover their strict version induces categories with families that support strict $1$-, $\Pi$- and $\Sigma$-types, i.e., a `binder-free' formulation of the core part of DTTs.

As a main theorem, we prove that CCCwDs give a sound and complete interpretation of the ($\mathsf{1}$, $\mathsf{\Pi}$, $\mathsf{\Sigma}$)-fragment of Martin-L\"{o}f type theory (MLTT), a canonical representative of DTTs.
Moreover, we establish a 2-equivalence between the 2-category of \emph{contextual} (in a suitable sense similar to contextual categories by Cartmell) CCCwDs and the 2-category of MLTTs.
In addition, we show that the 2-category of \emph{constant} (in a suitable sense that abstractly captures simple types) contextual CCCwDs is 2-equivalent to the 2-category of contextual CCCs, which is well-known to be 2-equivalent to the 2-category of \emph{simply-typed $\lambda$-calculi} equipped with $\mathsf{1}$- and $\mathsf{\times}$-types, a canonical representative of STTs. 
In this manner, we show that the categorical path from constant contextual CCCwDs to contextual CCCwDs corresponds to the syntactic path from STTs to DTTs, and also clarify its relation with the interpretation of STTs in CCCs.  

As a `by-product', fundamental concepts in category theory, e.g., categories, isomorphisms, functors, natural transformations, limits, adjoints, etc. are naturally generalized.
Hence, the present work also sets out a general theory of `categories for dependence'.
\end{abstract}

\tableofcontents

\section{Introduction}
In mathematics, \emph{category theory} \cite{mac2013categories} studies algebra of \emph{morphisms}, which originated from the work in algebraic topology by Samuel Eilenberg and Saunders Mac Lane in 1940s.
Later, it has turned out to be extremely general, giving a synthetic formulation of various concepts in mathematics, physics, logic and computer science \cite{baez2010physics,maclane2012sheaves,pierce1991basic,lambek1988introduction}.

In logic and computer science, \emph{type theories} \cite{hindley1997basic,pierce2002types} refer to a particular kind of formal logical systems or programming languages, in which \emph{proofs} or \emph{terms} are always \emph{typed} in \emph{contexts}. 
Historically, the concept of \emph{types} was first introduced to his \emph{theory of types} by Bertrand Russell in 1900s to circumvent the well-known paradox in foundations of mathematics discovered by himself, and later types were also incorporated by Alonzo Church into his \emph{$\lambda$-calculus} in 1940s to save it from another yet similar paradox, which led to type theories in their modern form.

An important concept in type theories is \emph{dependent types} \cite{hofmann1997syntax}, which are types that may contain \emph{free variables}.
A type theory is \emph{dependent} if it has dependent types; otherwise it is (and its types are) \emph{simple}.
Under the \emph{Curry-Howard isomorphism} \cite{sorensen2006lectures}, the generalization of simple type theories (STTs) to dependent type theories (DTTs) corresponds in logic to the path from propositional logic to predicate logic, where simple types and dependent types correspond to propositions and predicates, respectively. 
For concreteness, we focus on \emph{simply-typed $\lambda$-calculi (STLCs)} \cite{church1940formulation} and \emph{Martin-L\"{o}f type theories (MLTTs)} \cite{martin1982constructive,martin1984intuitionistic,martin1998intuitionistic} in this article for they are canonical representatives of STTs and DTTs, respectively.

\emph{Categorical logic} \cite{pitts2001categorical,jacobs1999categorical,lambek1988introduction} is the study of connections between categories and type theories.
In categorical logic, syntax and semantics of type theories are both captured by categories with additional structures, and interpretations by functors preserving those structures. 
For syntax, such a categorical reformulation abstracts tedious syntactic formalisms (such as capture-free substitution) and enables one to focus on the abstract essence of type theories; also, neat categorical structures in a sense `justify' the corresponding syntactic concepts.
For semantics, on the other hand, categorical semantics provides a convenient framework to establish concrete semantics of type theories as in general it is simpler and subsumes more interpretations than the traditional set-theoretic semantics. 
Moreover, by \emph{theory-category correspondences} \cite{pitts2001categorical,taylor1999practical}, one may regard categories as a syntax-independent abstraction of type theories, or conversely, type theories as a symbolic presentation of (strict) categories.

Categorical semantics of STTs has been well-established, namely \emph{cartesian closed categories (CCCs)} \cite{pitts2001categorical,jacobs1999categorical,lambek1988introduction}, but it is not quite the case for DTTs. 
More specifically, abstract semantics of DTTs is typically either:
\begin{enumerate}
\item Categorical yet indirect (in the sense that there is no counterpart of terms); or 
\item Direct yet not categorical (often called \emph{algebraic} or \emph{equational}). 
\end{enumerate}
Note that CCCs are a categorical concept, and they give a direct semantics of STTs, in which terms are interpreted by morphisms. 

\begin{remark}
Categorical concepts are usually defined in terms of universal properties, and thus they are \emph{weak}, i.e., identified only up to isomorphisms, while type-theoretic concepts are \emph{strict}, i.e., identified on-the-nose. 
Therefore, strictly speaking, one must employ strict structures to give a semantics of type theories; for instance, a fibred category has a substitution as a part of the structure \cite{jacobs1999categorical}. 
However, as long as a semantics employs the strict version of weak concepts, it would be fair to regard the semantics as categorical. 
\end{remark}

In fact, categorical semantics of DTTs has mathematical elegance, capturing type-theoretic phenomena in terms of categorical concepts, but usually they do not directly correspond to the syntax, having no counterpart of terms in particular.
We regard this indirectness as a problem:
\begin{itemize}

\item It would be difficult to say that indirect semantics faithfully captures DTTs since terms are a primitive concept that plays a central role in the syntax; 

\item Direct semantics of DTTs would be interesting as an abstraction of \emph{type dependence} as it is a natural and fundamental phenomenon which arises not only in DTTs but also in a variety of mathematical instances, e.g., see the set-theoretic example below; and

\item It is harder to work with indirect semantics of DTTs since intuition from the syntax is not directly applicable.

\end{itemize}
For instance, in \emph{locally cartesian closed categories (LCCCs)} \cite{seely1984locally} and \emph{categories with attributes (CwAs)} \cite{cartmell1986generalized,moggi1991category,pitts2001categorical}, each term $\mathsf{\Gamma \vdash f : A}$ in a DTT is identified with the \emph{context morphism} \cite{hofmann1997syntax,pitts2001categorical} $(\mathit{id}_{\mathsf{\Gamma}}, \mathsf{f}) : \mathsf{\Gamma} \to \mathsf{\Gamma, x : A}$ because there is no direct counterpart of terms in these frameworks.
In the syntax, however, $\mathsf{f}$ and $(\mathit{id}_{\mathsf{\Gamma}}, \mathsf{f})$ are not identified, and the former is clearly more primitive than the latter.
In terms of the set-theoretic semantics, a context $\mathsf{\vdash \Gamma \ ctx}$, a type $\mathsf{\Gamma \vdash A \ type}$ and a term $\mathsf{\Gamma \vdash f : A}$ in a DTT should be, intuitively speaking, interpreted respectively as a set $\Gamma$, an indexed set $A = (A_\gamma)_{\gamma \in \Gamma}$ and a function $f : \Gamma \to \bigcup_{\gamma \in \Gamma}A_\gamma$ such that $f(\gamma) \in A_\gamma$ for all $\gamma \in \Gamma$, but those categorical frameworks, specialized to the set-theoretic semantics, interpret the term $\mathsf{\Gamma \vdash f : A}$ as the function $\langle \mathit{id}_\Gamma, f \rangle : (\gamma \in \Gamma) \mapsto (\gamma, f(\gamma)) \in \Gamma \times \bigcup_{\gamma \in \Gamma}A_\gamma$.
This point holds also for other categorical semantics of DTTs such as \emph{comprehension categories} \cite{jacobs1999categorical}, \emph{display map categories} \cite{lamarche1989simple,taylor1987recursive,hyland1989theory,taylor1999practical} and \emph{indexed categories} \cite{curien1989alpha,obtulowicz1989categorical}.

In contrast, algebraic semantics of DTTs such as \emph{categories with families (CwFs)} \cite{dybjer1996internal,hofmann1997syntax} is closer to the syntax, having a direct counterpart of terms, but it appears mathematically \emph{ad-hoc} because it just employs an equational characterization that is not the strict version of weak concepts, i.e., it is not quite categorical.
Indeed, this point is one of the main motivations for the recent work on \emph{natural models of homotopy type theory (NMs)} \cite{awodey2016natural} by Awodey; it gives a categorical reformulation of CwFs.
However, it seems difficult to take the resulting categorical structure as a generalization of CCCs; at least, its relation with the standard interpretation of STLCs in CCCs remains unclear. 
This point is unsatisfactory if one takes account of the rather simple relation between STTs and DTTs.

To summarize, we have not yet found a categorical and direct counterpart of the path from STTs to DTTs.
We are concerned with such a counterpart for mathematical as well as practical points of view.
From the mathematical standpoint, such a categorical structure is of interest in its own right because it would accurately capture the abstract essence of dependent types, which are a natural and fundamental concept not only in logic and computation but also (as we shall see) in many other mathematical instances, e.g., in sets as illustrated above.

From the practical viewpoint, a direct correspondence with the syntax would make it much more convenient as a tool to establish concrete semantics of DTTs since each semantic concept has a direct type-theoretic counterpart, which delivers useful intuition.
If a direct semantics is in addition defined categorically, then in general it has fewer structures and axioms than an algebraic semantics so that it is simpler to verify that a concrete structure forms a semantics of DTTs through the former than the latter.

Motivated by these points, we introduce \emph{cartesian closed categories with dependence (CCCwDs)} in order to capture DTTs in a mathematically natural as well as true-to-syntax fashion. 
Roughly, \emph{categories with dependence (CwDs)} are categories equipped with direct counterparts of types and terms, called \emph{dependent (D-) objects} and \emph{dependent (D-) morphisms}, respectively, that satisfy certain axioms, where categories constitute a special case.
A CwD is \emph{semi-cartesian} if it has a terminal object and  \emph{semi-dependent pair spaces (semi-$\Sigma$-spaces)}, which correspond respectively to the empty context and context extensions in DTTs. Let us note that \emph{strict} semi-cartesian CwDs (SCCwDs) coincide with CwFs, and thus they are nothing new; our contribution is in defining a natural \emph{cartesian closed} structure on SCCwDs.
An SCCwD is \emph{cartesian} if it has a \emph{unit D-object} and \emph{dependent pair spaces ($\Sigma$-spaces)}, which are generalizations of a terminal object and binary products in the sense that they are defined in terms of universal properties that naturally generalize those of a terminal object and binary products, respectively; in fact, a unit D-object and $\Sigma$-spaces form instances of generalized limits, called \emph{dependent (D-) limits}.
An SCCwD is \emph{closed} if it has \emph{dependent map spaces ($\Pi$-spaces)}, which are, similarly to $\Sigma$-spaces, a generalization of exponentials.
As a mathematical `justification' of these structures as a natural generalization of CCCs, we prove that unit D-objects, $\Sigma$- and $\Pi$-spaces satisfy certain expected properties, e.g., they are unique up to isomorphisms, $\Sigma$- and $\Pi$-spaces are respectively left and right adjoints to each other (up to projections), etc., and give several non-syntactic instances of CCCwDs.

We then show that CCCwDs give a sound and complete semantics of the ($\mathsf{1}$, $\mathsf{\Pi}$, $\mathsf{\Sigma}$)-fragment of MLTT.
Moreover, we establish a 2-equivalence between the 2-category $\mathsf{Ctx}\mathbb{CCC_D}$ of \emph{contextual} (in a suitable sense similar to \emph{contextual categories} by Cartmell) CCCwDs (CtxCCCwDs) and the 2-category $\mathbb{MLTT}$ of MLTTs, which can be seen as a dependent version of the theory-category correspondence between STLCs and CCCs \cite{pitts2001categorical,crole1993categories}.
In addition, we verify that the 2-category $\mathsf{ConCtx}\mathbb{CCC_D}$ of \emph{constant} (in a suitable sense that abstractly captures simple types) CtxCCCwDs is 2-equivalent to the 2-category $\mathsf{Ctx}\mathbb{CCC}$ of contextual CCCs (CtxCCCs). 
In this manner, we demonstrate that the categorical path from constant CtxCCCwDs to CtxCCCwDs corresponds to the syntactic path from STTs to DTTs, and also clarify its relation with the standard interpretation of STTs in CCCs.  
These results may be summarized schematically as: 
\begin{diagram}
&&&& \mathsf{Ctx}\mathbb{CCC_D} && \simeq && \mathbb{MLTT} \\
&&&& \uInto &&&& \uInto \\
\mathsf{Ctx}\mathbb{CCC} && \simeq && \mathsf{ConCtx}\mathbb{CCC_D} && \simeq && \mathbb{STLC}
\end{diagram}
where the conventional 2-equivalence between $\mathsf{Ctx}\mathbb{CCC}$ and the 2-category $\mathbb{STLC}$ of STLCs is recovered by composing the two 2-equivalences on the lower side in the diagram.

Notably, CCCwDs interpret MLTTs by giving rise to CwFs that support strict $1$-, $\Pi$- and $\Sigma$-types.
This construction is rather canonical, and the term models of MLTTs form such CwFs via CtxCCCwDs; in this sense, CCCwDs are a \emph{refinement} of CwFs.
Also, constant CtxCCCwDs refine CtxCCCs for the former more faithfully captures STLCs than the latter.

Finally, as a `by-product', fundamental concepts in category theory such as categories, isomorphisms, functors, natural transformations, limits and adjoints are naturally generalized.
Thus, the present work also sets out a general theory of `categories of dependence' though we mostly focus on what is relevant to the main results to keep the paper of a reasonable length, leaving a more thorough treatment as future work.

\paragraph{Related Work.}
NMs by Awodey \cite{awodey2016natural} are another categorical and direct semantics of MLTTs, and thus most relevant to the present work.
The mathematical structures of CCCwDs are, however, a priori very different from those of NMs, in particular the relation between NMs and CCCs is not immediately clear, though we leave a detailed comparison of the two as future work.
Also, to the best of our knowledge, the theory-category correspondences between CtxCCCwDs and MLTTs as well as between constant CtxCCCwDs and CtxCCCs are new.

\paragraph{Plan of the Paper.}
The rest of the paper proceeds as follows.
We first review the syntax of MLTTs in Section~\ref{MLTT} and the interpretation of MLTTs in CwFs in Section~\ref{CwFs}, where we also present the syntax of STLCs and the interpretation of them in CCCs.
Then, we introduce CwDs and morphisms between them in Section~\ref{CwDs}, and SCCwDs and morphisms between them in Section~\ref{SCCwDs}.
We then consider generalized terminal objects and binary products in SCCwDs and show that they interpret $\mathsf{1}$- and $\mathsf{\Sigma}$-types in Section~\ref{CCwDs}, and we present generalized exponentials and prove that they interpret $\mathsf{\Pi}$-types in Section~\ref{CCCwDs}.
Along with these developments, we give a basic theory of CwDs, generalizing some basic part of category theory.
The next two sections are devoted to certain 2-equivalences: Section~\ref{Equivalence} establishes a 2-equivalence between $\mathsf{ConCtx}\mathbb{CCC_D}$ and $\mathsf{Ctx}\mathbb{CCC}$, and Section~\ref{TheoryCategoryCorrespondenceBetweenMLTTsAndCCCwDs} gives a theory-category correspondence between MLTTs and  CtxCCCwDs.
Finally, Section~\ref{ConclusionAndFutureWork} draws a conclusion and proposes future work.

\section{Preliminaries}
As a preparation, we review the syntax of MLTTs (in Section~\ref{MLTT}) and the interpretation of them in CwFs (in Section~\ref{CwFs}).
If the reader is already familiar with these concepts, then she or he is encouraged to skip the present section and come back later in the `call-by-need' fashion. 

There are, however, mainly four reasons to present these concepts explicitly:
\begin{enumerate}

\item To define the term models (or the classifying CwFs) of MLTTs;

\item To show that strict CCCwDs are a refinement of CwFs that support $1$-, $\Sigma$- and $\Pi$-types;

\item To prove a theory-category correspondence between MLTTs and contextual CCCwDs; and

\item To explain a gap between the CCC- and CwF-semantics of STLCs.

\end{enumerate}

\subsection{The Syntax of Martin-L\"{o}f Type Theories}
\label{MLTT}
We first review the syntax of MLTTs, following the presentations of \cite{hofmann1997syntax,pitts2001categorical}, and also present the syntax of STLCs as sub-theories of MLTTs. 
Note that we mainly focus on the syntax; see, e.g., \cite{nordstrom1990programming,martin1984intuitionistic} for a general introduction to MLTTs.

A \emph{\bfseries Martin-L\"{o}f type theory (MLTT)} \cite{martin1975intuitionistic,martin1984intuitionistic,martin1998intuitionistic} is a formal logical system similar to \emph{natural deduction} \cite{gentzen1935untersuchungen,troelstra2000basic} except that vertices of a derivation tree are \emph{\bfseries judgements}, for which we usually write $\mathcal{J}$ possibly with subscripts. There are the following six kinds of judgements (followed by their intended meanings):
\begin{itemize}

\item $\mathsf{\vdash \Gamma \ ctx}$ ($\mathsf{\Gamma}$ is a \emph{\bfseries context}); 

\item $\mathsf{\Gamma \vdash A \ type}$ ($\mathsf{A}$ is a \emph{\bfseries type} in the context $\mathsf{\Gamma}$);

\item $\mathsf{\Gamma \vdash a : A}$ ($\mathsf{a}$ is a \emph{\bfseries term} (or \emph{\bfseries program}) of type $\mathsf{A}$ in the context $\mathsf{\Gamma}$);

\item $\mathsf{\vdash \Gamma = \Delta \ ctx}$ ($\mathsf{\Gamma}$ and $\mathsf{\Delta}$ are \emph{\bfseries (judgmentally) equal} contexts);

\item $\mathsf{\Gamma \vdash A = B \ type}$ ($\mathsf{A}$ and $\mathsf{B}$ are \emph{\bfseries (judgmentally) equal} types in the context $\mathsf{\Gamma}$); and

\item $\mathsf{\Gamma \vdash a = a' : A}$ ($\mathsf{a}$ and $\mathsf{a'}$ are \emph{\bfseries (judgmentally) equal} terms of type $\mathsf{A}$ in the context $\mathsf{\Gamma}$).

\end{itemize}
That is, an MLTT consists of \emph{axioms} $\frac{}{ \ \mathcal{J} \ }$ and \emph{(inference) rules} $\frac{ \ \mathcal{J}_1 \ \mathcal{J}_2 \dots \mathcal{J}_k \ }{ \mathcal{J}_0 }$, which are to make a \emph{conclusion} from \emph{hypotheses} by constructing a derivation tree exactly as natural deduction does.

As often expressed as `an MLTT \emph{internalizes} the Curry-Howard isomorphism \cite{sorensen2006lectures}', its contexts, types and terms represent respectively \emph{assumptions} (or \emph{premises}), \emph{formulas} and \emph{proofs} in logic as well.
Therefore, e.g., the judgement $\mathsf{\Gamma \vdash a : A}$ can be read as `the formula $\mathsf{A}$ has a proof $\mathsf{a}$ under the assumption $\mathsf{\Gamma}$', and so on.

\begin{convention}
We assume a countably infinite set $\mathscr{V}$ of \emph{variables}, written $\mathsf{x}$, $\mathsf{y}$, $\mathsf{z}$, etc. possibly with subscripts.
We write $\mathscr{FV}(\mathsf{E})$ for the set of all \emph{free variables} occurring in an expression $\mathsf{E}$.
\end{convention}

\begin{notation}
Greek capital letters $\mathsf{\Gamma}$, $\mathsf{\Delta}$, $\mathsf{\Theta}$, etc. range over contexts, capital letters \textsf{A}, \textsf{B}, \textsf{C}, etc. over types, and lower-case letters \textsf{a}, \textsf{b}, \textsf{c}, etc. over terms. Strictly speaking, they are \emph{equivalence classes of symbols with respect to $\alpha$-equivalence} $\mathsf{\equiv}$ \cite{hankin1994lambda} as usual.
\end{notation}

Each type construction in an MLTT is defined by its \emph{\bfseries formation}, \emph{\bfseries introduction}, \emph{\bfseries elimination} and \emph{\bfseries computation} rules.
The formation rule stipulates how to form the type, and the introduction rule defines \emph{canonical terms} of the type, which in turn defines terms of the type (including non-canonical ones).
Then, the elimination and computation rules describe respectively how to consume terms of the type and the result of such a consumption (in the form of an equation), both of which are easily justified by the introduction rule.

Below, following the presentation of \cite{hofmann1997syntax}, we first recall the MLTT with $\mathsf{1}$-, $\mathsf{\Pi}$- and $\mathsf{\Sigma}$-types in the \emph{strict} sense, i.e., with \emph{uniqueness rules}, for which we write $\mathsf{MLTT(1, \Pi, \Sigma)}$, in Sections~\ref{Contexts}-\ref{DependentPairTypes}.
It is the minimal MLTT for us, and it corresponds to the dependently-typed version of the STLC equipped with $\mathsf{1}$- and $\mathsf{\times}$-types.
We then introduce a general class of MLTTs based on the framework of \emph{dependently-typed algebraic theories} \cite{pitts2001categorical} (or \emph{generalized algebraic theories} \cite{cartmell1986generalized}) in Sections~\ref{ContextMorphismsAndGeneralizedSubstitutions} and \ref{MLTTsAsAlgebraicTheories}.
Strictly speaking, this class is more general than what one usually considers as the class of MLTTs because additional types are not necessarily defined by the four kinds of rules mentioned above.
We need this generality for the \emph{theory-category correspondence} between the 2-categories of MLTTs and of contextual CCCwDs in Section~\ref{TheoryCategoryCorrespondenceBetweenMLTTsAndCCCwDs}.

\subsubsection{Contexts}
\label{Contexts}
A \emph{\bfseries context} is a finite sequence $\mathsf{x_1 : A_1, x_2 : A_2, \dots, x_n : A_n}$ of pairs of a variable $\mathsf{x_i}$ and a type $\mathsf{A_i}$ such that the variables $\mathsf{x_1, x_2, \dots, x_n}$ are pair-wise distinct. 
We write $\mathsf{\diamondsuit}$ for the \emph{empty context}, i.e., the empty sequence $\bm{\epsilon}$; we usually omit $\mathsf{\diamondsuit}$ when it occurs on the left-hand side of the turnstile $\vdash$. 

We have the following axiom and rules for contexts:
\begin{align*}
&(\textsc{Ctx-Emp}) \frac{}{ \ \mathsf{\vdash \diamondsuit \ ctx} \ } \ \ \ (\textsc{Ctx-Ext}) \frac{ \ \mathsf{\Gamma \vdash A \ type} \ \ \ \mathsf{x} \in \mathscr{V} \setminus \mathscr{FV}(\mathsf{\Gamma}) \ }{ \ \mathsf{\vdash \Gamma, x : A \ ctx} \ } \\ 
&(\textsc{Ctx-ExtEq}) \frac{ \ \mathsf{\vdash \Gamma = \Delta \ ctx \ \ \ \Gamma \vdash A = B \ type} \ \ \ \mathsf{x} \in \mathscr{V} \setminus \mathscr{FV}(\mathsf{\Gamma}) \ \ \ \mathsf{y} \in \mathscr{V} \setminus \mathscr{FV}(\mathsf{\Delta}) \ }{ \ \mathsf{\vdash \Gamma, x : A = \Delta, y : B \ ctx\ } \ } 
\end{align*}

The rules Ctx-Emp and Ctx-Ext determine that contexts are exactly finite sequences of pairs of a variable and a type.
The rule Ctx-ExtEq is a \emph{congruence rule}, i.e., it states that judgmental equality is preserved under the rule Ctx-Ext, i.e., the `context extension'. 

\begin{convention}
We will henceforth skip writing down congruence rules for other constructions.
\end{convention}

\subsubsection{Structural Rules}
Next, we collect the inference rules for all types as \emph{\bfseries structural rules}:
\begin{align*}
&(\textsc{Var}) \frac{ \ \mathsf{\vdash x_1 : A_1, x_2 : A_2, \dots, x_n : A_n \ ctx} \ \ j \in \{ 1, 2, \dots, n \} \ }{ \ \mathsf{x_1 : A_1, x_2 : A_2, \dots, x_n : A_n \vdash x_j : A_j} \ } \ \ \ (\textsc{Ct-EqRefl})\frac{ \ \mathsf{\vdash \Gamma \ ctx} \ }{ \ \mathsf{\vdash \Gamma = \Gamma \ ctx} \ } \\ 
&(\textsc{Ct-EqSym})\frac{ \ \mathsf{\vdash \Gamma = \Delta \ ctx} \ }{ \ \mathsf{\vdash \Delta = \Gamma \ ctx} \ } \ \ \ (\textsc{Ct-EqTrans}) \frac{ \ \mathsf{\vdash \Gamma = \Delta \ ctx} \ \ \mathsf{\vdash \Delta = \Theta \ ctx} \ }{ \ \mathsf{\vdash \Gamma = \Theta \ ctx} \ }\\
&(\textsc{Ty-EqRefl})\frac{ \ \mathsf{\Gamma \vdash A \ type} \ }{ \ \mathsf{\Gamma \vdash A = A \ type} \ } \ \ \ (\textsc{Ty-EqSym})\frac{ \ \mathsf{\Gamma \vdash A = B \ type} \ }{ \ \mathsf{\Gamma \vdash B = A \ type} \ } \\ 
&(\textsc{Ty-EqTrans})\frac{ \ \mathsf{\Gamma \vdash A = B \ type} \ \ \mathsf{\Gamma \vdash B = C \ type} \ }{ \ \mathsf{\Gamma \vdash A = C \ type} \ } \ \ \ (\textsc{Tm-EqRefl})\frac{ \ \mathsf{\Gamma \vdash a : A} \ }{ \ \mathsf{\Gamma \vdash a = a : A} \ } \\ 
&(\textsc{Tm-EqSym})\frac{ \ \mathsf{\Gamma \vdash a = a' : A} \ }{ \ \mathsf{\Gamma \vdash a' = a : A} \ } \ \ \ (\textsc{Tm-EqTrans})\frac{ \ \mathsf{\Gamma \vdash a = a' : A} \ \ \mathsf{\Gamma \vdash a' = a'' : A} \ }{ \ \mathsf{\Gamma \vdash a = a'' : A} \ } \\
&(\textsc{Ty-Con})\frac{ \ \mathsf{\vdash \Gamma = \Delta \ ctx \ \ \Gamma \vdash A \ type} \ }{ \ \mathsf{\Delta \vdash A \ type} \ } \ \ \ (\textsc{Tm-Con})\frac{ \ \mathsf{\Gamma \vdash a : A \ \ \vdash \Gamma = \Delta \ ctx \ \ \Gamma \vdash A = B \ type} \ }{ \ \mathsf{\Delta \vdash a : B} \ } 
\end{align*}

The rule Var states the reasonable idea that we may give an element $\mathsf{x_j : A_j}$ if it occurs in the context just by `copy-catting' it.
The next nine rules stipulate that every judgmental equality is an equivalence relation.
The rules Ty-Conv and Tm-Conv formalize the idea that judgements should be preserved under the exchange of judgmentally equal contexts and/or types. 

The following \emph{\bfseries weakening} and \emph{\bfseries substitution} rules are \emph{admissible} (or \emph{derivable}) in MLTTs, but it is convenient to present them explicitly:
\begin{align*}
&(\textsc{Weak})\frac{ \ \mathsf{\Gamma, \Delta \vdash \mathsf{J}} \ \ \ \mathsf{\Gamma \vdash A \ type_i} \ \ \ \mathsf{x} \in \mathscr{V} \setminus \mathscr{FV}(\mathsf{\Gamma, \Delta}) \ }{ \ \mathsf{\Gamma, x : A, \Delta \vdash \mathsf{J}} \ } \\ 
&(\textsc{Subst})\frac{ \ \mathsf{\Gamma, x : A, \Delta \vdash \mathsf{J}} \ \ \ \mathsf{\Gamma \vdash a : A} \ }{ \ \mathsf{\Gamma, \Delta[a/x] \vdash \mathsf{J}[a/x]} \ }
\end{align*}
where $\mathsf{\mathsf{J}[a/x]}$ (resp. $\mathsf{\Delta[a/x]}$) denotes the \emph{capture-free substitution} \cite{hankin1994lambda} of $\mathsf{a}$ for $\mathsf{x}$ in $\mathsf{J}$\footnote{Here, $\mathsf{J}$ denotes the righthand side of an arbitrary judgement.} (resp. $\Delta$).

\subsubsection{Unit Type}
\label{UnitType}
We proceed to introduce specific type constructions. 
Let us begin with the simplest type, called the \emph{\bfseries unit type} (or the \emph{\bfseries $\bm{\mathsf{1}}$-type}) $\mathsf{1}$ in the \emph{strict} sense, which is the type that has just one canonical term $\mathsf{\star}$.
Thus, from the logical point of view, it represents the simplest true formula.

Its rules are the following:
\begin{align*}
&(\textsc{$\mathsf{1}$-Form})\frac{ \ \mathsf{\vdash \Gamma \ ctx} \ }{ \ \mathsf{\Gamma \vdash 1 \ type} \ } \ \ \ (\textsc{$\mathsf{1}$-Intro})\frac{ \ \mathsf{\vdash \Gamma \ ctx} \ }{ \ \mathsf{\Gamma \vdash \star : \bm{1} } \ } \ \ \  (\textsc{$\mathsf{1}$-Uniq})\frac{ \ \mathsf{\Gamma \vdash t : 1} \ }{ \ \mathsf{\Gamma \vdash t = \star : 1 } \ } \\ 
&(\textsc{$\mathsf{1}$-Elim})\frac{ \ \mathsf{\Gamma, x : \bm{1} \vdash P \ type \ \ \ \Gamma \vdash p : P[\star / x] \ \ \ \Gamma \vdash t : \bm{1} } \ }{ \ \mathsf{\Gamma \vdash R^{\bm{1}}(P, p, t) : P[t/x]} \ } \\
&(\textsc{$\mathsf{1}$-Comp})\frac{ \ \mathsf{\Gamma, x : \bm{1} \vdash P \ type \ \ \ \Gamma \vdash p : P[\star/x] } \ }{ \ \mathsf{\Gamma \vdash R^{\bm{1}}(P, p, \star) = p : P[\star/x]} \ }
\end{align*}

The formation rule $\mathsf{1}$-Form states that $\mathsf{1}$ is an \emph{atomic} type in the sense that we may form it without assuming any other types.
The introduction rule $\mathsf{1}$-Intro defines that it has just one canonical term, viz., $\mathsf{\star}$.
Thus, the uniqueness rule $\mathsf{1}$-Uniq should make sense, from which the remaining rules $\mathsf{1}$-Elim and $\mathsf{1}$-Comp immediately follow if we define $\mathsf{R^{1}(P, p, t) \stackrel{\mathrm{df. }}{\equiv} p}$.

\begin{convention}
Thus, we henceforth omit the last two rules.
\end{convention}

\subsubsection{Dependent Function Types}
\label{DependentFunctionTypes}
Let us introduce a central non-atomic type construction, called \emph{\bfseries dependent function types} (or \emph{\bfseries $\bm{\mathsf{\Pi}}$-types}) in the \emph{strict} sense.
The $\mathsf{\Pi}$-type $\mathsf{\Pi_{x:A}B}$ is intended to represent the space of \emph{dependent functions} from $\mathsf{A}$ to $\mathsf{B}$, and thus it is a generalization of the \emph{function type} $\mathsf{A \Rightarrow B}$ in STLCs.

The rules of $\mathsf{\Pi}$-types are the following:
\begin{align*}
&(\textsc{$\mathsf{\Pi}$-Form})\frac{ \ \mathsf{\Gamma \vdash A \ type} \ \ \ \mathsf{\Gamma, x : A \vdash B \ type} \ }{ \ \mathsf{\Gamma \vdash \Pi_{x : A}B \ type}  \ }  \ \ \ (\textsc{$\mathsf{\Pi}$-Intro})\frac{ \ \mathsf{\Gamma, x : A \vdash b : B} \ }{ \ \mathsf{\Gamma \vdash \lambda x^A . b : \Pi_{x : A} B}  \ } \\
&(\textsc{$\Pi$-Elim})\frac{ \ \mathsf{\Gamma \vdash f : \Pi_{x : A}B} \ \ \ \mathsf{\Gamma \vdash a : A} \ }{ \ \mathsf{\Gamma \vdash f(a) : B[a/x]}  \ } \ \ \ (\textsc{$\mathsf{\Pi}$-Comp})\frac{ \ \mathsf{\Gamma, x : A \vdash b : B} \ \ \ \mathsf{\Gamma \vdash a : A} \ }{ \ \mathsf{\Gamma \vdash (\lambda x^A . b)(a) = b[a/x] : B[a/x]}  \ } \\
&(\textsc{$\mathsf{\Pi}$-Uniq})\frac{ \ \mathsf{\Gamma \vdash f : \Pi_{x : A}B} \ \ \ \mathsf{x} \not \in \mathscr{FV}(\mathsf{\Gamma}) \ }{ \ \mathsf{\Gamma \vdash \lambda x^A . f(x) = f : \Pi_{x : A} B} \ }  
\end{align*}

\begin{notation}
We often omit the superscript $\mathsf{A}$ on $\mathsf{\lambda x^A . b}$ and write $\mathsf{A \Rightarrow B}$ for $\mathsf{\Pi_{x:A}B}$ if $\mathsf{x} \not \in \mathscr{FV}(\mathsf{B})$.
\end{notation}

The formation rule $\mathsf{\Pi}$-Form states that we may form the $\mathsf{\Pi}$-type $\mathsf{\Pi_{x:A}B}$ from types $\mathsf{A}$ and $\mathsf{B}$, where $\mathsf{B}$ may depend on $\mathsf{A}$.
The introduction rule $\mathsf{\Pi}$-Intro defines how to construct the canonical terms of $\mathsf{\Pi_{x:A}B}$, viz., \emph{$\lambda$-abstractions}; it is the usual way of defining a function $\mathsf{f}$ from $\mathsf{A}$ to $\mathsf{B}$, i.e., to specify its output $\mathsf{f(a) : B[a/x]}$ on each input $\mathsf{a : A}$.
Then, the elimination and computation rules $\mathsf{\Pi}$-Elim and $\mathsf{\Pi}$-Comp should make sense by the introduction rule.
Finally, the uniqueness rule $\mathsf{\Pi}$-Uniq stipulates that terms of $\mathsf{\Pi}$-types are only canonical ones.

\subsubsection{Dependent Pair Types}
\label{DependentPairTypes}
Another important non-atomic type construction is \emph{\bfseries dependent sum types} (or \emph{\bfseries $\bm{\mathsf{\Sigma}}$-types}) in the \emph{strict} sense. 
The $\mathsf{\Sigma}$-type $\mathsf{\Sigma_{x:A}B}$ represents the spaces of \emph{dependent pairs} of $\mathsf{a:A}$ and $\mathsf{b:B[a/x]}$, and thus it is a generalization of the \emph{product type} $\mathsf{A \times B}$ in STLCs.

The rules of $\mathsf{\Sigma}$-types are the following:
\begin{align*}
&(\textsc{$\mathsf{\Sigma}$-Form})\frac{ \ \mathsf{\Gamma \vdash A \ type} \ \ \ \mathsf{\Gamma, x : A \vdash B \ type} \ }{ \ \mathsf{\Gamma \vdash \Sigma_{x : A}B \ type}  \ } \\ 
&(\textsc{$\mathsf{\Sigma}$-Intro})\frac{ \ \mathsf{\Gamma, x : A \vdash B \ type \ \ \ \Gamma \vdash a : A \ \ \ \Gamma \vdash b : B[a/x]} \ }{ \ \mathsf{\Gamma \vdash \langle a, b \rangle : \Sigma_{x : A} B}  \ } \\
&(\textsc{$\mathsf{\Sigma}$-Elim})\frac{ \ \mathsf{\Gamma, z : \Sigma_{x : A}B \vdash C \ type \ \ \ \Gamma, x : A, y : B \vdash g : C[\langle x, y \rangle/z] \ \ \ \Gamma \vdash p : \Sigma_{x : A}B} \ }{ \ \mathsf{\Gamma \vdash R^{\Sigma}_{[z : \Sigma_{x : A}B]C}([x:A, y:B]g, p) : C[p/z]} \ } \\
&(\textsc{$\mathsf{\Sigma}$-Comp})\frac{ \ \mathsf{\Gamma, z : \Sigma_{x : A}B \vdash C \ type \ \ \ \Gamma, x : A, y : B \vdash g : C[\langle x, y \rangle/z] \ \ \ \Gamma \vdash a : A \ \ \ \Gamma \vdash b : B[a/x]} \ }{ \ \mathsf{\Gamma \vdash R^{\Sigma}_{[z : \Sigma_{x : A}B]C}([x:A, y:B]g, \langle a, b \rangle) = g[a/x, b/y] : C[\langle a, b \rangle/z]} \ } \\
&(\textsc{$\mathsf{\Sigma}$-Uniq})\frac{ \ \mathsf{\Gamma \vdash p : \Sigma_{x:A}B} \ }{ \ \mathsf{\Gamma \vdash \langle \pi^{A, B}_1(p), \pi^{A, B}_2(p) \rangle = p : \Sigma_{x:A}B} \ } 
\end{align*}
where 
\begin{align*}
&\mathsf{\Gamma \vdash \pi^{A, B}_1(p) \stackrel{\mathrm{df. }}{\equiv} R^\Sigma_{[z : \Sigma_{x : A}B]A}([x:A, y:B]x, p) : A}; \\
&\mathsf{\Gamma \vdash \pi^{A, B}_2(p) \stackrel{\mathrm{df. }}{\equiv} R^\Sigma_{[z : \Sigma_{x : A}B]B[\pi^{A, B}_1(z)/x]}([x:A, y:B]y, p]) : B[\pi^{A, B}_1(p)/x]}
\end{align*}
are \emph{projections} constructed by \textsc{$\mathsf{\Sigma}$-Elim}.

\begin{notation}
We usually abbreviate $\mathsf{[z : \Sigma_{x : A}B]C}$, $\mathsf{[x:A, y:B]g}$ and $\mathsf{\pi^{A, B}_i(p)}$ respectively as $\mathsf{C}$, $\mathsf{g}$ and $\mathsf{\pi_i(p)}$ if it does not bring any confusion.
We often write $\mathsf{A \times B}$ for $\mathsf{\Sigma_{x:A}B}$ if $\mathsf{x} \not \in \mathscr{FV}(\mathsf{B})$.
\end{notation}

The formation rule $\mathsf{\Sigma}$-Form is the same as the case of $\mathsf{\Pi}$-types.
The introduction rule $\mathsf{\Sigma}$-Intro specifies that canonical terms of a $\mathsf{\Sigma}$-type $\mathsf{\Sigma_{x : A} B}$ are pairs $\mathsf{\langle a, b \rangle : \Sigma_{x:A}B}$ of terms $\mathsf{a : A}$ and $\mathsf{b : B[a/x]}$; they are in particular \emph{dependent} pairs for the type $\mathsf{B[a/x]}$ of the second component $\mathsf{b}$ depends on the first component $\mathsf{a : A}$.
Again, the elimination and computation rules $\mathsf{\Sigma}$-Elim and $\mathsf{\Sigma}$-Comp should make sense by the introduction rule.
Finally, the uniqueness rule $\mathsf{\Sigma}$-Uniq stipulates that terms of $\mathsf{\Sigma}$-types are only canonical ones, i.e., dependent pairs.

\subsubsection{Context Morphisms and Generalized Substitution}
\label{ContextMorphismsAndGeneralizedSubstitutions}
We have presented the theory $\mathsf{MLTT(1, \Pi, \Sigma)}$.
Next, let us review a \emph{derived} concept in the syntax: A \emph{\bfseries context morphism} \cite{hofmann1997syntax,pitts2001categorical} from a context $\mathsf{\vdash \Delta \ ctx}$ to another $\mathsf{\vdash \Gamma \ ctx}$ such that $\mathscr{FV}(\mathsf{\Delta}) \cap \mathscr{FV}(\mathsf{\Gamma}) = \emptyset$, where we assume without loss of generality $\mathsf{\Gamma \equiv x_1 : A_1, x_2 : A_2, \dots, x_n : A_n \ ctx}$, is a finite sequence $\bm{\mathsf{g}} \equiv  (\mathsf{g_1}, \mathsf{g_2}, \dots, \mathsf{g_n}) : \mathsf{\Delta} \to \mathsf{\Gamma}$ of terms 
\begin{align*}
\mathsf{\Delta} &\vdash \mathsf{g_1 : A_1} \\
\mathsf{\Delta} &\vdash \mathsf{g_2 : A_2[g_1/x_1]} \\
&\vdots \\
\mathsf{\Delta} &\vdash \mathsf{g_n : A_n[g_1/x_1, g_2/x_2, \dots, g_{n-1}/x_{n-1}]}.
\end{align*}

In addition, we say that parallel context morphisms $\mathsf{\bm{\mathsf{g, g'}} : \Delta \to \Gamma}$ are \emph{\bfseries (judgmentally) equal} and write $\mathsf{\bm{\mathsf{g}} = \bm{\mathsf{g'}} : \Delta \to \Gamma}$ if so are their corresponding component terms, i.e.,
\begin{align*}
\mathsf{\Delta} &\vdash \mathsf{g_1 = g'_1 : A_1} \\
\mathsf{\Delta} &\vdash \mathsf{g_2 = g'_2 : A_2[g_1/x_1]} \\
&\vdots \\
\mathsf{\Delta} &\vdash \mathsf{g_n = g'_n : A_n[g_1/x_1, g_2/x_2, \dots, g_{n-1}/x_{n-1}]}.
\end{align*}

Given any syntactic expression $\mathsf{E}$, we define the \emph{\bfseries generalized substitution} $\mathsf{E}[\bm{\mathsf{g}}/\bm{\mathsf{x}}]$ of $\bm{\mathsf{g}}$ for $\bm{\mathsf{x}}$ in $\mathsf{E}$ (we often abbreviate it as $\mathsf{E}[\bm{\mathsf{g}}]$), where $\bm{\mathsf{x}} \equiv (\mathsf{x_1}, \mathsf{x_2}, \dots, \mathsf{x_n})$ and $\mathscr{FV}(\mathsf{E}) \cap \mathscr{FV}(\bm{\mathsf{x}}) = \emptyset$, to be the expression 
\begin{equation*}
\mathsf{E[g_1/x_1, g_2/x_2, \dots, g_n/x_n]} 
\end{equation*}
i.e., what is obtained from $\mathsf{E}$ by simultaneously substituting $\mathsf{g_i}$ for $\mathsf{x_i}$ in $\mathsf{E}$ for $i = 1, 2, \dots, n$.
Then, it is shown in \cite{hofmann1997syntax} that if $\mathsf{\Gamma, \Theta \vdash E}$ is a judgement that is derivable in $\mathsf{MLTT(1, \Pi, \Sigma)}$, then so is the generalized substitution $\mathsf{\Delta, \Theta}[\bm{\mathsf{g}}] \vdash \mathsf{E}[\bm{\mathsf{g}}]$. Clearly, it subsumes the rules \textsc{Weak} and \textsc{Subst}. 

Given another context morphism $\mathsf{\bm{\mathsf{f}} : \Theta \to \Delta}$, we define the \emph{\bfseries composition} $\mathsf{\bm{\mathsf{g}} \circ \bm{\mathsf{f}} : \Theta \to \Gamma}$ by: 
\begin{equation*}
\mathsf{\bm{\mathsf{g}} \circ \bm{\mathsf{f}}} \stackrel{\mathrm{df. }}{\equiv} (\mathsf{g_1}[\bm{\mathsf{f}}], \mathsf{g_2}[\bm{\mathsf{f}}], \dots, \mathsf{g_n}[\bm{\mathsf{f}}]). 
\end{equation*}
Also, we have the \emph{\bfseries identity context morphism} on $\mathsf{\vdash \Gamma \ ctx}$:
\begin{equation*}
\mathit{id}_{\mathsf{\Gamma}} \stackrel{\mathrm{df. }}{\equiv} (\mathsf{x_1}, \mathsf{x_2}, \dots, \mathsf{x_n}) : \mathsf{\Gamma \to \Gamma}.
\end{equation*}

It has been shown in \cite{hofmann1997syntax} that contexts and context morphisms derivable in $\mathsf{MLTT(1, \Pi, \Sigma)}$ modulo judgmental equality $=$ form a category. 
This suggests reformulating the \emph{derived} notion of substitution as \emph{primitive}, leading to the notion of CwFs in Section~\ref{CwFs}. 

Note in particular that singleton lists of terms are context morphisms, but terms themselves are \emph{not}.
As we shall see in later sections, this point is captured by the CwF-semantics yet ignored by the CCC-semantics of STLCs.

\subsubsection{Martin-L\"{o}f Type Theories as Algebraic Theories}
\label{MLTTsAsAlgebraicTheories}
Now, we are ready to present a general class of MLTTs, in which $\mathsf{MLTT(1, \Pi, \Sigma)}$ is the minimal theory.
Our formulation is based on the framework of \emph{dependently-typed algebraic theories} \cite{pitts2001categorical} (or \emph{generalized algebraic theories} \cite{cartmell1986generalized}).

\begin{definition}[GMLTTs]
A \emph{\bfseries generalized MLTT (GMLTT)} is a quadruple $\mathcal{T} = (L_{\mathcal{T}}, S_{\mathcal{T}}, C_{\mathcal{T}}, A_{\mathcal{T}})$, where:
\begin{itemize}

\item $L_{\mathcal{T}}$ is a finite set of symbols, called the \emph{\bfseries alphabet} of $\mathcal{T}$, such that $L_{\mathcal{T}} \cap \mathscr{V} = \emptyset$ and it contains every non-variable symbol that occurs in a type or a term in $\mathsf{MLTT(1, \Pi, \Sigma)}$;

\item $S_{\mathcal{T}}$ is a subset of the set $(L_{\mathcal{T}} \cup \mathscr{V})^\ast$, whose elements are called \emph{\bfseries sorts} of $\mathcal{T}$;

\item $C_{\mathcal{T}}$ is a subset of the set $L_{\mathcal{T}}^\ast$, whose elements $\mathsf{C}$ are called \emph{\bfseries constants} of $\mathcal{T}$ and equipped with finite sequences $\mathscr{F}_{\mathcal{T}}(\mathsf{C})$ of
the form $(\mathsf{A_1}, \mathsf{A_2}, \dots, \mathsf{A_k}) \in S_{\mathcal{T}}^\ast$ or $((\mathsf{A_1}, \mathsf{A_2}, \dots, \mathsf{A_k}), \mathsf{B}) \in S_{\mathcal{T}}^\ast \times S_{\mathcal{T}}$, called the \emph{\bfseries format} of $\mathsf{C}$; and

\item $A_{\mathcal{T}}$ is a set of judgements of the form $\mathsf{A = A' :Ty}$ or $\mathsf{a = a' : A}$, called \emph{\bfseries axioms} of $\mathcal{T}$, where $\mathsf{A}, \mathsf{A'} \in S_{\mathcal{T}}$ and $\mathsf{a}, \mathsf{a'} \in (L_{\mathcal{T}} \cup \mathscr{V})^\ast$.

\end{itemize}
\end{definition}

\begin{convention}
Let $\mathcal{T}$ be a GMLTT.
A constant $\mathsf{C} \in C_{\mathcal{T}}$ with a format of the form $(\mathsf{A_1}$, $\mathsf{A_2}$, \dots, $\mathsf{A_k})$ is particularly called a \emph{\bfseries type-constant} and written $\mathsf{C} \in C_{\mathcal{T}}(\mathsf{A_1}$, $\mathsf{A_2}$, \dots, $\mathsf{A_k})$.
Similarly, a constant $\mathsf{F}  \in C_{\mathcal{T}}$ with a format of the form $((\mathsf{A_1}$, $\mathsf{A_2}$, \dots, $\mathsf{A_k}), \mathsf{B})$ is particularly called a \emph{\bfseries term-constant} and written $\mathsf{F} \in C_{\mathcal{T}}(\mathsf{A_1}$, $\mathsf{A_2}$, \dots, $\mathsf{A_k}; \mathsf{B})$; we also write $\mathscr{F}^{\mathit{dom}}_{\mathcal{T}}(\mathsf{F})$ for the sequence $(\mathsf{A_1}$, $\mathsf{A_2}$, \dots, $\mathsf{A_k})$ and $\mathscr{F}^{\mathit{cod}}_{\mathcal{T}}(\mathsf{F})$ for the sort $\mathsf{B}$.
We write $C_{\mathcal{T}}^{\mathsf{Ty}}$ and $C_{\mathcal{T}}^{\mathsf{Tm}}$ for the set of all type-constants and the set of all term-constants in $\mathcal{T}$, respectively. 
\end{convention}

\begin{definition}[Theorems of GMLTTs]
Let $\mathcal{T}$ be a GMLTT.
\emph{\bfseries Contexts}, \emph{\bfseries types} and \emph{\bfseries terms in $\bm{\mathcal{T}}$} are judgements of the forms $\mathsf{\vdash \Gamma \ ctx}$, $\mathsf{\Gamma \vdash A \ type}$ and $\mathsf{\Gamma \vdash a : A}$, respectively, obtained by the axioms and rules in Sections~\ref{Contexts}-\ref{DependentPairTypes} plus the following two rules:
\begin{align*}
&\textsc{(Type-Const)} \ \frac{ \ \mathsf{C} \in C_{\mathcal{T}}(\mathsf{A_1}, \mathsf{A_2}, \dots, \mathsf{A_k}) \ \ \ \mathsf{\Delta \equiv x_1 : A_1, x_2 : A_2, \dots, x_k : A_k} \ \ \ \bm{\mathsf{f}} : \mathsf{\Gamma} \to \mathsf{\Delta} \ }{\mathsf{ \ \Gamma \vdash C\bm{\mathsf{f}} \ type} \ } \\
&\textsc{(Term-Const)} \ \frac{ \ \mathsf{F} \in C_{\mathcal{T}}(\mathsf{A_1}, \mathsf{A_2}, \dots, \mathsf{A_k}; \mathsf{B}) \ \ \  \mathsf{\Delta \equiv x_1 : A_1, x_2 : A_2, \dots, x_k : A_k} \ \ \ \bm{\mathsf{f}} : \mathsf{\Gamma} \to \mathsf{\Delta} \ \ \ \mathsf{\Delta \vdash B \ type} \ }{\mathsf{ \ \Gamma \vdash F\bm{\mathsf{f}} : B[\bm{\mathsf{f}}/\mathsf{x_1}, \mathsf{x_2}, \dots, \mathsf{x_k}] } \ }
\end{align*}
We define $\mathcal{T}(\mathsf{\Delta}, \mathsf{\Gamma})$ to be the set of all context morphisms $\mathsf{\Delta} \to \mathsf{\Gamma}$ that consist of terms in $\mathcal{T}$, called \emph{\bfseries context morphisms in $\bm{\mathcal{T}}$}.

\emph{\bfseries Equalities in $\bm{\mathcal{T}}$} are the judgements of the form $\mathsf{\vdash \Gamma = \Gamma' \ ctx}$, $\mathsf{\Gamma \vdash A = A' \ type}$ or $\mathsf{\Gamma \vdash a = a' : A}$ obtained by the axioms and rules in Sections~\ref{Contexts}-\ref{DependentPairTypes} plus the following two rules:
\begin{align*}
&\textsc{(Type-Eq)} \ \frac{ \ \mathsf{\Delta \vdash A \ type} \ \ \ \mathsf{\Delta \vdash A' \ type} \ \ \ \mathsf{\bm{\mathsf{f}} : \Gamma \to \Delta} \ \ \ \mathsf{A = A' : Ty} \in A_{\mathcal{T}} \ }{ \ \mathsf{\Gamma \vdash A[\bm{\mathsf{f}}] = A'[\bm{\mathsf{f}}] \ type} \ } \\
&\textsc{(Term-Eq)} \ \frac{ \ \mathsf{\Delta \vdash a : A} \ \ \ \mathsf{\Delta \vdash a' : A} \ \ \ \mathsf{\bm{\mathsf{f}} : \Gamma \to \Delta} \ \ \ \mathsf{a = a' : A} \in A_{\mathcal{T}} \ }{ \ \mathsf{ \Gamma \vdash a[\bm{\mathsf{f}}] = a'[\bm{\mathsf{f}}] : A[\bm{\mathsf{f}}]} \ }
\end{align*}
Inhabited types and equalities in $\mathcal{T}$ are called \emph{\bfseries theorems in $\bm{\mathcal{T}}$}.
\end{definition}

\begin{notation}
We write $\mathsf{\Gamma \vdash_{\mathcal{T}} J}$ to mean that the judgement $\mathsf{\Gamma \vdash J}$ is derivable in a GMLTT $\mathcal{T}$.
\end{notation}

\begin{definition}[Well-formed GMLTTs]
A GMLTT $\mathcal{T}$ is \emph{\bfseries well-formed} if there is at least one rule to apply for each constant (resp. axiom) of $\mathcal{T}$.
\end{definition}

Well-formedness of a GMLTT $\mathcal{T}$ virtually means that: 
\begin{itemize}

\item For each type-constant $\mathsf{C} \in C_{\mathcal{T}}(\mathsf{A_1}, \mathsf{A_2}, \dots, \mathsf{A_k})$, there is a context morphism $\bm{\mathsf{f}} : \mathsf{\Gamma} \to \mathsf{x_1 : A_1, x_2 : A_2, \dots, x_k : A_k}$ in $\mathcal{T}$;

\item For each term-constant $\mathsf{F} \in C_{\mathcal{T}}(\mathsf{A_1}, \mathsf{A_2}, \dots, \mathsf{A_k}; \mathsf{B})$, there are a context morphism $\bm{\mathsf{f}} : \mathsf{\Gamma} \to \mathsf{x_1 : A_1, x_2 : A_2, \dots, x_k : A_k}$ in $\mathcal{T}$ and a type $\mathsf{x_1 : A_1, x_2 : A_2, \dots, x_k : A_k \vdash B \ type}$ in $\mathcal{T}$;

\item For each axiom of the form $\mathsf{A = A' : Ty} \in A_{\mathcal{T}}$, there are a context morphism $\bm{\mathsf{f}} : \mathsf{\Gamma} \to \mathsf{\Delta}$ in $\mathcal{T}$ and types $\mathsf{\Delta \vdash A \ type}$ and $\mathsf{\Delta \vdash A' \ type}$ in $\mathcal{T}$; and

\item For each axiom of the form $\mathsf{a = a' : A} \in A_{\mathcal{T}}$, there are a context morphism $\bm{\mathsf{f}} : \mathsf{\Gamma} \to \mathsf{\Delta}$ in $\mathcal{T}$ and terms $\mathsf{\Delta \vdash a : A}$ and $\mathsf{\Delta \vdash a' : A}$ in $\mathcal{T}$.

\end{itemize}

\begin{example}
Clearly, the minimal GMLTT $\Phi$ coincides with $\mathsf{MLTT(1, \Pi, \Sigma)}$.
\end{example}

\begin{example}
$\mathsf{MLTT(1, \Pi, \Sigma)}$ equipped with the natural number type $\mathsf{N}$, or $\mathsf{MLTT(1, \Pi, \Sigma, N)}$, can be presented as a well-formed GMLTT $\mathcal{N}$ such that:
\begin{itemize}

\item $L_{\mathcal{N}} \stackrel{\mathrm{df. }}{=} L_{\Phi} \cup \{ \mathsf{N}, \mathsf{zero}, \mathsf{succ}, \mathsf{R^N} \ \! \}$;

\item $S_{\mathcal{N}} \stackrel{\mathrm{df. }}{=} \{ \mathsf{A} \mid \mathsf{\Gamma \vdash_{\mathcal{N}} A \ type} \ \text{for some $\mathsf{\vdash_{\mathcal{N}} \Gamma \ ctx}$} \ \! \}$;

\item $C_{\mathcal{N}}$ contains the type-constant $\mathsf{N} \in C_{\mathcal{N}}(\bm{\epsilon})$ and the term-constants $\mathsf{zero} \in C_{\mathcal{N}}(\bm{\epsilon}; \mathsf{N})$, $\mathsf{succ} \in C_{\mathcal{N}}(\mathsf{N}; \mathsf{N})$ and $\mathsf{R^N_P} \in C_{\mathcal{N}}(\mathsf{A_1}, \mathsf{A_2}, \dots, \mathsf{A_k}, \mathsf{P[zero/y]}, \mathsf{P \Rightarrow P[succ(y)/y]}; \mathsf{P})$ for each type of the form $\mathsf{x_1 : A_1, x_2 : A_2, \dots, x_k : A_k, y : N \vdash_{\mathcal{N}} P \ type}$, where $\mathsf{R^N_P}$ is an abbreviation of $\mathsf{R^N(P)}$; and

\item $A_{\mathcal{N}}$ contains the equations 
\begin{align*}
&\mathsf{R^N_P(\bm{\mathsf{x}}, c_z, c_s) [zero/y] = c_z : P[zero/y]} \\
&\mathsf{R^N_P(\bm{\mathsf{x}}, c_z, c_s) [succ(y)/y] = c_s(R^N_P(\bm{\mathsf{x}}, c_z, c_s)) : P[succ(y)/y]}
\end{align*}
for each triple of a type of the form $\mathsf{\Gamma, y : N \vdash_{\mathcal{N}} P \ type}$ and two terms of the forms $\mathsf{\Gamma \vdash_{\mathcal{N}} c_z : P[zero/y]}$ and $\mathsf{\Gamma, y : N \vdash_{\mathcal{N}} c_s : P \Rightarrow P[succ(y)/y]}$, where $\mathsf{\Gamma \equiv x_1 : A_1, x_2 : A_2, \dots, x_k : A_k}$ and $\bm{\mathsf{x}} \equiv \mathsf{x_1, x_2, \dots, x_k}$.

\end{itemize}

We may recover the conventional rules of $\mathsf{N}$-type as follows.
Given a type $\mathsf{\Gamma, y : N \vdash_{\mathcal{N}} P \ type}$ and terms $\mathsf{\Gamma \vdash_{\mathcal{N}} c_z : P[zero/y]}$ and $\mathsf{\Gamma, y : N \vdash_{\mathcal{N}} c_s : P \Rightarrow P[succ(y)/y]}$, where $\mathsf{\Gamma \equiv x_1 : A_1, x_2 : A_2, \dots, x_k : A_k}$, we obtain $\mathsf{\Gamma \vdash_{\mathcal{N}} N \ type}$ by Type-Const, and $\mathsf{\Gamma \vdash_{\mathcal{N}} zero : N}$ and $\mathsf{\Gamma, y : N \vdash_{\mathcal{N}} succ(y) : N}$ by Term-Const.
Moreover, we get $\mathsf{\Gamma, y : N \vdash_{\mathcal{N}} c_z : P[zero/y]}$ by Weak, whence $\mathsf{\Gamma, y : N \vdash_{\mathcal{N}} R^N_P(\bm{\mathsf{x}}, c_z, c_s) : P}$ by Term-Const, where $\bm{\mathsf{x}} \equiv \mathsf{x_1}, \mathsf{x_2}, \dots, \mathsf{x_k}$.
Finally, we obtain the equations 
\begin{align*}
&\mathsf{\Gamma \vdash_{\mathcal{N}} R^N_P(\bm{\mathsf{x}}, c_z, c_x)[zero/y] = c_z : P[zero/y]} \\
&\mathsf{\Gamma, y : N \vdash_{\mathcal{N}} R^N_P(\bm{\mathsf{x}}, c_z, c_x)[succ(y)/y] = c_s(R^N_P(\bm{\mathsf{x}}, c_z, c_s)) : P[succ(y)/y]}
\end{align*}
by Term-Eq.
\end{example}

\begin{example}
$\mathsf{MLTT(1, \Pi, \Sigma)}$ equipped with identity types $\mathsf{Id}$, or $\mathsf{MLTT(1, \Pi, \Sigma, Id)}$, can be presented as a well-formed GMLTT $\mathcal{I}$ such that:
\begin{itemize}

\item $L_{\mathcal{I}} \stackrel{\mathrm{df. }}{=} L_{\Phi} \cup \{ \mathsf{Id}, \mathsf{refl}, \mathsf{R^{Id}} \ \! \}$;

\item $S_{\mathcal{I}} \stackrel{\mathrm{df. }}{=}  \{ \mathsf{A} \mid \mathsf{\Gamma \vdash_{\mathcal{I}} A \ type} \ \text{for some $\mathsf{\vdash_{\mathcal{I}} \Gamma \ ctx}$} \ \! \}$;

\item $C_{\mathcal{I}}$ contains the type-constant $\mathsf{Id_B} \in C_{\mathcal{I}}(\mathsf{B}, \mathsf{B})$ and the term-constants $\mathsf{refl_B} \in C_{\mathcal{I}}(\mathsf{A_1}, \mathsf{A_2}, \dots, \mathsf{A_k}, \mathsf{B}; \mathsf{Id_B(y, y)})$ and $\mathsf{R^{Id}_P} \in C_{\mathcal{I}}(\mathsf{A_1}, \mathsf{A_2}, \dots, \mathsf{A_k}, \mathsf{P[w/y, w/z, refl_B(w)/v]}, \mathsf{B}, \mathsf{B}, \mathsf{Id_B(y, z)}; \mathsf{P})$ for each pair of types of the forms $\mathsf{\Gamma \vdash_{\mathcal{I}} B \ type}$ and $\mathsf{\Gamma, y:B, z:B, v:Id_B(y, z) \vdash_{\mathcal{I}} P \ type}$, where $\mathsf{\Gamma \equiv x_1 : A_1, x_2 : A_2, \dots, x_k : A_k}$, and $\mathsf{Id_B}$, $\mathsf{refl_B}$ and $\mathsf{R^{Id}_P}$ are abbreviations of $\mathsf{Id(B)}$, $\mathsf{refl(B)}$ and $\mathsf{R^{Id}(P)}$, respectively;

\item $A_{\mathcal{I}}$ contains the equation
\begin{align*}
\mathsf{R^{Id}_P(\bm{\mathsf{x}}, p, w, w, refl_B(\bm{\mathsf{x}}, w)) = p : P[w/y, w/z, refl_B(\bm{\mathsf{x}}, w)/v]}
\end{align*}
for each quadruple of types $\mathsf{\Gamma \vdash_{\mathcal{I}} B \ type}$ and $\mathsf{\Gamma, y:B, z:B, v:Id_B(y, z) \vdash_{\mathcal{I}} P \ type}$, and terms $\mathsf{\Gamma, w : B \vdash_{\mathcal{I}} p : P[w/y, w/z, refl_B(\bm{\mathsf{x}}, w)/v]}$ and $\mathsf{\Gamma \vdash_{\mathcal{I}} b : B}$, where $\mathsf{\Gamma \equiv x_1 : A_1, x_2 : A_2, \dots, x_k : A_k}$ and $\bm{\mathsf{x}} \equiv \mathsf{x_1}, \mathsf{x_2}, \dots, \mathsf{x_k}$.

\end{itemize}

We may recover the conventional rules of $\mathsf{Id}$-types as follows.
Given a type $\mathsf{\Gamma \vdash_{\mathcal{I}} B \ type}$ and terms $\mathsf{\Gamma \vdash_{\mathcal{I}} b : B}$ and $\mathsf{\Gamma \vdash_{\mathcal{I}} b' : B}$, where $\mathsf{\Gamma \equiv x_1 : A_1, x_2 : A_2, \dots, x_k : A_k}$, we obtain the type $\mathsf{\Gamma \vdash_{\mathcal{I}} Id_B(b, b') \ type}$ by Type-Const; also, we have $\mathsf{\Gamma, y:B \vdash_{\mathcal{I}} Id_B(y, y) \ type}$, whence we get the term $\mathsf{\Gamma \vdash_{\mathcal{I}} refl_B(\bm{\mathsf{x}}, b) : Id_B(b, b)}$ by Term-Const, where $\bm{\mathsf{x}} \equiv \mathsf{x_1}, \mathsf{x_2}, \dots, \mathsf{x_k}$.
Moreover, given a type $\mathsf{\Gamma, y:B, z:B, v:Id_B(y, z) \vdash_{\mathcal{I}} P \ type}$ and terms $\mathsf{\Gamma, w : B \vdash_{\mathcal{I}} p : P[w/y, w/z, refl_B(\bm{\mathsf{x}}, w)/v]}$ and $\mathsf{\Gamma \vdash_{\mathcal{I}} q : Id_B(b, b')}$, we get the term $\mathsf{\Gamma, w : B \vdash_{\mathcal{I}} R^{Id}_P(\bm{\mathsf{x}}, p, b, b', q) : P[b/y, b'/z, q/v]}$ by Term-Const and the equality 
\begin{equation*}
\mathsf{\Gamma, w : B \vdash_{\mathcal{I}} R^{Id}_P(\bm{\mathsf{x}}, p, w, w, refl_B(\bm{\mathsf{x}}, w)) = p : P[w/y, w/z, refl_B(\bm{\mathsf{x}}, w)/v]}
\end{equation*}
by Term-Eq.
\end{example}

\if0
\begin{example}
$\mathsf{MLTT(1, \Sigma, \Pi)}$ equipped with identity types $\mathsf{Id}$, i.e., $\mathsf{MLTT(1, \Sigma, \Pi, Id)}$, can be presented as a well-formed GMLTT $\mathcal{I}$ such that:
\begin{itemize}

\item $S_{\mathcal{I}}$ has $\mathsf{Id_A : A, A \to Ty}$, $\mathsf{refl : A \to Id_A(x, x)}$ and $\mathsf{R^{Id}_P : P[z/x, z/y, refl(z)/p], A, A, Id_A(a, a') \to P[a/x, a'/y, q/p]}$;

\item $A_{\mathcal{I}}$ has $\mathsf{R^=(C, c, a, a, refl(a)) = c[a/z] : C[a/x, a/y, refl(a)/p]}$.

\end{itemize}
\end{example}
\fi

\begin{convention}
Henceforth, we shall focus on well-formed GMLTTs and call them \emph{MLTTs}.
\end{convention}

\subsubsection{Pre-Syntax}
Contexts, types and terms in MLTTs have been given together with the judgement of typing. 
Alternatively, we may first give a larger class of (possibly \emph{non-valid}) expressions by a simpler induction (on lengths of expressions), which should be called \emph{pre-contexts}, \emph{pre-types} and \emph{pre-terms}, and then carve out the \emph{valid} syntax as its subclass by the typing rules. 

For certain purposes, pre-syntax is useful. 
For instance, the interpretation of MLTTs in CwFs is easier to give on pre-syntax than on syntax \cite{hofmann1997syntax,pitts2001categorical}.
Following this method, we need pre-syntax in Section~\ref{TheoryCategoryCorrespondenceBetweenMLTTsAndCCCwDs}, and therefore we recall it here:

\begin{definition}[Pre-syntax of MLTTs \cite{hofmann1997syntax}]
The expressions called \emph{\bfseries pre-contexts} $\mathsf{\Gamma}$, \emph{\bfseries pre-types} $\mathsf{A}$, \emph{\bfseries pre-terms} $\mathsf{a}$ and \emph{\bfseries pre-context morphisms} $\bm{\mathsf{f}}$ in an MLTT $\mathcal{T}$ are given by the grammar:
\begin{align*}
\mathsf{\Gamma} &\stackrel{\mathrm{df. }}{\equiv} \mathsf{\diamondsuit} \mid \mathsf{\Gamma, x : A} \\
\mathsf{A} &\stackrel{\mathrm{df. }}{\equiv} \mathsf{1} \mid \textstyle \mathsf{\Pi_{x:A_1}A_2} \mid \textstyle \mathsf{\Sigma_{x:A_1}A_2} \mid \mathsf{C}\bm{\mathsf{f}} \\
\mathsf{a} &\stackrel{\mathrm{df. }}{\equiv} \mathsf{x} \mid \mathsf{\star} \mid \mathsf{\lambda x . \ \! a} \mid \mathsf{a_1 a_2} \mid \mathsf{\langle a_1, a_2 \rangle} \mid \mathsf{R^{\Sigma}_A(M, N)} \mid \mathsf{F}\bm{\mathsf{f}} \\
\bm{\mathsf{f}} &\stackrel{\mathrm{df. }}{\equiv} \bm{\epsilon} \mid \bm{\mathsf{f}} \ast \mathsf{a}
\end{align*}
where $\mathsf{x} \in \mathscr{V}$, $\mathsf{C} \in C_{\mathcal{T}}^{\mathsf{Ty}}$ and $\mathsf{F} \in C_{\mathcal{T}}^{\mathsf{Tm}}$.
\end{definition}

\subsubsection{Simply-Typed Lambda-Calculi as Sub-Theories}
\label{ComparisonWithSTLC}
Since we shall compare our semantics of \emph{\bfseries simply-typed $\bm{\lambda}$-calculi (STLCs)} with the standard CCC-semantics, we present them as sub-theories of MLTTs for a simple comparison of the two.

Let us first present the minimal STLC, which is equipped with \emph{finite product types} $\mathsf{1}$ and $\mathsf{\times}$; it corresponds to the non-dependently-typed version of $\mathsf{MLTT(1, \Pi, \Sigma)}$, where $\mathsf{\Rightarrow}$- and $\mathsf{\times}$-types in the former correspond respectively to $\mathsf{\Pi}$- and $\mathsf{\Sigma}$-types in the latter.
We call it the \emph{\bfseries equational theory $\bm{\mathsf{\lambda^=_{1, \times}}}$}.
Roughly, the theory $\mathsf{\lambda^=_{1, \times}}$ is obtained from $\mathsf{MLTT(1, \Pi, \Sigma)}$ by restricting types to \emph{closed} (i.e., with no free variables) ones and eliminating the rules for (judgmental) equalities between contexts and between types as there are only the trivial equalities (i.e., $\alpha$-equivalence). 

Thus, it is a formal system to deduce the following judgements:
\begin{itemize}

\item $\mathsf{\vdash \Gamma \ ctx}$ ($\mathsf{\Gamma}$ is a \emph{\bfseries context});

\item $\mathsf{\vdash A \ type}$ ($\mathsf{A}$ is a (\emph{closed}) \emph{\bfseries type});

\item $\mathsf{\Gamma \vdash a : A}$ ($\mathsf{a}$ is a \emph{\bfseries term} of type $\mathsf{A}$ in the context $\mathsf{\Gamma}$); and

\item $\mathsf{\Gamma \vdash a = a' : A}$ ($\mathsf{a}$ and $\mathsf{a'}$ are \emph{\bfseries equal} terms of type $\mathsf{A}$ in the context $\mathsf{\Gamma}$).

\end{itemize}

Speifically, $\mathsf{\lambda^=_{1, \times}}$ consists of the following axioms and rules:
\begin{align*}
&(\textsc{Ct-Emp})\frac{}{ \ \mathsf{\vdash \diamondsuit \ ctx} \ } \ \ \ (\textsc{Ct-Ext})\frac{ \ \mathsf{\vdash \Gamma \ ctx} \ \ \ \mathsf{\vdash A \ type} \ \ \ \mathsf{x} \in \mathscr{V} \setminus \mathscr{FV}(\mathsf{\Gamma}) \ }{ \ \mathsf{\vdash \Gamma, x : A \ ctx} \ } \\
&(\textsc{Var})\frac{ \ \mathsf{\vdash x_1 : A_1, x_2 : A_2, \dots, x_n : A_n \ ctx} \ \ \ j \in \{ 1, 2, \dots, n \} \ }{ \ \mathsf{x_1 : A_1, x_2 : A_2, \dots, x_n : A_n \vdash x_j : A_j} \ } \ \ \ (\textsc{Tm-EqRefl})\frac{ \ \mathsf{\Gamma \vdash a : A} \ }{ \ \mathsf{\Gamma \vdash a = a : A} \ } \\ 
&(\textsc{Tm-EqSym})\frac{ \ \mathsf{\Gamma \vdash a = a' : A} \ }{ \ \mathsf{\Gamma \vdash a' = a : A} \ } \ \ \ (\textsc{Tm-EqTrans})\frac{ \ \mathsf{\Gamma \vdash a = a' : A} \ \ \ \mathsf{\Gamma \vdash a' = a'' : A} \ }{ \ \mathsf{\Gamma \vdash a = a'' : A} \ } \\
&(\textsc{$\mathsf{1}$-Form})\frac{}{ \ \mathsf{\vdash 1 \ type} \ } \ \ \ (\textsc{$\Rightarrow$-Form}) \frac{ \ \mathsf{\vdash A \ type} \ \ \ \mathsf{\vdash B \ type} \ }{ \ \mathsf{\vdash A \Rightarrow B \ type}  \ } \ \ \ (\textsc{$\times$-Form})\frac{ \ \mathsf{\vdash A \ type} \ \ \ \mathsf{\vdash B \ type} \ }{ \ \mathsf{\vdash A \times B \ type}  \ } \\ 
&(\textsc{$\mathsf{1}$-Intro})\frac{ \ \mathsf{\vdash \Gamma \ ctx} \ }{ \ \mathsf{\Gamma \vdash \star : 1 } \ } \ \ \ (\textsc{$\Rightarrow$-Intro})\frac{ \ \mathsf{\Gamma, x : A \vdash b : B} \ }{ \ \mathsf{\Gamma \vdash \lambda x . b : A \Rightarrow B}  \ } \ \ \ (\textsc{$\times$-Intro})\frac{ \ \mathsf{\Gamma \vdash a : A \ \ \ \Gamma \vdash b : B} \ }{ \ \mathsf{\Gamma \vdash \langle a, b \rangle : A \times B}  \ } \\ 
&(\textsc{$\Rightarrow$-Elim})\frac{ \ \mathsf{\Gamma \vdash f : A \Rightarrow B} \ \ \ \mathsf{\Gamma \vdash a : A} \ }{ \ \mathsf{\Gamma \vdash f(a) : B}  \ } \ \ \ (\textsc{$\times$-Elim})\frac{ \ \mathsf{\Gamma \vdash p : A_1 \times A_2} \ }{ \ \mathsf{\Gamma \vdash \pi_i (p) : A_i} \ } \ \ \ (\textsc{$\mathsf{1}$-Uniq})\frac{ \ \mathsf{\Gamma \vdash c : C} \ }{ \ \mathsf{\Gamma \vdash c = \star : C} \ } \\ 
&(\textsc{$\Rightarrow$-Uniq})\frac{ \ \mathsf{\Gamma \vdash f : A \Rightarrow B} \ \ \ \mathsf{x} \in \mathscr{V} \setminus \mathscr{FV}(\mathsf{\Gamma}) \ }{ \ \mathsf{\Gamma \vdash \lambda x . f(x) = f : A \Rightarrow B} \ } \ \ \ (\textsc{$\times$-Uniq})\frac{ \ \mathsf{\Gamma \vdash p : A \times B} \ }{ \ \mathsf{\Gamma \vdash \langle \pi_1(p), \pi_2(p) \rangle = p : A \times B} \ } \\
&(\textsc{$\Rightarrow$-Comp})\frac{ \ \mathsf{\Gamma, x : A \vdash b : B} \ \ \ \mathsf{\Gamma \vdash a : A} \ }{ \ \mathsf{\Gamma \vdash (\lambda x . b)(a) = b[a/x] : B}  \ } \ \ \ (\textsc{$\times$-Comp})\frac{ \ \mathsf{\Gamma \vdash a_i : A_i} \ }{ \ \mathsf{\Gamma \vdash \pi_i (\langle a_1, a_2 \rangle) = a_i : A_i} \ }  
\end{align*}
where we again omit describing congruence rules.
Following a standard convention, we write $\mathsf{A \Rightarrow B}$ and $\mathsf{A \times B}$ for $\mathsf{\Pi_{x : A}B}$ and $\mathsf{\Sigma_{x : A}B}$, respectively, since there are no dependent types in $\mathsf{\lambda^=_{1, \times}}$.
Note that \textsc{$\times$-Elim} and \textsc{$\times$-Comp} are \emph{admissible} in $\mathsf{MLTT(1, \Pi, \Sigma)}$ by \textsc{$\Sigma$-Elim} and \textsc{$\Sigma$-Comp} as described in \cite{hofmann1997syntax}, and vice versa in $\mathsf{\lambda^=_{1, \times}}$; the other rules are just inherited from $\mathsf{MLTT(1, \Pi, \Sigma)}$.

It is easy to see that the theory $\mathsf{\lambda^=_{1, \times}}$ coincides with the standard equational (with respect to \emph{$\beta \eta$-equivalence}) theory of the \textsf{STLC} equipped with finite product types \cite{crole1993categories,jacobs1999categorical,lambek1988introduction}.


Accordingly, the general class of STLCs is as follows:
\begin{definition}[STLCs]
An \emph{\bfseries STLC} is a quadruple $\mathcal{T} = (L_{\mathcal{T}}, S_{\mathcal{T}}, C_{\mathcal{T}}, A_{\mathcal{T}})$, where:
\begin{itemize}

\item $L_{\mathcal{T}}$ is a finite set of symbols, called the \emph{\bfseries alphabet} of $\mathcal{T}$, such that $L_{\mathcal{T}} \cap \mathscr{V} = \emptyset$ and it contains every non-variable symbol that occurs in a type or a term in $\mathsf{\lambda^=_{1, \times}}$;

\item $S_{\mathcal{T}}$ is a subset of the set $L_{\mathcal{T}}^\ast$, whose elements are called \emph{\bfseries sorts} of $\mathcal{T}$;

\item $C_{\mathcal{T}}$ is a subset of the set $L_{\mathcal{T}}^\ast$, whose elements $\mathsf{C}$ are called \emph{\bfseries constants} of $\mathcal{T}$ and equipped with finite sequences $\mathscr{F}_{\mathcal{T}}(\mathsf{C})$ of
the form $\bm{\epsilon} \in S_{\mathcal{T}}^\ast$ or $((\mathsf{A_1}, \mathsf{A_2}, \dots, \mathsf{A_k}), \mathsf{B}) \in S_{\mathcal{T}}^\ast \times S_{\mathcal{T}}$, called the \emph{\bfseries format} of $\mathsf{C}$; and

\item $A_{\mathcal{T}}$ is a set of judgements of the form $\mathsf{a = a' : A}$, called \emph{\bfseries axioms} of $\mathcal{T}$, where $\mathsf{A} \in S_{\mathcal{T}}$ and $\mathsf{a}, \mathsf{a'} \in (L_{\mathcal{T}} \cup \mathscr{V})^\ast$

\end{itemize}
where the same convention is applied as in the case of MLTTs. 
\end{definition}

\begin{definition}[Theorems of STLCs]
Let $\mathcal{T}$ be an STLC.
\emph{\bfseries Contexts, types and terms in $\bm{\mathcal{T}}$} are judgements of the forms $\mathsf{\vdash \Gamma \ ctx}$, $\mathsf{\vdash A \ type}$ and $\mathsf{\vdash a : A}$, respectively, obtained by the axioms and rules given above in this section plus the following two rules:
\begin{align*}
&\textsc{(Type-Const)} \ \frac{ \ \mathsf{C} \in C_{\mathcal{T}}(\bm{\epsilon}) \ }{\mathsf{ \ \vdash C \ type} \ } \\
&\textsc{(Term-Const)} \ \frac{ \ \mathsf{F} \in C_{\mathcal{T}}(\mathsf{A_1}, \mathsf{A_2}, \dots, \mathsf{A_k}; \mathsf{B}) \ \ \  \mathsf{\Delta \equiv x_1 : A_1, x_2 : A_2, \dots, x_k : A_k} \ \ \ \bm{\mathsf{f}} : \mathsf{\Gamma} \to \mathsf{\Delta} \ \ \ \mathsf{\vdash B \ type} \ }{\mathsf{ \ \Gamma \vdash F\bm{\mathsf{f}} : B} \ }
\end{align*}

\emph{\bfseries Equalities in $\bm{\mathcal{T}}$} are the judgements of the form $\mathsf{\Gamma \vdash a = a' : A}$ obtained by the axioms and rules given above in this section plus the following rule:
\begin{align*}
&\textsc{(Term-Eq)} \ \frac{ \ \mathsf{\Delta \vdash a : A} \ \ \ \mathsf{\Delta \vdash a' : A} \ \ \ \mathsf{\bm{\mathsf{f}} : \Gamma \to \Delta} \ \ \ \mathsf{a = a' : A} \in A_{\mathcal{T}} \ }{ \ \mathsf{ \Gamma \vdash a[\bm{\mathsf{f}}] = a'[\bm{\mathsf{f}}] : A} \ }
\end{align*}
Inhabited types and equalities in $\mathcal{T}$ are also called \emph{\bfseries theorems in $\bm{\mathcal{T}}$}.
\end{definition}

\begin{definition}[Well-formed STLCs]
An STLC $\mathcal{T}$ is \emph{\bfseries well-formed} if there is at least one rule to apply for each constant (resp. axiom) of $\mathcal{T}$.
\end{definition}

\begin{convention}
We shall focus on \emph{well-formed} STLCs and simply call them \emph{STLCs}.
\end{convention}

It is clear that STLCs are just the non-dependently-typed version of MLTTs, and our notion of STLCs coincides with the standard notion of (equational) STTs \cite{lambek1988introduction,crole1993categories,pitts2001categorical}.

\if0
\begin{example}
$\mathsf{\lambda^=_{1, \times}}$ equipped with the empty type $\mathsf{0}$, i.e., $\mathsf{\lambda^=_{1, \times, 0}}$, can be presented as an STLC $\mathcal{Z}$ such that:
\begin{itemize}

\item $S_{\mathcal{Z}}$ has $\mathsf{0 : Ty}$ and $\mathsf{R^0_P : P}$;

\item $A_{\mathcal{Z}}$ has no axioms.

\end{itemize}
\end{example}
\fi

\subsection{Categories with Families}
\label{CwFs}
Next, we review \emph{categories with families (CwFs)} and their \emph{semantic type formers} as well as the interpretation of $\mathsf{MLTT(1, \Pi, \Sigma)}$ in CwFs that support $1$-, $\Pi$- and $\Sigma$-types in the strict sense.

\subsubsection{Categories with Families}
\begin{definition}[CwFs \cite{dybjer1996internal,hofmann1997syntax}]
\label{DefCwFs}
A \emph{\bfseries category with families (CwF)} is a tuple 
\begin{equation*}
\mathcal{C} = (\mathcal{C}, \mathit{Ty}, \mathit{Tm}, \_\{\_\}, T, \_.\_, \mathit{p}, \mathit{v}, \langle\_,\_\rangle_\_)
\end{equation*}
where:
\begin{itemize}

\item $\mathcal{C}$ is a category equipped with a specified terminal object $T \in \mathcal{C}$;

\item $\mathit{Ty}$ assigns, to each object $\Gamma \in \mathcal{C}$, a set $\mathit{Ty}(\Gamma)$, called the set of all \emph{\bfseries types} in the \emph{\bfseries context} $\Gamma$;

\item $\mathit{Tm}$ assigns, to each pair of an object $\Gamma \in \mathcal{C}$ and a type $A \in \mathit{Ty}(\Gamma)$, a set $\mathit{Tm}(\Gamma, A)$, called the set of all \emph{\bfseries terms} of type $A$ in the context $\Gamma$;

\item For each morphism $\phi : \Delta \to \Gamma$ in $\mathcal{C}$, $\_\{\_\}$ induces a function $\_\{ \phi \} : \mathit{Ty}(\Gamma) \to \mathit{Ty}(\Delta)$, called the \emph{\bfseries substitution on types}, and a family $(\_\{ \phi \}_A : \mathit{Tm}(\Gamma, A) \to \mathit{Tm}(\Delta, A\{ \phi \}))_{A \in \mathit{Ty}(\Gamma)}$ of functions, called the \emph{\bfseries substitutions on terms};

\item $\_ . \_$ assigns, to each pair of a context $\Gamma \in \mathcal{C}$ and a type $A \in \mathit{Ty}(\Gamma)$, a context $\Gamma . A \in \mathcal{C}$, called the \emph{\bfseries comprehension} of $A$;

\item $\mathit{p}$ associates each pair of a context $\Gamma \in \mathcal{C}$ and a type $A \in \mathit{Ty}(\Gamma)$ with a morphism $\mathit{p}(A) : \Gamma . A \to \Gamma$
in $\mathcal{C}$, called the \emph{\bfseries first projection} associated to $A$;

\item $\mathit{v}$ associates each pair of a context $\Gamma \in \mathcal{C}$ and a type $A \in \mathit{Ty}(\Gamma)$ with a term $\mathit{v}_A \in \mathit{Tm}(\Gamma . A, A\{\mathit{p}(A)\})$
called the \emph{\bfseries second projection} associated to $A$; and

\item $\langle \_, \_ \rangle_\_$ assigns, to each triple of a morphism $\phi : \Delta \to \Gamma$ in $\mathcal{C}$, a type $A \in \mathit{Ty}(\Gamma)$ and a term $g \in \mathit{Tm}(\Delta, A\{ \phi \})$, a morphism $\langle \phi, g \rangle_A : \Delta \to \Gamma . A$
in $\mathcal{C}$, called the \emph{\bfseries extension} of $\phi$ by $g$

\end{itemize}
that satisfies, for all $\Gamma, \Delta, \Theta \in \mathcal{C}$, $A \in \mathit{Ty}(\Gamma)$, $\phi : \Delta \to \Gamma$, $\varphi : \Theta \to \Delta$, $f \in \mathit{Tm}(\Gamma, A)$ and $g \in \mathit{Tm}(\Delta, A\{ \phi \})$, the following equations:
\begin{itemize}

\item \textsc{(Ty-Id)} $A \{ \mathit{id}_\Gamma \} = A$;

\item \textsc{(Ty-Comp)} $A \{ \phi \circ \varphi \} = A \{ \phi \} \{ \varphi \}$;

\item \textsc{(Tm-Id)} $f \{ \mathit{id}_\Gamma \}_A = f$;

\item \textsc{(Tm-Comp)} $f \{ \phi \circ \varphi \}_A = f \{ \phi \}_A \{ \varphi \}_{A\{\phi\}}$;

\item \textsc{(Cons-L)} $\mathit{p}(A) \circ \langle \phi, g \rangle_A = \phi$;

\item \textsc{(Cons-R)} $\mathit{v}_A \{ \langle \phi, g \rangle_A \}_{A\{\mathit{p}(A)\}} = g$;

\item \textsc{(Cons-Nat)} $\langle \phi, g \rangle_A \circ \varphi = \langle \phi \circ \varphi, g \{ \varphi \}_{A\{ \phi \}} \rangle_A$;

\item \textsc{(Cons-Id)} $\langle \mathit{p}(A), \mathit{v}_A \rangle_A = \mathit{id}_{\Gamma . A}$.

\end{itemize}

\end{definition}

\begin{notation}
We often omit the subscript $A$ in $\_ \{ \_ \}_A$ and $\langle \_, \_ \rangle_A$ if it does not cause any ambiguity. 
\end{notation}


CwFs are very close to the syntax, which is best described by the following \emph{term model}:
\begin{definition}[The term model $\mathcal{T}(1, \Pi, \Sigma)$ \cite{hofmann1997syntax}]
\label{DefTermModelT}
The CwF of contexts and context morphisms of $\mathsf{MLTT(1, \Pi, \Sigma)}$ modulo $\mathsf{=}$, called the \emph{\bfseries term model} $\mathcal{T}(1, \Pi, \Sigma)$, is defined as follows:
\begin{itemize}

\item The category $\mathcal{T}(1, \Pi, \Sigma)$ consists of contexts and context morphisms of $\mathsf{MLTT(1, \Pi, \Sigma)}$ as defined in Section~\ref{MLTT} modulo (judgmental) equality $\mathsf{=}$. 
Let us write $[\mathsf{\Gamma}]$ (resp. $[\mathsf{A}]$, $[\mathsf{a}]$) for the equivalence class of a context $\mathsf{\vdash \Gamma \ ctx}$ (resp. a type $\mathsf{\Gamma \vdash A \ type}$, a term $\mathsf{\Gamma \vdash a : A}$).

\item Given $[\mathsf{\Gamma}] \in \mathcal{T}(1, \Pi, \Sigma)$, we define $\mathit{Ty}([\mathsf{\Gamma}]) \stackrel{\mathrm{df. }}{=} \{ [\mathsf{A}] \ \! | \ \! \mathsf{\Gamma \vdash A \ type} \ \! \}$ and $\mathit{Tm}([\mathsf{\Gamma}], [\mathsf{A}]) \stackrel{\mathrm{df. }}{=} \{ [\mathsf{a}] \ \! | \ \! \mathsf{\Gamma \vdash a : A} \ \! \}$. Below, we represent these equivalence classes by their arbitrary representatives.

\item The functions $\_\{ \_ \}$ are generalized substitutions $\_[\_]$ on types and terms in Section~\ref{MLTT}.

\item The terminal object is (the equivalence class of) the empty context $\mathsf{\vdash \diamondsuit \ ctx}$.

\item Given $\mathsf{\vdash \Gamma \ ctx} \in \mathcal{T}(1, \Pi, \Sigma)$ and $\mathsf{\Gamma \vdash A \ type} \in \mathit{Ty}(\mathsf{\Gamma})$, we define $\mathsf{\Gamma . A}  \stackrel{\mathrm{df. }}{=} \mathsf{\vdash \Gamma, x : A \ ctx}$, where $\mathsf{x} \in \mathscr{V} \setminus \mathscr{FV}(\mathsf{\Gamma})$.

\item Assume $\mathsf{\vdash \Gamma = x_1 : A_1, x_2 : A_2, \dots, x_n : A_n \ ctx}$. Then, we define $\mathit{p}(\mathsf{A}) \stackrel{\mathrm{df. }}{=} \langle \mathsf{x_1, x_2, \dots, x_n} \rangle : \mathsf{\Gamma . A} \to \mathsf{\Gamma}$ and $\mathit{v}_{\mathsf{A}} \stackrel{\mathrm{df. }}{=} \mathsf{\Gamma, x : A \vdash x : A} \in \mathit{Tm}(\mathsf{\Gamma . A}, \mathsf{A[\mathit{p}(A)]})$.

\item Given $\phi : \mathsf{\Delta} \to \mathsf{\Gamma}$ and $\mathsf{\Delta \vdash g : A[\phi]} \in \mathit{Tm}(\mathsf{\Gamma}, \mathsf{A[\phi]})$, we define $\langle \phi, \mathsf{g} \rangle_{\mathsf{A}} \stackrel{\mathrm{df. }}{=} (\phi, \mathsf{g}) : \mathsf{\Gamma \to \Delta . A}$.

\end{itemize}
\end{definition}

There are various non-syntactic instances of CwFs, e.g., sets \cite{hofmann1997syntax}, groupoids \cite{hofmann1998groupoid} and games \cite{abramsky2015games,yamada2017game}.
We skip describing these CwFs and leave the details to the references. 

Nevertheless, CwFs model only the fragment of MLTTs that is common to all types; we need to equip them with \emph{semantic $1$-, $\Pi$- and $\Sigma$-type formers} \cite{hofmann1997syntax} in order to model $\mathsf{MLTT(1, \Pi, \Sigma)}$.

\begin{definition}[CwFs with $1$-type \cite{hofmann1997syntax}]
A CwF $\mathcal{C}$ \emph{\bfseries supports $\bm{1}$-type} if:
\begin{itemize}

\item \textsc{(Unit-Form)} For any $\Gamma \in \mathcal{C}$, there is a type $1_{\Gamma} \in \mathit{Ty}(\Gamma)$;

\item \textsc{(Unit-Intro)} For any $\Gamma \in \mathcal{C}$, there is a term $\star_{\Gamma} \in \mathit{Tm}(\Gamma, 1_\Gamma)$;

\item \textsc{(Unit-Subst)} For any morphism $\phi : \Delta \to \Gamma$ in $\mathcal{C}$, we have $1_{\Gamma}\{\phi\} = 1_{\Delta}$; and

\item \textsc{($\star$-Subst)} For any morphism $\phi : \Delta \to \Gamma$ in $\mathcal{C}$, we have $\star_{\Gamma}\{\phi\} = \star_{\Delta}$.

\end{itemize}
It supports $1$-type \emph{\bfseries in the strict sense} if it additionally satisfies: 
\begin{itemize}
\item (\textsc{Unit-Uniq}) $f = \star_\Gamma$ for any $f \in \mathit{Tm}(\Gamma, 1_\Gamma)$.
\end{itemize}
\end{definition}

\begin{definition}[CwFs with $\Pi$-types \cite{hofmann1997syntax}]
A CwF $\mathcal{C}$ \emph{\bfseries supports $\bm{\Pi}$-types} if:
\begin{itemize}

\item \textsc{($\Pi$-Form)} For any $\Gamma \in \mathcal{C}$, $A \in \mathit{Ty}(\Gamma)$ and $B \in \mathit{Ty}(\Gamma . A)$, there is a type $\Pi (A, B) \in \mathit{Ty}(\Gamma)$;

\item \textsc{($\Pi$-Intro)} For any $f \in \mathit{Tm}(\Gamma . A, B)$, there is a term $\lambda_{A, B} (f) \in \mathit{Tm}(\Gamma, \Pi (A, B))$;

\item \textsc{($\Pi$-Elim)} For any $h \in \mathit{Tm}(\Gamma, \Pi (A, B))$ and $a \in \mathit{Tm}(\Gamma, A)$, there is a term $\mathit{App}_{A, B} (h, a) \in \mathit{Tm}(\Gamma, B\{ \overline{a} \})$, where $\overline{a} \stackrel{\mathrm{df. }}{=} \langle \textit{id}_\Gamma, a \rangle_A : \Gamma \to \Gamma . A$;

\item \textsc{($\Pi$-Comp)} We have $\mathit{App}_{A, B} (\lambda_{A, B} (f) , a) = f \{ \overline{a} \}$;

\item \textsc{($\Pi$-Subst)} For any $\Delta \in \mathcal{C}$ and $\phi : \Delta \to \Gamma$ in $\mathcal{C}$, we have $\Pi (A, B) \{ \phi \} = \Pi (A\{\phi\}, B\{\phi^+\}$, where $\phi^+ \stackrel{\mathrm{df. }}{=} \langle \phi \circ \mathit{p}(A\{ \phi \}), \mathit{v}_{A\{\phi\}} \rangle_A : \Delta . A\{\phi\} \to \Gamma . A$;

\item \textsc{($\lambda$-Subst)} For any $g \in \mathit{Tm} (\Gamma. A, B)$, we have $\lambda_{A, B} (g) \{ \phi \} = \lambda_{A\{\phi\}, B\{\phi^+\}} (g \{ \phi^+ \})$; and

\item \textsc{(App-Subst)} We have $\mathit{App}_{A, B} (h, a) \{ \phi \} = \mathit{App}_{A\{\phi\}, B\{\phi^+\}} (h \{ \phi \}, a \{ \phi \})$. 

\end{itemize}
It supports $\Pi$-types \emph{\bfseries in the strict sense} if it additionally satisfies:
\begin{itemize}
\item (\textsc{$\lambda$-Uniq}) $\lambda_{A, B} (\mathit{App}_{A\{\mathit{p}(A)\}, B\{\mathit{p}(A)^+\}}(k, \mathit{v}_A))\{\mathit{p}(A)\} = k$ for any $k \in \mathit{Tm}(\Gamma . A, \Pi(A, B)\{\mathit{p}(A)\})$.
\end{itemize}
\end{definition}

\begin{definition}[CwFs with $\Sigma$-types \cite{hofmann1997syntax}]
\label{DefCwFsWithSigmaTypes}
A CwF $\mathcal{C}$ \emph{\bfseries supports $\bm{\Sigma}$-types} if:
\begin{itemize}

\item \textsc{($\Sigma$-Form)} For any $\Gamma \in \mathcal{C}$, $A \in \mathit{Ty}(\Gamma)$ and $B \in \mathit{Ty}(\Gamma . A)$, there is a type $\Sigma (A, B) \in \mathit{Ty}(\Gamma)$;

\item \textsc{($\Sigma$-Intro)} There is a morphism $\mathit{Pair}_{A, B} : \Gamma . A . B \to \Gamma . \Sigma (A, B)$ in $\mathcal{C}$;

\item \textsc{($\Sigma$-Elim)} Given $P \in \mathit{Ty}(\Gamma . \Sigma (A, B))$ and $f \in \mathit{Tm}(\Gamma . A . B, P\{ \mathit{Pair}_{A, B} \})$, there is a term $\mathcal{R}^{\Sigma}_{A, B, P}(f) \in \mathit{Tm}(\Gamma . \Sigma (A, B), P)$;

\item \textsc{($\Sigma$-Comp)} $\mathcal{R}^{\Sigma}_{A, B, P}(f) \{ \mathit{Pair}_{A, B}\} = f$;

\item \textsc{($\Sigma$-Subst)} For any $\Delta \in \mathcal{C}$ and $\phi : \Delta \to \Gamma$ in $\mathcal{C}$, $\Sigma (A, B) \{ \phi \} = \Sigma (A\{\phi\}, B\{\phi^+\})$;

\item \textsc{(Pair-Subst)} $\mathit{p}(\Sigma (A, B)) \circ \mathit{Pair}_{A, B} = \mathit{p}(A) \circ \mathit{p}(B)$, $\phi^\star \circ \mathit{Pair}_{A\{\phi\}, B\{\phi^+\}} = \mathit{Pair}_{A, B} \circ \phi^{++}$, where $\phi^\star \stackrel{\mathrm{df. }}{=} \langle \phi \circ \mathit{p}(\Sigma(A,B)\{\phi\}), \mathit{v}_{\Sigma(A,B)\{\phi\}} \rangle_{\Sigma(A,B)} : \Delta . \Sigma(A,B)\{\phi\} \to \Gamma . \Sigma(A,B)$, $\phi^{++} \stackrel{\mathrm{df. }}{=} \langle \phi^+ \circ \mathit{p}(B\{\phi^+\}), \mathit{v}_{B\{\phi^+\}} \rangle_B : \Delta . A\{\phi\} . B\{\phi^+\} \to \Gamma . A . B$; and

\item \textsc{($\mathcal{R}^{\Sigma}$-Subst)} $\mathcal{R}^{\Sigma}_{A, B, P}(f) \{\phi^\star \} = \mathcal{R}^{\Sigma}_{A\{\phi\}, B\{\phi^+\}, P\{\phi^\star \}} (f \{ \phi^{++} \})$.
\end{itemize}

It supports $\Sigma$-types \emph{\bfseries in the strict sense} if it satisfies: 
\begin{itemize}
\item (\textsc{$\mathcal{R}^{\Sigma}$-Uniq}) $g = \mathcal{R}^{\Sigma}_{A, B, P}(f)$ for any $g \in \mathit{Tm}(\Gamma . \Sigma (A, B), P)$ such that $g \{ \mathit{Pair}_{A, B}\} = f$.
\end{itemize}
\end{definition}

In light of Definition~\ref{DefTermModelT}, it is straightforward to see what these semantic type formers in the term model $\mathcal{T}(1, \Pi, \Sigma)$ are; thus, we leave their details to \cite{hofmann1997syntax}.

\begin{proposition}[Well-defined $\mathcal{T}(1, \Pi, \Sigma)$ \cite{hofmann1997syntax}]
\label{WellDefinedSyntacticCwFs}
The term model $\mathcal{T}(1, \Pi, \Sigma)$ is a well-defined CwF that supports $1$-, $\Pi$- and $\Sigma$-types in the strict sense.
\end{proposition}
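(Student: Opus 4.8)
The plan is to unwind Definition~\ref{DefTermModelT} and check, one structure at a time, that (i) each operation is well-defined on $=$-equivalence classes and (ii) the eight CwF equations together with the type-former axioms hold; in every case the semantic equation reduces to a syntactic derivation in $\mathsf{MLTT(1, \Pi, \Sigma)}$. First I would dispatch well-definedness: each of $\mathit{Ty}$, $\mathit{Tm}$, the substitutions $\_[\_]$, the comprehension $\mathsf{\Gamma . A}$, the projections $\mathit{p}(\mathsf{A})$ and $\mathit{v}_{\mathsf{A}}$, and the extension $\langle\_,\_\rangle$ is defined through a choice of representatives, so I must show the result is independent of that choice. This is exactly what the congruence rules (together with Ctx-ExtEq, Ty-Con and Tm-Con) guarantee: judgmentally equal inputs produce judgmentally equal outputs. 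That contexts and context morphisms modulo $=$ form a category, with composition $\_\circ\_$ and identities $\mathit{id}_{\mathsf{\Gamma}}$, is already recorded in the excerpt (from~\cite{hofmann1997syntax}), so associativity and the unit laws may be assumed.

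Next I would verify the eight CwF equations. Here \textsc{(Ty-Id)}, \textsc{(Tm-Id)}, \textsc{(Ty-Comp)} and \textsc{(Tm-Comp)} are precisely the statements that identity substitution acts trivially and that substitution is functorial, both of which follow from the syntactic substitution lemma for generalized substitution recalled in Section~\ref{ContextMorphismsAndGeneralizedSubstitutions}. The remaining four, \textsc{(Cons-L)}, \textsc{(Cons-R)}, \textsc{(Cons-Nat)} and \textsc{(Cons-Id)}, become routine identities once one expands the syntactic definitions $\mathit{p}(\mathsf{A}) = (\mathsf{x_1, \dots, x_n})$, $\mathit{v}_{\mathsf{A}} = (\mathsf{\Gamma, x : A \vdash x : A})$ and $\langle\phi, \mathsf{g}\rangle = (\phi, \mathsf{g})$: projecting off the last component of an extended morphism recovers $\phi$, reading off that last component recovers $\mathsf{g}$, extension commutes with precomposition by the substitution lemma, and extending the projections reassembles the identity.

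I would then install the semantic type formers by reading them off the corresponding syntactic constructors: $1_{\mathsf{\Gamma}} \stackrel{\mathrm{df.}}{=} [\mathsf{1}]$ and $\star_{\mathsf{\Gamma}} \stackrel{\mathrm{df.}}{=} [\mathsf{\star}]$; $\Pi(\mathsf{A}, \mathsf{B}) \stackrel{\mathrm{df.}}{=} [\mathsf{\Pi_{x:A}B}]$ with $\lambda_{\mathsf{A}, \mathsf{B}}$ and $\mathit{App}_{\mathsf{A}, \mathsf{B}}$ given by $\lambda$-abstraction and application; and $\Sigma(\mathsf{A}, \mathsf{B}) \stackrel{\mathrm{df.}}{=} [\mathsf{\Sigma_{x:A}B}]$ with $\mathit{Pair}_{\mathsf{A}, \mathsf{B}}$ the context morphism induced by $\mathsf{\langle x, y\rangle}$ and $\mathcal{R}^{\Sigma}_{\mathsf{A}, \mathsf{B}, \mathsf{P}}$ the eliminator $\mathsf{R^{\Sigma}}$. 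For each of the three definitions I would match the formation, introduction, elimination and computation axioms directly against $\mathsf{1}$-, $\mathsf{\Pi}$- and $\mathsf{\Sigma}$-Form/Intro/Elim/Comp, and, crucially, check the strict uniqueness axioms \textsc{(Unit-Uniq)}, \textsc{($\lambda$-Uniq)} and \textsc{($\mathcal{R}^{\Sigma}$-Uniq)} against $\mathsf{1}$-Uniq, $\mathsf{\Pi}$-Uniq and $\mathsf{\Sigma}$-Uniq respectively, which is exactly why $\mathsf{MLTT(1, \Pi, \Sigma)}$ was taken \emph{with} uniqueness rules.

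The main obstacle is not any single axiom but the family of substitution-stability conditions: \textsc{(Unit-Subst)}, \textsc{($\star$-Subst)}, \textsc{($\Pi$-Subst)}, \textsc{($\lambda$-Subst)}, \textsc{(App-Subst)}, \textsc{($\Sigma$-Subst)}, \textsc{(Pair-Subst)} and \textsc{($\mathcal{R}^{\Sigma}$-Subst)}. Each asserts that a type former commutes with substitution, and verifying them requires (a) the syntactic substitution lemma, and (b) a careful identification of the auxiliary morphisms $\phi^+$, $\phi^\star$ and $\phi^{++}$ with the appropriate weakening-plus-substitution operations on context morphisms, keeping track of capture-avoiding renaming and fresh-variable choices. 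This bookkeeping is where essentially all the work lies, though it is conceptually routine; since it is carried out in full in~\cite{hofmann1997syntax}, I would organize the proof around these lemmas and cite them rather than re-derive the renaming combinatorics.
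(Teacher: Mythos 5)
The paper gives no proof of this proposition: it is stated as a citation of Hofmann's work, with the remark immediately preceding it that the details of the semantic type formers in $\mathcal{T}(1, \Pi, \Sigma)$ are "left to \cite{hofmann1997syntax}". Your outline is a correct reconstruction of that standard argument — well-definedness via the congruence rules, the CwF equations via the generalized-substitution lemma, the type formers read off the syntactic constructors with the strict uniqueness axioms supplied by the $\mathsf{Uniq}$ rules — and it correctly identifies the substitution-stability conditions as the locus of the real bookkeeping, so it matches the cited proof in both structure and substance.
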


Now, one may see clearly why CwFs equipped with those semantic type formers are not categorical: They are not a strict version of some weak concepts defined by universal properties. 
The work \cite{awodey2016natural} by Awodey is an attempt to address this point.

\subsubsection{Interpretation of MLTTs in CwFs}
\label{InterpretationOfMLTTInCwFs}
Now, let us recall the interpretation of $\mathsf{MLTT(1, \Pi, \Sigma)}$ in CwFs that support $1$-, $\Pi$-, $\Sigma$-types in the strict sense \cite{hofmann1997syntax}.
Since a deduction tree of a judgement in $\mathsf{MLTT(1, \Pi, \Sigma)}$ is not necessarily unique in the presence of the rules \textsc{Ty-Con} and \textsc{Tm-Con}, a priori we cannot define an interpretation by induction on deduction trees. 
For this point, a standard approach is to define an interpretation $\llbracket \_ \rrbracket$ on \emph{pre-syntax} by induction on the lengths of expressions, which is \emph{partial}, and show that it is total and well-defined on every valid or well-typed syntax (i.e., judgement) and preserves judgmental equality as the corresponding semantic equality \cite{streicher2012semantics,hofmann1997syntax,pitts2001categorical}.

By this \emph{soundness} result, a posteriori we may describe the interpretation $\llbracket \_ \rrbracket$ of the syntax by induction on derivation trees of judgements:

\begin{definition}[Interpretation of $\mathsf{MLTT(1, \Pi, \Sigma)}$ in CwFs \cite{hofmann1997syntax}]
\label{DefMLTTInCwFs}
The interpretation $\llbracket \_ \rrbracket$ of $\mathsf{MLTT(1, \Pi, \Sigma)}$ in a CwF $\mathcal{C} = (\mathcal{C}, \mathit{Ty}, \mathit{Tm}, \_\{\_\}, T, \_.\_, \mathit{p}, \mathit{v}, \langle\_,\_\rangle_\_)$ that supports semantic type formers $1 = (1, \star)$, $\Pi = (\Pi, \lambda, \mathit{App})$, $\Sigma = (\Sigma, \mathit{Pair}, R^{\Sigma})$ in the strict sense is defined as follows:
\begin{itemize}

\item (\textsc{Ct-Emp}) $\llbracket \mathsf{\vdash \diamondsuit \ ctx} \rrbracket \stackrel{\mathrm{df. }}{=} T$;

\item (\textsc{Ct-Ext}) $\llbracket \mathsf{\vdash \Gamma, x : A \ ctx} \rrbracket \stackrel{\mathrm{df. }}{=} \llbracket \mathsf{\vdash \Gamma \ ctx} \rrbracket . \llbracket \mathsf{\Gamma \vdash A \ type} \rrbracket$;

\item (\textsc{$\mathsf{1}$-Form}) $\llbracket \mathsf{\Gamma \vdash \mathsf{1} \ type} \rrbracket \stackrel{\mathrm{df. }}{=} 1_{\llbracket \mathsf{\Gamma} \rrbracket}$;

\item (\textsc{$\Pi$-Form}) $\llbracket \mathsf{\Gamma \vdash \Pi_{x : A}B \ type} \rrbracket \stackrel{\mathrm{df. }}{=} \Pi (\llbracket \mathsf{\Gamma \vdash A \ type} \rrbracket, \llbracket \mathsf{\Gamma, x : A \vdash B \ type} \rrbracket)$;

\item (\textsc{$\Sigma$-Form}) $\llbracket \mathsf{\Gamma \vdash \Sigma_{x : A}B \ type} \rrbracket \stackrel{\mathrm{df. }}{=} \Sigma (\llbracket \mathsf{\Gamma \vdash A \ type} \rrbracket, \llbracket \mathsf{\Gamma, x : A \vdash B \ type} \rrbracket)$;

\item (\textsc{Var}) $\llbracket \mathsf{\Gamma, x: A \vdash x : A} \rrbracket \stackrel{\mathrm{df. }}{=} \mathit{v}_{\llbracket \mathsf{A} \rrbracket}$, $\llbracket \mathsf{\Gamma, x: A, \Delta, y : B \vdash x : A} \rrbracket \stackrel{\mathrm{df. }}{=} \llbracket \mathsf{\Gamma, x: A, \Delta \vdash x : A} \rrbracket \{ \mathit{p}(\llbracket \mathsf{\Gamma, x : A, \Delta \vdash B \ type} \rrbracket) \}$;

\item (\textsc{Ty-Con}) $\llbracket \mathsf{\Delta \vdash A \ type} \rrbracket \stackrel{\mathrm{df. }}{=} \llbracket \mathsf{\Gamma \vdash A \ type} \rrbracket$;

\item (\textsc{Tm-Con}) $\llbracket \mathsf{\Delta \vdash a : B} \rrbracket \stackrel{\mathrm{df. }}{=} \llbracket \mathsf{\Gamma \vdash a : A} \rrbracket$;

\item (\textsc{$\mathsf{1}$-Intro}) $\llbracket \mathsf{\Gamma \vdash \star : \mathsf{1}} \rrbracket \stackrel{\mathrm{df. }}{=} \star_{\llbracket \mathsf{\Gamma} \rrbracket}$;

\item (\textsc{$\Pi$-Intro}) $\llbracket \mathsf{\Gamma \vdash \lambda x . \ \! b : \Pi_{x : A}B} \rrbracket \stackrel{\mathrm{df. }}{=} \lambda_{\llbracket \mathsf{A} \rrbracket, \llbracket \mathsf{B} \rrbracket} (\llbracket \mathsf{\Gamma, x : A \vdash b : B} \rrbracket)$;

\item (\textsc{$\Sigma$-Intro}) $\llbracket \mathsf{\Gamma \vdash (a, b) : \Sigma_{x : A}B} \rrbracket \stackrel{\mathrm{df. }}{=} \mathit{Pair}_{\llbracket \mathsf{A} \rrbracket, \llbracket \mathsf{B} \rrbracket} \circ \langle \overline{\llbracket \mathsf{\Gamma \vdash a : A} \rrbracket}, \llbracket \mathsf{\Gamma \vdash b : B[a/x] \rrbracket} \rangle$;

\item (\textsc{$\Pi$-Elim}) $\llbracket \mathsf{\Gamma \vdash f(a) : B[a/x]} \rrbracket \stackrel{\mathrm{df. }}{=} \mathit{App}_{\llbracket \mathsf{A} \rrbracket, \llbracket \mathsf{B} \rrbracket} (\llbracket \mathsf{\Gamma \vdash f : \Pi_{x : A}B} \rrbracket, \llbracket \mathsf{\Gamma \vdash a : A} \rrbracket \rangle)$; and

\item (\textsc{$\Sigma$-Elim}) $\llbracket \mathsf{\Gamma \vdash R^{\Sigma}(C, g, p) : C[p/z]} \rrbracket \stackrel{\mathrm{df. }}{=} R^{\Sigma}_{\llbracket \mathsf{A} \rrbracket, \llbracket \mathsf{B} \rrbracket, \llbracket \mathsf{C} \rrbracket} (\llbracket \mathsf{\Gamma, x : A, y : B \vdash g : C[(x, y)/z]} \rrbracket) \circ \overline{\llbracket \mathsf{\Gamma \vdash p : \Sigma_{x : A} B} \rrbracket}$

\end{itemize}
where the hypotheses of the rules are as presented in Section~\ref{MLTT}, $\overline{\llbracket \mathsf{\Gamma \vdash a : A} \rrbracket} \stackrel{\mathrm{df. }}{=} \langle \mathit{id}_{\llbracket \mathsf{\Gamma} \rrbracket}, \llbracket \mathsf{a} \rrbracket \rangle : \llbracket \mathsf{\Gamma} \rrbracket \to \llbracket \mathsf{\Gamma} \rrbracket . \llbracket \mathsf{A} \rrbracket$ and $\overline{\llbracket \mathsf{\Gamma \vdash p : \Sigma_{x : A} B} \rrbracket} \stackrel{\mathrm{df. }}{=} \langle \mathit{id}_{\llbracket \mathsf{\Gamma} \rrbracket}, \llbracket \mathsf{p} \rrbracket \rangle : \llbracket \mathsf{\Gamma} \rrbracket \to \llbracket \mathsf{\Gamma} \rrbracket . \llbracket \mathsf{\Sigma_{x:A}B} \rrbracket$.
\end{definition}

\begin{theorem}[Soundness of CwFs \cite{hofmann1997syntax}]
\label{ThmSoundnessOfCwFs}
If $\mathsf{\vdash \Gamma = \Gamma' \ ctx}$ (resp. $\mathsf{\Gamma \vdash A = A' \ type}$, $\mathsf{\Gamma \vdash a = a' : A}$) in $\mathsf{MLTT(1, \Pi, \Sigma)}$, then $\llbracket \mathsf{\Gamma} \rrbracket = \llbracket \mathsf{\Gamma'} \rrbracket$ (resp. $\llbracket \mathsf{A} \rrbracket = \llbracket \mathsf{A'} \rrbracket$, $\llbracket \mathsf{a} \rrbracket = \llbracket \mathsf{a'} \rrbracket$) for the interpretation $\llbracket \_ \rrbracket$ of $\mathsf{MLTT(1, \Pi, \Sigma)}$ in any CwF that supports $1$-, $\Pi$- and $\Sigma$-types in the strict sense.
\end{theorem}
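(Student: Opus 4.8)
The plan is to follow the standard partial-interpretation method of Streicher and Hofmann \cite{streicher2012semantics,hofmann1997syntax}. Since derivation trees in $\mathsf{MLTT(1, \Pi, \Sigma)}$ are not unique (owing to the conversion rules \textsc{Ty-Con} and \textsc{Tm-Con}), I would first recast $\llbracket \_ \rrbracket$ as a \emph{partial} function on \emph{pre-syntax}, defined by recursion on the length of pre-expressions exactly by the clauses of Definition~\ref{DefMLTTInCwFs} but read as partial-function equations: a clause applies only when the interpretations of its subexpressions are defined and land in the expected $\mathit{Ty}$- or $\mathit{Tm}$-set. This makes the value depend only on the raw expression and not on any derivation, so that \emph{coherence} (independence of the chosen derivation tree) becomes automatic, and the remaining tasks are totality on well-typed syntax and preservation of judgmental equality.

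The key technical tool is a \textbf{Substitution Lemma}: for every pre-expression $\mathsf{E}$ and pre-context-morphism $\bm{\mathsf{g}} : \mathsf{\Delta} \to \mathsf{\Gamma}$, whenever the relevant interpretations are defined one has $\llbracket \mathsf{E}[\bm{\mathsf{g}}] \rrbracket = \llbracket \mathsf{E} \rrbracket \{ \llbracket \bm{\mathsf{g}} \rrbracket \}$, where $\llbracket \bm{\mathsf{g}} \rrbracket : \llbracket \mathsf{\Delta} \rrbracket \to \llbracket \mathsf{\Gamma} \rrbracket$ is assembled from the extensions $\langle \_, \_ \rangle$ and the projections $\mathit{p}$, $\mathit{v}$. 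This is proved by induction on $\mathsf{E}$, using the substitution laws \textsc{(Ty-Comp)} and \textsc{(Tm-Comp)} together with the naturality equations \textsc{($\Pi$-Subst)}, \textsc{($\lambda$-Subst)}, \textsc{(App-Subst)}, \textsc{($\Sigma$-Subst)}, \textsc{(Pair-Subst)} and \textsc{($\mathcal{R}^{\Sigma}$-Subst)} of the semantic type formers. The special case $\bm{\mathsf{g}} \equiv \overline{\llbracket \mathsf{a} \rrbracket}$ yields the single-variable substitution $\llbracket \mathsf{E}[a/x] \rrbracket = \llbracket \mathsf{E} \rrbracket \{ \overline{\llbracket \mathsf{a} \rrbracket} \}$ needed to discharge the computation rules.

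With this in hand I would prove, by simultaneous induction on derivations, that (i) every derivable context, type and term has a defined interpretation in the correct set and (ii) the equality rules are sound. Totality (i) follows clause-by-clause from the formation, introduction and elimination clauses of the type formers. For (ii), the reflexivity, symmetry, transitivity and congruence rules are immediate, since judgmental equality is interpreted as ordinary equality of semantic objects; the conversion rules \textsc{Ty-Con} and \textsc{Tm-Con} hold because the pre-syntactic interpretation ignores the ambient (judgmentally equal) context. The computation rules $\mathsf{1}$-\textsc{Comp}, $\mathsf{\Pi}$-\textsc{Comp} and $\mathsf{\Sigma}$-\textsc{Comp} reduce, via the Substitution Lemma, to the semantic equations \textsc{($\Pi$-Comp)} and \textsc{($\Sigma$-Comp)} (the $\mathsf{1}$-case is trivial by the definitional collapse $\mathsf{R^1 \equiv p}$), while the uniqueness rules $\mathsf{1}$-\textsc{Uniq}, $\mathsf{\Pi}$-\textsc{Uniq} and $\mathsf{\Sigma}$-\textsc{Uniq} correspond precisely to the strictness axioms \textsc{(Unit-Uniq)}, \textsc{($\lambda$-Uniq)} and \textsc{($\mathcal{R}^{\Sigma}$-Uniq)} — which is exactly where the hypothesis that the CwF supports the type formers \emph{in the strict sense} is used.

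The hard part will be the Substitution Lemma together with coherence: one must set up the partial interpretation so that the recursion on raw expressions is genuinely well-founded, and then verify that the many semantic naturality axioms assemble correctly for composite expressions — in particular, matching the semantic lifts $\phi^+$, $\phi^{++}$ and $\phi^\star$ against the de Bruijn-style bookkeeping of capture-free substitution. Since this interpretation-and-substitution machinery is carried out in full in \cite{hofmann1997syntax,streicher2012semantics}, I would invoke those treatments for the bulk of the bookkeeping and spell out only the clauses specific to the strict $1$-, $\mathsf{\Pi}$- and $\mathsf{\Sigma}$-formers, thereby obtaining the claimed preservation of all three kinds of judgmental equality.
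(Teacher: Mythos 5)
Your proposal is correct and follows exactly the route the paper itself indicates: the paper states this theorem as a cited result of \cite{hofmann1997syntax} and, in the paragraph preceding Definition~\ref{DefMLTTInCwFs}, sketches precisely the partial-interpretation-on-pre-syntax method with a substitution lemma and induction on derivations that you describe. Your identification of where the strictness axioms (\textsc{Unit-Uniq}, \textsc{$\lambda$-Uniq}, \textsc{$\mathcal{R}^{\Sigma}$-Uniq}) are needed for the uniqueness rules matches the intended argument.
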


By Proposition~\ref{WellDefinedSyntacticCwFs}, this interpretation is also \emph{complete}:
\begin{theorem}[Completeness of CwFs \cite{hofmann1997syntax}]
\label{ThmCompletenessOfCwFs}
Given $\mathsf{\vdash \Gamma \ ctx}$ and $\mathsf{\vdash \Delta \ ctx}$ (resp. $\mathsf{\Gamma \vdash A \ type}$ and $\mathsf{\Gamma \vdash B \ type}$, $\mathsf{\Gamma \vdash a : A}$ and $\mathsf{\Gamma \vdash a' : A}$) in $\mathsf{MLTT(1, \Pi, \Sigma)}$, if $\llbracket \mathsf{\Gamma} \rrbracket = \llbracket \mathsf{\Delta} \rrbracket$ (resp. $\llbracket \mathsf{A} \rrbracket = \llbracket \mathsf{B} \rrbracket$, $\llbracket \mathsf{a} \rrbracket = \llbracket \mathsf{a'} \rrbracket$) for the interpretation $\llbracket \_ \rrbracket$ in any CwF that supports $1$-, $\Sigma$- and $\Pi$-types in the strict sense, then there is a judgement $\mathsf{\vdash \Gamma  = \Delta \ ctx}$ (resp. $\mathsf{\Gamma \vdash A = B \ type}$, $\mathsf{\Gamma \vdash a = a' : A}$) in $\mathsf{MLTT(1, \Pi, \Sigma)}$.
\end{theorem}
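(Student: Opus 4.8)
The plan is to deduce completeness from the term model, exactly dual to how one typically deduces soundness from an arbitrary model. By Proposition~\ref{WellDefinedSyntacticCwFs}, the term model $\mathcal{T}(1, \Pi, \Sigma)$ is itself a CwF that supports $1$-, $\Pi$- and $\Sigma$-types in the strict sense, so it is one of the structures over which the hypothesis quantifies. Hence it suffices to instantiate the assumed equality of interpretations at this single, canonical CwF and read off the conclusion there.

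The crux is an \emph{identity lemma}: for every context $\mathsf{\vdash \Gamma \ ctx}$, type $\mathsf{\Gamma \vdash A \ type}$ and term $\mathsf{\Gamma \vdash a : A}$ derivable in $\mathsf{MLTT(1, \Pi, \Sigma)}$, the interpretation $\llbracket \_ \rrbracket$ in $\mathcal{T}(1, \Pi, \Sigma)$ returns the very equivalence class of its argument, i.e.
\[
\llbracket \mathsf{\Gamma} \rrbracket = [\mathsf{\Gamma}], \qquad \llbracket \mathsf{A} \rrbracket = [\mathsf{A}], \qquad \llbracket \mathsf{a} \rrbracket = [\mathsf{a}].
\]
First I would prove this by induction on derivations, matching each clause of Definition~\ref{DefMLTTInCwFs} against the explicit description of the term model in Definition~\ref{DefTermModelT}. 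For instance, the empty context maps to the terminal object, which is $[\mathsf{\diamondsuit}]$; context extension maps to comprehension, and $[\mathsf{\Gamma}] . [\mathsf{A}] = [\mathsf{\Gamma, x : A}]$; the semantic $1$-, $\Pi$- and $\Sigma$-type formers of $\mathcal{T}(1, \Pi, \Sigma)$ are by construction the syntactic type constructions, so they return the corresponding syntactic types; the rule \textsc{Var} maps to the second projection $\mathit{v}$, which in the term model is precisely the variable term; and the introduction and elimination clauses unwind similarly.

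Granting the lemma, completeness is immediate. Suppose $\llbracket \mathsf{\Gamma} \rrbracket = \llbracket \mathsf{\Delta} \rrbracket$ holds for the interpretation in \emph{every} CwF supporting the three strict type formers; in particular it holds in $\mathcal{T}(1, \Pi, \Sigma)$, where by the identity lemma it reads $[\mathsf{\Gamma}] = [\mathsf{\Delta}]$. By the very definition of the term model, equality of equivalence classes is judgmental equality, so $\mathsf{\vdash \Gamma = \Delta \ ctx}$ is derivable. The cases of types and terms are identical, using $[\mathsf{A}] = [\mathsf{B}] \iff \mathsf{\Gamma \vdash A = B \ type}$ and $[\mathsf{a}] = [\mathsf{a'}] \iff \mathsf{\Gamma \vdash a = a' : A}$.

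The main obstacle will be the bookkeeping inside the identity lemma: checking that each semantic operation of $\mathcal{T}(1, \Pi, \Sigma)$ coincides on the nose with its syntactic counterpart, which demands care with the generalized substitution $\_[\_]$ (playing the role of $\_\{\_\}$) and with the substitution-compatibility equations of the semantic type formers. Since $\llbracket \_ \rrbracket$ is first defined on pre-syntax and only shown total and well-defined on valid syntax via Theorem~\ref{ThmSoundnessOfCwFs}, I would carry the induction out on valid derivations, invoking soundness to guarantee well-definedness of $\llbracket \_ \rrbracket$ at each step. These verifications are routine but lengthy, and are precisely the details the excerpt defers to \cite{hofmann1997syntax}.
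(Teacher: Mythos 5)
Your proposal is correct and matches the paper's own argument: the paper derives completeness directly from Proposition~\ref{WellDefinedSyntacticCwFs} by instantiating the hypothesis at the term model $\mathcal{T}(1, \Pi, \Sigma)$, which is exactly your route. The identity lemma you spell out (that $\llbracket \_ \rrbracket$ in the term model returns the equivalence class of its argument) is precisely the content the paper leaves implicit and defers to \cite{hofmann1997syntax}.
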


For $\mathsf{\lambda^=_{1, \times}}$ is a sub-theory of $\mathsf{MLTT(1, \Pi, \Sigma)}$, any CwF that supports $1$-, $\Pi$- and $\Sigma$-types in the strict sense clearly interprets $\mathsf{\lambda^=_{1, \times}}$, and it is not hard to see that the interpretation is sound and complete as well.

\subsubsection{Interpretation of STLCs in CCCs}
The standard categorical semantics of $\mathsf{\lambda^=_{1, \times}}$ is in CCCs, where contexts and types are both interpreted by objects, and terms by morphisms:
\begin{definition}[Interpretation of $\mathsf{\lambda^=_{1, \times}}$ in CCCs \cite{lambek1988introduction,pitts2001categorical,jacobs1999categorical,crole1993categories}]
\label{DefLambdaInCCCs}
The interpretation $\llbracket \_ \rrbracket$ of the equational theory $\mathsf{\lambda^=_{1, \times}}$ in a CCC $\mathcal{C} = (\mathcal{C}, T, \times, \pi, \Rightarrow, \mathit{ev})$ is given by: 
\begin{itemize}

\item (\textsc{Ct-Emp}) $\llbracket \mathsf{\vdash \diamondsuit \ ctx} \rrbracket \stackrel{\mathrm{df. }}{=} T$;

\item (\textsc{Ct-Ext}) $\llbracket \mathsf{\vdash \Gamma, x : A \ ctx} \rrbracket \stackrel{\mathrm{df. }}{=} \llbracket \mathsf{\vdash \Gamma \ ctx} \rrbracket \times \llbracket \mathsf{\vdash A \ type} \rrbracket$;

\item (\textsc{$\mathsf{1}$-Form}) $\llbracket \mathsf{\vdash \mathsf{1} \ type} \rrbracket \stackrel{\mathrm{df. }}{=} T$;

\item (\textsc{$\Rightarrow$-Form}) $\llbracket \mathsf{\vdash A \Rightarrow B \ type} \rrbracket \stackrel{\mathrm{df. }}{=} \llbracket \mathsf{\vdash A \ type} \rrbracket \Rightarrow \llbracket \mathsf{\vdash B \ type} \rrbracket$;

\item (\textsc{$\times$-Form}) $\llbracket \mathsf{\vdash A \times B \ type} \rrbracket \stackrel{\mathrm{df. }}{=} \llbracket \mathsf{\vdash A \ type} \rrbracket \times \llbracket \mathsf{\vdash B \ type} \rrbracket$;

\item (\textsc{Var}) $\llbracket \mathsf{x_1 : A_1, x_2 : A_2, \dots, x_j : A_j, \dots, x_n : A_n \vdash x_j : A_j} \rrbracket \stackrel{\mathrm{df. }}{=} \pi_j$;

\item (\textsc{$\mathsf{1}$-Intro}) $\llbracket \mathsf{\Gamma \vdash \star : \mathsf{1}} \rrbracket \stackrel{\mathrm{df. }}{=} \ \! !_{\llbracket \mathsf{\Gamma} \rrbracket}$;

\item (\textsc{$\Rightarrow$-Intro}) $\llbracket \mathsf{\Gamma \vdash \lambda x . \ \! b : A \Rightarrow B} \rrbracket \stackrel{\mathrm{df. }}{=} \lambda_{\llbracket \mathsf{A} \rrbracket, \llbracket \mathsf{B} \rrbracket} (\llbracket \mathsf{\Gamma, x : A \vdash b : B} \rrbracket)$;

\item (\textsc{$\times$-Intro}) $\llbracket \mathsf{\Gamma \vdash (a, b) : A \times B} \rrbracket \stackrel{\mathrm{df. }}{=} \langle \llbracket \mathsf{\Gamma \vdash a : A} \rrbracket, \llbracket \mathsf{\Gamma \vdash b : B} \rrbracket \rangle$;

\item (\textsc{$\Rightarrow$-Elim}) $\llbracket \mathsf{\Gamma \vdash f(a) : B} \rrbracket \stackrel{\mathrm{df. }}{=} \mathit{ev}_{\llbracket \mathsf{A} \rrbracket, \llbracket \mathsf{B} \rrbracket} \circ \langle \llbracket \mathsf{\Gamma \vdash f : A \Rightarrow B} \rrbracket, \llbracket \mathsf{\Gamma \vdash a : A} \rrbracket \rangle$; and

\item (\textsc{$\times$-Elim}) $\llbracket \mathsf{\Gamma \vdash \pi_i (p) : A_i} \rrbracket \stackrel{\mathrm{df. }}{=} \pi_i \circ \llbracket \mathsf{\Gamma \vdash p : A_1 \times A_2} \rrbracket$

\end{itemize}
where the hypotheses of the rules are as presented in Section~\ref{MLTT}, and $\pi_j$ is constructed from the first and second projections in the obvious manner.

\end{definition}

\begin{remark}
As each judgement in $\mathsf{\lambda^=_{1, \times}}$ has a \emph{unique} derivation, Definition~\ref{DefLambdaInCCCs} has no problem.
\end{remark}

In light of the rather simple relation between $\mathsf{MLTT(1, \Pi, \Sigma)}$ and $\mathsf{\lambda^=_{1, \times}}$, i.e., the restriction of the former to closed types is the latter, the gap between the interpretations of $\mathsf{\lambda^=_{1, \times}}$ in Definitions~\ref{DefMLTTInCwFs} and \ref{DefLambdaInCCCs} should not be the case.
We shall address this problem in Section~\ref{Equivalence}.

\if0
\subsection{Problems to Solve}
Let us summarize our problems to address in this paper:
\begin{itemize}

\item \textsc{(Problem 1)} CwFs with the strict $1$-, $\Sigma$- and $\Pi$-types are not categorical;

\item \textsc{(Problem 2)} The relation between the CwF- and CCC-interpretations of $\mathsf{\lambda^{=}_{1, \times}}$ is unclear.

\end{itemize}

\emph{Natural models of homotopy type theory} \cite{awodey2016natural} by Awodey has given a solution to Problem~1; however, it does not solve Problem 2 because its formulation does not quite resemble CCCs.
\fi

\section{Categories with Dependence}
\label{CwDs}
We have reviewed all the necessary backgrounds; the main contents of the paper begin from the present section.
It is structured roughly as follows.
First, Section~\ref{CwDs} introduces \emph{categories with dependence (CwDs)} and morphisms between CwDs, called \emph{functors with dependence (FwDs)}.
Then, Section~\ref{SCCwDs} defines CwDs with generalized finite products, called \emph{semi-cartesian CwDs (SCCwDs)}, and morphisms between them, called \emph{semi-cartesian FwDs (SCFwDs)}.
The semi-cartesian structure enables us to define morphisms between SCFwDs, called \emph{natural transformations with dependence (NTwDs)}, and moreover the 2-category of SCCwDs (0-cells), SCFwDs (1-cells) and NTwDs (2-cells).
Next, Section~\ref{CCwDs} gives a full generalization of finite products; we call CwDs with them \emph{cartesian CwDs (CCwDs)}. 
Finally Section~\ref{CCCwDs} defines a \emph{closed} structure on CCwDs; we call CCwDs with the closed structure \emph{cartesian closed CwDs (CCCwDs)}, where morphisms between these concepts, \emph{cartesian FwDs (CFwDs)} and \emph{cartesian closed FwDs (CCFwDs)}, are also defined. 
These concepts constitute the 2-category of CCCwDs (0-cells), CCFwDs (1-cells) and NTwDs (2-cells) between them.

Along with these developments, we show that:
\begin{itemize}

\item Strict SCCwDs coincide with CwFs;

\item Strict CCwDs induce CwFs that support $1$- and $\Sigma$-types in the strict sense; and

\item Strict CCCwDs induce CwFs that support $1$-, $\Sigma$- and $\Pi$-types in the strict sense.

\end{itemize}
Thus, in particular, we establish an interpretation of $\mathsf{MLTT}(1, \Pi, \Sigma)$ in CCCwDs; it is easy to see that this interpretation is sound and complete.

\subsection{Categories and Functors with Dependence}
\label{CwDs}
Let us begin with a very simple generalization of categories to capture type dependence in DTTs, called \emph{categories  with dependence (CwDs)}, and morphisms between them, called \emph{functors with dependence (FwDs)}.

\subsubsection{Categories with Dependence}
The idea of CwDs comes from CwFs: To take counterparts of types and terms as \emph{primitive}, rather than to induce or compose them, which stands in sharp contrast to other categorical semantics of DTTs mentioned in the introduction.
\begin{definition}[CwDs]
A \emph{\bfseries category with dependence (CwD)} is a category $\mathcal{C}$ equipped with:
\begin{itemize}

\item \emph{\bfseries Dependent (D-) objects} such that each D-object $A$ is assigned a unique object $\Gamma \in \mathcal{C}$, called the \emph{\bfseries base} of $A$, where $A$ is said to be \emph{\bfseries over} $\Gamma$, and the class of all D-objects over $\Gamma$ is written $\mathscr{D}_{\mathcal{C}}(\Gamma)$; 

\item \emph{\bfseries Dependent (D-) morphisms} such that each D-morphism $f$ is assigned a unique pair of an object $\Gamma \in \mathcal{C}$ and a D-object $A \in \mathscr{D}_{\mathcal{C}}(\Gamma)$, called the \emph{\bfseries domain} and the \emph{\bfseries codomain} of $f$, respectively, where the class of all D-morphisms whose domain and codomain are $\Gamma$ and $A$, respectively, are written $\mathscr{D}_{\mathcal{C}}(\Gamma, A)$; 


\item For each morphism $\phi : \Delta \to \Gamma$ in $\mathcal{C}$ a (class) function $\_ \{ \phi \}_{\mathcal{C}} : \mathscr{D}_{\mathcal{C}}(\Gamma) \to \mathscr{D}_{\mathcal{C}}(\Delta)$ and a family $(\_\{ \phi \}_{\mathcal{C}, A})_{A \in \mathscr{D}_{\mathcal{C}}(\Gamma)}$ of (class) functions $\_\{ \phi \}_{\mathcal{C}, A} : \mathscr{D}_{\mathcal{C}}(\Gamma, A) \to \mathscr{D}_{\mathcal{C}}(\Delta, A\{\phi\}_{\mathcal{C}})$, which are all called \emph{\bfseries dependent (D-) compositions}, that satisfy for any $\Theta, \Delta, \Gamma \in \mathcal{C}$, $A \in \mathscr{D}_{\mathcal{C}}(\Gamma)$, $\phi \in \mathcal{C}(\Delta, \Gamma), \varphi \in \mathcal{C}(\Theta, \Delta)$ and $a \in \mathscr{D}_{\mathcal{C}}(\Gamma, A)$ the equations $A \{ \mathit{id}_\Gamma \}_{\mathcal{C}} = A$, $a \{ \mathit{id}_\Gamma \}_{\mathcal{C}, A} = a$, $A \{ \phi \circ \varphi \}_{\mathcal{C}} = A\{ \phi \}_{\mathcal{C}} \{ \varphi \}_{\mathcal{C}}$ and $a \{ \phi \circ \varphi \}_{\mathcal{C}, A} = a \{ \phi \}_{\mathcal{C}, A} \{ \varphi \}_{\mathcal{C}, A\{\phi\}_{\mathcal{C}}}$.
\end{itemize}

\end{definition}

\begin{notation}
We often omit the subscript $\mathcal{C}$ on $\mathscr{D}_{\mathcal{C}}$ and $\_\{ \_ \}_{\mathcal{C}}$ as well as the subscripts $\mathcal{C}$ and/or $A$ on $\_\{ \_ \}_{\mathcal{C}, A}$ if it brings no ambiguity. 
We write $a : \Gamma \rightarrowtriangle A$ as a notation for $a \in \mathscr{D}(\Gamma, A)$.
Given a CwD $\mathcal{C}$, we write $\mathsf{ob}(\mathcal{C})$ or $\mathcal{C}_0$ (resp. $\mathsf{ar}(\mathcal{C})$ or $\mathcal{C}_1$, $\mathsf{dob}(\mathcal{C})$ or $\mathcal{C}_2$, $\mathsf{dar}(\mathcal{C})$ or $\mathcal{C}_3$) for the class of all objects in $\mathcal{C}$ (resp. all morphisms in $\mathcal{C}$, all D-objects in $\mathcal{C}$, all D-morphisms in $\mathcal{C}$). 
\end{notation}

\begin{remark}
CwDs are nothing new; a CwD is just a substructure of a CwF (Definition~\ref{DefCwFs}), consisting of the underlying category, types, terms and substitutions of the CwF, where we have assigned new names to the last three components.
\end{remark}

\begin{notation}
Let us write $\mathscr{U}(\mathcal{C})$ for the underlying category of a given CwD $\mathcal{C}$.
\end{notation}

In an MLTT, assuming that each type has at least one non-variable term, a type is \emph{constant} in the sense that it does not contain any free variable iff it is invariant (except its context) with respect to a generalized substitution.
Thus, it seems natural to define:
\begin{definition}[Constant CwDs]
Let $\mathcal{C}$ be a CwD.
A D-object $A \in \mathscr{D}_{\mathcal{C}}(\Gamma)$ is \emph{\bfseries constant} if $A \{ \phi_1 \}_{\mathcal{C}} = A \{ \phi_2 \}_{\mathcal{C}}$ for all $\Delta \in \mathcal{C}$ and $\phi_1, \phi_2 : \Delta \to \Gamma$ in $\mathcal{C}$.
A CwD is \emph{\bfseries constant} if its D-objects are all constant. 
\end{definition}

\begin{example}
Each category $\mathcal{C}$ can be seen as a constant CwD $\mathsf{D}(\mathcal{C})$ as follows.
We define $\mathscr{D}_{\mathsf{D}(\mathcal{C})}(\Gamma) \stackrel{\mathrm{df. }}{=} \mathsf{ob}(\mathcal{C})$ and $\mathscr{D}_{\mathsf{D}(\mathcal{C})}(\Delta, \Gamma) \stackrel{\mathrm{df. }}{=} \mathcal{C}(\Delta, \Gamma)$ for all $\Delta, \Gamma \in \mathcal{C}$.
Given $\phi : \Delta \to \Gamma$ in $\mathcal{C}$, we define the D-composition $\_ \{ \phi \} : \mathscr{D}_{\mathsf{D}(\mathcal{C})}(\Gamma) \to \mathscr{D}_{\mathsf{D}(\mathcal{C})}(\Delta)$ to be the identity function on $\mathsf{ob}(\mathcal{C})$, and the D-composition $\_ \{ \phi \}_\Theta : \mathscr{D}_{\mathsf{D}(\mathcal{C})}(\Gamma, \Theta) \to \mathscr{D}_{\mathsf{D}(\mathcal{C})}(\Delta, \Theta)$ for any $\Theta \in \mathscr{D}_{\mathsf{D}(\mathcal{C})}(\Gamma)$ to be the pre-composition function with $\phi$, i.e., $f \mapsto f \circ \phi$ for all $f \in \mathscr{D}_{\mathsf{D}(\mathcal{C})}(\Gamma, \Theta)$.
We call this kind of CwDs \emph{\bfseries categories seen as CwDs}.
Moreover, the operation $\mathscr{U}$, restricted to CwDs $\mathcal{D}$ such that $\mathscr{D}_{\mathcal{D}}(\Gamma) = \mathsf{ob}(\mathcal{D})$, $\mathscr{D}_{\mathcal{D}}(\Delta, \Gamma) = \mathcal{D}(\Delta, \Gamma)$, $\_ \{ \phi \} = \mathit{id}_{\mathsf{ob}(\mathcal{D})}$ and $\_ \{ \phi \}_\Theta = (\_) \circ \phi$ for all $\Delta, \Gamma, \Theta \in \mathcal{D}$ and $\phi : \Delta \to \Gamma$ in $\mathcal{D}$, is clearly the inverse of the operation $\mathsf{D}$.
Hence, CwDs are generalized categories.
\end{example}

\begin{remark}
It is also possible to regard each category $\mathcal{C}$ as a constant CwD $\mathsf{d}(\mathcal{C})$ by $\mathscr{D}_{\mathsf{d}(\mathcal{C})}(\Gamma) \stackrel{\mathrm{df. }}{=} \emptyset$, but it does not reflect our view on CwDs as generalized categories.
This point will be clearer shortly as we proceed. 
\end{remark}

Also, it seems natural to lift smallness and local smallness of categories as follows: 

\begin{definition}[Small CwDs]
A CwD $\mathcal{C}$ is \emph{\bfseries small} if so is the underlying category $\mathscr{U}(\mathcal{C})$, and the class of all D-objects in $\mathcal{C}$ and the class of all D-morphisms in $\mathcal{C}$ are both sets.
\end{definition}

\begin{definition}[Locally small CwDs]
A CwD $\mathcal{C}$ is \emph{\bfseries locally small} if so is the underlying category $\mathscr{U}(\mathcal{C})$, and the class $\mathscr{D}_{\mathcal{C}}(\Gamma, A)$ is a set for any $\Gamma \in \mathcal{C}$ and $A \in \mathscr{D}_{\mathcal{C}}(\Gamma)$.
\end{definition}

\if0
Based on the idea by Peter Dybjer \cite{dybjer1996internal}, a locally small CwD can be equivalently defined as a functor $\mathscr{D}_{\mathcal{C}} : \mathcal{C}^{\mathsf{op}} \to \mathit{Fam}$, where $\mathcal{C}$ is locally small, and the category $\mathit{Fam}$ has pairs $(X, A)$ of a set $X$ and an indexed set $A = (A_x)_{x \in X}$ as objects and pairs $(\phi, f) : (X, A) \to (Y, B)$ of a function $\phi : X \to Y$ and a family $f = (f_x)_{x \in X}$ of functions $f_x : A_x \to B_{\phi(x)}$ as morphisms (the composition and identities are the obvious ones).
This formulation is useful, in particular when we consider \emph{opposites} of CwDs below, though it does not play a major role in this paper.
\fi

\begin{example}
The category $\mathit{Sets}$ of sets and functions gives rise to a non-constant locally small CwD.
A D-object over a set $X$ in $\mathit{Sets}$ is an indexed set $A = \{ A_x \in \mathit{Sets} \mid x \in X \ \! \}$, called a \emph{\bfseries dependent (D-) set} over $X$.
A D-morphism $X \rightarrowtriangle A$ in $\mathit{Sets}$ is a function $f : X \to \bigcup_{x \in X}A_x$ that satisfies $f(x) \in A_x$ for all $x \in X$, called a \emph{\bfseries dependent (D-) function} from $X$ to $A$.
Given a function $\phi : Y \to X$, the D-compositions in $\mathit{Sets}$ are given by $A \{ \phi \} \stackrel{\mathrm{df. }}{=} \{ A_{\phi (y)} \mid y \in Y \ \! \}$ and $f \{ \phi \} \stackrel{\mathrm{df. }}{=} f \circ \phi : (y \in Y) \mapsto f(\phi(y)) \in A_{\phi(y)} = (A\{ \phi \})_y$.
\end{example}

It is straightforward to define the \emph{substructure} relation between CwDs:
\begin{definition}[SubCwDs]
A CwD $\mathcal{C}$ is a \emph{\bfseries subCwD} of another $\mathcal{D}$ if $\mathscr{U}(\mathcal{C})$ is a subcategory of $\mathscr{U}(\mathcal{D})$, and the dependence of $\mathcal{C}$ is the corresponding part of the dependence of $\mathcal{D}$, i.e., for all $\Delta, \Gamma \in \mathcal{C}$, $A \in \mathscr{D}_{\mathcal{C}}(\Gamma)$, $f \in \mathscr{D}_{\mathcal{C}}(\Gamma, A)$ and $\phi : \Delta \to \Gamma$ in $\mathcal{C}$ we have $A \in \mathscr{D}_{\mathcal{D}}(\Gamma)$, $f \in \mathscr{D}_{\mathcal{D}}(\Gamma, A)$, $A\{\phi\}_{\mathcal{C}} = A\{\phi\}_{\mathcal{D}}$ and $f\{\phi\}_{\mathcal{C}} = f\{\phi\}_{\mathcal{D}}$.
\end{definition}

\begin{definition}[Full and wide subCwDs]
A CwD $\mathcal{C}$ is a \emph{\bfseries full subCwD} (resp. \emph{\bfseries wide subCwD}) of a CwD $\mathcal{D}$ if $\mathcal{C}$ is a subCwD of $\mathcal{D}$, $\mathscr{U}(\mathcal{C})$ is a full (resp. wide) subcategory of $\mathscr{U}(\mathcal{D})$, and $\mathscr{D}_{\mathcal{C}}(\Gamma, A) = \mathscr{D}_{\mathcal{D}}(\Gamma, A)$ for all $\Gamma \in \mathcal{C}$ and $A \in \mathscr{D}_{\mathcal{C}}(\Gamma)$ (resp. $\mathscr{D}_{\mathcal{C}}(\Gamma) = \mathscr{D}_{\mathcal{D}}(\Gamma)$ for all $\Gamma \in \mathcal{C}$).
\end{definition}

\begin{example}
Similarly to the CwD $\mathit{Sets}$, there is a non-constant locally small CwD $\mathit{Rel}$ whose underlying category is the category $\mathit{Rel}$ of sets and relations. 
D-objects in $\mathit{Rel}$ are D-sets, D-morphisms $X \rightarrowtriangle A$ in $\mathit{Rel}$ are relations $R \subseteq X \times \bigcup_{x \in X}A_x$ such that $(x, a) \in R \Rightarrow a \in A_x$, D-compositions in $\mathit{Rel}$ are given by $A\{\phi\} \stackrel{\mathrm{df. }}{=} \{ \bigcup_{(y, x) \in \phi} A_x \mid y \in Y \ \! \}$, i.e., $A\{\phi\}_y \stackrel{\mathrm{df. }}{=} \bigcup_{(y, x) \in \phi}A_x$ for all $y \in Y$ and $R\{\phi\} \stackrel{\mathrm{df. }}{=} R \circ \phi = \{ (y, a) \mid \exists x \in X . \ \! (y, x) \in \phi \wedge (x, a) \in R \ \! \}$ for any relation $\phi \subseteq Y \times X$.
It is straightforward to see that $\mathit{Sets}$ is a wide subCwD of $\mathit{Rel}$, lifting the relation between the categories $\mathit{Sets}$ and $\mathit{Rel}$.
\end{example}

\begin{example}
There is a non-constant locally small CwD $\mathit{Mon}$ whose underlying category is the category $\mathit{Mon}$ of monoids and monoid homomorphisms.
A D-object over a monoid $M = (|M|, \cdot_M, e_M)$ in $\mathit{Mon}$ is a triple $A = (|A|, \cdot_A, e_A)$ of an indexed set $|A| = (A_m)_{m \in |M|}$, a binary operation $\cdot_A$ on the union $\bigcup |A| \stackrel{\mathrm{df. }}{=}  \bigcup_{m \in |M|}A_m$ satisfying $a \cdot_A a' \in A_{m \cdot_M m'}$ for all $m, m' \in |M|$, $a \in A_m$ and $a \in A_{m'}$, and an element $e_A \in A_{e_M}$ such that the triple $\bigcup A = (\bigcup |A|, \cdot_A, e_A)$ forms a monoid, called a \emph{\bfseries dependent (D-) monoid} over $M$, and a D-morphism $M \rightarrowtriangle A$ in $\mathit{Mon}$ is a monoid homomorphism $f : M \to \bigcup A$ such that $f(m) \in A_m$ for all $m \in |M|$.
Given a monoid homomorphism $\phi : N \to M$, the D-composition $A\{\phi\} = (|A\{\phi\}|, \cdot_{A\{\phi\}}, e_{A\{\phi\}})$ is given by $A\{\phi\}_n \stackrel{\mathrm{df. }}{=} A_{\phi(n)}$ for each $n \in |N|$, $\cdot_{A\{\phi\}} \stackrel{\mathrm{df. }}{=} \cdot_A \upharpoonright \bigcup |A\{\phi\}|$ and $e_{A\{\phi\}} \stackrel{\mathrm{df. }}{=} e_A$, and the D-composition $f \{\phi\}$ is simply given by the function composition $f \circ \phi$.
\end{example}

One may wonder, as in the case of the CwDs $\mathit{Sets}$, $\mathit{Rel}$ and $\mathit{Mon}$, whether D-morphisms are always certain morphisms in the underlying category; however, it is not the case.
For instance, in the term model $\mathcal{T}(1, \Pi, \Sigma)$ regarded as a CwD, i.e., its types, terms and substitutions are regarded respectively as D-objects, D-morphisms and D-compositions, note that morphisms (or context morphisms) are certain \emph{lists} of D-morphisms (or terms).

There are other non-trivial CwDs whose D-morphisms are not certain morphisms: 
\begin{example}
There is a non-constant locally small CwD $\mathcal{GPD}$ whose underlying category is the category $\mathcal{GPD}$ of groupoids and functors between them \cite{hofmann1998groupoid}.
A D-object over a groupoid $G$ in $\mathcal{GPD}$ is any functor $A : G \to \mathcal{GPD}$, called a \emph{\bfseries dependent (D-) groupoid}. 
In order to define D-morphisms in $\mathcal{GPD}$, it is convenient (though not strictly necessary) to first employ the \emph{Grothendieck construction}  $G . A \in \mathcal{GPD}$ of $G$ and $A$, which is given by:
\begin{itemize}

\item An object of $G . A$ is a pair $(g, a)$ of objects $g \in G$ and $a \in A(g)$;

\item A morphism $(g, a) \to (g', a')$ in $G . A$ is a pair $(\phi, f)$ of a morphism $\phi : g \to g'$ in $G$ and a morphism $f : A(\phi)(a) \to a'$ in $A(g')$;

\item The composition $(g, a) \stackrel{(\phi, f)}{\to} (g', a') \stackrel{(\phi', f')}{\to} (g'', a'')$ in $G . A$ is the pair $(\phi' \circ \phi, f' \circ A(\phi')(f))$; 

\item The identity $\mathit{id}_{(g, a)} : (g, a) \to (g, a)$ in $G . A$ is the pair $(\mathit{id}_g, \mathit{id}_{a})$; and

\item The inverse of $(\phi, f) : (g, a) \to (g', a')$ is the pair $(\phi^{-1}, A(\phi^{-1})(f^{-1})) : (g', a') \to (g, a)$.

\end{itemize}
Then, a D-morphism $G \rightarrowtriangle A$ in $\mathcal{GPD}$ is a pair $\mu = (\mu_0, \mu_1)$ of maps $\mu_0 : \mathsf{ob}(G) \to \bigcup_{g \in G} \mathsf{ob}(A(g))$ and $\mu_1 : \mathsf{ar}(G) \to \bigcup_{g \in G} \mathsf{ar}(A(g))$ such that
\begin{align*}
(g, \mu_0(g)) &\in G . A \\
(\phi, \mu_1(\phi)) &\in G.A((g, \mu_0(g)), (g', \mu_0(g'))) \\
(\psi \circ \phi, \mu_1(\psi \circ \phi)) &= (\psi, \mu_1(\psi)) \circ (\phi, \mu_1(\phi)) \\
(\mathit{id}_g, \mu_1(\mathit{id}_g)) &= \mathit{id}_{(g, \mu_0(g))}
\end{align*}
for all $g, g', g'' \in G$, $\phi : g \to g'$ and $\psi : g' \to g''$ in $G$, i.e., the pairing $\langle \mathit{id}_G, \mu \rangle$ is a functor $G \to G.A$.
Note that it is impossible to turn $\mu$ into a morphism in $\mathcal{GPD}$ for there is no groupoid to serve as the codomain of $\mu$; we have employed $G . A$ to make this point clear.
Finally, given a morphism $F : H \to G$ in $\mathcal{GPD}$, the D-composition $A\{F\} : H \to \mathcal{GPD}$ is the composition $A \circ F$ of functors, and the D-composition $\mu\{F\} : H \rightarrowtriangle A\{F\}$ is given by $\mu\{F\}_i = \mu_i \circ F_i$ for $i = 1, 2$.
\end{example}

\begin{example}
\label{ExCAT}
Similarly to $\mathcal{GPD}$, the category $\mathit{CAT}$ of small categories and functors between them forms a non-constant locally small CwD.
A D-object over a category $\mathcal{C}$ in $\mathit{CAT}$ is a functor $\mathcal{X} : \mathcal{C} \to \mathit{CAT}$, called a \emph{\bfseries dependent (D-) category}, and a D-morphism $\mathcal{C} \rightarrowtriangle \mathcal{X}$ in $\mathit{CAT}$ is a pair $F = (F_0, F_1)$ of maps $F_0 : \mathsf{ob}(\mathcal{C}) \to \bigcup_{\Gamma \in \mathcal{C}} \mathsf{ob}(\mathcal{X}(\Gamma))$ and $F_1 : \mathsf{ar}(\mathcal{C}) \to \bigcup_{\Gamma \in \mathcal{C}} \mathsf{ar}(\mathcal{X}(\Gamma))$ such that
\begin{align*}
(\Gamma, F_0(\Gamma)) &\in \mathcal{C} . \mathcal{X} \\
(\phi, F_1(\phi) &\in \mathcal{C} . \mathcal{X}((\Delta, F_0(\Delta)), (\Gamma, F_0(\Gamma))) \\
(\psi \circ \phi, F_1(\psi \circ \phi)) &= (\psi, F_1(\psi)) \circ (\phi, F_1(\phi)) \\
(\mathit{id}_\Gamma, F_1(\mathit{id}_\Gamma)) &= \mathit{id}_{(\Gamma, F_0(\Gamma))}
\end{align*}
for all $\Delta, \Gamma, \Theta \in \mathcal{C}$, $\phi : \Delta \to \Gamma$ and $\psi : \Gamma \to \Theta$ in $\mathcal{C}$, called a \emph{\bfseries dependent (D-) functor}, where the category $\mathcal{C} . \mathcal{X}$ is the Grothendieck construction of $\mathcal{C}$ and $\mathcal{X}$ (discarding the structure of inverses).
Finally, D-compositions in $\mathit{CAT}$ are defined exactly as in the case of $\mathcal{GPD}$. 
\end{example}

Now, one may wonder what is an appropriate notion of \emph{isomorphisms between D-objects} for it is a fundamental principle in category theory to regard isomorphic objects as essentially the same.
Thinking of the CwD $\mathit{Sets}$ as an example, it sounds fair to say that two D-sets $A$ and $A'$ are isomorphic if they are over sets $X$ and $X'$, respectively, such that there is a bijection $\iota : X \stackrel{\sim}{\to} X'$, and $A_x$ is isomorphic to $A'_{\iota(x)}$ for each $x \in X$ because then the structures of $A$ and $A'$ as D-sets are essentially the same.
However, the second condition does not make sense for an arbitrary CwD (i.e., we cannot always talk about the components $A_x$).
We need additional conditions on CwDs, which will be introduced in Section~\ref{SCCwDs}, to talk about isomorphisms between D-objects.

Next, note that the asymmetry of the domain and the codomain of D-morphisms prohibits us from defining the `opposite CwDs' just by `flipping' morphisms and D-morphisms in a given CwD.
However, we may `flip' only morphisms, not D-morphisms, taking this operation as the `opposite' construction on CwDs.
Thus, we do not need any new concepts for this construction; it suffices to consider CwDs whose underlying categories are opposite categories. 

Nevertheless, it is often convenient to give the explicit definition:
\begin{definition}[Contravariant CwDs]
\label{DefContravariantCwDs}
A \emph{\bfseries contravariant CwD} is a category $\mathcal{C}$ equipped with a pair $(\mathscr{D}_{\mathcal{C}}, \_\{ \_ \}_{\mathcal{C}})$, called the \emph{\bfseries dependence} of $\mathcal{C}$, where:
\begin{itemize}

\item $\mathscr{D}_{\mathcal{C}}$ assigns to each object $\Gamma \in \mathcal{C}$ a class $\mathscr{D}_{\mathcal{C}}(\Gamma)$ of \emph{\bfseries dependent (D-) objects} over $\Gamma$ and to each pair of $\Gamma \in \mathcal{C}$ and $A \in \mathscr{D}_{\mathcal{C}}(\Gamma)$ a class $\mathscr{D}_{\mathcal{C}}(\Gamma, A)$ of \emph{\bfseries dependent (D-) morphisms} from $\Gamma$ to $A$;

\item $\_ \{ \_ \}_{\mathcal{C}}$ assigns to each morphism $\phi : \Delta \to \Gamma$ in $\mathcal{C}$ a (class) function $\_ \{ \phi \}_{\mathcal{C}} : \mathscr{D}_{\mathcal{C}}(\Delta) \to \mathscr{D}_{\mathcal{C}}(\Gamma)$ and a family $(\_\{ \phi \}_{\mathcal{C}, A})_{A \in \mathscr{D}_{\mathcal{C}}(\Gamma)}$ of (class) functions $\_\{ \phi \}_{\mathcal{C}, A} : \mathscr{D}_{\mathcal{C}}(\Delta, B) \to \mathscr{D}_{\mathcal{C}}(\Gamma, B\{\phi\}_{\mathcal{C}})$, which are all called \emph{\bfseries contravariant dependent (D-) compositions} that satisfy:
\begin{align}
B \{ \mathit{id}_\Delta \}_{\mathcal{C}} &= B \\
b \{ \mathit{id}_\Delta \}_{\mathcal{C}, B} &= b \\
B \{ \psi \circ \phi \}_{\mathcal{C}} &= B\{ \phi \}_{\mathcal{C}} \{ \psi \}_{\mathcal{C}} \\
b \{ \psi \circ \phi \}_{\mathcal{C}, B} &= b \{ \phi \}_{\mathcal{C}, B} \{ \psi \}_{\mathcal{C}, B\{\phi\}_{\mathcal{C}}}
\end{align}
for any $\Theta, \Delta, \Gamma \in \mathcal{C}$, $B \in \mathscr{D}_{\mathcal{C}}(\Delta)$, $\phi \in \mathcal{C}(\Delta, \Gamma), \psi \in \mathcal{C}(\Gamma, \Theta)$ and $b \in \mathscr{D}_{\mathcal{C}}(\Delta, B)$.
\end{itemize}
\end{definition}

\begin{notation}
We write $\mathcal{C}^{\mathsf{op}}$ for the opposite category of a given category $\mathcal{C}$, and $\phi^{\mathsf{op}} : \Gamma \to \Delta$ in $\mathcal{C}^{\mathsf{op}}$ for the `flipped' morphism $\phi : \Delta \to \Gamma$ in $\mathcal{C}$.
\end{notation}

\begin{remark}
A contravariant CwD $\mathcal{C}$ is nothing other than a CwD $\mathcal{C}^{\mathsf{op}}$ described in terms of the category $\mathcal{C}$.
\end{remark}

\begin{notation}
The notations for CwDs are also applied to contravariant CwDs. 
\end{notation}

We are now ready to define:
\begin{definition}[Opposites]
Given a CwD $\mathcal{C}$, its \emph{\bfseries opposite} $\mathcal{C}^{\mathsf{op}}$ is the contravariant CwD such that $\mathscr{U}(\mathcal{C}^{\mathsf{op}}) \stackrel{\mathrm{df. }}{=} \mathscr{U}(\mathcal{C})^{\mathsf{op}}$, $\mathscr{D}_{\mathcal{C}^{\mathsf{op}}} \stackrel{\mathrm{df. }}{=} \mathscr{D}_{\mathcal{C}}$ and $\_\{ \_ \}_{\mathcal{C}^{\mathsf{op}}} \stackrel{\mathrm{df. }}{=} \_ \{ \_ \}_{\mathcal{C}} \circ (\_)^{\mathsf{op}}$.
Dually, the \emph{\bfseries opposite} $\mathcal{D}^{\mathsf{op}}$ of a contravariant CwD $\mathcal{D}$ is the CwD defined in the same manner.
\end{definition}

For a contravariant CwD $\mathcal{C}$ is just a CwD $\mathcal{C}^{\mathsf{op}}$, by the \emph{principle of duality} \cite{mac2013categories,awodey2010category}, it suffices to focus only on (\emph{covariant}) CwDs.
However, just for convenience, we occasionally describe contravariant ones $\mathcal{C}^{\mathsf{op}}$ explicitly in terms of $\mathcal{C}$ like Definition~\ref{DefContravariantCwDs}.

\subsubsection{Functors with Dependence}
Next, let us generalize functors by taking into account D-objects and D-morphisms:
\begin{definition}[FwDs]
A \emph{\bfseries functor with dependence (FwD)} between CwDs $\mathcal{C}$ and $\mathcal{D}$ is a quadruple $F = (F_0, F_1, F_2, F_3)$, written $F : \mathcal{C} \to \mathcal{D}$, of maps $F_i : \mathcal{C}_i \to \mathcal{D}_i$ that satisfies:
\begin{enumerate}

\item $\forall \Delta, \Gamma \in \mathcal{C}, \phi \in \mathcal{C}(\Delta, \Gamma) . \ \! F_1(\phi) \in \mathcal{D}(F_0(\Delta), F_0(\Gamma))$;

\item $\forall \Gamma \in \mathcal{C}, A \in \mathscr{D}_{\mathcal{C}}(\Gamma) . \ \! F_2(A) \in \mathscr{D}_{\mathcal{D}}(F_0(\Gamma))$;

\item $\forall \Gamma \in \mathcal{C}, A \in \mathscr{D}_{\mathcal{C}}(\Gamma), a \in \mathscr{D}_{\mathcal{C}}(\Gamma, A) . \ \! F_3(a) \in \mathscr{D}_{\mathcal{D}}(F_0(\Gamma), F_2(A))$;

\item $\forall \Delta, \Gamma, \Theta \in \mathcal{C}, \phi \in \mathcal{C}(\Delta, \Gamma), \psi \in \mathcal{C}(\Gamma, \Theta) . \ \! F_1 (\psi \circ \phi) = F_1(\psi) \circ F_1(\phi)$;

\item $\forall \Gamma \in \mathcal{C} . \ \! F_1(\mathit{id}_\Gamma) = \mathit{id}_{F_0(\Gamma)}$;

\item $\forall \Delta, \Gamma \in \mathcal{C}, \phi \in \mathcal{C}(\Delta, \Gamma), A \in \mathscr{D}_{\mathcal{C}}(\Gamma) . \ \! F_2(A \{ \phi \}_{\mathcal{C}}) = F_2(A) \{ F_1(\phi) \}_{\mathcal{D}}$; and

\item $\forall \Delta, \Gamma \in \mathcal{C}, \phi \in \mathcal{C}(\Delta, \Gamma), A \in \mathscr{D}_{\mathcal{C}}(\Gamma), a \in \mathscr{D}_{\mathcal{C}}(\Gamma, A) . \ \! F_3(a \{ \phi \}_{\mathcal{C}}) = F_3(a)\{F_1(\phi)\}_{\mathcal{D}}$.

\end{enumerate}
\end{definition}

\begin{convention}
We usually abbreviate each $F_i$ ($i = 0, 1, 2, 3$) as $F$, which would not bring any confusion in practice.
\end{convention}

That is, an FwD $F : \mathcal{C} \to \mathcal{D}$ is just a functor $\mathscr{U}(F) \stackrel{\mathrm{df. }}{=} (F_0, F_1) : \mathscr{U}(\mathcal{C}) \to \mathscr{U}(\mathcal{D})$ equipped with maps $F_2$ and $F_3$ on D-objects and D-morphisms, respectively, with analogous structure-preserving properties.

\begin{example}
Given a functor $F : \mathcal{C} \to \mathcal{D}$, there are the obvious FwDs $\mathsf{D}(F) : \mathsf{D}(\mathcal{C}) \to \mathsf{D}(\mathcal{D})$.
We call this kind of FwDs \emph{\bfseries functors seen as FwDs}.
Conversely, FwDs between categories seen as CwDs coincide with functors.
In this sense, FwDs are a generalization of functors.
Note also that there are the trivial FwDs $\mathsf{d}(F) : \mathsf{d}(\mathcal{C}) \to \mathsf{d}(\mathcal{D})$.
\end{example}

\begin{example}
There is an FwD $(\_)^\ast : \mathit{Sets} \to \mathit{Mon}$ that maps:
\begin{align*}
X \in \mathit{Sets} &\mapsto X^\ast \in \mathit{Mon} \\
\phi \in \mathit{Sets}(X, Y) &\mapsto \phi^\ast \in \mathit{Mon}(X^\ast, Y^\ast) \\
A \in \mathscr{D}_{\mathit{Sets}}(X) &\mapsto A^\ast =( \{ \textstyle \prod_{i = 1}^{k} A_{x_i} \ \! | \ \! x_1 x_2 \dots x_k \in X^\ast \}, \ast, \bm{\epsilon})\in \mathscr{D}_{\mathit{Mon}}(X^\ast) \\
f \in \mathscr{D}_{\mathit{Sets}}(X, A) &\mapsto f^\ast \in \mathscr{D}_{\mathit{Mon}}(X^\ast, A^\ast)
\end{align*}
where $X^\ast = (X^\ast, \ast, \bm{\epsilon})$ is the \emph{free monoid} over $X$, $\phi^\ast$ (resp. $f^\ast$) maps each $x_1 x_2 \dots x_k \in X^\ast$ to $\phi(x_1) \phi(x_2) \dots \phi(x_k) \in Y^\ast$ (resp. to $f(x_1) f(x_2) \dots f(x_k) \in \prod_{i = 1}^{k} A_{x_i}$), and $\prod_{i = 1}^{k} A_{x_i}$ is the product of monoids $A_{x_i}$ for $i = 1, 2, \dots, k$. 
\end{example}

\begin{definition}[Fullness and faithfulness of FwDs]
An FwD $F : \mathcal{C} \to \mathcal{D}$ is \emph{\bfseries full} (resp. \emph{\bfseries faithful}) if so is the underlying functor $\mathscr{U}(F)$, and the function $F_{\Gamma, A} : \mathscr{D}_{\mathcal{C}}(\Gamma, A) \to \mathscr{D}_{\mathcal{D}}(F(\Gamma), F(A))$ that maps $f \mapsto F(f)$ for all $f \in \mathscr{D}_{\mathcal{C}}(\Gamma, A)$ is surjective (resp. injective) for each pair of an object $\Gamma \in \mathcal{C}$ and a D-object $A \in \mathscr{D}_{\mathcal{C}}(\Gamma)$ in $\mathcal{C}$.
\end{definition}

Just for convenience, let us write down explicitly the contravariant case:
\begin{definition}[Contravariant FwDs]
A \emph{\bfseries contravariant FwD} from a contravariant CwD $\mathcal{C}$ to a CwD $\mathcal{D}$ is an FwD $F : \mathcal{C}^{\mathsf{op}} \to \mathcal{D}$.
Dually, a contravariant FwD from $\mathcal{D}$ to $\mathcal{C}$ is a contravariant FwD $\mathcal{D}^{\mathsf{op}} \to \mathcal{C}^{\mathsf{op}}$, i.e., an FwD $\mathcal{D} \to \mathcal{C}^{\mathsf{op}}$. 
\end{definition}

\begin{example}
Given a locally small CwD $\mathcal{C}$ and an object $\Delta \in \mathcal{C}$, we obtain an FwD $\mathcal{C}(\Delta, \_) : \mathcal{C} \to \mathit{Sets}$ that maps each object $\Gamma \in \mathcal{C}$ to the set $\mathcal{C}(\Delta, \Gamma)$, each morphism $\alpha \in \mathcal{C}(\Gamma, \Gamma')$ to the function $\mathcal{C}(\Delta, \alpha) : \mathcal{C}(\Delta, \Gamma) \to \mathcal{C}(\Delta, \Gamma')$ that maps $(\phi \in \mathcal{C}(\Delta, \Gamma)) \mapsto \alpha \circ \phi \in \mathcal{C}(\Delta, \Gamma')$, each D-object $A \in \mathscr{D}_{\mathcal{C}}(\Gamma)$ to the D-set $\mathcal{C}(\Delta, A) \stackrel{\mathrm{df. }}{=} (\mathscr{D}_{\mathcal{C}}(\Delta, A\{ \phi \}_{\mathcal{C}}))_{\phi \in \mathcal{C}(\Delta, \Gamma)}$, and each D-morphism $f \in \mathscr{D}_{\mathcal{C}}(\Gamma, A)$ to the D-function $\mathcal{C}(\Delta, f) : \mathcal{C}(\Delta, \Gamma) \rightarrowtriangle \mathcal{C}(\Delta, A)$ that maps $(\phi \in \mathcal{C}(\Delta, \Gamma)) \mapsto f \{ \phi \}_{\mathcal{C}} \in \mathscr{D}_{\mathcal{C}}(\Delta, A\{\phi\}_{\mathcal{C}}) = \mathcal{C}(\Delta, A)_\phi$.

Dually, given a locally small contravariant CwD $\mathcal{D}$ and an object $\Delta \in \mathcal{D}$, we obtain a contravariant FwD $\mathcal{D}(\_, \Delta) : \mathcal{D} \to \mathit{Sets}$ that maps each object $\Gamma \in \mathcal{D}$ to the set $\mathcal{D}(\Gamma, \Delta)$, each morphism $\beta \in \mathcal{D}(\Gamma', \Gamma$) to the function $\mathcal{D}(\beta, \Delta) : \mathcal{D}(\Gamma, \Delta) \to \mathcal{D}(\Gamma', \Delta)$ that maps $(\varphi \in \mathcal{D}(\Gamma, \Delta)) \mapsto \varphi \circ \beta \in \mathcal{D}(\Gamma', \Delta)$, each D-object $A \in \mathscr{D}_{\mathcal{D}}(\Gamma)$ to the D-set $\mathcal{D}(A, \Delta) \stackrel{\mathrm{df. }}{=} (\mathscr{D}_{\mathcal{D}}(\Delta, A\{ \varphi \}_{\mathcal{D}}))_{\varphi \in \mathcal{D}(\Gamma, \Delta)}$, and each D-morphism $f \in \mathscr{D}_{\mathcal{D}}(\Gamma, A)$ to the D-function $\mathcal{D}(f, \Delta) : \mathcal{D}(\Gamma, \Delta) \rightarrowtriangle \mathcal{D}(A, \Delta)$ that maps $(\varphi \in \mathcal{D}(\Gamma, \Delta)) \mapsto f\{\varphi\}_{\mathcal{D}} \in \mathscr{D}_{\mathcal{D}}(\Delta, A\{\varphi\}_{\mathcal{D}}) = \mathcal{D}(A, \Delta)_\varphi$.
\end{example}

As expected, small CwDs and FwDs between them form a category:
\begin{definition}[The category $\mathbb{C}_{\mathbb{D}}$]
\label{TheCategoryCwDs}
The category $\mathbb{C}_{\mathbb{D}}$ is given by:
\begin{itemize}

\item Objects are small CwDs;

\item Morphism $\mathcal{C} \to \mathcal{D}$ are FwDs $\mathcal{C} \to \mathcal{D}$;

\item The composition $\mathcal{C} \stackrel{F}{\to} \mathcal{D} \stackrel{G}{\to} \mathcal{E}$ is the quadruple $(G_0 \circ F_0, G_1 \circ F_1, G_2 \circ F_2, G_3 \circ F_3)$; and

\item The identity $\mathit{id}_{\mathcal{C}} : \mathcal{C} \to \mathcal{C}$ is the quadruple $(\mathit{id}_{\mathcal{C}_0}, \mathit{id}_{\mathcal{C}_1}, \mathit{id}_{\mathcal{C}_2}, \mathit{id}_{\mathcal{C}_3})$.

\end{itemize}
\end{definition}

It is easy to check that $\mathbb{C}_{\mathbb{D}}$ is a well-defined locally small category, and there is the obvious forgetful functor $\mathscr{U} : \mathbb{C}_{\mathbb{D}} \to \mathit{CAT}$, which has the functor $\mathsf{d} : \mathit{CAT} \to \mathbb{C}_{\mathbb{D}}$ as the left adjoint. 

On the other hand, there seems no sensible way to lift the category $\mathbb{C}_{\mathbb{D}}$ to a CwD.
One may define a D-object over a small CwD $\mathcal{C}$ to be an FwD $\mathcal{C} \to \mathit{CAT}$ or a functor $\mathscr{U}(\mathcal{C}) \to \mathbb{C_D}$, but neither works very well.

\subsection{Semi-Cartesian Categories, Functors and Natural Transformations with Dependence}
\label{SCCwDs}
As we have seen above, CwDs and FwDs are natural generalizations of categories and functors, respectively, which subsume several concrete instances.
However, we cannot yet have morphisms between parallel FwDs because we cannot define \emph{morphisms between D-objects} at the moment.
Thus, CwDs and FwDs are in some sense incomplete structures.

On the other hand, we need additional structures on CwDs in order to interpret the \emph{empty context} Ctx-Emp and \emph{context extensions} Ctx-Ext in MLTTs.

The present section addresses these two problems by introducing CwDs with generalized binary products, called \emph{semi-dependent pair (semi-$\Sigma$-) spaces}.
We call CwDs with a terminal object and semi-$\Sigma$-spaces \emph{semi-cartesian CwDs (SCCwDs)}; their strict version coincides with CwFs, and thus they interpret the empty context and context extensions. 
We also define FwDs that preserve the semi-cartesian structure of CwDs, called \emph{semi-cartesian FwDs (SCFwDs)} and morphisms between parallel SCFwDs, called \emph{natural transformations with dependence (NTwDs)}, which constitute 1-cells and 2-cells of the 2-category of SCCwDs (0-cells), respectively.  
Moreover, as promised before, we lift this 2-category to a \emph{2-CwD}, the 2-dimensional generalization of CwDs.

\subsubsection{Semi-Cartesian Categories with Dependence}
\begin{definition}[Semi-$\Sigma$-spaces]
\label{DefSemiDependentPairSpaces}
Let $\mathcal{C}$ be a CwD.
A \emph{\bfseries semi-dependent pair (semi-$\bm{\Sigma}$-) space} of an object $\Gamma \in \mathcal{C}$ and a D-object $A \in \mathscr{D}(\Gamma)$ is an object 
\begin{equation*}
\Gamma . A \in \mathcal{C}
\end{equation*}
together with a morphism 
\begin{equation*}
\pi_1^{\Gamma . A} : \Gamma . A \to \Gamma
\end{equation*}
in $\mathcal{C}$, called the \emph{\bfseries first projection} on $\Gamma . A$, and a D-morphism 
\begin{equation*}
\pi_2^{\Gamma . A} : \Gamma . A \rightarrowtriangle A\{ \pi_1^{\Gamma . A} \}
\end{equation*}
in $\mathcal{C}$, called the \emph{\bfseries second projection} on $\Gamma . A$, such that for any $\Delta \in \mathcal{C}$, $\phi : \Delta \to \Gamma$ and $g : \Delta \rightarrowtriangle A\{\phi\}$ in $\mathcal{C}$ there exists a unique morphism 
\begin{equation*}
\langle \phi, g \rangle : \Delta \to \Gamma . A
\end{equation*}
in $\mathcal{C}$, called the \emph{\bfseries semi-dependent pairing} of $\phi$ and $g$, that satisfies the equations 
\begin{align*}
\pi_1^{\Gamma . A} \circ \langle \phi, g \rangle &= \phi \\
\pi_2^{\Gamma . A} \{ \langle \phi, g \rangle \} &= g.
\end{align*}
\end{definition}

\begin{notation}
We often omit the superscript $\Gamma . A$ on $\pi_i^{\Gamma . A}$ ($i = 1, 2$).
\end{notation}

\begin{definition}[SCCwDs]
\label{DefSCDCs}
A \emph{\bfseries semi-cartesian CwD (SCCwD)} is a CwD $\mathcal{C}$ that has:
\begin{itemize}

\item A terminal object $T \in \mathcal{C}$; and 

\item A semi-$\Sigma$-pair space $\Gamma \stackrel{\pi_1}{\leftarrow} \Gamma . A \stackrel{\pi_2}{\rightarrowtriangle} A\{\pi_1\}$ of any $\Gamma \in \mathcal{C}$ and $A \in \mathscr{D}_{\mathcal{C}}(\Gamma)$.

\end{itemize}
\end{definition}

See the following diagram that depicts the universal property of semi-$\Sigma$-spaces:  
\begin{diagram}
\label{DPS}
\Gamma & &\lTo^\phi & \Delta &  \rDepTo^{g}  & & A \{ \phi \} \\
\uEquals & & & \dDotsTo_{\langle \phi, g \rangle} & & & \uImplies \\
\Gamma & & \lTo^{\pi_1} & \Gamma . A & \rDepTo^{\pi_2} & & \textstyle A\{ \pi_1 \}
\end{diagram}
where the thick arrow $A \{ \pi_1 \} \Rightarrow A \{ \phi \}$ denotes the transformation of the codomain of $\pi_2$ when $\langle \phi, g \rangle$ is composed with $\pi_2$.
Clearly, it generalizes the universal property of binary products in the sense that for a category seen as a CwD the diagram coincides with that of binary products; in other words, for a category seen as a CwD, semi-$\Sigma$-spaces $\Gamma . A$ are just binary products $\Gamma \times A$.
In this sense, SCCwDs are a generalization of cartesian categories. 

On the other hand, unlike binary products, we do not even have a sensible way to formulate, let alone prove, that a terminal object is unit for semi-$\Sigma$-spaces, or semi-$\Sigma$-spaces are associative, due to the asymmetry of semi-$\Sigma$-spaces.
We need \emph{unit D-objects} and \emph{$\Sigma$-spaces} in Section~\ref{CCwDs} to address this point.

Similarly to the case of binary products, we may easily establish:
\begin{proposition}[Uniqueness of semi-$\Sigma$-spaces]
Semi-$\Sigma$-spaces are unique up to isomorphisms. 
\end{proposition}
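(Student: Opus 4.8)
The plan is to run the standard uniqueness-from-a-universal-property argument, with extra care for the dependent bookkeeping. Suppose $(P, \pi_1, \pi_2)$ and $(Q, \rho_1, \rho_2)$ are two semi-$\Sigma$-spaces of the same $\Gamma \in \mathcal{C}$ and $A \in \mathscr{D}(\Gamma)$, so that $\pi_1 : P \to \Gamma$, $\pi_2 : P \rightarrowtriangle A\{\pi_1\}$, $\rho_1 : Q \to \Gamma$ and $\rho_2 : Q \rightarrowtriangle A\{\rho_1\}$ each satisfy the universal property of Definition~\ref{DefSemiDependentPairSpaces}. First I would feed the cone $(\pi_1, \pi_2)$ out of $P$ into the universal property of $Q$: since $\pi_1 : P \to \Gamma$ and $\pi_2 : P \rightarrowtriangle A\{\pi_1\}$, there is a unique $u : P \to Q$ with $\rho_1 \circ u = \pi_1$ and $\rho_2 \{ u \} = \pi_2$. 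Symmetrically, feeding $(\rho_1, \rho_2)$ into the universal property of $P$ yields a unique $v : Q \to P$ with $\pi_1 \circ v = \rho_1$ and $\pi_2 \{ v \} = \rho_2$.

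Next I would show $v \circ u = \mathit{id}_P$ and $u \circ v = \mathit{id}_Q$ by invoking the uniqueness clauses. For $v \circ u : P \to P$ one computes $\pi_1 \circ (v \circ u) = (\pi_1 \circ v) \circ u = \rho_1 \circ u = \pi_1$, and, using the CwD axiom $a \{ \phi \circ \varphi \} = a \{ \phi \} \{ \varphi \}$ together with $\pi_2 \{ v \} = \rho_2$ and $\rho_2 \{ u \} = \pi_2$, one gets $\pi_2 \{ v \circ u \} = \pi_2 \{ v \} \{ u \} = \rho_2 \{ u \} = \pi_2$. Since $\mathit{id}_P$ also satisfies $\pi_1 \circ \mathit{id}_P = \pi_1$ and $\pi_2 \{ \mathit{id}_P \} = \pi_2$ (the latter by $a \{ \mathit{id} \} = a$), the uniqueness part of the universal property of $P$ applied to the data $(\Delta, \phi, g) = (P, \pi_1, \pi_2)$ forces $v \circ u = \mathit{id}_P$. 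The identity $u \circ v = \mathit{id}_Q$ is entirely symmetric. Hence $u$ is an isomorphism in $\mathscr{U}(\mathcal{C})$ commuting with both projections (i.e.\ $\rho_1 \circ u = \pi_1$ and $\rho_2 \{ u \} = \pi_2$), and it is the \emph{unique} such commuting isomorphism by the same uniqueness clause; this is the precise sense in which semi-$\Sigma$-spaces are unique up to isomorphism.

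The one genuinely non-formal point, and the step I would be most careful about, is the dependent typechecking of the second-projection equations: before an equation such as $\pi_2 \{ v \circ u \} = \pi_2$ can even be written down, one must verify that both sides are D-morphisms over the \emph{same} codomain D-object. This is exactly where the composition laws for D-compositions do the work. Tracking the codomains, $\pi_2 \{ v \}$ lands in $\mathscr{D}(Q, A\{\pi_1\}\{v\}) = \mathscr{D}(Q, A\{\pi_1 \circ v\}) = \mathscr{D}(Q, A\{\rho_1\})$, which is exactly the codomain of $\rho_2$, so the equation $\pi_2 \{ v \} = \rho_2$ is well-typed; and then $\pi_2 \{ v \} \{ u \}$ lands in $\mathscr{D}(P, A\{\rho_1 \circ u\}) = \mathscr{D}(P, A\{\pi_1\})$, matching $\pi_2$. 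This matching of codomains along composites is precisely the content of the thick arrow $A\{\pi_1\} \Rightarrow A\{\phi\}$ in the universal-property diagram accompanying Definition~\ref{DefSemiDependentPairSpaces}, and keeping that bookkeeping straight is the whole substance of the proof; once the codomains are confirmed to agree, the remainder is the verbatim universal-property computation.
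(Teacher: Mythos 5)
Your proof is correct and is precisely the standard universal-property argument that the paper alludes to (it states the proposition with only the remark that it follows ``similarly to the case of binary products'' and gives no further details); your careful tracking of the codomain D-objects via $A\{\pi_1\}\{v\} = A\{\pi_1 \circ v\} = A\{\rho_1\}$ is exactly the dependent bookkeeping needed to make that analogy rigorous.
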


\begin{lemma}[Semi-dependent pairings as comprehensions]
\label{LemSemiSigmaPairings}
Let $\mathcal{C}$ be an SCCwD, and $\Theta, \Delta, \Gamma \in \mathcal{C}$, $A \in \mathscr{D}(\Gamma)$, $\varphi : \Theta \to \Delta$, $\phi : \Delta \to \Gamma$ and $g : \Delta \rightarrowtriangle A\{\phi\}$ in $\mathcal{C}$. 
Then, we have:
\begin{align}
\label{ConsNat}
\langle \phi, g \rangle \circ \varphi &= \langle \phi \circ \varphi, g \{ \varphi \} \rangle \\
\label{ConsId}
\langle \pi_1^{\Gamma . A}, \pi_2^{\Gamma . A} \rangle &= \mathit{id}_{\Gamma . A}.
\end{align}
\end{lemma}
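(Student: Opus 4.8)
The plan is to derive both equations purely from the \emph{uniqueness} clause in the universal property of semi-$\Sigma$-spaces (Definition~\ref{DefSemiDependentPairSpaces}), together with the functoriality and identity laws of D-composition. In each case the right-hand side is, by definition, the unique morphism making a certain pair of projection equations hold, so it suffices to check that the left-hand side satisfies those same two equations and then invoke uniqueness.

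For \eqref{ConsNat}, recall that $\langle \phi \circ \varphi, g\{\varphi\}\rangle : \Theta \to \Gamma.A$ is characterized as the unique morphism $h$ with $\pi_1^{\Gamma.A}\circ h = \phi\circ\varphi$ and $\pi_2^{\Gamma.A}\{h\} = g\{\varphi\}$; here $g\{\varphi\} : \Theta \rightarrowtriangle A\{\phi\}\{\varphi\} = A\{\phi\circ\varphi\}$ by the D-composition law, so the right-hand side is well-typed. I would then take $h := \langle \phi, g\rangle\circ\varphi$ and verify the two defining equations for it. The first follows from associativity of composition in $\mathscr{U}(\mathcal{C})$ together with $\pi_1^{\Gamma.A}\circ\langle\phi,g\rangle = \phi$. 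The second uses the D-morphism composition law $\pi_2^{\Gamma.A}\{\langle\phi,g\rangle\circ\varphi\} = \pi_2^{\Gamma.A}\{\langle\phi,g\rangle\}\{\varphi\}$, followed by $\pi_2^{\Gamma.A}\{\langle\phi,g\rangle\} = g$, which yields $g\{\varphi\}$. Uniqueness then forces $\langle\phi,g\rangle\circ\varphi = \langle\phi\circ\varphi, g\{\varphi\}\rangle$.

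For \eqref{ConsId}, I instantiate the universal property at $\Delta := \Gamma.A$, $\phi := \pi_1^{\Gamma.A}$ and $g := \pi_2^{\Gamma.A}$, so that $\langle\pi_1^{\Gamma.A},\pi_2^{\Gamma.A}\rangle$ is the unique morphism $k : \Gamma.A \to \Gamma.A$ with $\pi_1^{\Gamma.A}\circ k = \pi_1^{\Gamma.A}$ and $\pi_2^{\Gamma.A}\{k\} = \pi_2^{\Gamma.A}$. Taking $k := \mathit{id}_{\Gamma.A}$, the first equation is immediate and the second is exactly the identity law $\pi_2^{\Gamma.A}\{\mathit{id}_{\Gamma.A}\} = \pi_2^{\Gamma.A}$ for D-composition. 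Uniqueness yields the claim.

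I do not anticipate a genuine obstacle; this is the standard uniqueness argument familiar from the analogous product identities in a plain category. The only point requiring care is the bookkeeping of codomains of D-morphisms under D-composition—specifically, verifying the identity $A\{\phi\}\{\varphi\} = A\{\phi\circ\varphi\}$ that makes the right-hand side of \eqref{ConsNat} typecheck, and applying the D-morphism composition law in the correct order, namely $\pi_2^{\Gamma.A}\{\langle\phi,g\rangle\circ\varphi\} = \pi_2^{\Gamma.A}\{\langle\phi,g\rangle\}\{\varphi\}$ rather than mis-associating the substitution.
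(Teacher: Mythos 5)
Your proof is correct and is exactly the standard uniqueness argument the paper has in mind (the paper omits the proof, remarking only that the lemma is established "similarly to the case of binary products"). Both verifications — that $\langle\phi,g\rangle\circ\varphi$ satisfies the two defining equations of $\langle\phi\circ\varphi,g\{\varphi\}\rangle$, and that $\mathit{id}_{\Gamma.A}$ satisfies those of $\langle\pi_1^{\Gamma.A},\pi_2^{\Gamma.A}\rangle$ — are carried out with the right D-composition laws, and the typechecking remark about $A\{\phi\}\{\varphi\}=A\{\phi\circ\varphi\}$ is the correct point of care.
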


Note that the equations (\ref{ConsNat}) and (\ref{ConsId}) correspond respectively to the axioms Cons-Nat and Cons-Id for CwFs, and thus Lemma~\ref{LemSemiSigmaPairings} particularly implies that an SCCwD is a CwF except that a terminal object, comprehensions, projections and extensions are not parts of the structure, and comprehensions and extensions are called semi-$\Sigma$-spaces and semi-dependent pairings, respectively.
Hence, CwFs are the strict version of SCCwDs.

Nevertheless, we require some equation for our notion of \emph{strict} SCCwDs:
\begin{definition}[Strict SCCwDs]
A \emph{\bfseries strict SCCwD} is an SCCwD $\mathcal{C}$ equipped with a terminal object $T \in \mathcal{C}$ and a semi-$\Sigma$-space $\Gamma \stackrel{\pi_1}{\leftarrow} \Gamma . A \stackrel{\pi_2}{\rightarrowtriangle} A\{\pi_1\}$ that satisfies:  
\begin{equation}
\label{EqStrictSCCwDs}
\pi_2^{\Gamma . A} \{ \pi_1^{\Gamma . A . A\{\pi_1^{\Gamma . A}\}} \} = \pi_2^{\Gamma . A . A\{\pi_1^{\Gamma . A}\}}
\end{equation}
for each $\Gamma \in \mathcal{C}$ and $A \in \mathscr{D}_{\mathcal{C}}(\Gamma)$.
\end{definition}

We shall need this equation for the proof of Theorem~\ref{ThmPi}.

\begin{example}
\label{ExCCsAsDSCCs}
A category $\mathcal{C}$ seen as a CwD $\mathsf{D}(\mathcal{C})$ is semi-cartesian iff $\mathcal{C}$ is cartesian.
We call this kind of SCCwDs \emph{\bfseries cartesian categories seen as SCCwDs}.
\end{example}

\begin{example}
\label{ExDCCSets}
The CwD $\mathit{Sets}$ is semi-cartesian. 
Any singleton set $\{ \bullet \}$ is a terminal object, and given a set $X$ and an indexed set $A$ over $X$ the set $\{ (x, a) \mid x \in X, a \in A_x \ \! \}$ of \emph{\bfseries semi-dependent pairs} $(x, a)$ is a semi-$\Sigma$-space $X . A$ together with the projection functions as the projections. 
Semi-dependent pairings are just pairings of functions. 
\end{example}

\begin{example}
The CwD $\mathit{Rel}$ is semi-cartesian.
The empty set $\emptyset$ is a terminal (as well as initial) object, and given a set $X$ and an indexed set $A$ over $X$ the disjoint union $X + \bigcup_{x \in X}A_x$ is a semi-$\Sigma$-space $X . A$ together with the first projection $\pi_1 \stackrel{\mathrm{df. }}{=} \{ (\iota_1(x), x) \mid x \in X \ \! \} \subseteq X.A \times X$ and the second projection $\pi_2 \stackrel{\mathrm{df. }}{=} \{ (\iota_2(a), a) \mid a \in \bigcup_{x \in X}A_x \ \! \} \subseteq X.A \times \bigcup_{z \in X.A}A\{\pi_1\}_z$, where $\iota_i$ ($i = 1, 2$) are the `tags' for the disjoint union $X.A$.
Given a set $Y$, a relation $R \subseteq Y \times X$ and a D-relation $S \subseteq Y \times \bigcup_{y \in Y}A\{R\}_y$, the semi-dependent pairing of $R$ and $S$ is given by $\langle R, S \rangle \stackrel{\mathrm{df. }}{=} \{ (y, \iota_1(x)) \mid (y, x) \in R \ \! \} \cup \{ (y, \iota_2(a)) \mid (y, a) \in S \ \! \}$.
\end{example}

\begin{example}
The CwD $\mathit{Mon}$ is semi-cartesian.
A terminal object in $\mathit{Mon}$ or a \emph{\bfseries terminal monoid} is any monoid $T$ whose underlying set is singleton.
Given a monoid $M$ and a D-monoid $A$ over $M$, the semi-$\Sigma$-space $M . A$ is given by $|M.A| \stackrel{\mathrm{df. }}{=} \{ (m, a) \mid m \in |M|, a \in A_m \ \! \}$, $\cdot_{M.A} : ((m, a), (m', a')) \mapsto (m \cdot_M m', a \cdot_A a')$ and $e_{M.A} \stackrel{\mathrm{df. }}{=} (e_M, e_A)$, where projections and semi-dependent pairings are the obvious ones as in $\mathit{Sets}$.
\end{example}

\begin{example}
The CwD $\mathcal{GPD}$ is semi-cartesian.
A terminal object in $\mathcal{GPD}$ is any groupoid $T$ with just one object $\star$ and the identity $\mathit{id}_\star$.
Given a groupoid $G$ and a D-groupoid $A : G \to \mathcal{GPD}$, the semi-$\Sigma$-space $G . A$ is the Grothendieck construction of $G$ and $A$, where projections and semi-dependent pairings are the obvious ones. 
\end{example}

\begin{example}
The CwD $\mathit{CAT}$ is semi-cartesian in the same manner as the case of $\mathcal{GPD}$.
\end{example}

\if0
As expected, the CwD $\mathbb{C_D}$ is also semi-cartesian; since its semi-cartesian structure is slightly more involved than the above examples, let us present it in the form of a proposition:
\begin{proposition}[Semi-cartesian structure of $\mathbb{C_D}$]
The CwD $\mathbb{C_D}$ is semi-cartesian.
\end{proposition}
\begin{proof}
First, a terminal object in $\mathbb{C_D}$ is any CwD $T$ that has just one object $\star$, one morphism $\mathit{id}_\star$, one D-object $\clubsuit \in \mathscr{D}_T(\star)$ and one D-morphism $\imath \in \mathscr{D}_T(\star, \clubsuit)$.

Next, let $\mathcal{C}$ be a small CwD $\mathcal{C}$ (though the smallness is inessential for the construction below) and $\mathcal{X} : \mathscr{U}(\mathcal{C}) \rightarrowtriangle \mathbb{C_D}$ a D-CwD.
We define a semi-$\Sigma$-space $\mathcal{C} \stackrel{\pi_1}{\leftarrow} \mathcal{C} . \mathcal{X} \stackrel{\pi_2}{\rightarrowtriangle} \mathcal{X}\{\pi_1\}$ in $\mathbb{C_D}$ as follows.

The underlying category $\mathscr{U}(\mathcal{C} . \mathcal{X})$ is the Grothendieck construction $\mathscr{U}(\mathcal{C}) . \mathscr{U}(\mathcal{X})$ given in Example~\ref{ExCAT}.
Let $(\Gamma, \Xi) \in \mathcal{C} . \mathcal{X}$, i.e., $\Gamma \in \mathcal{C}$ and $\Xi \in \mathcal{X}(\Gamma)$. 
A D-object over $(\Gamma, \Xi)$ in $\mathscr{U}(\mathcal{C} . \mathcal{X})$ is a pair $(A, X)$ of D-objects $A \in \mathscr{D}_{\mathcal{C}}(\Gamma)$ and $X \in \mathscr{D}_{\mathbb{C_D}}(\mathcal{X}(A)(\Xi))$.
Note that $X$ is an FwD $\mathcal{X}(A)(\Xi) \to \mathbb{C_D}$.
A D-morphism $(\Gamma, \Xi) \rightarrowtriangle (A, X)$ in $\mathcal{C} . \mathcal{X}$ is a pair $(f, F)$ of D-morphisms $f \in \mathscr{D}_{\mathcal{C}}(\Gamma, A)$ and $F \in \mathscr{D}_{\mathbb{C_D}}(\mathcal{X}(A)(\Xi), X)$.
Note that $F$ is a D-FwD $\mathcal{X}(A)(\Xi) \rightarrowtriangle X$ in $\mathbb{C_D}$.
Given another object $(\Delta, \Upsilon) \in \mathcal{C} . \mathcal{X}$ and a morphism $(\phi, \sigma) : (\Delta, \Upsilon) \to (\Gamma, \Xi)$ in $\mathcal{C} . \mathcal{X}$, the D-composition $(A, X)\{(\phi, \sigma)\}_{\mathcal{C} . \mathcal{X}} \in \mathscr{D}_{\mathcal{C} . \mathcal{X}}(\Delta, \Upsilon)$ is given by:
\begin{equation*}
(A, X)\{(\phi, \sigma)\}_{\mathcal{C} . \mathcal{X}} \stackrel{\mathrm{df. }}{=} (A\{\phi\}_{\mathcal{C}}, X \circ \mathcal{X}(A)(\sigma))
\end{equation*}
and the D-composition $(f, F)\{(\phi, \sigma)\}_{\mathcal{C} . \mathcal{X}} : (\Delta, \Upsilon) \rightarrowtriangle (A, X)\{(\phi, \sigma)\}_{\mathcal{C} . \mathcal{X}}$ is given by:
\begin{equation*}
(f, F)\{(\phi, \sigma)\}_{\mathcal{C} . \mathcal{X}} \stackrel{\mathrm{df. }}{=} (f\{\phi\}_{\mathcal{C}}, F \circ \mathcal{X}(A)(\sigma))
\end{equation*}
which completes the description of the small CwD $\mathcal{C} . \mathcal{X}$.

The first projection $\pi_1 : \mathcal{C} . \mathcal{X} \to \mathcal{C}$ is the FwD that maps each object $(\Gamma, \Xi) \in \mathcal{C} . \mathcal{X}$ to $\Gamma \in \mathcal{C}$, each morphism $(\phi, \sigma) \in \mathcal{C} . \mathcal{X}((\Delta, \Upsilon), (\Gamma, \Xi))$ to $\phi \in \mathcal{C}(\Delta, \Gamma)$, each D-object $(A, X) \in \mathscr{D}_{\mathcal{C} . \mathcal{X}}(\Gamma, \Xi)$ to $A \in \mathscr{D}_{\mathcal{C}}(\Gamma)$, and each D-morphism $(f, F) \in \mathscr{D}_{\mathcal{C} . \mathcal{X}}((\Gamma, \Xi), (A, X))$ to $f \in \mathscr{D}_{\mathcal{C}}(\Gamma, A)$.
Similarly, the second projection $\pi_2 : \mathcal{C} . \mathcal{X} \rightarrowtriangle \mathcal{X}\{\pi_1\}$ is the D-FwD that maps each object $(\Gamma, \Xi) \in \mathcal{C} . \mathcal{X}$ to $\Xi \in \mathcal{X}(\Gamma)$, each morphism $(\phi, \sigma) \in \mathcal{C} . \mathcal{X}((\Delta, \Upsilon), (\Gamma, \Xi))$ to $\sigma \in \mathcal{X}(\Gamma)(\mathcal{X}(\phi)(\Upsilon), \Xi)$, each D-object $(A, X) \in \mathscr{D}_{\mathcal{C} . \mathcal{X}}(\Gamma, \Xi)$ to $X \in \mathscr{D}_{\mathbb{C_D}}(\mathcal{X}(A)(\Xi))$, and each D-morphism $(f, F) \in \mathscr{D}_{\mathcal{C} . \mathcal{X}}((\Gamma, \Xi), (A, X))$ to $F \in \mathscr{D}_{\mathbb{C_D}}(\mathcal{X}(A)(\Xi), X)$.

Finally, given another small CwD $\mathcal{D}$, an FwD $\Phi : \mathcal{D} \to \mathcal{C}$ and a D-FwD $G : \mathcal{D} \rightarrowtriangle \mathcal{X}\{\Phi\}$, we have a semi-dependent pairing $\langle \Phi, G \rangle : \mathcal{D} \to \mathcal{C} . \mathcal{X}$ such that $\langle \Phi, G \rangle_i = \langle \Phi_i, G_i \rangle : \mathcal{D}_i \to (\mathcal{C} . \mathcal{X})_i$ for $i = 0, 1, 2, 3$.
Since it is straightforward to check that these structures satisfy the required axioms, we leave the details to the reader. 
\end{proof}
\fi

Just for clarity, we explicitly write down the contravariant case because the directions of morphisms in this case are slightly trickier:
\begin{definition}[Contravariant semi-$\Sigma$-spaces]
\label{DefContravariantSemiDependentPairSpaces}
Let $\mathcal{C}$ be a contravariant CwD.
A \emph{\bfseries contravariant semi-$\bm{\Sigma}$-space} of an object $\Gamma \in \mathcal{C}$ and a D-object $A \in \mathscr{D}_{\mathcal{C}}(\Gamma)$ is an object $\Gamma . A \in \mathcal{C}$ together with a morphism $\pi_1 : \Gamma \to \Gamma . A$ in $\mathcal{C}$, called the \emph{\bfseries contravariant first projection} on $\Gamma . A$, and a D-morphism $\pi_2 : \Gamma . A \rightarrowtriangle A\{ \pi_1 \}$ in $\mathcal{C}$, called the \emph{\bfseries contravariant second projection} on $\Gamma . A$, such that for any $\Delta \in \mathcal{C}$, $\varphi : \Gamma \to \Delta$ and $g : \Delta \rightarrowtriangle A\{\varphi\}$ in $\mathcal{C}$ there exists a unique morphism $\langle \varphi, g \rangle : \Gamma . A \to \Delta$ in $\mathcal{C}$, called the \emph{\bfseries contravariant semi-dependent pairing} of $\varphi$ and $g$, that satisfies $\langle \varphi, g \rangle \circ \pi_1 = \varphi$ and $\pi_2 \{ \langle \varphi, g \rangle \} = g$.
\end{definition}

\begin{definition}[Semi-cartesian contravariant CwDs]
\label{DefSCDCs}
A contravariant CwD $\mathcal{C}$ is \emph{\bfseries semi-cartesian} iff it has a terminal object $T \in \mathcal{C}$ and a contravariant semi-$\Sigma$-space $\Gamma \stackrel{\pi_1}{\rightarrow} \Gamma . A \stackrel{\pi_2}{\rightarrowtriangle} A\{\pi_1\}$ of any pair of an object $\Gamma \in \mathcal{C}$ and a D-object $A \in \mathscr{D}_{\mathcal{C}}(\Gamma)$.
\end{definition}

Clearly, the opposite of an SCCwD is a semi-cartesian contravariant CwD; the semi-cartesian contravariant CwDs which we shall consider occur mostly in this manner.

\subsubsection{Semi-Cartesian Functors with Dependence}
Next, following the general principle of category theory that morphisms are structure-preserving, let us define:
\begin{definition}[SCFwDs]
A \emph{\bfseries semi-cartesian FwD (SCFwD)} is an FwD $F : \mathcal{C} \to \mathcal{C'}$ between SCCwDs $\mathcal{C}$ and $\mathcal{C'}$ such that:
\begin{itemize}

\item The object $F(T) \in \mathcal{C'}$ is terminal in $\mathcal{C'}$ for any terminal object $T \in \mathcal{C}$; and 

\item The diagram $F(\Gamma) \stackrel{F(\pi_1^{\Gamma . A})}{\leftarrow} F(\Gamma . A) \stackrel{F(\pi_2^{\Gamma . A})}{\rightarrowtriangle} F(A)\{F(\pi_1^{\Gamma . A})\}$ in $\mathcal{C'}$ is a semi-$\Sigma$-space in $\mathcal{C'}$ for any semi-$\Sigma$-space $\Gamma \stackrel{\pi_1^{\Gamma . A}}{\leftarrow} \Gamma . A \stackrel{\pi_2^{\Gamma . A}}{\rightarrowtriangle} A\{\pi_1^{\Gamma . A}\}$ in $\mathcal{C}$.
\end{itemize}
If $\mathcal{C}$ and $\mathcal{C'}$ are both strict, written $\mathcal{C} = (\mathcal{C}, T, \_ . \_, \pi)$ and $\mathcal{C'} = (\mathcal{C'}, T', \_ .' \_, \pi')$, then $F$ is \emph{\bfseries strict} iff 
\begin{align*}
F(T) &= T' \\
F(\Gamma) \stackrel{F(\pi_1^{\Gamma . A})}{\leftarrow} F(\Gamma . A) \stackrel{F(\pi_2^{\Gamma . A})}{\rightarrowtriangle} F(A)\{F(\pi_1^{\Gamma . A})\} &= F(\Gamma) \stackrel{\pi_1'^{F(\Gamma) .' F(A)}}{\leftarrow} F(\Gamma) .' F(A) \stackrel{\pi_2'^{F(\Gamma) .' F(A)}}{\rightarrowtriangle} F(A)\{\pi_1'^{F(\Gamma) .' F(A)}\}
\end{align*}
for any semi-$\Sigma$-space $\Gamma \stackrel{\pi_1^{\Gamma . A}}{\leftarrow} \Gamma . A \stackrel{\pi_2^{\Gamma . A}}{\rightarrowtriangle} A\{\pi_1^{\Gamma . A}\}$ in $\mathcal{C}$.
\end{definition}

\begin{remark}
Note that the first projection $\pi_1$ is `fliped' under semi-cartesian \emph{contravariant} FwDs.
\end{remark}

Let $F : \mathcal{C} \to \mathcal{C'}$ be an SCFwD.
It is easy to see that there is a unique isomorphism $\iota_{T'}^{F(T)} : T' \stackrel{\sim}{\to} F(T)$ in $\mathcal{C'}$ for any terminal objects $T \in \mathcal{C}$ and $T' \in \mathcal{C'}$, and similarly given semi-$\Sigma$-spaces $\Gamma \stackrel{\pi_1}{\leftarrow} \Gamma . A \stackrel{\pi_2}{\rightarrowtriangle} A\{\pi_1\}$ in $\mathcal{C}$ and $F(\Gamma) \stackrel{\pi_1'}{\leftarrow} F(\Gamma) .' F(A) \stackrel{\pi_2'}{\rightarrowtriangle} F(A)\{\pi_1'\}$ in $\mathcal{C'}$ there is a unique isomorphism $\iota^{F(\Gamma . A)}_{F(\Gamma) .' F(A)} : F(\Gamma) .' F(A) \stackrel{\sim}{\to} F(\Gamma . A)$ in $\mathcal{C'}$.
Moreover, given that $\mathcal{C}$ and $\mathcal{C'}$ are both strict, $F$ is strict iff these isomorphisms are all identities.

\begin{notation}
We often abbreviate the isomorphism $\iota_{T'}^{F(T)}$ (resp. $\iota^{F(\Gamma . A)}_{F(\Gamma) .' F(A)}$) as $\iota$. 
\end{notation}

\begin{example}
Clearly, (resp. strict) \emph{finite-product-preserving} (or \emph{cartesian}) functors $\mathcal{C} \to \mathcal{D}$ coincide with (resp. strict) SCFwDs $\mathsf{D}(\mathcal{C}) \to \mathsf{D}(\mathcal{D})$ for any (resp. strict) cartesian categories $\mathcal{C}$ and $\mathcal{D}$. 
We call this kind of (resp. strict) SCFwDs (resp. \emph{\bfseries strict}) \emph{\bfseries cartesian functors seen as} (resp. \emph{\bfseries strict}) \emph{\bfseries SCFwDs}.
Thus, (resp. strict) SCFwDs are generalized (resp. strict) cartesian functors. 
\end{example}

\if0
\begin{example}
The FwD $(\_)^\ast : \mathit{Sets} \to \mathit{Mon}$ is \emph{not} semi-cartesian because it clearly does not preserve terminal objects.
\end{example}
\fi

\begin{example}
Given a locally small SCCwD $\mathcal{C}$ and an object $\Delta \in \mathcal{C}$, the FwD $\mathcal{C}(\Delta, \_) : \mathcal{C} \to \mathit{Sets}$ is semi-cartesian: For any semi-$\Sigma$-space $\Gamma \stackrel{\pi_1}{\leftarrow} \Gamma . A \stackrel{\pi_2}{\rightarrowtriangle} A\{\pi_1\}$ in $\mathcal{C}$, we have the semi-$\Sigma$-space 
\begin{equation*}
\mathcal{C}(\Delta, \Gamma) \stackrel{\mathcal{C}(\Delta, \pi_1)}{\leftarrow} \mathcal{C}(\Delta, \Gamma . A) \stackrel{\mathcal{C}(\Delta, \pi_2)}{\rightarrowtriangle} \mathcal{C}(\Delta, A)\{\mathcal{C}(\Delta, \pi_1)\}
\end{equation*} 
in $\mathit{Sets}$, i.e., $\mathcal{C}(\Delta, \Gamma) . \mathcal{C}(\Delta, A) \cong \mathcal{C}(\Delta, \Gamma . A)$, where semi-dependent pairings of $\mathcal{C}(\Delta, \Gamma . A)$ are the obvious `point-wise' semi-dependent pairings in $\mathcal{C}$. 

Dually, given a locally small semi-cartesian contravariant CwD $\mathcal{D}$ and $\Delta \in \mathcal{D}$, the contravariant FwD $\mathcal{D}(\_, \Delta) : \mathcal{D} \to \mathit{Sets}$ is semi-cartesian: Given a contravariant semi-$\Sigma$-space $\Gamma \stackrel{\pi_1}{\rightarrow} \Gamma . A \stackrel{\pi_2}{\rightarrowtriangle} A\{\pi_1\}$ in $\mathcal{D}$, we have the semi-$\Sigma$-space 
\begin{equation*}
\mathcal{D}(\Gamma, \Delta) \stackrel{\mathcal{D}(\pi_1, \Delta)}{\leftarrow} \mathcal{D}(\Gamma . A, \Delta) \stackrel{\mathcal{D}(\pi_2, \Delta)}{\rightarrowtriangle} \mathcal{D}(A, \Delta)\{\mathcal{D}(\pi_1, \Delta)\}
\end{equation*} 
in $\mathit{Sets}$, i.e., $\mathcal{D}(\Gamma, \Delta) . \mathcal{C}(A, \Delta) \cong \mathcal{C}(\Gamma . A, \Delta)$, where semi-dependent pairings of $\mathcal{C}(\Gamma . A, \Delta)$ are again the `point-wise' contravariant semi-dependent pairings in $\mathcal{D}$.
\end{example}

The following lemma generalizes well-known properties of cartesian functors: 
\begin{lemma}[SCFwD-lemma]
\label{LemSCFwDLemma}
Let $F : \mathcal{C} \to \mathcal{D}$ be an SCFwD, and $\Gamma, \Delta \in \mathcal{C}$, $A \in \mathscr{D}_{\mathcal{C}}(\Gamma)$, $B \in \mathscr{D}_{\mathcal{C}}(\Delta)$, $\phi : \Delta \to \Gamma$, $g : \Delta \rightarrowtriangle A\{\phi\}$ and $h : \Delta . B \rightarrowtriangle A\{\phi \circ \pi_1^{\Delta . B}\}$ in $\mathcal{C}$.
Then, we have:
\begin{enumerate}

\item $F(\pi_1^{\Gamma . A}) \circ \iota^{F(\Gamma . A)}_{F(\Gamma) . F(A)} = \pi_1^{F(\Gamma) . F(A)} : F(\Gamma) . F(A) \to F(\Gamma)$;

\item $F(\pi_2^{\Gamma . A}) \{ \iota^{F(\Gamma . A)}_{F(\Gamma) . F(A)} \} = \pi_2^{F(\Gamma) . F(A)} : F(\Gamma) . F(A) \rightarrowtriangle F(A) \{ \pi_1^{F(\Gamma) . F(A)} \}$; 

\item $F(\langle \phi, g \rangle) = \iota^{F(\Gamma . A)}_{F(\Gamma) . F(A)} \circ \langle F(\phi), F(g) \rangle : F(\Delta) \to F(\Gamma . A)$; and

\item $(\iota^{F(\Gamma . A)}_{F(\Gamma) . F(A)})^{-1} \circ F(\langle \phi \circ \pi_1^{\Delta . B}, h \rangle) \circ \iota^{F(\Delta . B)}_{F(\Delta) . F(B)} = \langle F(\phi) \circ \pi_1^{F(\Delta) . F(B)}, F(h) \{ \iota^{F(\Delta . B)}_{F(\Delta) . F(B)} \} \rangle : F(\Delta) . F(B) \to F(\Gamma) . F(A)$.

\end{enumerate}
\end{lemma}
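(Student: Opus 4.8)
The plan is to derive every clause from the universal property of semi-$\Sigma$-spaces, i.e.\ from the uniqueness of semi-dependent pairings. Throughout I abbreviate $\iota_{\Gamma.A} := \iota^{F(\Gamma.A)}_{F(\Gamma).F(A)}$ and $\iota_{\Delta.B} := \iota^{F(\Delta.B)}_{F(\Delta).F(B)}$, and I recall that, by construction, $\iota_{\Gamma.A} : F(\Gamma).F(A) \stackrel{\sim}{\to} F(\Gamma.A)$ is the unique morphism obtained by applying the universal property of the \emph{image} semi-$\Sigma$-space $F(\Gamma) \stackrel{F(\pi_1^{\Gamma.A})}{\leftarrow} F(\Gamma.A) \stackrel{F(\pi_2^{\Gamma.A})}{\rightarrowtriangle} F(A)\{F(\pi_1^{\Gamma.A})\}$ to the projections $\pi_1^{F(\Gamma).F(A)}$ and $\pi_2^{F(\Gamma).F(A)}$ of the \emph{canonical} semi-$\Sigma$-space of $\mathcal{D}$. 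With this reading, clauses~1 and~2 are literally the two defining equations $F(\pi_1^{\Gamma.A}) \circ \iota_{\Gamma.A} = \pi_1^{F(\Gamma).F(A)}$ and $F(\pi_2^{\Gamma.A})\{\iota_{\Gamma.A}\} = \pi_2^{F(\Gamma).F(A)}$ of that mediating morphism, so they need no further argument.

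For clause~3 I would first note that $F(\langle \phi, g \rangle)$ is itself the semi-dependent pairing of $F(\phi)$ and $F(g)$ \emph{with respect to the image space} $F(\Gamma.A)$: applying $F$ to $\pi_1^{\Gamma.A} \circ \langle \phi, g \rangle = \phi$ and $\pi_2^{\Gamma.A}\{\langle \phi, g \rangle\} = g$, then using functoriality and FwD-axiom~7 ($F(a\{\psi\}) = F(a)\{F(\psi)\}$), yields $F(\pi_1^{\Gamma.A}) \circ F(\langle \phi, g \rangle) = F(\phi)$ and $F(\pi_2^{\Gamma.A})\{F(\langle \phi, g \rangle)\} = F(g)$. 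It then suffices to check that $\iota_{\Gamma.A} \circ \langle F(\phi), F(g) \rangle$ satisfies the same two equations. Composing by $F(\pi_1^{\Gamma.A})$ and using clause~1 gives $F(\pi_1^{\Gamma.A}) \circ \iota_{\Gamma.A} \circ \langle F(\phi),F(g)\rangle = \pi_1^{F(\Gamma).F(A)} \circ \langle F(\phi),F(g)\rangle = F(\phi)$; D-composing by $F(\pi_2^{\Gamma.A})$ and using the D-composition law $a\{\psi\circ\varphi\}=a\{\psi\}\{\varphi\}$ together with clause~2 gives $F(\pi_2^{\Gamma.A})\{\iota_{\Gamma.A}\}\{\langle F(\phi),F(g)\rangle\} = \pi_2^{F(\Gamma).F(A)}\{\langle F(\phi),F(g)\rangle\} = F(g)$. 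Uniqueness of the mediating morphism into $F(\Gamma.A)$ forces the two sides to coincide.

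Clause~4 is the main work, and I would prove it by showing that the left-hand side $u := \iota_{\Gamma.A}^{-1} \circ F(\langle \phi \circ \pi_1^{\Delta.B}, h \rangle) \circ \iota_{\Delta.B}$ \emph{is} the pairing on the right, via the universal property of the canonical space $F(\Gamma).F(A)$. For the first projection, clause~1 gives $\pi_1^{F(\Gamma).F(A)} \circ \iota_{\Gamma.A}^{-1} = F(\pi_1^{\Gamma.A})$, after which functoriality, the defining equation $\pi_1^{\Gamma.A} \circ \langle \phi\circ\pi_1^{\Delta.B}, h\rangle = \phi\circ\pi_1^{\Delta.B}$, and clause~1 applied to $\Delta.B$ collapse $\pi_1^{F(\Gamma).F(A)} \circ u$ to $F(\phi)\circ\pi_1^{F(\Delta).F(B)}$. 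For the second projection I would first record $\pi_2^{F(\Gamma).F(A)}\{\iota_{\Gamma.A}^{-1}\} = F(\pi_2^{\Gamma.A})$, which follows from clause~2 and the cancellation $\iota_{\Gamma.A}\circ\iota_{\Gamma.A}^{-1} = \mathit{id}_{F(\Gamma.A)}$ absorbed inside a single D-composition; then the D-composition law, FwD-axiom~7, and the defining equation $\pi_2^{\Gamma.A}\{\langle \phi\circ\pi_1^{\Delta.B}, h\rangle\} = h$ reduce $\pi_2^{F(\Gamma).F(A)}\{u\}$ to $F(h)\{\iota_{\Delta.B}\}$. Uniqueness then identifies $u$ with $\langle F(\phi) \circ \pi_1^{F(\Delta).F(B)}, F(h)\{\iota_{\Delta.B}\}\rangle$. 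The only genuine obstacle here is type bookkeeping rather than any conceptual difficulty: one must verify that $F(h)\{\iota_{\Delta.B}\}$ has codomain $F(A)\{F(\phi)\circ\pi_1^{F(\Delta).F(B)}\}$, which again rests on rewriting $F(\pi_1^{\Delta.B})\circ\iota_{\Delta.B} = \pi_1^{F(\Delta).F(B)}$ by clause~1, so that the right-hand pairing is well-formed in the first place.
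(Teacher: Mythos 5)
Your proof is correct and follows essentially the same route as the paper's: clauses 1 and 2 fall out of the characterization of $\iota^{F(\Gamma.A)}_{F(\Gamma).F(A)}$ as the mediating isomorphism between the image and canonical semi-$\Sigma$-spaces, and clauses 3 and 4 are settled by the uniqueness part of the universal property of semi-dependent pairings. The only (harmless) divergence is that the paper dispatches clause 4 as an immediate corollary of clauses 1 and 3 together with the naturality equation $\langle \psi, k\rangle\circ\varphi = \langle\psi\circ\varphi, k\{\varphi\}\rangle$, whereas you rerun the universal-property computation directly; both are fine.
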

\begin{proof}
The equations 1 and 2 follow from the definition of the canonical isomorphism $\iota^{F(\Gamma . A)}_{F(\Gamma) . F(A)} : F(\Gamma) . F(A) \stackrel{\sim}{\to} F(\Gamma . A)$, i.e., $\iota^{F(\Gamma . A)}_{F(\Gamma) . F(A)} \stackrel{\mathrm{df. }}{=} \langle F(\pi_1^{\Gamma . A}), F(\pi_2^{\Gamma . A}) \rangle^{-1}$, and for the equation 3 we have:
\begin{align*}
\pi_1^{F(\Gamma) . F(A)} \circ ((\iota^{F(\Gamma . A)}_{F(\Gamma) . F(A)} )^{-1} \circ F(\langle \phi, g \rangle)) &= (\pi_1^{F(\Gamma) . F(A)} \circ (\iota^{F(\Gamma . A)}_{F(\Gamma) . F(A)} )^{-1}) \circ F(\langle \phi, g \rangle) \\
&= F(\pi_1^{\Gamma . A}) \circ F(\langle \phi, g \rangle) \ \text{(by the equation 1)} \\
&= F(\pi_1^{\Gamma . A} \circ \langle \phi, g \rangle) \\
&= F(\phi)
\end{align*}
as well as:
\begin{align*}
\pi_2^{F(\Gamma) . F(A)} \{ (\iota^{F(\Gamma . A)}_{F(\Gamma) . F(A)} )^{-1} \circ F(\langle \phi, g \rangle) \} &= \pi_2^{F(\Gamma) . F(A)} \{ (\iota^{F(\Gamma . A)}_{F(\Gamma) . F(A)} )^{-1} \} \{ F(\langle \phi, g \rangle) \} \\
&= F(\pi_2^{\Gamma . A}) \{ F(\langle \phi, g \rangle) \} \ \text{(by the equation 2)} \\
&= F(\pi_2^{\Gamma . A} \{ \langle \phi, g \rangle \}) \\
&= F(g).
\end{align*}
Hence, by the universal property of semi-dependent pairings, we may conclude that
\begin{equation*}
(\iota^{F(\Gamma . A)}_{F(\Gamma) . F(A)} )^{-1} \circ F(\langle \phi, g \rangle) = \langle F(\phi), F(g) \rangle
\end{equation*}
whence $F(\langle \phi, g \rangle) = \iota^{F(\Gamma . A)}_{F(\Gamma) . F(A)} \circ \langle F(\phi), F(g) \rangle : F(\Delta) \to F(\Gamma . A)$, establishing the equation 3.

Finally, the equation 4 is an immediate corollary of the equations 1 and 3.
\end{proof}


\subsubsection{Natural Transformations with Dependence}
Note that the additional semi-cartesian structure on SCCwDs $\mathcal{C}$ enables us to define a \emph{morphism between D-objects} $D \in \mathscr{D}_{\mathcal{C}}(\Delta)$ and $C \in \mathscr{D}_{\mathcal{C}}(\Gamma)$ to be a pair $(\phi, f)$ of a morphism $\phi : \Delta \to \Gamma$ and a D-morphism $f : \Delta . D \rightarrowtriangle C \{ \phi \circ \pi_1 \}$ in $\mathcal{C}$.
Based on this idea, it seems a reasonable idea to define a \emph{morphism between SCFwDs} $F, G : \mathcal{C} \to \mathcal{D}$ to be a pair $(\eta, t)$ of a family $\eta = (\eta_\Gamma)_{\Gamma \in \mathcal{C}}$ of morphisms $\eta_\Gamma : F(\Gamma) \to G(\Gamma)$ in $\mathcal{D}$ and a family $t = (t_{\Gamma, A})_{\Gamma \in \mathcal{C}, A \in \mathscr{D}_{\mathcal{C}}(\Gamma)}$ of D-morphisms $t_{\Gamma, A} : F(\Gamma) . F(A) \rightarrowtriangle G(A)\{\eta_\Gamma \circ \pi_1\}$ in $\mathcal{D}$ such that the diagrams
\begin{diagram}
F(\Gamma) &&\rTo^{F(\varphi)} && F(\Delta) && F(\Gamma) . F(A) \cong & F(\Gamma . A) &&\rTo^{\langle F(\varphi \circ \pi_1), F(g) \rangle} && F(\Delta) . F(B) \\
&&&&&&&&&&& \dDepTo_{t_{\Delta, B}} \\
\dTo_{\eta_\Gamma} &&&& \dTo_{\eta_{\Delta}} && \dTo_{\langle \eta_\Gamma \circ \pi_1, t_{\Gamma, A} \rangle} &&&&& G(B)\{\eta_\Delta \circ \pi_1\} \\
&&&&&&&&&&& \dImplies \\
G(\Gamma) && \rTo^{G(\varphi)} && G(\Delta) && G(\Gamma) . G(A) \cong & G(\Gamma . A) & \rDepTo^{G(g)} &G(B)\{G(\varphi \circ \pi_1)\}& \rImplies &G(B) \{G(\varphi) \circ \eta_\Gamma \circ \pi_1\}
\end{diagram}
commute for any $\Gamma, \Delta \in \mathcal{C}$, $A \in \mathscr{D}_{\mathcal{C}}(\Gamma)$, $B \in \mathscr{D}_{\mathcal{C}}(\Delta)$, $\varphi : \Gamma \to \Delta$ and $g : \Gamma . A \rightarrowtriangle B\{\varphi \circ \pi_1\}$ in $\mathcal{C}$.

Nevertheless, this concept turns out to be too general as morphisms between SCFwDs. 
For instance, such morphisms between cartesian functors seen as SCFwDs do not coincide with natural transformations, and Theorem~\ref{ThmDSTheorem} would not hold if we adopted them as morphisms between SCFwDs. 

Fortunately, however, the right one is simpler:
\begin{definition}[NTwDs]
A \emph{\bfseries natural transformation with dependence (NTwD)} between SCFwDs $F, G : \mathcal{C} \to \mathcal{D}$ is a pair $(\eta, t)$ of a family $\eta = (\eta_\Gamma)_{\Gamma \in \mathcal{C}}$ of morphisms $\eta_\Gamma : F(\Gamma) \to G(\Gamma)$ in $\mathcal{D}$ and a family $t = (t_{\Gamma, A})_{\Gamma \in \mathcal{C}, A \in \mathscr{D}_{\mathcal{C}}(\Gamma)}$ of D-morphisms $t_{\Gamma, A} : F(\Gamma) . F(A) \rightarrowtriangle G(A)\{\eta_\Gamma \circ \pi_1\}$ in $\mathcal{D}$ such that $\eta$ forms a natural transformation (NT) $\mathscr{U}(F) \Rightarrow \mathscr{U}(G) : \mathscr{U}(\mathcal{C}) \to \mathscr{U}(\mathcal{D})$, and the pair is \emph{\bfseries coherent} in the sense that for any $\Gamma, \Delta \in \mathcal{C}$, $A \in \mathscr{D}_{\mathcal{C}}(\Gamma)$, $B \in \mathscr{D}_{\mathcal{C}}(\Delta)$, $\varphi : \Gamma \to \Delta$ and $g : \Gamma . A \rightarrowtriangle B\{\varphi \circ \pi_1\}$ in $\mathcal{C}$ it satisfies the equation
\begin{equation}
\label{CohNTwDs}
F(\Gamma) . F(A) \stackrel{\iota}{\rightarrow} F(\Gamma . A) \stackrel{\eta_{\Gamma . A}}{\rightarrow} G(\Gamma . A) \stackrel{\iota^{-1}}{\rightarrow} G(\Gamma) . G(A) \stackrel{\pi_2}{\rightarrowtriangle} G(A)\{\pi_1\} \Rightarrow G(A)\{ \eta_\Gamma \circ \pi_1 \} = t_{\Gamma, A}
\end{equation}
where the semi-$\Sigma$-spaces $\Gamma . A \in \mathcal{C}$ and $G(\Gamma) . G(A) \in \mathcal{D}$ are arbitrary.
\end{definition}

Let us see that the coherence axiom (\ref{CohNTwDs}) makes the diagram 
\begin{diagram}
 F(\Gamma) . F(A) \cong & F(\Gamma . A) &&\rTo^{\langle F(\varphi \circ \pi_1), F(g) \rangle} && F(\Delta) . F(B) \\
&&&&& \dDepTo_{t_{\Delta, B}} \\
\dTo_{\langle \eta_\Gamma \circ \pi_1, t_{\Gamma, A} \rangle} &&&&& G(B)\{\eta_\Delta \circ \pi_1\} \\
&&&&& \dImplies \\
G(\Gamma) . G(A) \cong & G(\Gamma . A) & \rDepTo^{G(g)} &G(B)\{G(\varphi \circ \pi_1)\}& \rImplies &G(B) \{G(\varphi) \circ \eta_\Gamma \circ \pi_1\}
\end{diagram}
commutes for any $\Gamma, \Delta \in \mathcal{C}$, $A \in \mathscr{D}_{\mathcal{C}}(\Gamma)$, $B \in \mathscr{D}_{\mathcal{C}}(\Delta)$, $\varphi : \Gamma \to \Delta$ and $g : \Gamma . A \rightarrowtriangle B\{\varphi \circ \pi_1\}$ in $\mathcal{C}$.
First, by the coherence of $(\eta, t)$ and the naturality of $\eta$, we have:
\begin{align*}
G(g) \{ \iota \} \{ \langle \eta_\Gamma \circ \pi_1, t_{\Gamma, A} \rangle \} &= G(g)\{\iota\} \{ \langle \pi_1 \circ (\iota^{-1} \circ \eta_{\Gamma . A} \circ \iota), \pi_2 \{ \iota^{-1} \circ \eta_{\Gamma . A} \circ \iota \} \rangle \} \\
&= G(g)\{\iota \circ \langle \pi_1, \pi_2 \rangle \circ \iota^{-1} \circ \eta_{\Gamma . A} \circ \iota \} \\
&= G(g) \{ \eta_{\Gamma . A} \circ \iota \}
\end{align*}
as well as:
\begin{align*}
t_{\Delta, B} \{ \langle F(\varphi \circ \pi_1), F(g) \rangle \} \{ \iota \} &= \pi_2 \{ \iota^{-1} \circ \eta_{\Delta . B} \circ \iota \} \{ \iota^{-1} \circ F(\langle \varphi \circ \pi_1, g\rangle) \} \{ \iota \} \\
&= \pi_2 \{ \iota^{-1} \circ \eta_{\Delta . B} \circ \iota \circ \iota^{-1} \circ F(\langle \varphi \circ \pi_1, g\rangle) \circ \iota \} \\
&= \pi_2 \{ \iota^{-1} \circ \eta_{\Delta . B} \circ F(\langle \varphi \circ \pi_1, g\rangle) \circ \iota \}
\end{align*}
whence it suffices to show:
\begin{equation*}
G(g) \{ \eta_{\Gamma . A} \circ \iota \} =  \pi_2 \{ \iota^{-1} \circ \eta_{\Delta . B} \circ F(\langle \varphi \circ \pi_1, g\rangle) \circ \iota \}.
\end{equation*}

Then, by the naturality of $\eta$, we have:
\begin{equation*}
G(\langle \varphi \circ \pi_1, g \rangle) \circ \eta_{\Gamma . A} = \eta_{\Delta . B} \circ F(\langle \phi \circ \pi_1, g \rangle)
\end{equation*}
and thus, it is immediate that:
\begin{align*}
G(g) \{ \eta_{\Gamma . A} \circ \iota \} &= \pi_2 \{ \langle G(\varphi \circ \pi_1), G(g) \rangle \circ \eta_{\Gamma . A} \circ \iota \} \\
&= \pi_2 \{ \iota^{-1} \circ G(\langle \varphi \circ \pi_1, g \rangle) \circ \eta_{\Gamma . A} \circ \iota \} \\
&= \pi_2 \{ \iota^{-1} \circ \eta_{\Delta . B} \circ F(\langle \phi \circ \pi_1, g \rangle) \circ \iota \}.
\end{align*}

Next, note that NTwDs $(\eta, t)$ between \emph{strict} SCFwDs $F, G : \mathcal{C} \to \mathcal{D}$ are essentially NTs $\eta : \mathscr{U}(F) \Rightarrow \mathscr{U}(G) : \mathscr{U}(\mathcal{C}) \to \mathscr{U}(\mathcal{D})$ because $t$ may be completely recovered from $\eta$ by the equation $t_{\Gamma, A} = \pi_2 \{\eta_{\Gamma . A} \}$ for any $\Gamma \in \mathcal{C}$ and $A \in \mathscr{D}_{\mathcal{C}}(\Gamma)$.
This point plays an essential role for the commutative diagram in the proof of Theorem~\ref{ThmDSTheorem}.

\begin{notation}
We write $(\eta, t) : F \Rightarrow G$ to mean that the pair $(\eta, t)$ is an NTwD from an SCCwD $F$ to another $G$, or even $(\eta, t) : F \Rightarrow G : \mathcal{C} \to \mathcal{D}$ if we would like to indicate $F, G : \mathcal{C} \to \mathcal{D}$.
\end{notation}

We skip describing \emph{contravariant NTwDs} explicitly because now it should be clear for the reader what they are.
More generally, we shall focus on \emph{covariant} cases in the rest of the paper.

\begin{example}
An NT $\eta : F \Rightarrow G : \mathcal{C} \to \mathcal{D}$ gives rise to an NTwD $\mathsf{D}(\eta) \stackrel{\mathrm{df. }}{=} (\eta, t(\eta)) : \mathsf{D}(F) \Rightarrow \mathsf{D}(G) : \mathsf{D}(\mathcal{C}) \to \mathsf{D}(\mathcal{D})$ by $t(\eta)_{\Gamma, A} \stackrel{\mathrm{df. }}{=} \eta_A \circ \pi_2^{F(\Gamma) . F(A)}$ for all $\Gamma \in \mathcal{C}$ and $A \in \mathscr{D}_{\mathcal{C}}(\Gamma)$, as well as another $\mathsf{d}(\eta) \stackrel{\mathrm{df. }}{=} (\eta, \emptyset) : \mathsf{d}(F) \Rightarrow \mathsf{d}(G) : \mathsf{d}(\mathcal{C}) \to \mathsf{d}(\mathcal{D})$.
We call the first kind \emph{\bfseries NTs seen as NTwDs}.
Conversely, an NTwD $(\epsilon, s) : J \Rightarrow K : \mathcal{L} \to \mathcal{R}$ such that $J$ and $K$ are cartesian functors seen as SCFwDs and $s_{\Gamma, A} = \epsilon_A \circ \pi_2^{F(\Gamma) . F(A)}$ for all $\Gamma \in \mathcal{L}$ and $A \in \mathscr{D}_{\mathcal{L}}(\Gamma)$ coincides with an NT.
Hence, NTwDs are a generalization of NTs.
\end{example}

\begin{example}
Given a locally small SCCwD $\mathcal{C}$ and a morphism $\phi : \Delta \to \Gamma$ in $\mathcal{C}$, we may give an NTwD $(\eta(\phi), t(\phi)) : \mathcal{C}(\Gamma, \_) \Rightarrow \mathcal{C}(\Delta, \_) : \mathcal{C} \to \mathit{Sets}$ by defining for any $\Theta \in \mathcal{C}$ and $C \in \mathscr{D}_{\mathcal{C}}(\Theta)$ 
\begin{align*}
\eta(\phi)_\Theta &\stackrel{\mathrm{df. }}{=} \mathcal{C}(\phi, \Theta) : \mathcal{C}(\Gamma, \Theta) \to \mathcal{C}(\Delta, \Theta) \\ t(\phi)_{\Theta, C} &\stackrel{\mathrm{df. }}{=} \mathcal{C}(\phi, C) \circ \pi_2 : \mathcal{C}(\Gamma, \Theta) . \mathcal{C}(\Gamma, C) \rightarrowtriangle \mathcal{C}(\Delta, C) \{ \mathcal{C}(\phi, \Theta) \circ \pi_1 \}.
\end{align*}
Note that $\eta(\phi)_\Theta$ maps each morphism $\varphi : \Gamma \to \Theta$ in $\mathcal{C}$ to the morphism $\varphi \circ \phi : \Delta \to \Theta$ in $\mathcal{C}$, and $t(\phi)_{\Theta, C}$ maps each pair $(\varphi, g) \in \mathcal{C}(\Gamma, \Theta) . \mathcal{C}(\Gamma, C)$ to the D-morphism $g\{\phi\} \in \mathscr{D}_{\mathcal{C}}(\Delta, C\{\varphi\}\{\phi\})$ in $\mathcal{C}$.
It is easy to see that the pair $(\eta(\phi), t(\phi))$ satisfies the required naturality condition. 
\end{example}

Similarly to the case of NTs, it is straightforward to define \emph{vertical composition} of NTwDs:
\begin{definition}[Vertical composition of NTwDs]
The \emph{\bfseries vertical composition} $(\epsilon, s) \circ (\eta, t) : F \Rightarrow H : \mathcal{C} \to \mathcal{D}$ of NTwDs $(\eta, t) : F \Rightarrow G : \mathcal{C} \to \mathcal{D}$ and $(\epsilon, s) : G \Rightarrow H : \mathcal{C} \to \mathcal{D}$ is given by:
\begin{align*}
(\epsilon, s) \circ (\eta, t) &\stackrel{\mathrm{df. }}{=} (\epsilon \circ \eta, s \circ t) \\
\epsilon \circ \eta &\stackrel{\mathrm{df. }}{=} (\epsilon_\Gamma \circ \eta_\Gamma)_{\Gamma \in \mathcal{C}} \\
s \circ t &\stackrel{\mathrm{df. }}{=} (s_{\Gamma, A} \{ \langle \eta_{\Gamma} \circ \pi_1, t_{\Gamma, A} \rangle \})_{\Gamma \in \mathcal{C}, A \in \mathscr{D}_{\mathcal{C}}(\Gamma)}.
\end{align*}
\end{definition}

Note that the vertical composition on the first components is just the conventional vertical composition on NTs.
It is easy to see that vertical composition of NTwDs is well-defined:
\begin{proposition}[Well-defined VC of NTwDs]
NTwDs are closed under vertical composition.
\end{proposition}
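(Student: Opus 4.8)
The plan is to verify that $(\epsilon \circ \eta, s \circ t)$ satisfies the two defining conditions of an NTwD $F \Rightarrow H$: that its first component is a natural transformation $\mathscr{U}(F) \Rightarrow \mathscr{U}(H)$, and that the pair is coherent in the sense of (\ref{CohNTwDs}). The naturality of $\epsilon \circ \eta = (\epsilon_\Gamma \circ \eta_\Gamma)_{\Gamma \in \mathcal{C}}$ is immediate, being the classical vertical composite of the natural transformations $\eta$ and $\epsilon$; so all the work is in the coherence condition, preceded by a short typing check.

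The key observation that organizes the calculation is that the coherence axiom (\ref{CohNTwDs}) for $(\eta, t)$ is equivalent to the single morphism identity
\[
\langle \eta_\Gamma \circ \pi_1, t_{\Gamma, A} \rangle = \iota_G^{-1} \circ \eta_{\Gamma . A} \circ \iota_F : F(\Gamma) . F(A) \to G(\Gamma) . G(A),
\]
where I abbreviate $\iota_F \stackrel{\mathrm{df. }}{=} \iota^{F(\Gamma . A)}_{F(\Gamma) . F(A)}$ and $\iota_G \stackrel{\mathrm{df. }}{=} \iota^{G(\Gamma . A)}_{G(\Gamma) . G(A)}$. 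To prove this I would invoke the universal property of the semi-$\Sigma$-space $G(\Gamma) . G(A)$: the right-hand side satisfies $\pi_1 \circ (\iota_G^{-1} \circ \eta_{\Gamma . A} \circ \iota_F) = \eta_\Gamma \circ \pi_1$, which follows from Lemma~\ref{LemSCFwDLemma}.1 applied to both $F$ and $G$ together with the naturality square $G(\pi_1^{\Gamma . A}) \circ \eta_{\Gamma . A} = \eta_\Gamma \circ F(\pi_1^{\Gamma . A})$; and it satisfies $\pi_2 \{ \iota_G^{-1} \circ \eta_{\Gamma . A} \circ \iota_F \} = t_{\Gamma, A}$, which is exactly (\ref{CohNTwDs}). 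Uniqueness of semi-dependent pairings then forces the displayed identity. The same reasoning gives the analogue $s_{\Gamma, A} = \pi_2 \{ \iota_H^{-1} \circ \epsilon_{\Gamma . A} \circ \iota_G \}$ for $(\epsilon, s)$, with $\iota_H \stackrel{\mathrm{df. }}{=} \iota^{H(\Gamma . A)}_{H(\Gamma) . H(A)}$.

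With these reformulations in hand, coherence of the composite is a short chain. First the typing is correct: since $\pi_1 \circ \langle \eta_\Gamma \circ \pi_1, t_{\Gamma, A} \rangle = \eta_\Gamma \circ \pi_1$, the codomain $H(A)\{\epsilon_\Gamma \circ \pi_1\}$ of $s_{\Gamma, A}$ transports along $\langle \eta_\Gamma \circ \pi_1, t_{\Gamma, A} \rangle$ to $H(A)\{\epsilon_\Gamma \circ \eta_\Gamma \circ \pi_1\} = H(A)\{(\epsilon \circ \eta)_\Gamma \circ \pi_1\}$, so $(s \circ t)_{\Gamma, A}$ does land in $\mathscr{D}_{\mathcal{D}}(F(\Gamma) . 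F(A), H(A)\{(\epsilon \circ \eta)_\Gamma \circ \pi_1\})$. Then, substituting both reformulations and using the functoriality axiom $a \{ \phi \circ \varphi \} = a \{ \phi \} \{ \varphi \}$ of D-composition together with $\iota_G \circ \iota_G^{-1} = \mathit{id}_{G(\Gamma . A)}$, I compute
\[
(s \circ t)_{\Gamma, A} = s_{\Gamma, A} \{ \iota_G^{-1} \circ \eta_{\Gamma . A} \circ \iota_F \} = \pi_2 \{ \iota_H^{-1} \circ \epsilon_{\Gamma . A} \circ \eta_{\Gamma . A} \circ \iota_F \} = \pi_2 \{ \iota_H^{-1} \circ (\epsilon \circ \eta)_{\Gamma . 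A} \circ \iota_F \},
\]
which is exactly the value prescribed by (\ref{CohNTwDs}) for the pair $(\epsilon \circ \eta, s \circ t)$, since $(\epsilon \circ \eta)_{\Gamma . A} = \epsilon_{\Gamma . A} \circ \eta_{\Gamma . A}$. This establishes coherence and finishes the argument.

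The only mildly delicate point, and hence the main obstacle, is the bookkeeping of the three distinct canonical isomorphisms $\iota_F$, $\iota_G$, $\iota_H$ and their domains and codomains, so that the cancellation $\iota_G \circ \iota_G^{-1} = \mathit{id}_{G(\Gamma . A)}$ is applied on the correct object; once the coherence axiom has been recast as the pairing identity above, everything else reduces to routine applications of the CwD axioms.
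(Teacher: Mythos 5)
Your proof is correct and takes essentially the same route as the paper's: the paper's own chain of equalities expands $t_{\Gamma,A}$ and $s_{\Gamma,A}$ via the coherence axiom, collapses the semi-dependent pairing $\langle \eta_\Gamma \circ \pi_1, t_{\Gamma,A}\rangle$ to $\iota^{-1}\circ\eta_{\Gamma . A}\circ\iota$ using Cons-Nat and Cons-Id, and cancels $\iota\circ\iota^{-1}$, which is exactly your computation with the key pairing identity merely extracted as a preliminary step instead of derived inline.
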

\begin{proof}
It suffices to show that the coherence \ref{CohNTwDs} is preserved under vertical composition. 
Let $(\eta, t) : F \Rightarrow G : \mathcal{C} \to \mathcal{D}$ and $(\epsilon, s) : G \Rightarrow H : \mathcal{C} \to \mathcal{D}$ be NTwDs.
Then, we have:
\begin{align*}
(s \circ t)_{\Gamma, A} &= s_{\Gamma, A} \{ \langle \eta_\Gamma \circ \pi_1, t_{\Gamma, A} \rangle \} \\
&= \pi_2 \{ \iota^{-1} \circ \epsilon_{\Gamma, A} \circ \iota \} \{ \langle \eta_\Gamma \circ \pi_1, \pi_2 \{ \iota^{-1} \circ \eta_{\Gamma, A} \circ \iota \} \rangle \} \\
&= \pi_2 \{ \iota^{-1} \circ \epsilon_{\Gamma, A} \circ \iota \} \{ \langle \pi_1 \circ \iota^{-1} \circ \eta_{\Gamma . A} \circ \iota, \pi_2 \{ \iota^{-1} \circ \eta_{\Gamma, A} \circ \iota \} \rangle \} \\
&= \pi_2 \{ \iota^{-1} \circ \epsilon_{\Gamma, A} \circ \iota \} \{ \langle \pi_1, \pi_2 \rangle \circ \iota^{-1} \circ \eta_{\Gamma . A} \circ \iota \} \\
&= \pi_2 \{ \iota^{-1} \circ \epsilon_{\Gamma, A} \circ \iota \circ \mathit{id}_{F(\Gamma) . F(A)} \circ \iota^{-1} \circ \eta_{\Gamma . A} \circ \iota \} \\
&= \pi_2 \{ \iota^{-1} \circ \epsilon_{\Gamma, A} \circ \eta_{\Gamma . A} \circ \iota \} \\
&= \pi_2 \{ \iota^{-1} \circ (\epsilon \circ \eta)_{\Gamma, A} \circ \iota \} 
\end{align*}
for any $\Gamma \in \mathcal{C}$ and $A \in \mathscr{D}_{\mathcal{C}}(\gamma)$, which establishes the coherence \ref{CohNTwDs} on the vertical composition $(\epsilon, s) \circ (\eta, t) = (\epsilon \circ \eta, s \circ t)$.
\end{proof}

As one may have already expected, we also have \emph{horizontal composition} of NTwDs:
\begin{definition}[Horizontal composition of NTwDs]
The \emph{\bfseries horizontal composition} $(\eta', t') \odot (\eta, t) : F' \circ F \Rightarrow G' \circ G : \mathcal{C} \to \mathcal{E}$ of NTwDs $(\eta, t) : F \Rightarrow G : \mathcal{C} \to \mathcal{D}$ and $(\eta', t') : F' \Rightarrow G' : \mathcal{D} \to \mathcal{E}$ is given by:
\begin{align*}
(\eta', t') \odot (\eta, t) &\stackrel{\mathrm{df. }}{=} (\eta' \odot \eta, t' \odot t) \\
\eta' \odot \eta &\stackrel{\mathrm{df. }}{=} (\eta'_{G(\Gamma)} \circ F'(\eta_\Gamma))_{\Gamma \in \mathcal{C}} = (G'(\eta_\Gamma) \circ \eta'_{F(\Gamma)})_{\Gamma \in \mathcal{C}} \\
t' \odot t &\stackrel{\mathrm{df. }}{=}  (t'_{G(\Gamma), G(A)} \{ \langle F'(\eta_\Gamma) \circ \pi_1, F'(t_{\Gamma, A})\{ \iota^{F' \circ F(\Gamma . A)}_{F' \circ F(\Gamma) . F' \circ F(A)} \} \rangle \})_{\Gamma \in \mathcal{C}, A \in \mathscr{D}_{\mathcal{C}}(\Gamma)}.
\end{align*}
\end{definition}

Again, note that the horizontal composition on the first components is just the horizontal composition on NTs.
It is not hard to see that horizontal composition on NTwDs is well-defined:
\begin{proposition}[Well-defined HC of NTwDs]
NTwDs are closed under horizontal composition.
\end{proposition}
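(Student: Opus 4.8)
The plan is to verify the two conditions in the definition of an NTwD for the pair $(\eta' \odot \eta, t' \odot t)$: first that $\eta' \odot \eta$ is a natural transformation $\mathscr{U}(F' \circ F) \Rightarrow \mathscr{U}(G' \circ G)$, and second that the pair is coherent in the sense of (\ref{CohNTwDs}). The first condition is immediate: $\eta' \odot \eta$ is by construction the ordinary horizontal composite of the underlying NTs $\mathscr{U}(\eta)$ and $\mathscr{U}(\eta')$, which is standard, its two displayed forms $\eta'_{G(\Gamma)} \circ F'(\eta_\Gamma)$ and $G'(\eta_\Gamma) \circ \eta'_{F(\Gamma)}$ agreeing by naturality of $\eta'$; and $F' \circ F$, $G' \circ G$ are again SCFwDs since composites of structure-preserving FwDs preserve terminal objects and semi-$\Sigma$-spaces. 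So all the work lies in coherence.

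For coherence I would first record a convenient reformulation: for an NTwD $(\eta, t) : F \Rightarrow G$, writing $\iota_F := \iota^{F(\Gamma . A)}_{F(\Gamma) . F(A)}$ and $\iota_G := \iota^{G(\Gamma . A)}_{G(\Gamma) . G(A)}$, the equation (\ref{CohNTwDs}) is equivalent to the single identity $\langle \eta_\Gamma \circ \pi_1, t_{\Gamma, A} \rangle = \iota_G^{-1} \circ \eta_{\Gamma . A} \circ \iota_F$. Indeed, applying $\pi_2 \{ - \}$ to both sides recovers (\ref{CohNTwDs}), while conversely (\ref{CohNTwDs}) fixes the second component and naturality of $\eta$ together with Lemma~\ref{LemSCFwDLemma}(1) fixes the first (they give $\pi_1 \circ (\iota_G^{-1} \circ \eta_{\Gamma . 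A} \circ \iota_F) = \eta_\Gamma \circ \pi_1$), so the identity follows by uniqueness of semi-dependent pairings. I would also record the composition-coherence of the canonical isomorphisms, $\iota_{F' \circ F} = F'(\iota_F) \circ \iota_{F'}$ and likewise for $G$, where $\iota_{F'}$ is the isomorphism of $F'$ at the semi-$\Sigma$-space $F(\Gamma) . F(A)$ in $\mathcal{D}$; this is a direct consequence of Lemma~\ref{LemSCFwDLemma}(3) once one unfolds $\iota_F = \langle F(\pi_1), F(\pi_2) \rangle^{-1}$.

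With these preliminaries, coherence of the composite amounts to showing $\langle (\eta' \odot \eta)_\Gamma \circ \pi_1, (t' \odot t)_{\Gamma, A} \rangle = \iota_{G' \circ G}^{-1} \circ (\eta' \odot \eta)_{\Gamma . A} \circ \iota_{F' \circ F}$. Set $m := \langle \eta_\Gamma \circ \pi_1, t_{\Gamma, A} \rangle = \iota_G^{-1} \circ \eta_{\Gamma . A} \circ \iota_F$ (coherence of $(\eta, t)$) and $m' := \langle \eta'_{G(\Gamma)} \circ \pi_1, t'_{G(\Gamma), G(A)} \rangle$ (coherence of $(\eta', t')$ at the pair $(G(\Gamma), G(A))$ in $\mathcal{D}$). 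Substituting the composition-coherence of the isos into the right-hand side, using functoriality of $F'$ and $G'$ to absorb $\iota_F$ and $\iota_G$, and then applying naturality of $\eta'$ twice (once for the morphism $\eta_{\Gamma . A} \circ \iota_F$ and once for $m$), I expect the right-hand side to collapse to $m' \circ n$, where $n := \iota_{F'}^{-1} \circ F'(m) \circ \iota_{F'}$ across the two relevant semi-$\Sigma$-spaces; by Lemma~\ref{LemSCFwDLemma}(4) this $n$ is exactly $\langle F'(\eta_\Gamma) \circ \pi_1, F'(t_{\Gamma, A}) \{ \iota \} \rangle$, the pairing appearing in the definition of $t' \odot t$. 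Finally I would compute the left-hand side directly: by equation (\ref{ConsNat}) and the projection law $\pi_1 \circ n = F'(\eta_\Gamma) \circ \pi_1$, the product $m' \circ n$ unfolds to $\langle (\eta'_{G(\Gamma)} \circ F'(\eta_\Gamma)) \circ \pi_1, t'_{G(\Gamma), G(A)} \{ n \} \rangle = \langle (\eta' \odot \eta)_\Gamma \circ \pi_1, (t' \odot t)_{\Gamma, A} \rangle$, so both sides equal $m' \circ n$ and coherence follows.

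The main obstacle is purely the bookkeeping of the several families of canonical isomorphisms — those for $F$, $G$, $F'$, $G'$ and for the composites $F' \circ F$, $G' \circ G$ — and in particular keeping straight the two distinct isos that arise for $F'$, namely the one for the semi-$\Sigma$-space $F(\Gamma) . F(A)$ living in $\mathcal{D}$ versus the one for $F(\Gamma . A)$; the composition-coherence identity above is what lets these be interchanged safely. The conceptual content is light — two appeals to naturality of $\eta'$, plus one use each of the coherence reformulation and of Lemma~\ref{LemSCFwDLemma}(4) — but ensuring that every substitution typechecks, with each D-morphism sitting over the correct base object, is where the care is needed.
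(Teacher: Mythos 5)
Your proposal is correct and follows essentially the same route as the paper: a direct verification of the coherence condition~(\ref{CohNTwDs}) for $(\eta'\odot\eta, t'\odot t)$ using the coherence of the two factors, the naturality of $\eta'$, Lemma~\ref{LemSCFwDLemma}, and the composition law $\iota_{F'\circ F} = F'(\iota_F)\circ\iota_{F'}$ for the canonical isomorphisms. The only difference is organizational — you lift the coherence condition to the morphism-level identity $\langle\eta_\Gamma\circ\pi_1, t_{\Gamma,A}\rangle = \iota^{-1}\circ\eta_{\Gamma.A}\circ\iota$ and compare both sides to a common composite $m'\circ n$, whereas the paper carries $\pi_2\{-\}$ through a single long equational chain on the D-morphisms; this is a cleaner bookkeeping of the same computation, not a different argument.
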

\begin{proof}
Let $(\eta, t) : F \Rightarrow G : \mathcal{C} \to \mathcal{D}$ and $(\eta', t') : F' \Rightarrow G' : \mathcal{D} \to \mathcal{E}$ be NTwDs.
We show that horizontal composition $(\eta', t') \odot (\eta, t) : F' \circ F \Rightarrow G' \circ G : \mathcal{C} \to \mathcal{E}$ satisfies the coherence (\ref{CohNTwDs}):
\begin{align*}
(t' \odot t)_{\Gamma, A} &= t'_{G(\Gamma), G(A)} \{ \langle F'(\eta_\Gamma) \circ \pi_1, F'(t_{\Gamma, A}) \{ \iota \} \rangle \} \\
&= t'_{G(\Gamma), G(A)} \{ \langle F'(\eta_\Gamma \circ \pi_1) \circ \iota, F'(t_{\Gamma, A}) \{ \iota \} \rangle \} \\
&= \pi_2 \{ \iota^{-1} \circ \eta'_{G(\Gamma) . G(A)} \circ \iota \} \{ \langle F' (G(\pi_1) \circ \eta_{\Gamma . A} \circ \iota), F'(\pi_2\{ \iota^{-1} \circ \eta_{\Gamma . A} \circ \iota \}) \rangle \circ \iota \} \\
&= \pi_2 \{ \iota^{-1} \circ \eta'_{G(\Gamma) . G(A)} \circ \iota \} \{ \iota^{-1} \circ F' (\langle G(\pi_1) \circ \eta_{\Gamma . A} \circ \iota, \pi_2\{ \iota^{-1} \circ \eta_{\Gamma . A} \circ \iota \} \rangle) \circ \iota \} \\
&= \pi_2 \{ \iota^{-1} \circ \eta'_{G(\Gamma) . G(A)} \circ \iota \} \{ \iota^{-1} \circ F' (\langle \pi_1 \circ (\iota^{-1} \circ \eta_{\Gamma . A} \circ \iota), \pi_2\{ \iota^{-1} \circ \eta_{\Gamma . A} \circ \iota \} \rangle) \circ \iota \} \\
&= \pi_2 \{ \iota^{-1} \circ \eta'_{G(\Gamma) . G(A)} \circ \iota \} \{ \iota^{-1} \circ F' (\langle \pi_1, \pi_2 \rangle \circ (\iota^{-1} \circ \eta_{\Gamma . A} \circ \iota)) \circ \iota \} \\
&= \pi_2 \{ \iota^{-1} \circ \eta'_{G(\Gamma) . G(A)} \circ \iota \} \{ \iota^{-1} \circ F' (\mathit{id}_{G(\Gamma) . G(A)} \circ \iota^{-1} \circ \eta_{\Gamma . A} \circ \iota) \circ \iota \} \\
&= \pi_2 \{ \iota^{-1} \circ \eta'_{G(\Gamma) . G(A)} \circ \iota \circ \iota^{-1} \circ F' (\iota^{-1}) \circ F'(\eta_{\Gamma . A}) \circ F'(\iota) \circ \iota \} \\
&= \pi_2 \{ \iota^{-1} \circ G'(\iota^{-1}) \circ \eta'_{G(\Gamma . A)} \circ F'(\iota) \circ \iota \circ \iota^{-1} \circ F' (\iota^{-1}) \circ F'(\eta_{\Gamma . A}) \circ F'(\iota) \circ \iota \} \\
&= \pi_2 \{ \iota^{-1} \circ \eta'_{G(\Gamma . A)} \circ F'(\eta_{\Gamma . A}) \circ \iota \} \\
&= \pi_2 \{ \iota^{-1} \circ (\eta' \odot \eta)_{\Gamma . A} \circ \iota \}
\end{align*}
for any $\Gamma \in \mathcal{C}$ and $A \in \mathscr{D}_{\mathcal{C}}(\gamma)$, completing the proof.
\end{proof}

\if0
where note that 
\begin{equation*}
F' \circ F(\Gamma) . F' \circ F(A) \stackrel{\langle F'(\eta_\Gamma) \circ \pi_1, F'(t_{\Gamma, A})\{\iota\} \rangle}{\to} F' \circ G(\Gamma) . F' \circ G(A) \stackrel{t'_{G(\Gamma), G(A)}}{\to} F' \circ G'(A)\{ \eta'_{G(\Gamma)} \circ F'(\eta_\Gamma) \circ \pi_1 \}
\end{equation*}
for all $\Gamma \in \mathcal{C}$ and $A \in \mathscr{D}_{\mathcal{C}}(\Gamma)$.
\fi

\subsubsection{The 2-Category of Semi-Cartesian Categories with Dependence}
We are now ready to define the 2-category of small SCCwDs, SCFwDs and NTwDs.
Let us first define the following preliminary concept:
\begin{definition}[SCFwD-categories]
Given SCCwDs $\mathcal{C}$ and $\mathcal{D}$, the category $[\mathcal{C}, \mathcal{D}]$, called the \emph{\bfseries SCFwD-category} from $\mathcal{C}$ to $\mathcal{D}$ is given by:
\begin{itemize}

\item Objects are SCFwDs $\mathcal{C} \to \mathcal{D}$;

\item Morphisms $F \to G$ are NTwDs $F \Rightarrow G$;

\item The composition $F \stackrel{(\eta, t)}{\to} G \stackrel{(\epsilon, s)}{\to} H$ is the vertical composition $(\epsilon, s) \circ (\eta, t)$; and

\item The identity $\mathit{id}_F$ is the pair $((\mathit{id}_{F(\Gamma)})_{\Gamma \in \mathcal{C}}, (\pi_2^{F(\Gamma) . F(A)})_{\Gamma \in \mathcal{C}, A \in \mathscr{D}_{\mathcal{C}}(\Gamma)})$.

\end{itemize}
\end{definition}

It is easy to see that SCFwD-categories are well-defined.
We are now ready to define:
\begin{definition}[The 2-category $\mathbb{SCC_D}$]
The 2-category $\mathbb{SCC_D}$ is defined by:
\begin{itemize}

\item 0-cells (or objects) are small SCCwDs;

\item Given 0-cells $\mathcal{C}$ and $\mathcal{D}$, the hom-category $\mathbb{SCC_D}(\mathcal{C}, \mathcal{D})$ is the SCFwD-category $[\mathcal{C}, \mathcal{D}]$;

\item Given 0-cells $\mathcal{C}$, $\mathcal{D}$ and $\mathcal{E}$, the composition functor $[\mathcal{C}, \mathcal{D}] \times [\mathcal{D}, \mathcal{E}] \to [\mathcal{C}, \mathcal{E}]$ maps
\begin{align*}
(F, F') &\mapsto F' \circ F \\
((\eta, t), (\eta', t')) &\mapsto (\eta', t') \odot (\eta, t)
\end{align*}
for all $F, G \in [\mathcal{C}, \mathcal{D}]$, $F', G' \in [\mathcal{D}, \mathcal{E}]$, $(\eta, t) \in [\mathcal{C}, \mathcal{D}](F, G)$ and $(\eta', t') \in [\mathcal{D}, \mathcal{E}](F', G')$; and

\item Given a 0-cell $\mathcal{C}$, the functor $1 \to [\mathcal{C}, \mathcal{C}]$ maps
\begin{align*}
\star &\mapsto \mathit{id}_{\mathcal{C}} \\
\mathit{id}_\star &\mapsto \mathit{id}_{\mathit{id}_{\mathcal{C}}}
\end{align*}
where $\star$ is the unique object of the terminal category $1$.
\end{itemize}
\end{definition}

\begin{theorem}[Well-defined $\mathbb{SCC_D}$]
The structure $\mathbb{SCC_D}$ in fact forms a 2-category.
\end{theorem}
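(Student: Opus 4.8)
The plan is to verify that $\mathbb{SCC_D}$ satisfies the axioms of a \emph{strict} $2$-category: each hom-object $[\mathcal{C}, \mathcal{D}]$ is a category (already noted to be well-defined), the composition assignments $[\mathcal{C}, \mathcal{D}] \times [\mathcal{D}, \mathcal{E}] \to [\mathcal{C}, \mathcal{E}]$ are functors, the assignments $1 \to [\mathcal{C}, \mathcal{C}]$ are functors, and these data obey strict associativity and unit laws. The single observation that organizes the whole argument is that the coherence axiom (\ref{CohNTwDs}) forces the second component of an NTwD to be \emph{determined} by its first: for any NTwD $(\eta, t) : F \Rightarrow G$ one has $t_{\Gamma, A} = \pi_2 \{ \iota^{-1} \circ \eta_{\Gamma . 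A} \circ \iota \}$, and, using Lemma~\ref{LemSCFwDLemma} together with the naturality of $\eta$, this formula already has the correct codomain $G(A)\{\eta_\Gamma \circ \pi_1\}$ for every natural transformation $\eta : \mathscr{U}(F) \Rightarrow \mathscr{U}(G)$. Consequently an NTwD between SCFwDs carries exactly the data of such a natural transformation, and two NTwDs coincide as soon as their first components do. I would state and prove this reduction first, since it collapses almost every $2$-categorical axiom to its classical counterpart for functors and natural transformations.

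Second, I would treat the horizontal composition functor. On $0$- and $1$-cells I must check that SCFwDs are closed under composition: if $F : \mathcal{C} \to \mathcal{D}$ and $F' : \mathcal{D} \to \mathcal{E}$ each send terminal objects to terminal objects and semi-$\Sigma$-spaces to semi-$\Sigma$-spaces, then so does $F' \circ F$, because the image of a semi-$\Sigma$-space under $F$ is a semi-$\Sigma$-space and its further image under $F'$ is again one; the associated canonical isomorphism factors as $\iota^{F'\circ F} = F'(\iota^{F}) \circ \iota^{F'}$, a bookkeeping identity I would record for later use. That horizontal composition of NTwDs is again an NTwD is exactly the preceding proposition on well-definedness of horizontal composition, whose computation I reuse verbatim. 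It then remains to verify functoriality of $\odot$, namely preservation of identities and the middle-four interchange law $((\epsilon', s') \circ (\eta', t')) \odot ((\epsilon, s) \circ (\eta, t)) = ((\epsilon', s') \odot (\epsilon, s)) \circ ((\eta', t') \odot (\eta, t))$. Here the reduction pays off: on first components both sides compute, from the definitions of $\circ$ and $\odot$ on the $\eta$-components, to the ordinary Godement interchange of natural transformations, which holds classically; and since both sides are legitimate NTwDs with equal first components, they are equal by uniqueness of the second components. Preservation of identities is the special case in which all $2$-cells are identities.

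Finally, I would dispatch the unit functors $1 \to [\mathcal{C}, \mathcal{C}]$, sending $\star \mapsto \mathit{id}_{\mathcal{C}}$ and $\mathit{id}_\star \mapsto \mathit{id}_{\mathit{id}_{\mathcal{C}}}$ (trivially functorial), and the strict associativity and unit laws for horizontal composition. Associativity of $1$-cell composition holds on the nose because it is defined componentwise by composition of the underlying maps $F_i$, and vertical composition of $2$-cells is strictly associative for the same reason, its first components being the usual vertical composition of natural transformations; the associativity and unit coherence for horizontal composition of $2$-cells then reduce, on first components, to the corresponding strict identities in the classical $2$-category of categories, and propagate to the second components by uniqueness.

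The main obstacle I anticipate is not any single axiom but the disciplined handling of the canonical isomorphisms $\iota$ throughout. The reduction to first components is clean only once one has verified that the explicit formulas defining the $t$-components of vertical and horizontal composites are genuinely consistent with the coherence formula $t_{\Gamma, A} = \pi_2\{\iota^{-1} \circ \eta_{\Gamma . A} \circ \iota\}$ --- precisely the content of the two well-definedness propositions above, which I would invoke rather than redo. The delicate point is the factorization $\iota^{F'\circ F} = F'(\iota^{F}) \circ \iota^{F'}$ and the repeated use of Lemma~\ref{LemSCFwDLemma} to commute $F'$ past pairings and projections; getting these bookkeeping identities right is what makes the otherwise formal reduction rigorous.
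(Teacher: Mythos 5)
Your proof is correct, and on the one genuinely hard axiom --- the interchange law --- it takes a different route from the paper. The paper, like you, reduces the first-component equation to the classical interchange law in $\mathit{CAT}$, but it then establishes the second-component equation $(s' \odot s) \circ (t' \odot t) = (s' \circ t') \odot (s \circ t)$ by a long direct computation from the definitions of $\circ$ and $\odot$, manipulating semi-dependent pairings and the naturality squares of the NTwDs involved. You instead observe that the coherence axiom (\ref{CohNTwDs}) makes the second component of an NTwD \emph{redundant data}: $t_{\Gamma, A} = \pi_2\{\iota^{-1} \circ \eta_{\Gamma . A} \circ \iota\}$ is forced by $\eta$, so two NTwDs between the same SCFwDs with equal first components coincide; combined with the already-proven propositions that vertical and horizontal composites of NTwDs are again NTwDs, the second-component identity follows for free from the first. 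The paper records this determinacy only in passing (and only in the strict case, where $\iota$ is an identity) and does not exploit it in the proof. Your approach buys a substantially shorter argument and makes the conceptual point that $\mathbb{SCC_D}$ embeds $2$-fully into the classical setting at the level of $2$-cells; the cost is that its rigor rests entirely on the codomain verification you sketch (that $\pi_2\{\iota^{-1}\circ\eta_{\Gamma.A}\circ\iota\}$ lands in $G(A)\{\eta_\Gamma \circ \pi_1\}$, via Lemma~\ref{LemSCFwDLemma} and naturality of $\eta$) and on the factorization $\iota^{F'\circ F} = F'(\iota^{F})\circ\iota^{F'}$, both of which you correctly identify as the load-bearing bookkeeping; the paper's computation, by contrast, is self-contained but opaque. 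Both are valid proofs of the theorem.
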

\begin{proof}
Let us just show that $\mathbb{SCC_D}$ satisfies the \emph{interchange law} since the other axioms are easier to verify. 
Let $\mathcal{C}$, $\mathcal{D}$ and $\mathcal{E}$ be SCCwDs, $F, G, H : \mathcal{C} \to \mathcal{D}$ and $F', G', H' : \mathcal{D} \to \mathcal{E}$ SCFwDs, and $(\eta, t) : F \Rightarrow G$, $(\epsilon, s) : G \Rightarrow H$, $(\eta', t') : F' \Rightarrow G'$ and $(\epsilon', s') : G' \Rightarrow H'$ NTwDs. We have to establish the equation:
\begin{equation*}
((\epsilon', s') \odot (\epsilon, s)) \circ ((\eta', t') \odot (\eta, t)) = ((\epsilon', s') \circ (\eta', t')) \odot ((\epsilon, s) \circ (\eta, t))
\end{equation*}
which is equivalent to:
\begin{equation*}
((\epsilon' \odot \epsilon) \circ (\eta' \odot \eta), (s' \odot s) \circ (t' \odot t)) = ((\epsilon' \circ \eta') \odot (\epsilon \circ \eta), (s' \circ t') \odot (s \circ t))
\end{equation*}
which is reduced to:
\begin{equation}
\label{InterchangeLaw}
(s' \odot s) \circ (t' \odot t) = (s' \circ t') \odot (s \circ t)
\end{equation}
because the equation of the first components is just the well-known interchange law for the 2-category $\mathit{CAT}$ of small categories, functors and NTs.

Then, observe that:
\begin{align*}
&((s' \circ t') \odot (s \circ t))_{\Gamma, A} \\
= \ & (s' \circ t')_{H(\Gamma), H(A)} \circ \langle F'(\epsilon_\Gamma \circ \eta_\Gamma) \circ \pi_1, F'((s \circ t)_{\Gamma, A})\{\iota\} \rangle \\
= \ & s'_{H(\Gamma), H(A)} \circ \langle \eta'_{H(\Gamma)} \circ \pi_1, t'_{H(\Gamma), H(A)} \rangle \circ \langle F'(\epsilon_\Gamma \circ \eta_\Gamma) \circ \pi_1, F'((s \circ t)_{\Gamma, A})\{\iota\} \rangle \\
= \ & s'_{H(\Gamma), H(A)} \circ \langle \eta'_{H(\Gamma)} \circ F'(\epsilon_\Gamma \circ \eta_\Gamma) \circ \pi_1, t'_{H(\Gamma), H(A)} \circ \langle F'(\epsilon_\Gamma \circ \eta_\Gamma) \circ \pi_1, F'((s \circ t)_{\Gamma, A})\{\iota\} \rangle \rangle \\
= \ & s'_{H(\Gamma), H(A)} \circ \langle G'(\epsilon_\Gamma \circ \eta_\Gamma) \circ \eta'_{F(\Gamma)} \circ \pi_1, t'_{H(\Gamma), H(A)} \circ \langle F'((\epsilon \circ \eta)_\Gamma \circ \pi_1), F'((s \circ t)_{\Gamma, A}) \rangle \circ \iota \rangle \rangle \\
= \ & s'_{H(\Gamma), H(A)} \circ \langle G'(\epsilon_\Gamma \circ \eta_\Gamma) \circ \eta'_{F(\Gamma)} \circ \pi_1, G'((s \circ t)_{\Gamma, A})\{\iota\} \circ \langle \eta'_{F(\Gamma)} \circ \pi_1, t'_{F(\Gamma), F(A)} \rangle \rangle \\
= \ & s'_{H(\Gamma), H(A)} \circ \langle G'(\epsilon_\Gamma \circ \eta_\Gamma) \circ \eta'_{F(\Gamma)} \circ \pi_1, G'(s_{\Gamma, A})\{\iota\} \circ \langle G'(\eta_\Gamma) \circ \pi_1, G'(t_{\Gamma, A})\{\iota\} \rangle \circ \langle \eta'_{F(\Gamma)} \circ \pi_1, t'_{F(\Gamma), F(A)} \rangle \rangle \\
= \ & s'_{H(\Gamma), H(A)} \circ \langle G'(\epsilon_\Gamma \circ \eta_\Gamma) \circ \eta'_{F(\Gamma)} \circ \pi_1, G'(s_{\Gamma, A})\{\iota\} \circ \langle G'(\eta_\Gamma) \circ \eta'_{F(\Gamma)} \circ \pi_1, G'(t_{\Gamma, A})\{\iota\} \circ \langle \eta'_{F(\Gamma)} \circ \pi_1, t'_{F(\Gamma), F(A)} \rangle \rangle \rangle \\
= \ & s'_{H(\Gamma), H(A)} \circ \langle G'(\epsilon_\Gamma \circ \eta_\Gamma) \circ \eta'_{F(\Gamma)} \circ \pi_1, G'(s_{\Gamma, A})\{\iota\} \circ \langle G'(\eta_\Gamma) \circ \eta'_{F(\Gamma)} \circ \pi_1, t'_{G(\Gamma), G(A)} \circ \langle F'(\eta_\Gamma \circ \pi_1), F'(t_{\Gamma, A}) \rangle \circ \iota \rangle \rangle \\
= \ & s'_{H(\Gamma), H(A)} \circ \langle G'(\epsilon_\Gamma \circ \eta_\Gamma) \circ \eta'_{F(\Gamma)} \circ \pi_1, G'(s_{\Gamma, A})\{\iota\} \circ \langle G'(\eta_\Gamma) \circ \eta'_{F(\Gamma)} \circ \pi_1, t'_{G(\Gamma), G(A)} \circ \langle F'(\eta_\Gamma) \circ \pi_1, F'(t_{\Gamma, A})\{\iota\} \rangle \rangle \rangle \\
= \ & s'_{H(\Gamma), H(A)} \circ \langle G'(\epsilon_\Gamma) \circ G'(\eta_\Gamma) \circ \eta'_{F(\Gamma)} \circ \pi_1, G'(s_{\Gamma, A})\{\iota\} \circ \langle G'(\eta_\Gamma) \circ \eta'_{F(\Gamma)} \circ \pi_1, (t' \odot t)_{\Gamma, A} \rangle \rangle \\
= \ & s'_{H(\Gamma), H(A)} \circ \langle G'(\epsilon_\Gamma) \circ \eta'_{G(\Gamma)} \circ F'(\eta_\Gamma) \circ \pi_1, G'(s_{\Gamma, A})\{\iota\} \circ \langle \eta'_{G(\Gamma)} \circ F'(\eta_\Gamma) \circ \pi_1, (t' \odot t)_{\Gamma, A} \rangle \rangle \\
= \ & s'_{H(\Gamma), H(A)} \circ \langle G'(\epsilon_\Gamma) \circ \pi_1, G'(s_{\Gamma, A})\{\iota\} \rangle \circ \langle \eta'_{G(\Gamma)} \circ F'(\eta_\Gamma) \circ \pi_1, (t' \odot t)_{\Gamma, A} \rangle \\
= \ & s'_{H(\Gamma), H(A)} \circ \langle G'(\epsilon_\Gamma) \circ \pi_1, G'(s_{\Gamma, A})\{\iota\} \rangle \circ \langle (\eta' \odot \eta)_\Gamma \circ \pi_1, (t' \odot t)_{\Gamma, A} \rangle \\
= \ & (s' \odot s)_{\Gamma, A} \circ \langle (\eta' \odot \eta)_\Gamma \circ \pi_1, (t' \odot t)_{\Gamma, A} \rangle \\
= \ & ((s' \odot s) \circ (t' \odot t))_{\Gamma, A}
\end{align*}
for all $\Gamma \in \mathcal{C}$ and $A \in \mathscr{D}_{\mathcal{C}}(\Gamma)$, which establishes the equation (\ref{InterchangeLaw}).
\end{proof}

By this 2-categorical structure of small SCCwDs, we may employ the notions of adjunctions and equivalences between 0-cells in an arbitrary 2-category to define \emph{adjunctions} and \emph{equivalences} between small SCCwDs.
Clearly, we may unpack these definitions and define adjunctions and equivalences between large SCCwDs as well. 

\if0
\begin{definition}[Adjunctions between SCCwDs]
An \emph{\bfseries adjunction} between SCCwDs $\mathcal{C}$ and $\mathcal{D}$ is an adjunction between them in the 2-category $\mathbb{SCC_D}$.
\end{definition}

\begin{definition}[Equivalences between SCCwDs]
An \emph{\bfseries equivalence} between SCCwDs $\mathcal{C}$ and $\mathcal{D}$ is an equivalence between them in the 2-category $\mathbb{SCC_D}$.
\end{definition}
\fi

\subsubsection{Categories of Dependent Objects}
This section introduces a 2-functor $(\_)_{\mathscr{D}} : \mathbb{SCC_D} \to \mathit{CAT}$, which will turn out to be very useful for the rest of the paper.

\if0
As in the case of natural transformations, we have:
\begin{lemma}[Naturality of vertical inverses]
Given a NTwD $(\eta, t) : F \to G$, where $F, G : \mathcal{C} \to \mathcal{D}$ are semi-cartesian FwDs, any pair $(\epsilon, s)$ of a family $\epsilon = (\epsilon_\Gamma)_{\Gamma \in \mathcal{C}}$ of morphisms $\epsilon_\Gamma : G(\Gamma) \to F(\Gamma)$ in $\mathcal{D}$ and a family $s = (s_{\Gamma, A})_{\Gamma \in \mathcal{C}, A \in \mathscr{D}_{\mathcal{C}}(\Gamma)}$ of D-morphisms $s_{\Gamma, A} : F(\Gamma) . F(A) \rightarrowtriangle G(A)\{\epsilon_\Gamma \circ \pi_1\}$ in $\mathcal{D}$ with
\begin{align*}
\epsilon_\Gamma \circ \eta_\Gamma &= \mathit{id}_{F(\Gamma)} \\
\eta_\Gamma \circ \epsilon_\Gamma &= \mathit{id}_{G(\Gamma)} \\
s_{\Gamma, A} \circ \langle \eta_{\Gamma} \circ \pi_1, t_{\Gamma, A} \rangle &= \pi_2^{F(\Gamma), F(A)} \\
t_{\Gamma, A} \circ \langle \epsilon_{\Gamma} \circ \pi_1, s_{\Gamma, A} \rangle &= \pi_2^{G(\Gamma), G(A)}
\end{align*}
for all $\Gamma \in \mathcal{C}$ and $A \in \mathscr{D}_{\mathcal{C}}(\Gamma)$, then the pair $(\epsilon, s)$ is a NTwD $G \to F$.
\end{lemma}
\begin{proof}
Clearly, it suffices to establish the naturality of $s$, i.e., the diagram
\begin{diagram}
G(\Gamma) . G(A) &&\rTo^{\langle G(\varphi) \circ \pi_1, G(g) \rangle} && G(\Delta) . G(B) \\
\dTo_{\langle \epsilon_\Gamma \circ \pi_1, s_{\Gamma, A} \rangle} &&&& \dTo_{s_{\Delta, B}} \\
F(\Gamma) . F(A) && \rTo^{F(g)} && F(B)
\end{diagram}
commutes for all $\Gamma, \Delta \in \mathcal{C}$, $A \in \mathscr{D}_{\mathcal{C}}(\Gamma)$, $B \in \mathscr{D}_{\mathcal{C}}(\Delta)$, $\varphi : \Gamma \to \Delta$ and $g : \Gamma . A \rightarrowtriangle B\{\varphi \circ \pi_1\}$ in $\mathcal{C}$.

However, it just follows from the naturality of $\eta$, $\epsilon$ and $t$:
\begin{align*}
s_{\Delta, B} \circ \langle G(\varphi) \circ \pi_1, G(g) \rangle &= s_{\Delta, B} \circ \langle \pi_1, \pi_2 \rangle \circ \langle G(\varphi) \circ \pi_1, G(g) \rangle \\
&= s_{\Delta, B} \circ \langle \pi_1, t_{\Delta, B} \circ \langle \epsilon_\Delta \circ \pi_1, s_{\Delta, B} \rangle \rangle \circ \langle G(\varphi) \circ \pi_1, G(g) \rangle \\
&= s_{\Delta, B} \circ \langle G(\varphi) \circ \pi_1, t_{\Delta, B} \circ \langle \epsilon_\Delta \circ G(\varphi) \circ \pi_1, s_{\Delta, B} \circ \langle G(\varphi) \circ \pi_1, G(g) \rangle \rangle \rangle \\
&= s_{\Delta, B} \circ \langle G(\varphi) \circ \pi_1, t_{\Delta, B} \circ \langle F(\varphi) \circ \epsilon_\Gamma \circ \pi_1, F(g) \circ \langle \epsilon_\Gamma \circ \pi_1, s_{\Gamma, A} \rangle \rangle \rangle \\
&= s_{\Delta, B} \circ \langle G(\varphi) \circ \pi_1, t_{\Delta, B} \circ \langle F(\varphi) \circ \pi_1, F(g) \rangle \circ \langle \epsilon_\Gamma \circ \pi_1, s_{\Gamma, A} \rangle \rangle \\
&= s_{\Delta, B} \circ \langle G(\varphi) \circ \pi_1, G(g) \circ \langle \eta_\Gamma \circ \pi_1, t_{\Gamma, A} \rangle \circ \langle \epsilon_\Gamma \circ \pi_1, s_{\Gamma, A} \rangle \rangle \\
&= s_{\Delta, B} \circ \langle G(\varphi) \circ \pi_1, G(g) \circ \langle \eta_\Gamma \circ \epsilon_\Gamma \circ \pi_1, t_{\Gamma, A} \circ \langle \epsilon_\Gamma \circ \pi_1, s_{\Gamma, A} \rangle \rangle \rangle \\
&= s_{\Delta, B} \circ \langle G(\varphi) \circ \pi_1, G(g) \circ \langle \pi_1, \pi_2 \rangle \rangle \\
&= s_{\Delta, B} \circ \langle G(\varphi) \circ \pi_1, G(g) \rangle
\end{align*}
which completes the proof.
\end{proof}
\fi
\if0
The additional structures of SCD-categories enable us to define \emph{morphisms between D-objects}:
\begin{definition}[Morphisms between D-objects]
Let $\mathcal{C}$ be a SCD-category, $\Delta, \Gamma \in \mathcal{C}$, $D \in \mathscr{D}(\Delta)$ and $C \in \mathscr{D}(\Gamma)$.
A \emph{\bfseries morphism} between the pairs $(\Delta, D)$ and $(\Gamma, C)$ in $\mathcal{C}$ is a pair $(\phi, f)$ of a morphism $\phi : \Delta \to \Gamma$ and a D-morphism $f : \Delta . D \rightarrowtriangle C\{\phi \circ \pi_1\}$ in $\mathcal{C}$.
\end{definition}
\fi

Let us begin with the operation on 0-cells:
\begin{definition}[Categories of D-objects]
Given an SCCwD $\mathcal{C}$, the \emph{\bfseries category of D-objects} in $\mathcal{C}$ is the category $\mathcal{C}_{\mathscr{D}}$ given by: 
\begin{itemize}

\item $\mathsf{ob}(\mathcal{C}_{\mathscr{D}}) \stackrel{\mathrm{df. }}{=} \{ (\Gamma, C) \mid \Gamma \in \mathcal{C}, C \in \mathscr{D}_{\mathcal{C}}(\Gamma) \ \! \}$;

\item $\mathcal{C}_{\mathscr{D}}((\Delta, D), (\Gamma, C)) \stackrel{\mathrm{df. }}{=} \{ (\phi, f) \mid \phi \in \mathcal{C}(\Delta, \Gamma), f \in \mathscr{D}_{\mathcal{C}}(\Delta . D, C\{ \phi \circ \pi_1 \}) \ \! \}$;

\item $(\Delta, D) \stackrel{(\phi, f)}{\to} (\Gamma, C) \stackrel{(\psi, g)}{\to} (\Theta, E) \stackrel{\mathrm{df. }}{=} (\psi \circ \phi, g \{ \langle \phi \circ \pi_1, f \rangle \}_{\mathcal{C}})$;

\item $\mathit{id}_{(\Gamma, C)} \stackrel{\mathrm{df. }}{=} (\mathit{id}_\Gamma, \pi_2)$.

\end{itemize}
\end{definition}

It is straightforward to check that $\mathcal{C}_{\mathscr{D}}$ is a well-defined category for any SCCwD $\mathcal{C}$, and there is the obvious forgetful functor $p_{\mathcal{C}} : \mathcal{C}_{\mathscr{D}} \to \mathcal{C}$.
This construction may be seen as a categorical abstraction of the syntactic \emph{comprehension category} of a DTT \cite{jacobs1999categorical}.

\if0
Note that it is possible in a CwD $\mathcal{C}$ that $\mathscr{D}_{\mathcal{C}}(\Delta) \cap \mathscr{D}_{\mathcal{C}}(\Gamma) \neq \emptyset$ for some $\Delta, \Gamma \in \mathcal{C}$ such that $\Delta \neq \Gamma$; for instance, consider categories seen as CwDs.
Thus, given a D-object $A \in \mathsf{dob}(\mathcal{C})$, an object $\Xi \in \mathcal{C}$ such that $A \in \mathscr{D}(\Xi)$ is not necessarily unique.
This is why we consider morphisms between pairs of an object and a D-object over that object, not between D-objects, where note that there is no obvious way to define morphisms between D-objects only.
\fi

Note that each morphism $(\phi, f) : (\Delta, D) \to (\Gamma, C)$ in $\mathcal{C}_{\mathscr{D}}$ induces a morphism $\langle \phi \circ \pi_1, f \rangle : \Delta . D \to \Gamma . C$ in $\mathcal{C}$; however, we have not taken $\langle \phi \circ \pi_1, f \rangle$ as a morphism $(\Delta, D) \to (\Gamma, C)$ in $\mathcal{C}_{\mathscr{D}}$ since otherwise we cannot define the composition of these morphisms as given above (as semi-$\Sigma$-spaces are unique only up to isomorphisms).

\if0
More generally, in the framework of \emph{fibred categories} \cite{jacobs1999categorical}, where comprehension categories constitute a special case, an interpretation of DTTs is given by a certain functor $p : \mathcal{E} \to \mathcal{B}$, called a \emph{fibration}, where in terms of the term models of DTTs the \emph{base category} $\mathcal{B}$ has contexts as objects and context morphisms as morphisms, while the \emph{total category} $\mathcal{E}$ has pairs of a context and a type over that context as objects and certain finite sequences of terms as morphisms.
It is worth noting that a comprehension category identifies each term $\mathsf{\Gamma \vdash a : A}$ with the context morphism $\langle \mathit{id}_{\mathsf{\Gamma}}, \mathsf{a} \rangle : \mathsf{\Gamma} \to \mathsf{\Gamma, x : A}$.
In contrast, we take a SCDC $\mathcal{C}$ as primitive, which has a direct counterpart of terms, viz., dependent morphisms, and it rather induces a comprehension category $p_{\mathcal{C}} : \mathcal{C}_{\mathscr{D}} \to \mathcal{C}$ just like the term models.
Thus, comprehension categories are more general than SCDCs, but the former does not have any direct counterpart of terms. 
\fi
\if0
Similarly, we give the category of dependence for each \emph{contravariant} semi-cartesian D-category:
\begin{definition}[Categories of contravariant dependence]
Given a contravariant semi-cartesian D-category $\mathcal{C}$, the \emph{\bfseries category of contravariant dependence} on $\mathcal{C}$ is the category $\mathcal{C}_{\mathscr{D}}$ given by: 
\begin{itemize}

\item $\mathsf{ob}(\mathcal{C}_{\mathscr{D}}) \stackrel{\mathrm{df. }}{=} \{ (\Gamma, C) \ \! | \ \! \Gamma \in \mathcal{C}, C \in \mathscr{D}(\Gamma) \ \! \}$;

\item $\mathcal{C}_{\mathscr{D}}((\Delta, D), (\Gamma, C)) \stackrel{\mathrm{df. }}{=} \{ (\varphi, f) \ \! | \ \! \varphi \in \mathcal{C}(\Gamma, \Delta), f \in \mathscr{D}(\Delta . D, C\{ \pi_1 \circ \varphi \}) \ \! \}$;

\item $(\Delta, D) \stackrel{(\varphi, f)}{\to} (\Gamma, C) \stackrel{(\psi, g)}{\to} (\Theta, E) \stackrel{\mathrm{df. }}{=} (\varphi \circ \psi, g \circ \langle \pi_1 \circ \varphi, f \rangle)$;

\item $\mathit{id}_{(\Gamma, C)} \stackrel{\mathrm{df. }}{=} (\mathit{id}_\Gamma, \pi_2)$.

\end{itemize}
\end{definition}
\fi

Next, let us lift this construction to SCFwDs as follows:
\begin{definition}[Functors between categories of D-objects]
Each SCFwD $F : \mathcal{C} \to \mathcal{D}$ induces the functor $F_{\mathscr{D}} : \mathcal{C}_{\mathscr{D}} \to  \mathcal{D}_{\mathscr{D}}$ that maps each object $(\Delta, D) \in \mathcal{C}_{\mathscr{D}}$ to the object $(F(\Delta), F(D)) \in \mathcal{D}_{\mathscr{D}}$ and each morphism $(\phi, f)$ in $\mathcal{C}_{\mathscr{D}}$ to the morphism $(F(\phi), F(f) \{ \iota^{F(\Delta . D)}_{F(\Delta) . F(D)} \})$ in $\mathcal{D}_{\mathscr{D}}$.
\end{definition}

By the equation 4 in Lemma~\ref{LemSCFwDLemma} (for the preservation of composition), it is easy to see that the functor $F_{\mathscr{D}} : \mathcal{C}_{\mathscr{D}} \to  \mathcal{D}_{\mathscr{D}}$ is well-defined for any SCFwD $F : \mathcal{C} \to \mathcal{D}$.

It is even simpler for the case of NTwDs:
\begin{definition}[NTwDs as NTs]
An NTwD $(\eta, t) : F \Rightarrow G : \mathcal{C} \to \mathcal{D}$ gives rise to an NT $(\eta, t)_{\mathscr{D}} \stackrel{\mathrm{df. }}{=}  (\eta_\Gamma, t_{\Gamma, A})_{(\Gamma, A) \in \mathcal{C}_{\mathscr{D}}} : F_{\mathscr{D}} \Rightarrow G_{\mathscr{D}} : \mathcal{C}_{\mathscr{D}} \to \mathcal{D}_{\mathscr{D}}$.
\end{definition}

It is easy to see that this construction $(\_)_{\mathscr{D}}$ on NTwDs is well-defined.
It in essence implies that an NTwD $(\eta, t) : F \Rightarrow G : \mathcal{C} \to \mathcal{D}$ can be seen as an NT $(\eta, t)_{\mathscr{D}} : F_{\mathscr{D}} \Rightarrow G_{\mathscr{D}} : \mathcal{C}_{\mathscr{D}} \to \mathcal{D}_{\mathscr{D}}$ such that components of $\eta$ do not depend on D-objects of $\mathcal{C}$, where the two commutative diagrams 
\begin{diagram}
F(\Gamma) &&\rTo^{F(\varphi)} && F(\Delta) && F(\Gamma) . F(A) \cong & F(\Gamma . A) &&\rTo^{\langle F(\varphi \circ \pi_1), F(g) \rangle} && F(\Delta) . F(B) \\
&&&&&&&&&&& \dDepTo_{t_{\Delta, B}} \\
\dTo_{\eta_\Gamma} &&&& \dTo_{\eta_{\Delta}} && \dTo_{\langle \eta_\Gamma \circ \pi_1, t_{\Gamma, A} \rangle} &&&&& G(B)\{\eta_\Delta \circ \pi_1\} \\
&&&&&&&&&&& \dImplies \\
G(\Gamma) && \rTo^{G(\varphi)} && G(\Delta) && G(\Gamma) . G(A) \cong & G(\Gamma . A) & \rDepTo^{G(g)} &G(B)\{G(\varphi \circ \pi_1)\}& \rImplies &G(B) \{G(\varphi) \circ \eta_\Gamma \circ \pi_1\}
\end{diagram}
for $(\eta, t)$ in $\mathcal{D}$ is unified into the following commutative diagram 
\begin{diagram}
(F(\Gamma), F(A)) &&&\rTo^{(F(\varphi), F(g)\{ \iota^{F(\Gamma . A)}_{F(\Gamma) . F(A)}\})} &&& (F(\Delta), F(B)) \\
&&&&&& \\
\dTo_{(\eta_\Gamma, t_{\Gamma, A})} &&&&&& \dTo_{(\eta_\Delta, t_{\Delta, B})} \\
&&&&&& \\
(G(\Gamma), G(A)) &&&\rTo^{(G(\varphi), G(g)\{ \iota^{G(\Gamma . A)}_{G(\Gamma) . G(A)}\})} &&& (G(\Delta), G(B))
\end{diagram}
in $\mathcal{D}_{\mathscr{D}}$ for any $\Gamma, \Delta \in \mathcal{C}$, $A \in \mathscr{D}_{\mathcal{C}}(\Gamma)$, $B \in \mathscr{D}_{\mathcal{C}}(\Delta)$, $\varphi : \Gamma \to \Delta$ and $g : \Gamma . A \rightarrowtriangle B\{\varphi \circ \pi_1\}$ in $\mathcal{C}$.

Thus, we may just apply known results on NTs to NTwDs; for instance:
\begin{lemma}[Naturality of vertical inverses of NTwDs]
Given an NTwD $(\eta, t) : F \Rightarrow G : \mathcal{C} \to \mathcal{D}$, if a pair $(\epsilon, s)$ of a family $\epsilon = (\epsilon_\Gamma)_{\Gamma \in \mathcal{C}}$ of morphisms $\epsilon_\Gamma : G(\Gamma) \to F(\Gamma)$ in $\mathcal{D}$ and a family $s = (s_{\Gamma, A})_{\Gamma \in \mathcal{C}, A \in \mathscr{D}_{\mathcal{C}}(\Gamma)}$ of D-morphisms $s_{\Gamma, A} : G(\Gamma) . G(A) \rightarrowtriangle F(A)\{\epsilon_\Gamma \circ \pi_1\}$ in $\mathcal{D}$ satisfies the equations
\begin{align*}
\epsilon_\Gamma \circ \eta_\Gamma &= \mathit{id}_{F(\Gamma)} \\
\eta_\Gamma \circ \epsilon_\Gamma &= \mathit{id}_{G(\Gamma)} \\
s_{\Gamma, A} \{ \langle \eta_{\Gamma} \circ \pi_1^{F(\Gamma) . F(A)}, t_{\Gamma, A} \rangle \} &= \pi_2^{F(\Gamma) . F(A)} \\
t_{\Gamma, A} \{ \langle \epsilon_{\Gamma} \circ \pi_1^{G(\Gamma) . G(A)}, s_{\Gamma, A} \rangle \} &= \pi_2^{G(\Gamma) . G(A)}
\end{align*}
for all $\Gamma \in \mathcal{C}$ and $A \in \mathscr{D}_{\mathcal{C}}(\Gamma)$, then the pair $(\epsilon, s)$ is an NTwD $G \Rightarrow F$.
\end{lemma}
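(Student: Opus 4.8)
The plan is to reduce the statement to an elementary fact about ordinary natural transformations, via the category $\mathcal{C}_{\mathscr{D}}$ of D-objects introduced above. First I would unwind the four hypotheses inside $\mathcal{D}_{\mathscr{D}}$. Recalling that composition there sends $(\Delta, D) \xrightarrow{(\phi, f)} (\Gamma, C) \xrightarrow{(\psi, g)} (\Theta, E)$ to $(\psi \circ \phi, g\{\langle \phi \circ \pi_1, f\rangle\})$ and that $\mathit{id}_{(\Gamma, C)} = (\mathit{id}_\Gamma, \pi_2)$, the two equations $\epsilon_\Gamma \circ \eta_\Gamma = \mathit{id}_{F(\Gamma)}$ and $s_{\Gamma, A}\{\langle \eta_\Gamma \circ \pi_1, t_{\Gamma, A}\rangle\} = \pi_2^{F(\Gamma).F(A)}$ say exactly that $(\epsilon_\Gamma, s_{\Gamma, A}) \circ (\eta_\Gamma, t_{\Gamma, A}) = \mathit{id}_{(F(\Gamma), F(A))}$ in $\mathcal{D}_{\mathscr{D}}$, while the remaining two say $(\eta_\Gamma, t_{\Gamma, A}) \circ (\epsilon_\Gamma, s_{\Gamma, A}) = \mathit{id}_{(G(\Gamma), G(A))}$. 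Thus the hypotheses are precisely the assertion that, for every $(\Gamma, A) \in \mathcal{C}_{\mathscr{D}}$, the pair $(\epsilon_\Gamma, s_{\Gamma, A})$ is a two-sided inverse in $\mathcal{D}_{\mathscr{D}}$ of the component of $(\eta, t)_{\mathscr{D}}$ at $(\Gamma, A)$, namely $(\eta_\Gamma, t_{\Gamma, A})$.

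Since $(\eta, t)_{\mathscr{D}} : F_{\mathscr{D}} \Rightarrow G_{\mathscr{D}}$ is already known to be a natural transformation and all of its components are now seen to be isomorphisms, it is a natural isomorphism. I would then invoke the standard result that the pointwise inverse of a natural isomorphism is itself natural to conclude that $((\epsilon_\Gamma, s_{\Gamma, A}))_{(\Gamma, A) \in \mathcal{C}_{\mathscr{D}}}$ is a natural transformation $G_{\mathscr{D}} \Rightarrow F_{\mathscr{D}}$; this is the single \emph{known result on NTs} the lemma alludes to. It remains to read this back as an NTwD $(\epsilon, s) : G \Rightarrow F$, for which two things must be checked: that $\epsilon$ underlies a natural transformation $\mathscr{U}(G) \Rightarrow \mathscr{U}(F)$, and that $(\epsilon, s)$ satisfies the coherence axiom (\ref{CohNTwDs}). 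The first is immediate: $\eta$ is a natural isomorphism $\mathscr{U}(F) \Rightarrow \mathscr{U}(G)$ (each $\eta_\Gamma$ is invertible by the first two hypotheses), and $\epsilon$ is its pointwise inverse, hence natural.

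The coherence axiom is the only genuine computation. Writing $\iota$ for the canonical isomorphisms on both the $F$- and $G$-sides, I would verify $s_{\Gamma, A} = \pi_2^{F(\Gamma).F(A)}\{\iota^{-1} \circ \epsilon_{\Gamma.A} \circ \iota\}$ by starting from the right-hand side, substituting $\pi_2^{F(\Gamma).F(A)} = s_{\Gamma, A}\{\langle \eta_\Gamma \circ \pi_1, t_{\Gamma, A}\rangle\}$ from the hypothesis, using the identity $\langle \eta_\Gamma \circ \pi_1, t_{\Gamma, A}\rangle = \iota^{-1} \circ \eta_{\Gamma.A} \circ \iota$ — which follows from the coherence of $(\eta, t)$ together with (\ref{ConsNat}) and (\ref{ConsId}), exactly as in the proof of well-definedness of vertical composition above — then cancelling $\iota \circ \iota^{-1}$ and applying $\eta_{\Gamma.A} \circ \epsilon_{\Gamma.A} = \mathit{id}_{G(\Gamma.A)}$ (the second hypothesis at the object $\Gamma.A$) to collapse the middle to the identity. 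This leaves $s_{\Gamma, A}\{\mathit{id}\} = s_{\Gamma, A}$, as required.

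The main obstacle is purely bookkeeping: keeping the two distinct canonical isomorphisms $\iota^{F(\Gamma.A)}_{F(\Gamma).F(A)}$ and $\iota^{G(\Gamma.A)}_{G(\Gamma).G(A)}$ apart, and tracking the codomain transformations induced by D-composition (so that, e.g., $\pi_1 \circ \iota^{-1} \circ \epsilon_{\Gamma.A} \circ \iota = \epsilon_\Gamma \circ \pi_1$, which is what guarantees that the target type of $s_{\Gamma, A}$ is indeed $F(A)\{\epsilon_\Gamma \circ \pi_1\}$). Conceptually, however, once the problem is transported into $\mathcal{D}_{\mathscr{D}}$ the result is essentially free, since invertibility in a category is a symmetric relation and the non-trivial naturality is inherited directly from that of $(\eta, t)_{\mathscr{D}}$.
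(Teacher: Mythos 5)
Your proposal is correct and follows exactly the route the paper intends: it states this lemma immediately after observing that NTwDs transport along $(\_)_{\mathscr{D}}$ to NTs, so that the hypotheses say each $(\epsilon_\Gamma, s_{\Gamma,A})$ is a two-sided inverse of $(\eta_\Gamma, t_{\Gamma,A})$ in $\mathcal{D}_{\mathscr{D}}$ and the standard pointwise-inverse-of-a-natural-isomorphism fact applies. Your explicit verification of the coherence axiom (\ref{CohNTwDs}) for $(\epsilon, s)$ — via $\langle \eta_\Gamma \circ \pi_1, t_{\Gamma,A} \rangle = \iota^{-1} \circ \eta_{\Gamma.A} \circ \iota$ and cancellation against $\epsilon_{\Gamma.A}$ — is the one step the paper leaves implicit, and it is both necessary (an NTwD is not merely a natural transformation between the $(\_)_{\mathscr{D}}$ images) and correctly carried out.
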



Now, let us summarize the constructions $(\_)_{\mathscr{D}}$ given so far as follows:
\begin{definition}[The 2-functor $(\_)_{\mathscr{D}}$]
The 2-functor $(\_)_{\mathscr{D}} : \mathbb{SCC_D} \to \mathit{CAT}$ is given by $\mathcal{C} \mapsto \mathcal{C}_{\mathscr{D}}$ for small SCCwDs $\mathcal{C}$, $F \mapsto F_{\mathscr{D}}$ for SCFwDs $F$ and $(\eta, t) \mapsto (\eta, t)_{\mathscr{D}}$ for NTwDs $(\eta, t)$.
\end{definition}

\begin{proposition}[Well-defined $(\_)_{\mathscr{D}}$]
The map $(\_)_{\mathscr{D}}$ is in fact a 2-functor $\mathbb{SCC_D} \to \mathit{CAT}$.
\end{proposition}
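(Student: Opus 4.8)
The plan is to verify, one clause at a time, the defining equations of a strict $2$-functor, exploiting that the genuinely structural work has already been delegated to the definitions: the three assignments $\mathcal{C}\mapsto\mathcal{C}_{\mathscr{D}}$, $F\mapsto F_{\mathscr{D}}$ and $(\eta,t)\mapsto(\eta,t)_{\mathscr{D}}$ have each been checked to be well-defined, so what remains is (i) that $\mathcal{C}_{\mathscr{D}}$ is \emph{small} whenever $\mathcal{C}$ is (immediate, since $\mathsf{ob}(\mathcal{C}_{\mathscr{D}})$ and every hom-set are built from the sets $\mathcal{C}_i$ of a small CwD); (ii) that for each pair $\mathcal{C},\mathcal{D}$ the assignments on $1$- and $2$-cells constitute a functor $[\mathcal{C},\mathcal{D}]\to\mathit{CAT}(\mathcal{C}_{\mathscr{D}},\mathcal{D}_{\mathscr{D}})$; (iii) preservation of identity $1$-cells, $(\mathit{id}_{\mathcal{C}})_{\mathscr{D}}=\mathit{id}_{\mathcal{C}_{\mathscr{D}}}$; and (iv) preservation of horizontal composition, both of $1$-cells, $(G\circ F)_{\mathscr{D}}=G_{\mathscr{D}}\circ F_{\mathscr{D}}$, and of $2$-cells, $((\eta',t')\odot(\eta,t))_{\mathscr{D}}=(\eta',t')_{\mathscr{D}}\odot(\eta,t)_{\mathscr{D}}$.

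I would dispatch the easy clauses first. For (ii), functoriality on a hom-category amounts to preservation of identity $2$-cells and of vertical composition. The former is immediate: the identity NTwD $\mathit{id}_F=((\mathit{id}_{F(\Gamma)}),(\pi_2^{F(\Gamma).F(A)}))$ has component $(\mathit{id}_{F(\Gamma)},\pi_2^{F(\Gamma).F(A)})$ at $(\Gamma,A)$, which is exactly $\mathit{id}_{(F(\Gamma),F(A))}$ by the definition $\mathit{id}_{(\Gamma,C)}=(\mathit{id}_\Gamma,\pi_2)$ in $\mathcal{D}_{\mathscr{D}}$. The latter holds \emph{by design}: unfolding the composition formula $(\psi,g)\circ(\phi,f)=(\psi\circ\phi,g\{\langle\phi\circ\pi_1,f\rangle\})$ of $\mathcal{D}_{\mathscr{D}}$ on the components $(\eta_\Gamma,t_{\Gamma,A})$ and $(\epsilon_\Gamma,s_{\Gamma,A})$ yields precisely $(\epsilon_\Gamma\circ\eta_\Gamma,\,s_{\Gamma,A}\{\langle\eta_\Gamma\circ\pi_1,t_{\Gamma,A}\rangle\})$, which is the component of the vertical composite $(\epsilon,s)\circ(\eta,t)$; hence $((\epsilon,s)\circ(\eta,t))_{\mathscr{D}}=(\epsilon,s)_{\mathscr{D}}\circ(\eta,t)_{\mathscr{D}}$. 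Clause (iii) is equally short once one observes that, for the identity FwD, the canonical isomorphism $\iota^{\mathit{id}(\Delta.D)}_{\mathit{id}(\Delta).\mathit{id}(D)}=\langle\pi_1,\pi_2\rangle^{-1}$ is the identity by (\ref{ConsId}), so $(\mathit{id}_{\mathcal{C}})_{\mathscr{D}}$ sends $(\phi,f)\mapsto(\phi,f\{\mathit{id}\})=(\phi,f)$.

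The substance lies in clause (iv), and its engine is a \emph{coherence lemma} for the canonical isomorphisms, which I would isolate first: for composable SCFwDs $F:\mathcal{C}\to\mathcal{D}$, $G:\mathcal{D}\to\mathcal{E}$ and a semi-$\Sigma$-space $\Gamma.A$ in $\mathcal{C}$,
\[
\iota^{(G\circ F)(\Gamma.A)}_{(G\circ F)(\Gamma).(G\circ F)(A)}=G\bigl(\iota^{F(\Gamma.A)}_{F(\Gamma).F(A)}\bigr)\circ\iota^{G(F(\Gamma).F(A))}_{G(F(\Gamma)).G(F(A))}.
\]
This is proved by checking that the right-hand composite satisfies the two equations characterizing the canonical isomorphism into the semi-$\Sigma$-space $(G\circ F)(\Gamma.A)$, using only equations~1 and~2 of Lemma~\ref{LemSCFwDLemma} (which express $\iota$ through the images of $\pi_1,\pi_2$) together with functoriality of $G$. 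With this lemma, $(G\circ F)_{\mathscr{D}}=G_{\mathscr{D}}\circ F_{\mathscr{D}}$ follows by rewriting $G(F(f)\{\iota^F\})\{\iota^G\}=G F(f)\{G(\iota^F)\circ\iota^G\}=G F(f)\{\iota^{G\circ F}\}$ via the CwD axiom $a\{\phi\}\{\varphi\}=a\{\phi\circ\varphi\}$.

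The main obstacle is the preservation of horizontal composition of $2$-cells, which I expect to be the genuinely laborious step. Here I would unfold $(\eta',t')_{\mathscr{D}}\odot(\eta,t)_{\mathscr{D}}$ as the horizontal composite of the natural transformations $(\eta,t)_{\mathscr{D}}:F_{\mathscr{D}}\Rightarrow G_{\mathscr{D}}$ and $(\eta',t')_{\mathscr{D}}:F'_{\mathscr{D}}\Rightarrow G'_{\mathscr{D}}$ in $\mathit{CAT}$, whose component at $(\Gamma,A)$ is $(\eta',t')_{\mathscr{D},(G(\Gamma),G(A))}\circ F'_{\mathscr{D}}\bigl((\eta,t)_{\mathscr{D},(\Gamma,A)}\bigr)$; expanding the two applications of the $\mathcal{E}_{\mathscr{D}}$-composition formula and of $F'_{\mathscr{D}}$ produces $\bigl(\eta'_{G(\Gamma)}\circ F'(\eta_\Gamma),\,t'_{G(\Gamma),G(A)}\{\langle F'(\eta_\Gamma)\circ\pi_1,\,F'(t_{\Gamma,A})\{\iota^{F'(F(\Gamma).F(A))}\}\rangle\}\bigr)$. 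This must be matched against the component $((\eta'\odot\eta)_\Gamma,(t'\odot t)_{\Gamma,A})$ of the defined horizontal composite of NTwDs: the first coordinates agree by the very definition of $\eta'\odot\eta$, and the second coordinates agree after the coherence lemma is invoked to reconcile the canonical isomorphism recorded in the definition of $t'\odot t$ (that of the composite $F'\circ F$) with the one emitted by $F'_{\mathscr{D}}$ (that of $F'$ alone). Being careful with the indexing of the isomorphisms $\iota$ is the delicate part of the whole argument; once the two second coordinates are shown equal, every defining clause of a strict $2$-functor has been established, completing the proof.
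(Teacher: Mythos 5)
Your proposal is correct and follows essentially the same route as the paper: the only substantive step in both is the preservation of $1$-cell composition, established via the coherence identity $\iota^{(G\circ F)(\Delta.D)}_{(G\circ F)(\Delta).(G\circ F)(D)}=G(\iota^{F(\Delta.D)}_{F(\Delta).F(D)})\circ\iota^{G(F(\Delta).F(D))}_{G(F(\Delta)).G(F(D))}$ derived from Lemma~\ref{LemSCFwDLemma}, with the remaining clauses reduced to unfolding the definitions. The paper simply dismisses the $2$-cell preservation clauses (including horizontal composition) as "rather trivial," whereas you spell them out; your extra detail is sound and does not change the argument.
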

\begin{proof}
It suffices to show that $(\_)_{\mathscr{D}}$ preserves composition and identities.
We first show the preservation of composition of 1-cells.
Let $\mathcal{C}$, $\mathcal{D}$ and $\mathcal{E}$ be SCCwDs.
Given SCFwDs $F : \mathcal{C} \to \mathcal{D}$ and $G : \mathcal{D} \to \mathcal{E}$, the object-maps of $(G \circ F)_{\mathscr{D}}$ and $G_{\mathscr{D}} \circ F_{\mathscr{D}}$ clearly coincide.
For their arrow-maps, consider any morphism $(\phi, f) : (\Delta, D) \to (\Gamma, C)$ in $\mathcal{C}$; then, we have: 
\begin{align*}
(G \circ F)_{\mathscr{D}}(\phi, f) &= (G \circ F(\phi), G \circ F(f) \{ \iota^{G \circ F(\Delta . D)}_{G \circ F(\Delta) . G \circ F(D)} \}) \\
&= (G(F(\phi)), G(F(f)) \{ G (\iota^{F(\Delta . D)}_{F(\Delta) . F(D)}) \circ \iota^{G(F(\Delta) . F(D))}_{G(F(\Delta)) . G(F(D))} \}) \\
&= (G(F(\phi)), G(F(f)) \{ G (\iota^{F(\Delta . D)}_{F(\Delta) . F(D)}) \} \{ \iota^{G(F(\Delta) . F(D))}_{G(F(\Delta)) . G(F(D))} \}) \\
&= (G(F(\phi)), G(F(f) \{ \iota^{F(\Delta . D)}_{F(\Delta) . F(D)} \}) \{ \iota^{G(F(\Delta) . F(D))}_{G(F(\Delta)) . G(F(D))} \}) \\
&= G_{\mathscr{D}}(F(\phi), F(f) \{ \iota^{F(\Delta . D)}_{F(\Delta) . F(D)} \}) \\
&= G_{\mathscr{D}} (F_{\mathscr{D}}(\phi, f)) \\
&= G_{\mathscr{D}} \circ F_{\mathscr{D}}(\phi, f).
\end{align*}
where it is easy to show $\iota^{G \circ F(\Delta . D)}_{G \circ F(\Delta) . G \circ F(D)} = G (\iota^{F(\Delta . D)}_{F(\Delta) . F(D)}) \circ \iota^{G(F(\Delta) . F(D))}_{G(F(\Delta)) . G(F(D))}$ by the equations 1, 2 and 3 of Lemma~\ref{LemSCFwDLemma}.
Hence, we have shown that $(G \circ F)_{\mathscr{D}} = G_{\mathscr{D}} \circ F_{\mathscr{D}}$.
Finally, the preservation of 1-cell identities is immediate, and the preservations of vertical and horizontal compositions of 2-cells, and of vertical and horizontal 2-cell identities are rather trivial.
\end{proof}

Given an SCCwD $\mathcal{C}$ and an object $\Gamma \in \mathcal{C}$, we have the subcategory of $\mathcal{C}_{\mathscr{D}}$ whose objects are pairs $(\Gamma, C)$ and morphisms are pairs $(\mathit{id}_\Gamma, f)$, where $C$ and $f$ vary, but $\Gamma$ is fixed.
Then, we may focus on the second components of such pairs, which is equivalent to the following category:
\begin{definition}[Categories of fixed objects]
Given an SCCwD $\mathcal{C}$ and an object $\Gamma \in \mathcal{C}$, the \emph{\bfseries category of a fixed object} $\Gamma$ on $\mathcal{C}$ is the category $\mathcal{C}_\Gamma$ given by: 
\begin{itemize}

\item $\mathsf{ob}(\mathcal{C}_\Gamma) \stackrel{\mathrm{df. }}{=} \mathscr{D}_{\mathcal{C}}(\Gamma)$;

\item $\mathcal{C}_\Gamma(A, B) \stackrel{\mathrm{df. }}{=} \mathscr{D}_{\mathcal{C}}(\Gamma . A, B\{ \pi_1 \})$;

\item $A \stackrel{f}{\to} B \stackrel{g}{\to} C \stackrel{\mathrm{df. }}{=} g \{ \langle \pi_1, f \rangle \}$; and

\item $\mathit{id}_{(\Gamma, A)} \stackrel{\mathrm{df. }}{=} \pi_2$.

\end{itemize}
\end{definition}


This structure will be essential when we prove that our generalizations of binary products and exponentials are certain adjoints, and thus they are unique up to isomorphisms. 

\if0
We may lift this construction to a contravariant D-category:
\begin{definition}[Functors between categories of fixed objects]
Given an SCCwD $\mathcal{C}$ and a morphism $\phi : \Delta \to \Gamma$ in $\mathcal{C}$, we obtain the functor $\mathcal{C}_{\phi} : \mathcal{C}_{\Gamma} \to \mathcal{C}_\Delta$ that maps 
\begin{align*}
(A \in \mathscr{D}_{\mathcal{C}}(\Gamma)) &\mapsto A\{\phi\} \in \mathscr{D}_{\mathcal{C}}(\Delta) \\
(f \in \mathscr{D}_{\mathcal{C}}(\Gamma . A, B\{ \pi_1 \})) &\mapsto f \{ \langle \phi \circ \pi_1, \pi_2 \rangle \} \in \mathscr{D}_{\mathcal{C}}(\Delta . A\{\phi\}, B\{\phi \} \{ \pi_1\}).
\end{align*}
\end{definition}

\begin{definition}[The D-category of fixed objects]
Given an SCCwD $\mathcal{C}$, we obtain the D-category $\mathcal{C}_{\_} : \mathscr{U}(\mathcal{C})^{\mathsf{op}} \to \mathit{CAT}$ that maps each object $\Gamma \in \mathscr{U}(\mathcal{C})$ to the category $\mathcal{C}_\Gamma$ and each morphism $\phi : \Delta \to \Gamma$ in $\mathscr{U}(\mathcal{C})$ to the functor $\mathcal{C}_{\phi} : \mathcal{C}_{\Gamma} \to \mathcal{C}_\Delta$.
\end{definition}

It is straightforward to see that the D-category $\mathcal{C}_{\_} : \mathscr{U}(\mathcal{C})^{\mathsf{op}} \to \mathit{CAT}$ is well-defined for any SCCwD $\mathcal{C}$.
We shall employ this structure when we show later that our generalization of binary products and exponentials give rise to bifunctors.
\fi

\subsubsection{Isomorphisms between Dependent Objects}
Next, we define \emph{isomorphisms between D-objects} as promised before:
\begin{definition}[Isomorphisms between D-objects]
Let $\mathcal{C}$ be an SCCwD. 
Given $\Gamma, \Delta \in \mathcal{C}$, $A \in \mathscr{D}_{\mathcal{C}}(\Gamma)$ and $B \in \mathscr{D}_{\mathcal{C}}(\Delta)$, an \emph{\bfseries isomorphism} between $A$ and $B$ is an isomorphism $(\Gamma, A) \stackrel{\sim}{\to} (\Delta, B)$ in $\mathcal{C}_{\mathscr{D}}$.
We call $A$ and $B$ \emph{\bfseries isomorphic} and write $A \cong B$ iff there is an isomorphism between them.
\end{definition}
\if0
Intuitively, dependent objects $A \in \mathscr{D}(\Gamma)$ and $B \in \mathscr{D}(\Delta)$ are isomorphic iff the objects $\Gamma$ and $\Delta$ are isomorphic, and fibres of $A$ and $B$ connected by the isomorphism $\Gamma \cong \Delta$ are isomorphic. 
Therefore, we regard this notion as appropriate for isomorphisms between dependent objects.
\fi

It is easy to see that an inverse of an isomorphism between D-objects is unique.

\begin{example}
In the SCCwD $\mathit{Sets}$, D-sets $A \in \mathscr{D}_{\mathit{Sets}}(X)$ and $B \in \mathscr{D}_{\mathit{Sets}}(Y)$ are isomorphic iff there is a bijection $\varphi : X \stackrel{\sim}{\to} Y$, and the sets $A_x$ and $B_{\varphi(x)}$ are bijective for each $x \in X$.
\end{example}

Note that it is a fundamental property of functors to preserve isomorphisms between objects.
Similarly, SCFwDs preserve isomorphisms between D-objects as well:
\begin{proposition}[Preservation of isomorphisms between D-objects under SCFwDs]
SCFwDs preserve isomorphisms between D-objects.
\end{proposition}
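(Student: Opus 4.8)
The plan is to reduce the statement to the elementary categorical fact that any functor preserves isomorphisms, applied to the functor $F_{\mathscr{D}} : \mathcal{C}_{\mathscr{D}} \to \mathcal{D}_{\mathscr{D}}$ induced by the given SCFwD $F : \mathcal{C} \to \mathcal{D}$. The whole point is that we have already set up the right vocabulary: by definition, an isomorphism between D-objects $A \in \mathscr{D}_{\mathcal{C}}(\Gamma)$ and $B \in \mathscr{D}_{\mathcal{C}}(\Delta)$ is nothing but an isomorphism $(\Gamma, A) \stackrel{\sim}{\to} (\Delta, B)$ in the category $\mathcal{C}_{\mathscr{D}}$, and $F_{\mathscr{D}}$ is an honest functor between these categories (its well-definedness was noted right after its definition, and re-derived in the proof that $(\_)_{\mathscr{D}}$ is a $2$-functor, where the preservation of composition rests on equation~4 of Lemma~\ref{LemSCFwDLemma}). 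So the argument is essentially a single definitional unfolding followed by one invocation of functoriality.

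Concretely, I would first fix an isomorphism $A \cong B$, i.e. an isomorphism $(\phi, f) : (\Gamma, A) \stackrel{\sim}{\to} (\Delta, B)$ in $\mathcal{C}_{\mathscr{D}}$, with two-sided inverse $(\psi, g) : (\Delta, B) \stackrel{\sim}{\to} (\Gamma, A)$. Applying the functor $F_{\mathscr{D}}$ and using that functors send isomorphisms to isomorphisms with inverse the image of the inverse, I obtain an isomorphism $F_{\mathscr{D}}(\phi, f) : F_{\mathscr{D}}(\Gamma, A) \stackrel{\sim}{\to} F_{\mathscr{D}}(\Delta, B)$ in $\mathcal{D}_{\mathscr{D}}$, with inverse $F_{\mathscr{D}}(\psi, g)$. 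The final step is to read off the endpoints: since $F_{\mathscr{D}}$ acts on objects by $(\Gamma, A) \mapsto (F(\Gamma), F(A))$, this is precisely an isomorphism $(F(\Gamma), F(A)) \stackrel{\sim}{\to} (F(\Delta), F(B))$ in $\mathcal{D}_{\mathscr{D}}$, which by the definition of isomorphisms between D-objects is exactly an isomorphism between the D-objects $F(A)$ and $F(B)$. Hence $F(A) \cong F(B)$, as required.

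I do not expect any genuine obstacle here, since all the substance has already been discharged in establishing that $F_{\mathscr{D}}$ is a functor; the one thing worth stating explicitly, rather than any hard computation, is the clean correspondence between isomorphisms in $\mathcal{C}_{\mathscr{D}}$ and isomorphisms between D-objects, so that the conclusion $F(A) \cong F(B)$ is immediate from functoriality. If one preferred a more self-contained presentation, one could instead unfold the pairs $(\phi, f)$, $(\psi, g)$ and verify directly that $(F(\phi), F(f)\{\iota\})$ and $(F(\psi), F(g)\{\iota\})$ are mutually inverse in $\mathcal{D}_{\mathscr{D}}$, but this merely re-proves functoriality of $F_{\mathscr{D}}$ in the special case at hand and is therefore unnecessary given Lemma~\ref{LemSCFwDLemma}.
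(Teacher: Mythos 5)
Your proof is correct, and it takes a cleaner route than the paper's. The paper does not invoke the functoriality of $F_{\mathscr{D}}$ at all: it fixes the inverse pair $(\psi, g)$ of $(\phi, f)$ and verifies by hand, via a chain of equations using Lemma~\ref{LemSCFwDLemma} and the naturality of the canonical isomorphisms $\iota$, that $(F(\phi), F(f)\{\iota\})$ and $(F(\psi), F(g)\{\iota\})$ are mutually inverse in $\mathcal{D}_{\mathscr{D}}$ — i.e., it carries out exactly the ``self-contained'' computation you correctly identify as redundant. Your version factors the argument through the previously established fact that $F_{\mathscr{D}} : \mathcal{C}_{\mathscr{D}} \to \mathcal{D}_{\mathscr{D}}$ is a well-defined functor (asserted in the paper right after its definition, and resting on the same Lemma~\ref{LemSCFwDLemma}), after which the statement is literally an instance of ``functors preserve isomorphisms'' plus a definitional unfolding of what an isomorphism between D-objects is. There is no circularity, since both $\mathcal{C}_{\mathscr{D}}$ and $F_{\mathscr{D}}$ are set up before isomorphisms between D-objects are defined. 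What your approach buys is brevity and a conceptual explanation of \emph{why} the proposition holds; what the paper's explicit computation buys is independence from the well-definedness claim for $F_{\mathscr{D}}$, which the paper leaves unproved (``it is easy to see''), so a fully rigorous write-up of your version should at least note that it leans on that earlier unchecked assertion.
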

\begin{proof}
Let $F : \mathcal{C} \to \mathcal{D}$ be an SCFwD, and $(\phi, f) : (\Delta, D) \stackrel{\sim}{\to} (\Gamma, C)$ in $\mathcal{C}_{\mathscr{D}}$.
Let $(\psi, g)$ be the inverse of $(\phi, f)$, i.e., $\psi \circ \phi = \mathit{id}_\Delta$, $\phi \circ \psi = \mathit{id}_\Gamma$, $g \{ \langle \phi \circ \pi_1, f \rangle \} = \pi_2$ and $f \{ \langle \psi \circ \pi_1, g \rangle \} = \pi_2$. 
It suffices to establishes the isomorphisms $(F(\phi), F(f)\{\iota\}) : (F(\Delta), F(B)) \cong (F(\Gamma), F(A)) : (F(\psi), F(g)\{\iota\})$ in $\mathcal{D}_{\mathscr{D}}$.
Clearly, $F(\psi) \circ F(\phi) = F(\psi \circ \phi) = F(\mathit{id}_\Delta) = \mathit{id}_{F(\Delta)}$; and we have:
\begin{align*}
F(g) \{ \iota \} \{ \langle F(\phi) \circ \pi_1, F(f)\{\iota\} \rangle \} &= F(g) \{ \iota \} \{ \langle F(\phi) \circ (F(\pi_1) \circ \iota), F(f)\{ \iota \} \rangle \} \ \text{(by Lemma~\ref{LemSCFwDLemma})} \\ 
&= F(g) \{ \iota \} \{ \langle (F(\phi) \circ F(\pi_1)) \circ \iota, F(f)\{ \iota \} \rangle \} \\
&= F(g) \{ \iota \} \{ \langle F(\phi \circ \pi_1) \circ \iota, F(f)\{ \iota \} \rangle \} \\ 
&= F(g) \{ \iota \} \{ \langle F(\phi \circ \pi_1), F(f) \rangle \circ \iota \} \\
&= F(g) \{ \iota \} \{ \iota^{-1} \circ F(\langle \phi \circ \pi_1, f \rangle) \circ \iota \} \ \text{(by Lemma~\ref{LemSCFwDLemma})} \\
&= F(g) \{ \iota \circ \iota^{-1} \circ F(\langle \phi \circ \pi_1, f \rangle) \circ \iota \} \\
&= F(g) \{ F(\langle \phi \circ \pi_1, f \rangle) \} \{ \iota \} \\
&= F(g\{\langle \phi \circ \pi_1, f \rangle\})\{\iota\} \\ 
&= F(\pi_2)\{\iota\} \\
&= \pi_2.
\end{align*}

Similarly, we have $F(\phi) \circ F(\psi) = \mathit{id}_{F(\Gamma)}$ and $F(f)  \{ \iota \} \{ \langle F(\psi) \circ \pi_1, F(g)\{\iota\} \rangle \} = \pi_2$, which completes the proof.
\end{proof}

\if0
Now, we are also able to define:
\begin{definition}[Essential surjectivity on D-objects]
Let $F : \mathcal{C} \to \mathcal{D}$ be an SCFwD.
$F$ is \emph{\bfseries essentially surjective on D-objects} iff for any D-object $B \in \mathscr{D}_{\mathcal{D}}(\Delta)$ in $\mathcal{D}$ there is some D-object $A \in \mathscr{D}_{\mathcal{C}}(\Gamma)$ in $\mathcal{C}$ such that $F(A) \cong B$ in $\mathcal{D}$.
\end{definition}

Similarly to the case of categories, we may establish:
\begin{theorem}[Equivalences and adjunctions]
Equivalences between SCCwDs coincide with adjunctions between them that are faithful, full and essentially surjective on objects and D-objects.
\end{theorem}
\begin{proof}
Left to the reader. 
\end{proof}
\fi


\subsection{Cartesian Categories and Functors with Dependence}
\label{CCwDs}
We have seen in the last section that strict SCCwDs are CwFs, and thus they give a semantics of MLTTs yet without any specific type constructions.
In this section, we give a generalization of terminal objects and semi-$\Sigma$-spaces, called \emph{unit D-objects} and \emph{$\Sigma$-spaces}, which respectively interpret $\mathsf{1}$- and $\mathsf{\Sigma}$-types in MLTTs.
We also introduce SCFwDs that preserve these structures.

\subsubsection{Cartesian Categories with Dependence}
A naive idea to interpret $\mathsf{1}$-types is to employ D-objects that are to be called \emph{terminal}.
Note that we may define \emph{limits for D-objects and D-morphisms} in any SCCwD $\mathcal{C}$ to be just limits in the category $\mathcal{C}_{\mathscr{D}}$.
Therefore, we may define: 
\begin{definition}[Terminal and initial D-objects]
A D-object $A \in \mathscr{D}_{\mathcal{C}}(\Gamma)$ in an SCCwD $\mathcal{C}$ is \emph{\bfseries terminal} (resp. \emph{\bfseries initial}) iff so is the pair $(\Gamma, A)$ in the category $\mathcal{C}_{\mathscr{D}}$.
\end{definition}

Since a terminal (resp. initial) D-object in an SCCwD is unique up to isomorphisms, we may call it \emph{the} terminal (resp. initial) D-object.
\if0
\begin{example}
In the SCDC $\mathit{DSets}$, the terminal dependent object is the singleton dependent set $1 = (1_\bullet = T)_{\bullet \in T}$ and the initial dependent object is the empty dependent set $\emptyset$.
\end{example}
\fi

Notice, however, that terminal D-objects cannot interpret $\mathsf{1}$-type because their universal property does not exactly match the uniqueness rule $1$-Uniq.
Thus, we introduce:
\begin{definition}[Unit D-objects]
A D-object $1 \in \mathscr{D}_{\mathcal{C}}(T)$ in an SCCwD $\mathcal{C}$ is \emph{\bfseries unit} iff $T \in \mathcal{C}$ is a terminal object in $\mathcal{C}$, and there is a unique D-morphism $\bm{!}_\Gamma : \Gamma \rightarrowtriangle 1\{!_\Gamma\}$\footnote{Note the notational difference between $!_\Gamma$ and $\bm{!}_\Gamma$.} for each object $\Gamma \in \mathcal{C}$.
\end{definition}

\begin{example}
In a cartesian category $\mathcal{C}$ seen as an SCCwD $\mathsf{D}(\mathcal{C})$, a terminal object $T$ over itself is also a unit D-object.
\end{example}

\begin{proposition}[Uniqueness of unit D-objects]
Unit D-objects are unique up to isomorphisms. 
\end{proposition}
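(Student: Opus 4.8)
The plan is to prove $1 \cong 1'$ for two unit D-objects $1 \in \mathscr{D}_{\mathcal{C}}(T)$ and $1' \in \mathscr{D}_{\mathcal{C}}(T')$ by exhibiting an explicit isomorphism $(T, 1) \stackrel{\sim}{\to} (T', 1')$ in the category $\mathcal{C}_{\mathscr{D}}$, with every verification powered by the two uniqueness clauses at hand: uniqueness of morphisms into a terminal object, and uniqueness of the D-morphisms $\bm{!}_\Gamma$. To keep the bookkeeping honest I would write $\bm{!}^{1}$ and $\bm{!}^{1'}$ for the unit D-morphisms furnished by $1$ and $1'$ respectively, let $!_T : T \to T'$ and $!_{T'} : T' \to T$ be the unique morphisms (by terminality of $T'$ and of $T$), and record two elementary consequences of terminality: first, $!_{T'} \circ !_T$ and $!_T \circ !_{T'}$ are endomorphisms of terminal objects, hence $\mathit{id}_T$ and $\mathit{id}_{T'}$; second, $\pi_1^{T . 1} : T . 1 \to T$ is forced to equal the unique morphism $T . 1 \to T$. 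Consequently $\pi_2^{T . 1}$ and $\bm{!}^{1}_{T . 1}$ are both D-morphisms $T . 1 \rightarrowtriangle 1\{\pi_1^{T . 1}\}$, so by uniqueness of $\bm{!}^{1}$ they coincide: $\pi_2^{T . 1} = \bm{!}^{1}_{T . 1}$ (and symmetrically for $T' . 1'$).

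Next I would assemble the candidate isomorphism. Since $!_T \circ \pi_1^{T . 1} : T . 1 \to T'$ is the unique such morphism by terminality of $T'$, the D-morphism $\bm{!}^{1'}_{T . 1}$ has exactly type $T . 1 \rightarrowtriangle 1'\{!_T \circ \pi_1^{T . 1}\}$, which is precisely what a second component of a morphism $(T, 1) \to (T', 1')$ in $\mathcal{C}_{\mathscr{D}}$ must be; thus $(!_T, \bm{!}^{1'}_{T . 1})$ is a legitimate morphism $(T, 1) \to (T', 1')$. Symmetrically, $(!_{T'}, \bm{!}^{1}_{T' . 1'}) : (T', 1') \to (T, 1)$ is a morphism of $\mathcal{C}_{\mathscr{D}}$.

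Then I would compute the two composites using the composition law $(\psi, g) \circ (\phi, f) = (\psi \circ \phi, g\{\langle \phi \circ \pi_1, f \rangle\})$. For $(!_{T'}, \bm{!}^{1}_{T' . 1'}) \circ (!_T, \bm{!}^{1'}_{T . 1})$ the first component is $!_{T'} \circ !_T = \mathit{id}_T$, and the second is $\bm{!}^{1}_{T' . 1'}\{\langle !_T \circ \pi_1^{T . 1}, \bm{!}^{1'}_{T . 1}\rangle\}$. I would identify the codomain of this D-morphism as $1\{!_{T'} \circ \pi_1^{T' . 1'} \circ \langle !_T \circ \pi_1^{T . 1}, \bm{!}^{1'}_{T . 1}\rangle\}$, whose substituting morphism $T . 1 \to T$ is, again by terminality of $T$, equal to $\pi_1^{T . 1}$; hence this second component is a D-morphism $T . 1 \rightarrowtriangle 1\{\pi_1^{T . 1}\}$, and by uniqueness of $\bm{!}^{1}$ it equals $\bm{!}^{1}_{T . 1} = \pi_2^{T . 1}$. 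Therefore the composite is $(\mathit{id}_T, \pi_2^{T . 1}) = \mathit{id}_{(T, 1)}$, and by the identical argument the reverse composite is $\mathit{id}_{(T', 1')}$. This exhibits $(!_T, \bm{!}^{1'}_{T . 1})$ as an isomorphism in $\mathcal{C}_{\mathscr{D}}$, so $1 \cong 1'$.

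The only delicate point is the type bookkeeping after D-composition: one must repeatedly invoke terminality to collapse the various morphisms landing in $T$ or $T'$ so that the substituted D-objects $1\{-\}$ and $1'\{-\}$ line up, \emph{before} the uniqueness of unit D-morphisms can be applied. Once the domains and codomains are matched, every needed equality of D-morphisms is immediate from uniqueness, so there is no genuine computation to grind through.
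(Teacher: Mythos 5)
Your proof is correct and follows exactly the paper's route: the paper's own proof is the one-line observation that $(!_{T'}, \bm{!}_{1'}) : (T, 1) \cong (T', 1') : (!_T, \bm{!}_1)$ in $\mathcal{C}_{\mathscr{D}}$, and what you have written is precisely that isomorphism with all the terminality and uniqueness verifications (which the paper leaves implicit) spelled out. In particular your identification $\pi_2^{T.1} = \bm{!}^{1}_{T.1}$ and the codomain-collapsing via terminality are exactly the bookkeeping the paper silently relies on.
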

\begin{proof}
Let $1 \in \mathscr{D}_{\mathcal{C}}(T)$ and $1' \in \mathscr{D}_{\mathcal{C}}(T')$ be both unit D-objects in an SCCwD $\mathcal{C}$.
Then, we have $(!_{T'}, \bm{!}_{1'}) : (T, 1) \cong (T', 1') : (!_T, \bm{!}_1)$ in the category $\mathcal{C}_{\mathscr{D}}$, i.e., $1 \cong 1'$ in $\mathcal{C}$.
\end{proof}

It is almost immediate by the definition that unit D-objects capture $\mathsf{1}$-type in MLTTs:
\begin{theorem}[Unit D-objects interpret the $\mathsf{1}$-type]
\label{ThmUnit}
A strict SCCwD equipped with a unit D-object gives rise to a CwF that supports $\mathsf{1}$-type in the strict sense.
\end{theorem}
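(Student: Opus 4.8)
The plan is to read off the $1$-type structure directly from the unit D-object, exploiting the identification of the CwF arising from a strict SCCwD with the SCCwD itself. Recall from the discussion following Lemma~\ref{LemSemiSigmaPairings} that a strict SCCwD $\mathcal{C}$ yields a CwF whose types are $\mathit{Ty}(\Gamma) = \mathscr{D}_{\mathcal{C}}(\Gamma)$, whose terms are $\mathit{Tm}(\Gamma, A) = \mathscr{D}_{\mathcal{C}}(\Gamma, A)$, and whose substitution $\_\{\_\}$ is D-composition. So it suffices to produce, for this CwF, the data and equations required by the definition of a CwF supporting $1$-type in the strict sense.

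First I would fix a unit D-object $1 \in \mathscr{D}_{\mathcal{C}}(T)$ and define the semantic type former by $1_\Gamma := 1\{!_\Gamma\}$ and the canonical term by $\star_\Gamma := \bm{!}_\Gamma$ for each $\Gamma \in \mathcal{C}$, where $!_\Gamma : \Gamma \to T$ is the unique morphism into the terminal object and $\bm{!}_\Gamma : \Gamma \rightarrowtriangle 1\{!_\Gamma\}$ is the unique D-morphism guaranteed by unitness. These immediately discharge (Unit-Form) and (Unit-Intro), since $1_\Gamma \in \mathscr{D}_{\mathcal{C}}(\Gamma) = \mathit{Ty}(\Gamma)$ and $\star_\Gamma \in \mathscr{D}_{\mathcal{C}}(\Gamma, 1_\Gamma) = \mathit{Tm}(\Gamma, 1_\Gamma)$.

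Next I would verify the substitution laws. For $\phi : \Delta \to \Gamma$ in $\mathcal{C}$, functoriality of D-composition gives $1_\Gamma\{\phi\} = 1\{!_\Gamma\}\{\phi\} = 1\{!_\Gamma \circ \phi\}$, and terminality of $T$ forces $!_\Gamma \circ \phi = !_\Delta$, so $1_\Gamma\{\phi\} = 1\{!_\Delta\} = 1_\Delta$; this is (Unit-Subst). For ($\star$-Subst), the same computation shows that $\star_\Gamma\{\phi\} = \bm{!}_\Gamma\{\phi\}$ lies in $\mathscr{D}_{\mathcal{C}}(\Delta, 1\{!_\Gamma\}\{\phi\}) = \mathscr{D}_{\mathcal{C}}(\Delta, 1\{!_\Delta\})$, which by the uniqueness clause in the definition of a unit D-object is the singleton containing $\bm{!}_\Delta$; hence $\star_\Gamma\{\phi\} = \bm{!}_\Delta = \star_\Delta$. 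The point where one must be careful is precisely this bookkeeping of codomains: one needs (Unit-Subst) already in hand to know that $\bm{!}_\Gamma\{\phi\}$ is a competitor for the unique D-morphism into $1\{!_\Delta\}$, so the two substitution laws must be proved in this order.

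Finally, strictness is immediate: given any $f \in \mathit{Tm}(\Gamma, 1_\Gamma) = \mathscr{D}_{\mathcal{C}}(\Gamma, 1\{!_\Gamma\})$, the uniqueness of $\bm{!}_\Gamma$ in $\mathscr{D}_{\mathcal{C}}(\Gamma, 1\{!_\Gamma\})$ yields $f = \bm{!}_\Gamma = \star_\Gamma$, which is exactly (Unit-Uniq). The whole argument is thus essentially a translation of the defining property of a unit D-object into the CwF axioms, and there is no serious obstacle beyond keeping the indexing D-objects (the codomains of the D-morphisms) straight; I expect this codomain-matching to be the only subtle point.
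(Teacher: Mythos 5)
Your proposal is correct and takes essentially the same route as the paper, which defines $1_\Gamma := 1\{!_\Gamma\}$ and leaves the remaining axioms as "clear"; you have simply filled in the routine verifications (terminality giving $!_\Gamma \circ \phi = !_\Delta$, functoriality of D-composition for \textsc{Unit-Subst}, and uniqueness of $\bm{!}_\Gamma$ for \textsc{$\star$-Subst} and \textsc{Unit-Uniq}). Your remark about proving \textsc{Unit-Subst} before \textsc{$\star$-Subst} to get the codomains to match is a sensible piece of bookkeeping that the paper elides.
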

\begin{proof}
Let $\mathcal{C}$ be a strict SCCwD equipped with a unit D-object $1 \in \mathscr{D}_{\mathcal{C}}(T)$.
For each object $\Gamma \in \mathcal{C}$, we define $1_\Gamma \stackrel{\mathrm{df. }}{=} 1\{!_\Gamma\} \in \mathscr{D}_{\mathcal{C}}(\Gamma)$, which interprets the rule Unit-Intro. 
Then, it is clear how the remaining axioms for a CwF $\mathcal{C}$ that supports $1$-type in the strict sense are satisfied.
\end{proof}

Note that unit D-objects are terminal, but not necessarily vice versa.
In fact, this pattern will occur repeatedly: Limits in the category of D-objects $\mathcal{C}_{\mathscr{D}}$ on an SCCwD $\mathcal{C}$ in general do not give interpretations of type constructions of MLTTs in $\mathcal{C}$, but their stronger version does.

Next, let us consider \emph{(binary) products} of D-objects.
However, if we define the binary product of D-objects $A \in \mathscr{D}_{\mathcal{C}}(\Gamma)$ and $B \in \mathscr{D}_{\mathcal{C}}(\Gamma . A)$ in an SCCwD $\mathcal{C}$ as the binary product $(\Gamma, A) \times (\Gamma . A, B)$ in the category $\mathcal{C}_{\mathscr{D}}$, then it would be  a D-object $A \times B \in \mathscr{D}_{\mathcal{C}}(\Gamma \times \Gamma.A)$ in $\mathcal{C}$, which does not match the rule $\mathsf{\Sigma}$-Form.
Instead, what we need to interpret $\mathsf{\Sigma}$-types is the following:
\begin{definition}[$\Sigma$-spaces]
Let $\mathcal{C}$ be an SCCwD.
Given $\Gamma \in \mathcal{C}$, $A \in \mathscr{D}_{\mathcal{C}}(\Gamma)$ and $B \in \mathscr{D}_{\mathcal{C}}(\Gamma . A)$, a \emph{\bfseries dependent pair ($\bm{\Sigma}$-) space} of $A$ and $B$ in $\mathcal{C}$ is a D-object $\Sigma(A, B) \in \mathscr{D}_{\mathcal{C}}(\Gamma)$ together with D-morphisms
\begin{equation*}
A\{\pi_1^{\Gamma . \Sigma(A, B)}\} \stackrel{\varpi_1^{\Sigma(A, B)}}{\leftarrowtriangle} \Gamma . \Sigma(A, B) \stackrel{\varpi_2^{\Sigma(A, B)}}{\rightarrowtriangle} B \{ \langle \pi_1^{\Gamma . \Sigma(A, B)}, \varpi_1^{\Sigma(A, B)} \rangle \}
\end{equation*}
in $\mathcal{C}$, called the \emph{\bfseries first} and \emph{\bfseries second projections} of $\Sigma(A, B)$, respectively, such that for any $\phi : \Delta \to \Gamma$, $g : \Delta \rightarrowtriangle A \{ \phi \}$ and $h : \Delta \rightarrowtriangle B \{ \langle \phi, g \rangle \}$ in $\mathcal{C}$ there exists a unique D-morphism  
\begin{equation*}
\Lbag g, h \Rbag : \Delta \rightarrowtriangle \Sigma(A, B) \{ \phi \}
\end{equation*}
in $\mathcal{C}$, called the \emph{\bfseries dependent pairing} of $g$ and $h$, that satisfies 
\begin{align}
\label{UP1Sigma}
\varpi_1^{\Sigma(A, B)} \{ \langle \phi, \Lbag g, h \Rbag \rangle \} &= g \\
\label{UP2Sigma}
\varpi_2^{\Sigma(A, B)} \{ \langle \phi, \Lbag g, h \Rbag \rangle \} &= h.
\end{align}
\end{definition}

\begin{notation}
We often omit the superscript $\Sigma(A, B)$ on $\varpi_i^{\Sigma(A, B)}$ ($i = 1, 2$).
\end{notation}

The universal property of a dependent pairing $\Lbag g, h \Rbag : \Delta \rightarrowtriangle \Sigma(A, B) \{ \phi \}$ may be described as the following commutative diagram:
\begin{diagram}
\label{DPS}
A\{\phi\} && & \lDepTo^g && \Delta &  \rDepTo^{h}  & && B \{ \langle \phi, g \rangle \} \\
 \uImplies&&& & \ldTo(3, 2)^{\phi} &  & && & \uImplies \\
 &&\Gamma&&&\dDotsto_{\langle \phi, \Lbag g, h \Rbag \rangle}&&&& \\
 &&&\luTo(3, 2)^{\pi_1}&&&&&& \\
A \{ \pi_1 \} && \lDepTo^{\varpi_1} &&& \Gamma . \Sigma(A, B) && \rDepTo^{\varpi_2} & & B\{ \langle \pi_1, \varpi_1 \rangle \}
\end{diagram}

\begin{proposition}[Uniqueness of $\Sigma$-spaces]
\label{PropUniquenessOfSigmaSpaces}
$\Sigma$-spaces are unique up to isomorphisms.
\end{proposition}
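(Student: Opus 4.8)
The plan is to run the standard universal-property uniqueness argument, but phrased inside the category $\mathcal{C}_\Gamma$ of the fixed object $\Gamma$. Suppose $(\Sigma(A, B), \varpi_1, \varpi_2)$ and $(\Sigma'(A, B), \varpi_1', \varpi_2')$ are two $\Sigma$-spaces of the same $A \in \mathscr{D}_{\mathcal{C}}(\Gamma)$ and $B \in \mathscr{D}_{\mathcal{C}}(\Gamma . A)$. Since both D-objects lie over the same base $\Gamma$, I expect the comparison isomorphism to have identity first component, so it suffices to produce mutually inverse morphisms $\Sigma(A, B) \rightleftarrows \Sigma'(A, B)$ in $\mathcal{C}_\Gamma$. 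The assignment $A \mapsto (\Gamma, A)$, $f \mapsto (\mathit{id}_\Gamma, f)$ is a faithful functor $\mathcal{C}_\Gamma \to \mathcal{C}_{\mathscr{D}}$ — composition matches because $g \circ f = g\{\langle \pi_1, f \rangle\}$ in $\mathcal{C}_\Gamma$ is precisely the $\mathcal{C}_{\mathscr{D}}$-composite of $(\mathit{id}_\Gamma, f)$ and $(\mathit{id}_\Gamma, g)$ — and functors preserve isomorphisms, so an iso in $\mathcal{C}_\Gamma$ immediately yields $\Sigma(A, B) \cong \Sigma'(A, B)$ in the sense of the definition.

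First I would build the comparison D-morphisms by cross-feeding the two universal properties. Applying the universal property of $\Sigma'(A, B)$ to the data $\phi = \pi_1^{\Gamma . \Sigma(A, B)}$, $g = \varpi_1$, $h = \varpi_2$ produces $u = \Lbag \varpi_1, \varpi_2 \Rbag'$, a D-morphism $\Gamma . \Sigma(A, B) \rightarrowtriangle \Sigma'(A, B)\{\pi_1\}$, i.e.\ a morphism $\Sigma(A, B) \to \Sigma'(A, B)$ in $\mathcal{C}_\Gamma$; symmetrically, applying the universal property of $\Sigma(A, B)$ to $\pi_1^{\Gamma . \Sigma'(A, B)}$, $\varpi_1'$, $\varpi_2'$ yields $v = \Lbag \varpi_1', \varpi_2' \Rbag$. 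By construction these satisfy $\varpi_1'\{\langle \pi_1, u \rangle\} = \varpi_1$, $\varpi_2'\{\langle \pi_1, u \rangle\} = \varpi_2$ and $\varpi_1\{\langle \pi_1', v \rangle\} = \varpi_1'$, $\varpi_2\{\langle \pi_1', v \rangle\} = \varpi_2'$, which are exactly the defining equations (\ref{UP1Sigma}) and (\ref{UP2Sigma}) for the two spaces.

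Then I would verify $v \circ u = \mathit{id}_{\Sigma(A, B)}$, i.e.\ $v\{\langle \pi_1, u \rangle\} = \pi_2^{\Gamma . \Sigma(A, B)}$, via the uniqueness clause of the universal property of $\Sigma(A, B)$. The key rewriting is $\langle \pi_1, v \circ u \rangle = \langle \pi_1', v \rangle \circ \langle \pi_1, u \rangle$, obtained from the naturality equation (\ref{ConsNat}) of Lemma~\ref{LemSemiSigmaPairings} together with $\pi_1' \circ \langle \pi_1, u \rangle = \pi_1$. This collapses the projections: $\varpi_i\{\langle \pi_1, v \circ u \rangle\} = \varpi_i\{\langle \pi_1', v \rangle\}\{\langle \pi_1, u \rangle\} = \varpi_i'\{\langle \pi_1, u \rangle\} = \varpi_i$ for $i = 1, 2$. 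Since $\langle \pi_1, \pi_2 \rangle = \mathit{id}$ by (\ref{ConsId}), the identity $\pi_2^{\Gamma . \Sigma(A, B)}$ has these same two projections, so uniqueness forces $v \circ u = \pi_2^{\Gamma . \Sigma(A, B)} = \mathit{id}_{\Sigma(A, B)}$. A symmetric computation gives $u \circ v = \mathit{id}_{\Sigma'(A, B)}$, exhibiting the isomorphism in $\mathcal{C}_\Gamma$.

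The main obstacle is not conceptual but bookkeeping of codomains under D-composition: I must check at each step that substitutions compose correctly, e.g.\ that $\Sigma(A, B)\{\pi_1'\}\{\langle \pi_1, u \rangle\} = \Sigma(A, B)\{\pi_1\}$ and that the composites of semi-dependent pairings land in the intended D-objects, so that every application of (\ref{UP1Sigma}) and (\ref{UP2Sigma}) is well-typed. Getting the single functoriality identity $\pi_1' \circ \langle \pi_1, u \rangle = \pi_1$ right — it is just the first-projection equation of the semi-$\Sigma$-space $\Gamma . \Sigma'(A, B)$ — is what makes all the substitution bookkeeping collapse cleanly, and it is the step I would write out most carefully.
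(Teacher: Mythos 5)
Your argument is correct and is exactly the standard universal-property uniqueness argument that the paper invokes by reference (its proof says only ``similar to the proof of the uniqueness of semi-$\Sigma$-spaces''); cross-feeding the two universal properties to get $u = \Lbag \varpi_1, \varpi_2 \Rbag'$ and $v = \Lbag \varpi_1', \varpi_2' \Rbag$, collapsing $\langle \pi_1, v \circ u \rangle = \langle \pi_1', v \rangle \circ \langle \pi_1, u \rangle$ via (\ref{ConsNat}), and concluding by the uniqueness clause against $\pi_2$ is precisely the intended route. Your packaging of the comparison maps as an isomorphism in $\mathcal{C}_\Gamma$ and transporting it along the faithful functor $\mathcal{C}_\Gamma \to \mathcal{C}_{\mathscr{D}}$ is a clean way to land in the paper's official notion of isomorphism between D-objects, and all the typing checks you flag do go through.
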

\begin{proof}
Similar to the proof of the uniqueness of semi-$\Sigma$-spaces.
\end{proof}

\begin{definition}[CCwDs]
A \emph{\bfseries cartesian CwD (CCwD)} is an SCCwD $\mathcal{C}$ that has:
\begin{itemize}

\item A unit D-object $1 \in \mathscr{D}_{\mathcal{C}}(T)$;

\item A $\Sigma$-space $A\{ \pi_1 \} \stackrel{\varpi_1}{\leftarrowtriangle} \Gamma . \Sigma(A, B) \stackrel{\varpi_2}{\rightarrowtriangle} B \{ \langle \pi_1, \varpi_1 \rangle \}$ for any triple of $\Gamma \in \mathcal{C}$, $A \in \mathscr{D}_{\mathcal{C}}(\Gamma)$ and $B \in \mathscr{D}_{\mathcal{C}}(\Gamma . A)$.

\end{itemize}
A \emph{\bfseries strict CCwD} is a strict SCCwD $\mathcal{C} = (\mathcal{C}, T, \_ . \_, \pi)$ equipped with a unit D-object $1 \in \mathscr{D}_{\mathcal{C}}(T)$ and a family $\Sigma = (\Sigma(A, B))_{\Gamma \in \mathcal{C}, A \in \mathscr{D}_{\mathcal{C}}(\Gamma), B \in \mathscr{D}_{\mathcal{C}}(\Gamma . A)}$ of $\Sigma$-spaces $\Sigma(A, B) \in \mathscr{D}_{\mathcal{C}}(\Gamma)$ that is \emph{\bfseries coherent} in the sense that for any $\Delta, \Gamma \in \mathcal{C}$, $A \in \mathscr{D}_{\mathcal{C}}(\Gamma)$, $B \in \mathscr{D}_{\mathcal{C}}(\Gamma . A)$ and $\phi : \Delta \to \Gamma$ in $\mathcal{C}$ it satisfies
\begin{align}
\label{Coh0Sigma}
\Sigma(A, B) \{ \phi \} &= \Sigma(A \{ \phi \}, B \{ \phi^+ \}) \\
\label{Coh1Sigma}
\varpi_1^{\Sigma(A, B)} \circ \phi^\star &= \varpi_1^{\Sigma(A\{\phi\}, B\{\phi^+\})} \\
\label{Coh2Sigma}
\varpi_2^{\Sigma(A, B)} \circ \phi^\star &= \varpi_2^{\Sigma(A\{\phi\}, B\{\phi^+\})}
\end{align}
where $\phi^+ \stackrel{\mathrm{df. }}{=} \langle \phi \circ \pi_1, \pi_2 \rangle : \Delta . A \{ \phi \} \to \Gamma . A$ and $\phi^\star \stackrel{\mathrm{df. }}{=} \langle \phi \circ \pi_1, \pi_2 \rangle : \Delta . \Sigma(A, B) \{ \phi \} \to \Gamma . \Sigma(A, B)$.
\end{definition}

\begin{example}
\label{ExCCsSeenAsCDCs}
A cartesian category $\mathcal{C}$ seen as an SCCwD $\mathsf{D}(\mathcal{C})$ is a CCwD as we may define $\Sigma(A, B) \stackrel{\mathrm{df. }}{=} A \times B$, $\varpi_1^{\Sigma(A, B)} \stackrel{\mathrm{df. }}{=} \pi_1^{A \times B} \circ \pi_2^{\Gamma \times (A \times B)}$ and $\varpi_2^{A \times B} \stackrel{\mathrm{df. }}{=} \pi_2^{A \times B} \circ \pi_2^{\Gamma \times (A \times B)}$ for any $\Gamma \in \mathcal{C}$, $A \in \mathscr{D}_{\mathsf{D}(\mathcal{C})}(\Gamma)$ and $B \in \mathscr{D}_{\mathsf{D}(\mathcal{C})}(\Gamma . A)$.
We call it a \emph{\bfseries cartesian category seen as a CCwD}. 
\end{example}

\begin{proposition}[Associativity of $\Sigma$-spaces]
\label{PropAssociativityOfSigmaSpaces}
Let $\mathcal{C}$ be an SCCwD, and $\Gamma \in \mathcal{C}$, $A \in \mathscr{D}_{\mathcal{C}}(\Gamma)$, $B \in \mathscr{D}_{\mathcal{C}}(\Gamma . A)$ and $C \in \mathscr{D}_{\mathcal{C}}(\Gamma . \Sigma(A, B))$.
Then, we have isomorphisms 
\begin{align*}
\mathit{Pair}_{\Gamma . \Sigma(A, B)}^{\Sigma(\Gamma . A, B)} : \Gamma . A . B &\stackrel{\sim}{\to} \Gamma . \Sigma(A, B) \\
\mathit{Triple}_{\Sigma(\Sigma(A, B), C)}^{\Sigma(A, \Sigma(B, C\{\mathit{Pair}_{\Gamma . \Sigma(A, B)}^{\Sigma(\Gamma . A, B)}\}))} : \Sigma(\Sigma(A, B), C) &\stackrel{\sim}{\to} \Sigma(A, \Sigma(B, C\{\mathit{Pair}_{A, B}\})).
\end{align*}
\end{proposition}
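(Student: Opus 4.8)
The plan is to derive both isomorphisms from the two universal properties at hand: that of semi-$\Sigma$-spaces (Definition~\ref{DefSemiDependentPairSpaces}, together with equations (\ref{ConsNat}) and (\ref{ConsId}) from Lemma~\ref{LemSemiSigmaPairings}) and that of $\Sigma$-spaces (equations (\ref{UP1Sigma}) and (\ref{UP2Sigma})). First I would construct the comprehension isomorphism $\mathit{Pair} \stackrel{\mathrm{df.}}{\equiv} \mathit{Pair}_{\Gamma.\Sigma(A,B)}^{\Sigma(\Gamma.A,B)}$ directly. By the universal property of the semi-$\Sigma$-space $\Gamma.\Sigma(A,B)$ it suffices to give a morphism $\Gamma.A.B \to \Gamma$ together with a D-morphism into $\Sigma(A,B)$ pulled back along it; the natural choices are $\pi_1^{\Gamma.A}\circ\pi_1^{\Gamma.A.B}$ and the dependent pairing $\Lbag \pi_2^{\Gamma.A}\{\pi_1^{\Gamma.A.B}\},\, \pi_2^{\Gamma.A.B}\Rbag$, so I set
\[
\mathit{Pair} \stackrel{\mathrm{df.}}{\equiv} \langle\, \pi_1^{\Gamma.A}\circ\pi_1^{\Gamma.A.B},\ \Lbag \pi_2^{\Gamma.A}\{\pi_1^{\Gamma.A.B}\},\, \pi_2^{\Gamma.A.B}\Rbag \,\rangle : \Gamma.A.B \to \Gamma.\Sigma(A,B).
\]
Here one first checks, using (\ref{ConsNat}) and (\ref{ConsId}), that $\langle \pi_1^{\Gamma.A}\circ\pi_1^{\Gamma.A.B},\, \pi_2^{\Gamma.A}\{\pi_1^{\Gamma.A.B}\}\rangle = \pi_1^{\Gamma.A.B}$, which is exactly what makes the codomain $B\{\pi_1^{\Gamma.A.B}\}$ of $\pi_2^{\Gamma.A.B}$ agree with the type required by the dependent pairing. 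The candidate inverse, obtained symmetrically from the universal property of $\Gamma.A.B$ as a semi-$\Sigma$-space over $\Gamma.A$, is
\[
\mathit{Pair}^{-1} \stackrel{\mathrm{df.}}{\equiv} \langle\, \langle \pi_1^{\Gamma.\Sigma(A,B)}, \varpi_1 \rangle,\ \varpi_2 \,\rangle : \Gamma.\Sigma(A,B) \to \Gamma.A.B.
\]

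Second I would verify these are mutually inverse. For $\mathit{Pair}^{-1}\circ\mathit{Pair} = \mathit{id}_{\Gamma.A.B}$, I would repeatedly apply (\ref{ConsNat}) to push the composite through the pairings, then use (\ref{UP1Sigma}) and (\ref{UP2Sigma}) to evaluate $\varpi_1\{\mathit{Pair}\} = \pi_2^{\Gamma.A}\{\pi_1^{\Gamma.A.B}\}$ and $\varpi_2\{\mathit{Pair}\} = \pi_2^{\Gamma.A.B}$, arriving at $\langle \pi_1^{\Gamma.A.B}, \pi_2^{\Gamma.A.B}\rangle$, which is $\mathit{id}_{\Gamma.A.B}$ by (\ref{ConsId}). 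For the other composite I would note that $\pi_2^{\Gamma.\Sigma(A,B)} = \Lbag \varpi_1, \varpi_2\Rbag$ (by uniqueness of dependent pairings, since both sides satisfy (\ref{UP1Sigma})–(\ref{UP2Sigma}) against $\mathit{id}_{\Gamma.\Sigma(A,B)} = \langle\pi_1,\pi_2\rangle$), and then check that $\mathit{Pair}\circ\mathit{Pair}^{-1}$ agrees with $\mathit{id}$ after $\pi_1^{\Gamma.\Sigma(A,B)}$ and after $\pi_2^{\Gamma.\Sigma(A,B)}$, invoking the uniqueness clauses of both universal properties. These are the routine calculations I would not spell out in full.

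For the associativity isomorphism, the plan is to avoid building $\mathit{Triple}$ by hand and instead paste four instances of the comprehension isomorphism just constructed, all lying over $\Gamma$. Writing $C' \stackrel{\mathrm{df.}}{\equiv} C\{\mathit{Pair}\} \in \mathscr{D}_{\mathcal{C}}(\Gamma.A.B)$, I would assemble the chain
\[
\Gamma.\Sigma(\Sigma(A,B),C) \cong \Gamma.\Sigma(A,B).C \cong \Gamma.A.B.C' \cong \Gamma.A.\Sigma(B,C') \cong \Gamma.\Sigma(A,\Sigma(B,C'))
\]
where the first isomorphism is the inverse of the comprehension map for $\Sigma(A,B)$ and $C$ over $\Gamma$, the third and fourth are the comprehension maps for $B, C'$ over $\Gamma.A$ and for $A, \Sigma(B,C')$ over $\Gamma$, and the middle one is the lift of $\mathit{Pair}: \Gamma.A.B \stackrel{\sim}{\to} \Gamma.\Sigma(A,B)$ along $C$. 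For that lift I would use the elementary fact that for any isomorphism $\theta : \Delta \stackrel{\sim}{\to} \Xi$ and any $D \in \mathscr{D}_{\mathcal{C}}(\Xi)$ the morphism $\langle \theta\circ\pi_1, \pi_2 \rangle : \Delta.D\{\theta\} \stackrel{\sim}{\to} \Xi.D$ is an isomorphism, with inverse $\langle \theta^{-1}\circ\pi_1, \pi_2\rangle$, again a direct consequence of (\ref{ConsNat}) and (\ref{ConsId}). Each of the four isomorphisms commutes with the iterated first projection down to $\Gamma$, so their composite $\theta : \Gamma.\Sigma(\Sigma(A,B),C) \stackrel{\sim}{\to} \Gamma.\Sigma(A,\Sigma(B,C'))$ does too; hence setting $\mathit{Triple} \stackrel{\mathrm{df.}}{\equiv} (\mathit{id}_\Gamma, \pi_2\{\theta\})$ yields an isomorphism over $\mathit{id}_\Gamma$ in $\mathcal{C}_{\mathscr{D}}$, i.e.\ the required isomorphism of D-objects.

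I expect the main obstacle to be purely bookkeeping: keeping the substitution and D-composition annotations on the projections consistent through the chain, and in particular confirming that the D-object $C$ over $\Gamma.\Sigma(A,B)$ reindexes to exactly $C\{\mathit{Pair}\}$ over $\Gamma.A.B$, so that the inner $\Sigma(B,C\{\mathit{Pair}\})$ appearing in the statement is the one produced by the chain. The only genuinely structural point—beyond the naturality equations—is the descent along the forgetful functor $p_{\mathcal{C}} : \mathcal{C}_{\mathscr{D}} \to \mathcal{C}$: I must confirm that an isomorphism of comprehensions over $\Gamma$ supplies an isomorphism of the D-objects themselves, which follows from the bijection between D-morphisms $f : \Gamma.X \rightarrowtriangle Y\{\pi_1\}$ and morphisms $\langle \pi_1, f\rangle : \Gamma.X \to \Gamma.Y$ over $\Gamma$, together once more with (\ref{ConsNat}) and (\ref{ConsId}).
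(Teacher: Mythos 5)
Your construction of $\mathit{Pair}$ and its inverse is exactly the paper's: the same semi-dependent pairing $\langle \pi_1 \circ \pi_1, \Lbag \pi_2\{\pi_1\}, \pi_2 \Rbag \rangle$ with inverse $\langle \langle \pi_1, \varpi_1 \rangle, \varpi_2 \rangle$ (the paper states these without the verification you sketch, which is sound and relies, as you say, only on (\ref{ConsNat}), (\ref{ConsId}), (\ref{UP1Sigma}), (\ref{UP2Sigma}) and the uniqueness clauses). For the associativity isomorphism you diverge: the paper writes $\mathit{Triple}$ and $\mathit{Triple}^{-1}$ directly as explicit nested dependent pairings, $(\mathit{id}_\Gamma, \Lbag \varpi_1\{\langle \pi_1, \varpi_1 \rangle\}, \Lbag \varpi_2\{\langle \pi_1, \varpi_1 \rangle\}, \varpi_2 \Rbag \Rbag)$ and its mirror image, whereas you paste four instances of the comprehension isomorphism into a chain $\Gamma . \Sigma(\Sigma(A,B),C) \cong \Gamma.\Sigma(A,B).C \cong \Gamma.A.B.C\{\mathit{Pair}\} \cong \Gamma.A.\Sigma(B,C\{\mathit{Pair}\}) \cong \Gamma.\Sigma(A,\Sigma(B,C\{\mathit{Pair}\}))$, check that each leg commutes with the projection to $\Gamma$, and descend via $\theta \mapsto (\mathit{id}_\Gamma, \pi_2\{\theta\})$ to an isomorphism in $\mathcal{C}_{\mathscr{D}}$. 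This descent step is correct: since $\pi_1 \circ \theta = \pi_1$ one has $\langle \mathit{id}_\Gamma \circ \pi_1, \pi_2\{\theta\} \rangle = \theta$, so $(\mathit{id}_\Gamma, \pi_2\{\theta\})$ and $(\mathit{id}_\Gamma, \pi_2\{\theta^{-1}\})$ compose to identities in $\mathcal{C}_{\mathscr{D}}$. Your route is more modular — it reuses the already-proved $\mathit{Pair}$ isomorphism and the elementary lift of an isomorphism along a D-object, at the cost of an extra layer of bookkeeping about "being over $\Gamma$" — while the paper's explicit formula produces the canonical witness directly (and by uniqueness of dependent pairings your $\pi_2\{\theta\}$ necessarily coincides with it). Both arguments are complete at the same level of rigour; yours is arguably more checkable since each step is an instance of a single already-verified isomorphism.
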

\begin{proof}
For brevity, let us omit the subscripts and superscripts, and write $\mathit{Pair}$ and $\mathit{Triple}$ for the isomorphisms to establish.
Let us define: 
\begin{equation*}
\mathit{Pair} \stackrel{\mathrm{df. }}{=} \langle \pi_1 \circ \pi_1, \Lbag \pi_2 \{ \pi_1 \}, \pi_2 \Rbag \rangle : \Gamma . A . B \to \Gamma . \Sigma(A, B)
\end{equation*}
which has $\langle \langle \pi_1, \varpi_1 \rangle, \varpi_2 \rangle : \Gamma . \Sigma(A, B) \to \Gamma . A . B$ as the inverse $\mathit{Pair}^{-1}$.

Similarly, we define: 
\begin{equation*}
\mathit{Triple} \stackrel{\mathrm{df. }}{=} (\mathit{id}_\Gamma, \Lbag \varpi_1\{\langle \pi_1, \varpi_1 \rangle\}, \Lbag \varpi_2\{\langle \pi_1, \varpi_1 \rangle\}, \varpi_2 \Rbag \Rbag) : \Sigma(\Sigma(A, B), C) \to \Sigma(A, \Sigma(B, C\{\mathit{Pair}\}))
\end{equation*}
which has the inverse $\mathit{Triple}^{-1} \stackrel{\mathrm{df. }}{=} (\mathit{id}_\Gamma, \Lbag \Lbag \varpi_1, \varpi_1\{\langle \langle \pi_1, \varpi_1 \rangle, \varpi_2 \rangle \} \Rbag, \varpi_2\{\langle \langle \pi_1, \varpi_1 \rangle, \varpi_2 \rangle \} \Rbag)$.
\end{proof}

\begin{notation}
For \emph{strict} SCCwDs, we rather write $\mathit{Pair}_{A, B}$ and $\mathit{Triple}_{A, B, C}$ for $\mathit{Pair}_{\Gamma . \Sigma(A, B)}^{\Sigma(\Gamma . A, B)}$ and $\mathit{Triple}_{\Sigma(\Sigma(A, B), C)}^{\Sigma(A, \Sigma(B, C\{\mathit{Pair}_{\Gamma . \Sigma(A, B)}^{\Sigma(\Gamma . A, B)}\}))}$, respectively. 
Even for non-strict SCCwDs, however, we often abuse the notation and employ $\mathit{Pair}_{A, B}$ and $\mathit{Triple}_{A, B, C}$ for brevity.
\end{notation}

\begin{proposition}[Unit law of $\Sigma$-spaces]
Let $\mathcal{C}$ be an SCCwD, and $\Gamma \in \mathcal{C}$ and $A \in \mathscr{D}_{\mathcal{C}}(\Gamma)$.
If $1 \in \mathscr{D}_{\mathcal{C}}(T)$ is a unit D-object, then we have $\Sigma(1\{!_\Gamma\}, A\{\pi_1\}) \cong A \cong \Sigma(A, 1\{!_\Gamma\})$.
\end{proposition}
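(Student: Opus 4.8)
The plan is to prove each of the two isomorphisms directly in the category $\mathcal{C}_{\mathscr{D}}$, using that all four D-objects sit over the same base $\Gamma$. Hence for $A'$ equal to $\Sigma(1\{!_\Gamma\}, A\{\pi_1\})$ and then to $\Sigma(A, 1\{!_\Gamma\})$, I would look for an isomorphism $(\mathit{id}_\Gamma, \alpha) : (\Gamma, A) \stackrel{\sim}{\to} (\Gamma, A')$ whose base component is $\mathit{id}_\Gamma$. Unwinding the composition $(\psi \circ \phi, g\{\langle \phi \circ \pi_1, f\rangle\})$ and the identity $(\mathit{id}_\Gamma, \pi_2)$ of $\mathcal{C}_{\mathscr{D}}$, this reduces to producing D-morphisms $\alpha$ and $\beta$ (in the two opposite directions over $\Gamma$) such that the two resulting composites equal $\pi_2^{\Gamma . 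A}$ and $\pi_2^{\Gamma . A'}$, respectively. Throughout I would exploit two facts about the unit D-object: first, it absorbs reindexing, $1\{!_\Gamma\}\{\phi\} = 1\{!_\Delta\}$ for any $\phi : \Delta \to \Gamma$ (because $!_\Gamma \circ \phi = !_\Delta$ by terminality); and second, any D-morphism into such a $1\{!_\Delta\}$ must be $\bm{!}_\Delta$ by the uniqueness clause in the definition of unit D-objects. In $\Sigma(A, 1\{!_\Gamma\})$ the second argument is read, as forced by typing, as the unit D-object $1\{!_{\Gamma . A}\} \in \mathscr{D}_{\mathcal{C}}(\Gamma . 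A)$.

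For $A' = \Sigma(1\{!_\Gamma\}, A\{\pi_1\})$ I would set $\alpha := \varpi_2^{A'} : \Gamma . A' \rightarrowtriangle A\{\pi_1\}$ (its codomain reindexes to $A\{\pi_1\}$ since $\pi_1 \circ \langle \pi_1, \varpi_1 \rangle = \pi_1$) and $\beta := \Lbag \bm{!}_{\Gamma . A}, \pi_2^{\Gamma . A} \Rbag : \Gamma . A \rightarrowtriangle A'\{\pi_1\}$, where the first slot is forced to be the unit D-morphism. One composite is immediate: it equals $\varpi_2^{A'}\{\langle \pi_1, \beta \rangle\} = \pi_2^{\Gamma . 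A}$ by the $\Sigma$-computation rule (\ref{UP2Sigma}). The reverse composite $\beta\{\langle \pi_1, \alpha \rangle\}$, which must be shown to equal $\pi_2^{\Gamma . A'}$, is the delicate step: I would prove it by the \emph{uniqueness} half of the $\Sigma$-universal property, so that it suffices to check that both sides agree after reindexing along $\langle \pi_1, - \rangle$ and postcomposing with $\varpi_1$ and with $\varpi_2$. The $\varpi_1$-components are both D-morphisms into a reindexed unit, hence both equal $\bm{!}_{\Gamma . A'}$; the $\varpi_2$-components collapse to $\alpha$ using the naturality (\ref{ConsNat}) and the identity (\ref{ConsId}) of semi-dependent pairings from Lemma~\ref{LemSemiSigmaPairings}, the semi-$\Sigma$ equation $\pi_2\{\langle \phi, g \rangle\} = g$, and once more the rule (\ref{UP2Sigma}).

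For $A' = \Sigma(A, 1\{!_{\Gamma . A}\})$ the argument is symmetric, with the unit now occupying the second slot: I would take $\beta := \varpi_1^{A'} : \Gamma . A' \rightarrowtriangle A\{\pi_1\}$ and $\alpha := \Lbag \pi_2^{\Gamma . A}, \bm{!}_{\Gamma . A} \Rbag : \Gamma . A \rightarrowtriangle A'\{\pi_1\}$, the typing being legitimate because $1\{!_{\Gamma.A}\}\{\langle \pi_1, \pi_2 \rangle\} = 1\{!_{\Gamma.A}\}$. One composite follows at once from the computation rule (\ref{UP1Sigma}); the reverse one is handled by the same uniqueness argument, where now the $\varpi_2$-components are both the forced unit D-morphism $\bm{!}_{\Gamma . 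A'}$ and the $\varpi_1$-components agree via (\ref{UP1Sigma}) together with the semi-$\Sigma$ equation.

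I expect the main obstacle to be the reverse-composite direction in each case, i.e. verifying that the relevant map equals $\pi_2^{\Gamma . A'}$: this is precisely where one cannot argue by a single computation rule and must instead invoke $\Sigma$-uniqueness and reduce to the two projection components, all while carefully tracking the codomain reindexings $A\{\pi_1\}\{\langle \phi, g \rangle\} = A\{\phi\}$ and $1\{!_\Gamma\}\{\phi\} = 1\{!_\Delta\}$. The unit-uniqueness fact is what makes the $\varpi_1$- (resp. $\varpi_2$-) components trivial, so the bookkeeping, rather than any genuinely new idea, is the crux.
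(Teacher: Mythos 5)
Your proof is correct, and the constructions you give (including the reading of the second argument of $\Sigma(A,1\{!_\Gamma\})$ as $1\{!_{\Gamma.A}\}$, and the use of unit-uniqueness to force the $\varpi_1$- resp. $\varpi_2$-components) all check out. However, you take a more hands-on route than the paper, whose entire proof is ``Immediate from Proposition~\ref{PropUniquenessOfSigmaSpaces}'': the intended argument there is to observe that $A$ itself, equipped with the projections $\bm{!}_{\Gamma.A}$ and $\pi_2^{\Gamma.A}$ (in the appropriate order for each case), already satisfies the universal property of a $\Sigma$-space of $1\{!_\Gamma\}$ and $A\{\pi_1\}$ (resp.\ of $A$ and $1\{!_{\Gamma.A}\}$) --- the unit clause makes the first (resp.\ second) component of any dependent pairing trivial --- and then to invoke the uniqueness of $\Sigma$-spaces up to isomorphism. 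Your proof instead fixes the given $\Sigma$-space $A'$, constructs the candidate inverse pair $(\alpha,\beta)$ explicitly, and verifies the two composites, one via a computation rule and one via the uniqueness clause of the universal property together with unit-uniqueness; this effectively inlines the proof of the uniqueness proposition specialized to this situation. The paper's route is shorter because it outsources the mutual-inverse verification to an already-proved general lemma; yours is more self-contained and makes the isomorphism and all the reindexing bookkeeping explicit, which is where you correctly identify the only real work to lie.
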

\begin{proof}
Immediate from Proposition~\ref{PropUniquenessOfSigmaSpaces}.
\end{proof}

Thus, as promised before, we have shown that $\Sigma$-spaces have a unit D-object $1$ and satisfy associativity, generalizing the corresponding properties of binary products.

\if0
Moreover, as a cartesian category $\mathcal{C}$ induces a bifunctor $\times : \mathcal{C} \times \mathcal{C} \to \mathcal{C}$, we have:
\begin{proposition}[The functor $\_ . \_$]
Every SCCwD $\mathcal{C}$ induces the functor $\_ . \_ : \mathscr{U}(\mathcal{C})^{\mathsf{op}} . \mathcal{C}_{\_} \to \mathcal{C}$ that maps each object $(\Gamma, A) \in \mathscr{U}(\mathcal{C})^{\mathsf{op}} . \mathcal{C}_{\_}$ to $\Gamma . A \in \mathcal{C}$ and each morphism $(\phi, f) \in \mathscr{U}(\mathcal{C})^{\mathsf{op}} . \mathcal{C}_{\_}((\Gamma, A), (\Delta, B))$ to $\langle \phi \circ \pi_1, f \rangle \in \mathcal{C}(\Delta . B, \Gamma. A)$.
\end{proposition}

\begin{proposition}[The functor $\Sigma$]

\end{proposition}
\fi

Now, let us show that coherent $\Sigma$-spaces capture strict $\mathsf{\Sigma}$-types in MLTTs:
\begin{theorem}[Coherent $\Sigma$-spaces interpret $\mathsf{\Sigma}$-types]
\label{ThmSigma}
A strict SCCwD equipped with coherent $\Sigma$-spaces induces a CwF that supports $\mathsf{\Sigma}$-types in the strict sense.
\end{theorem}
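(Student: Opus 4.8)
The plan is to observe first that the underlying structure is already mostly in place: by the remark following Lemma~\ref{LemSemiSigmaPairings}, a strict SCCwD \emph{is} a CwF, with comprehensions given by semi-$\Sigma$-spaces, projections by $\pi_1,\pi_2$, and extensions by semi-dependent pairings. So the task reduces to exhibiting a semantic $\Sigma$-type former $\Sigma=(\Sigma,\mathit{Pair},\mathcal{R}^{\Sigma})$ and checking the axioms of Definition~\ref{DefCwFsWithSigmaTypes}, including the strict one. I would take $\Sigma(A,B)$ to be the given coherent $\Sigma$-space D-object (so \textsc{($\Sigma$-Form)} holds by fiat), take $\mathit{Pair}_{A,B}:\Gamma.A.B\stackrel{\sim}{\to}\Gamma.\Sigma(A,B)$ to be the isomorphism $\mathit{Pair}$ constructed in Proposition~\ref{PropAssociativityOfSigmaSpaces} (giving \textsc{($\Sigma$-Intro)}), and then \emph{define} the eliminator by substitution along the inverse isomorphism, $\mathcal{R}^{\Sigma}_{A,B,P}(f)\stackrel{\mathrm{df.}}{=}f\{\mathit{Pair}_{A,B}^{-1}\}$. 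The typing is automatic: since $f\in\mathit{Tm}(\Gamma.A.B,P\{\mathit{Pair}_{A,B}\})$ and $\mathit{Pair}_{A,B}^{-1}:\Gamma.\Sigma(A,B)\to\Gamma.A.B$, the axioms \textsc{(Ty-Comp)} and \textsc{(Ty-Id)} give $P\{\mathit{Pair}_{A,B}\}\{\mathit{Pair}_{A,B}^{-1}\}=P\{\mathit{Pair}_{A,B}\circ\mathit{Pair}_{A,B}^{-1}\}=P$, so $\mathcal{R}^{\Sigma}_{A,B,P}(f)\in\mathit{Tm}(\Gamma.\Sigma(A,B),P)$, establishing \textsc{($\Sigma$-Elim)}.

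With this definition the computation and uniqueness rules fall out purely formally from $\mathit{Pair}_{A,B}$ being an isomorphism, using only \textsc{(Tm-Id)} and \textsc{(Tm-Comp)}. For \textsc{($\Sigma$-Comp)} I would write $\mathcal{R}^{\Sigma}_{A,B,P}(f)\{\mathit{Pair}_{A,B}\}=f\{\mathit{Pair}_{A,B}^{-1}\}\{\mathit{Pair}_{A,B}\}=f\{\mathit{Pair}_{A,B}^{-1}\circ\mathit{Pair}_{A,B}\}=f\{\mathit{id}\}=f$, and for \textsc{($\mathcal{R}^{\Sigma}$-Uniq)}, given $g$ with $g\{\mathit{Pair}_{A,B}\}=f$, I would compute $f\{\mathit{Pair}_{A,B}^{-1}\}=g\{\mathit{Pair}_{A,B}\}\{\mathit{Pair}_{A,B}^{-1}\}=g\{\mathit{Pair}_{A,B}\circ\mathit{Pair}_{A,B}^{-1}\}=g\{\mathit{id}\}=g$, so $g=\mathcal{R}^{\Sigma}_{A,B,P}(f)$. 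This is the conceptual core and it is where the definition of $\mathcal{R}^{\Sigma}$ via the inverse pays off. The axiom \textsc{($\Sigma$-Subst)} is literally the coherence equation (\ref{Coh0Sigma}), so nothing new is required there.

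The genuine content lies in the naturality axioms \textsc{(Pair-Subst)} and \textsc{($\mathcal{R}^{\Sigma}$-Subst)}, and this is where I expect the main obstacle. For the first clause $\mathit{p}(\Sigma(A,B))\circ\mathit{Pair}_{A,B}=\mathit{p}(A)\circ\mathit{p}(B)$ I would unfold $\mathit{Pair}_{A,B}=\langle\pi_1\circ\pi_1,\Lbag\pi_2\{\pi_1\},\pi_2\Rbag\rangle$ and apply \textsc{(Cons-L)}, i.e.\ $\pi_1\circ\langle\phi,g\rangle=\phi$. The harder clause $\phi^{\star}\circ\mathit{Pair}_{A\{\phi\},B\{\phi^{+}\}}=\mathit{Pair}_{A,B}\circ\phi^{++}$ asserts that two morphisms into the semi-$\Sigma$-space $\Gamma.\Sigma(A,B)$ coincide; by the universal property of semi-$\Sigma$-spaces it suffices to check agreement after post-composing with $\pi_1$ and with $\pi_2$. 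The $\pi_1$-component reduces again to \textsc{(Cons-L)} and the definitions of $\phi^{\star},\phi^{++}$, while the $\pi_2$-component is where the coherence equations (\ref{Coh1Sigma}) and (\ref{Coh2Sigma}) for $\varpi_1,\varpi_2$ are indispensable, together with equations (\ref{ConsNat}) and (\ref{ConsId}) of Lemma~\ref{LemSemiSigmaPairings} and the universal property (\ref{UP1Sigma})–(\ref{UP2Sigma}) of dependent pairings; I anticipate also needing the strictness equation (\ref{EqStrictSCCwDs}) to realign the repeated second projections that appear when $\mathit{Pair}$ is expressed over iterated comprehensions. Finally \textsc{($\mathcal{R}^{\Sigma}$-Subst)} follows formally: since $\mathcal{R}^{\Sigma}$ is substitution along $\mathit{Pair}^{-1}$, applying $\_\{\phi^{\star}\}$ and using \textsc{(Tm-Comp)} together with the just-proved \textsc{(Pair-Subst)} identity (inverted) transports the definition across $\phi$ to yield $\mathcal{R}^{\Sigma}_{A\{\phi\},B\{\phi^{+}\},P\{\phi^{\star}\}}(f\{\phi^{++}\})$. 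The bookkeeping of the auxiliary morphisms $\phi^{+},\phi^{\star},\phi^{++}$ and keeping track of which coherence equation licenses each rewrite is the part that demands care, but no new idea beyond the universal properties and the coherence hypotheses is needed.
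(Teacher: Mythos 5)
Your proposal is correct and follows essentially the same route as the paper: the same choices of $\Sigma(A,B)$, of $\mathit{Pair}_{A,B}$ via Proposition~\ref{PropAssociativityOfSigmaSpaces}, and of $\mathcal{R}^{\Sigma}_{A,B,P}(f)\stackrel{\mathrm{df.}}{=}f\{\mathit{Pair}_{A,B}^{-1}\}$, with \textsc{($\Sigma$-Comp)} and \textsc{($\mathcal{R}^{\Sigma}$-Uniq)} falling out of the isomorphism and \textsc{($\Sigma$-Subst)}, \textsc{(Pair-Subst)}, \textsc{($\mathcal{R}^{\Sigma}$-Subst)} resting on the coherence equations (\ref{Coh0Sigma})--(\ref{Coh2Sigma}). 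The only cosmetic differences are that the paper verifies \textsc{($\mathcal{R}^{\Sigma}$-Subst)} by a direct computation with $\varpi_1,\varpi_2$ rather than by inverting \textsc{(Pair-Subst)} as you suggest (both work), and the strictness equation (\ref{EqStrictSCCwDs}) you anticipate needing is in fact not used here (it is reserved for Theorem~\ref{ThmPi}).
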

\begin{proof}
Let $\mathcal{C} = (\mathcal{C}, T, \_ . \_, \pi)$ be a strict SCCwD equipped with a coherent family of $\Sigma$-spaces $\Sigma = (\Sigma, \varpi, \Lbag, \_ \Rbag)$.
We equip the corresponding CwF $\mathcal{C}$ with $\Sigma$-types in the strict sense as follows:
\begin{itemize}

\item \textsc{($\Sigma$-Form)} Given $\Gamma \in \mathcal{C}$, $A \in \mathscr{D}_{\mathcal{C}}(\Gamma)$ and $B \in \mathscr{D}_{\mathcal{C}}(\Gamma . A)$, we have:
\begin{equation*}
\Sigma (A, B) \in \mathscr{D}_{\mathcal{C}}(\Gamma).
\end{equation*}

\item \textsc{($\Sigma$-Intro)} As in the proof of Proposition~\ref{PropAssociativityOfSigmaSpaces}, we define: 
\begin{equation*}
\mathit{Pair}_{A, B} \stackrel{\mathrm{df. }}{=} \langle \pi_1 \circ \pi_1, \Lbag \pi_2 \{ \pi_1 \}, \pi_2 \Rbag \rangle : \Gamma . A . B \to \Gamma . \Sigma(A, B)
\end{equation*}
with the inverse given by:
\begin{equation*}
\mathit{Pair}_{A, B}^{-1} \stackrel{\mathrm{df. }}{=} \langle \langle \pi_1, \varpi_1 \rangle, \varpi_2 \rangle : \Gamma . \Sigma(A, B) \to \Gamma . A . B.
\end{equation*}

\item \textsc{($\Sigma$-Elim)} Given $P \in \mathscr{D}_{\mathcal{C}}(\Sigma(\Gamma, \Sigma(A,B)))$ and $f \in \mathscr{D}_{\mathcal{C}}(\Sigma(\Sigma(\Gamma, A), B), P\{\mathit{Pair}_{A,B}\})$, let us define: 
\begin{equation*}
\mathcal{R}^{\Sigma}_{A, B, P}(f) \stackrel{\mathrm{df. }}{=} f \{ \mathit{Pair}_{A, B}^{-1} \} \in \mathscr{D}_{\mathcal{C}}(\Sigma(\Gamma, \Sigma (A, B)), P).
\end{equation*}

\item \textsc{($\Sigma$-Comp)} We clearly have:
\begin{align*}
\mathcal{R}^{\Sigma}_{A, B, P}(f) \{ \mathit{Pair}_{A, B} \} &= f \{ \mathit{Pair}_{A, B}^{-1} \} \{ \mathit{Pair}_{A, B} \} \\
&= f \{\mathit{Pair}_{A, B}^{-1} \circ \mathit{Pair}_{A, B} \} \\
&= f \{ \mathit{id}_{\Gamma . A . B} \} \\
&= f.
\end{align*}

\item \textsc{($\Sigma$-Subst)} By the axiom (\ref{Coh0Sigma}).

\item \textsc{(Pair-Subst)} We clearly have: 
\begin{align*}
\mathit{p}(\Sigma (A, B)) \circ \mathit{Pair}_{A, B} &= \pi_1 \circ \langle \pi_1 \circ \pi_1, \Lbag \pi_2 \{ \pi_1 \}, \pi_2 \Rbag \rangle \\
&= \pi_1 \circ \pi_1 \\
&= \mathit{p}(A) \circ \mathit{p}(B)
\end{align*}
as well as: 
\begin{align*}
\phi^\star \circ \mathit{Pair}_{A\{\phi\}, B\{\phi^+\}} &= \langle \phi \circ \pi_1, \pi_2 \rangle \circ \langle \pi_1 \circ \pi_1, \Lbag \pi_2 \{ \pi_1 \}, \pi_2 \Rbag \rangle \\
&= \langle \phi \circ \pi_1 \circ \pi_1, \Lbag \pi_2 \{ \pi_1 \}, \pi_2 \Rbag \rangle \\
&= \langle \pi_1 \circ \pi_1, \Lbag \pi_2 \{ \pi_1 \}, \pi_2 \Rbag \rangle \circ \langle \langle \phi \circ \pi_1, \pi_2 \rangle \circ \ \pi_1, \pi_2 \rangle \\
&= \mathit{Pair}_{A, B} \circ \phi^{++} 
\end{align*}
where 
\begin{align*}
\phi^\star &\stackrel{\mathrm{df. }}{=} \langle \phi \circ \pi_1, \pi_2 \rangle : \Delta . \Sigma(A, B)\{ \phi \} \to \Gamma . \Sigma(A, B) \\ 
\phi^{++} &\stackrel{\mathrm{df. }}{=} \langle \phi^+ \circ \pi_1, \pi_2 \rangle : (\Delta . A\{\phi\}) . B\{\phi^+\} \to \Gamma . A . B. 
\end{align*}

\item \textsc{($\mathcal{R}^{\Sigma}$-Subst)} We have:
\begin{align*}
\textstyle \mathcal{R}^{\Sigma}_{A, B, P}(f) \{\phi^\star \} &= f \{ \mathit{Pair}_{A, B}^{-1} \} \{ \langle \phi \circ \pi_1, \pi_2 \rangle \} \\
&= f \{ \langle \langle  \pi_1, \varpi^{\Sigma(A, B)}_1 \rangle, \varpi^{\Sigma(A, B)}_2 \rangle \circ \langle \phi \circ \pi_1, \pi_2 \rangle \} \\
&= f \{ \langle \langle \phi \circ \pi_1, \varpi^{\Sigma(A, B)}_1 \{ \langle \phi \circ \pi_1, \pi_2 \rangle \} \rangle, \varpi^{\Sigma(A, B)}_2 \{ \langle \phi \circ \pi_1, \pi_2 \rangle \} \rangle \} \\
&= f \{ \langle \langle \phi \circ \pi_1, \varpi^{\Sigma(A\{ \phi \}, B\{\phi^+\})}_1 \rangle, \varpi^{\Sigma(A\{ \phi \}, B\{\phi^+\})}_2 \rangle \} \ \text{(because $\Sigma$ is coherent)} \\
&= f \{ \langle \phi^+ \circ \langle \pi_1, \varpi^{\Sigma(A\{ \phi \}, B\{\phi^+\})}_1 \rangle, \varpi^{\Sigma(A\{ \phi \}, B\{\phi^+\})}_2 \rangle \} \\
&= f \{ \langle \phi^+ \circ \pi_1, \pi_2 \rangle \} \{ \langle \langle \pi_1, \varpi^{\Sigma(A\{ \phi \}, B\{\phi^+\})}_1 \rangle, \varpi^{\Sigma(A\{ \phi \}, B\{\phi^+\})}_2 \rangle \} \\
&= f \{ \phi^{++} \} \{ \mathit{Pair}_{A\{\phi\}, B\{\phi^+\}}^{-1} \} \\
&= \mathcal{R}^{\Sigma}_{A\{\phi\}, B\{\phi^+\}, P\{\phi^\star \}} (f \{ \phi^{++} \}).
\end{align*}

\item \textsc{($\Sigma$-Uniq)} Finally, given $g \in \mathscr{D}_{\mathcal{C}}(\Gamma . \Sigma(A, B), P)$ such that $g \{ \mathit{Pair}_{A, B} \} = f$, we have:
\begin{align*}
\mathcal{R}^\Sigma_{A, B, P}(f) &= f \{ \mathit{Pair}_{A, B}^{-1} \} \\
&= g \{ \mathit{Pair}_{A, B} \}\{ \mathit{Pair}_{A, B}^{-1} \} \\
&= g \{ \mathit{Pair}_{A, B}  \circ \mathit{Pair}_{A, B}^{-1} \} \\
&= g\{ \mathit{id}_{\Gamma . \Sigma(A, B)} \} \\
&= g
\end{align*}

\end{itemize}
which completes the proof.
\end{proof}

Note that strict $\Sigma$-types in CwFs are reformulated more concisely as strict $\Sigma$-spaces in SCCwDs. 
In particular, we have rather \emph{induced} the morphism $\mathit{Pair}_{A, B} : \Gamma . A . B \to \Gamma . \Sigma(A, B)$ from projections, which seems rather canonical, while it is taken as \emph{primitive} in NMs \cite{awodey2016natural}.

On the other hand, strict $\Sigma$-types in CwFs do not always induce $\Sigma$-spaces in the corresponding strict SCCwDs as, e.g., there are no obvious projections $\varpi_i$. 
However, we claim that strict CCwDs are a \emph{refinement} of CwFs that support $1$- and $\Sigma$-types in the strict sense, rather than a restriction, as the following instances, including the term models, are formed via CCwDs:

\begin{example}
The SCCwD $\mathit{Sets}$ is cartesian.
A unit D-object in $\mathit{Sets}$ is any singleton D-set $1 = (1_\bullet)_{\bullet \in T}$ such that $1_\bullet = T$.
Given a set $X$ and D-sets $A$ over $X$ and $B$ over $X.A$, the $\Sigma$-space $\Sigma(A, B)$ in $\mathit{Sets}$ is the D-set $(\{ (a, b) \mid a \in A_x, b \in B_{(x, a)} \ \! \})_{x \in X}$ over $X$ equipped with the obvious projections and dependent pairings.
The SCCwD $\mathit{Rel}$ is also cartesian in a similar manner. 
\end{example}

\begin{example}
The SCCwD $\mathcal{GPD}$ is cartesian.
A unit D-object in $\mathcal{GPD}$ is a D-groupoid $1 : T \to \mathcal{GPD}$ that maps $\star \mapsto T$ and $\mathit{id}_\star \mapsto \mathit{id}_T$.
Given D-groupoids $A : \Gamma \to \mathcal{GPD}$ and $B : \Gamma . A \to \mathcal{GPD}$, the $\Sigma$-space $\Sigma(A, B) : \Gamma \to \mathcal{GPD}$ in $\mathcal{GPD}$ maps each object $\gamma \in \Gamma$ to the groupoid $A(\gamma) . B_\gamma$, where the D-groupoid $B_\gamma : A(\gamma) \to \mathcal{GPD}$ maps each object $a \in A(\gamma)$ to the groupoid $B(\gamma, a)$ and each isomorphism $\alpha : a \stackrel{\sim}{\to} a'$ in $A(\gamma)$ to the functor $B(\mathit{id}_\gamma, \alpha) : B(\gamma, a) \to B(\gamma, a')$, and each isomorphism $\phi : \gamma \stackrel{\sim}{\to} \gamma'$ in $\Gamma$ to the functor $A(\phi) . B(\phi, A(\phi)) : A(\gamma) . B_\gamma \to A(\gamma') . B_{\gamma'}$ that maps each object $(a, b) \in A(\gamma) . B_\gamma$ to the groupoid $(A(\phi)(a), B(\phi, A(\phi))(b))$ and each isomorphism $(\alpha, \beta) : (a, b) \to (a', b')$ in $A(\gamma) . B_\gamma$ to the functor $(A(\phi)(\alpha), B(\phi, A(\phi))(\beta)) : (A(\phi)(a), B(\phi, A(\phi))(b)) \to (A(\phi)(a'), B(\phi, A(\phi))(b'))$.
The projections and dependent pairings of $\Sigma(A, B)$ are the obvious ones. 
\end{example}

\begin{example}
In a similar manner, we may show that the SCCwD $\mathit{CAT}$ is also cartesian. 
\end{example}

\begin{example}
The term model $\mathcal{T}(1, \Pi, \Sigma)$ is a strict CCwD.
The unit D-object is $\mathsf{\diamondsuit \vdash 1 \ type}$, and the $\Sigma$-space of $\mathsf{\Gamma \vdash A \ type}$ and $\mathsf{\Gamma, x : A \vdash B \ type}$ is $\mathsf{\Gamma \vdash \Sigma(A, B) \ type}$ equipped with projections
\begin{align*}
&\mathsf{\Gamma, p : \Sigma(A, B) \vdash \pi_1(p) : A} \\
&\mathsf{\Gamma, p : \Sigma(A, B) \vdash \pi_2(p) : B[\pi_1(p)/x]}
\end{align*}
and the dependent pairing of $\mathsf{\Delta \vdash g : A[\bm{\mathsf{d}}]}$ and $\mathsf{\Delta \vdash h : B[\langle \bm{\mathsf{d}}, g \rangle]}$, where $\bm{\mathsf{d}} : \mathsf{\Delta} \to \mathsf{\Gamma}$, is the term
\begin{equation*}
\mathsf{\Delta \vdash \langle g, h \rangle : \Sigma(A, B)[\bm{\mathsf{d}}]}.
\end{equation*}
\end{example}

\subsubsection{Dependent Limits}
We have seen that a unit D-object and $\Sigma$-spaces model $\mathsf{1}$- and $\mathsf{\Sigma}$-types in MLTTs.
However, one may wonder if there is any systematic way to understand these constructions, which would be similar to limits that subsume finite products.
This section briefly addresses this point by introducing the notion of \emph{dependent limits (D-limits)}, which is a natural generalization of limits.
As such, the present section is a detour, and thus the reader may skip it without any problem.

\begin{definition}[D-cones]
Given an FwD $F : \mathcal{I} \to \mathcal{C}$, a \emph{\bfseries dependent (D-) cone} to $F$ is a triple $(\Theta, \phi, f)$ of an object $\Theta \in \mathcal{C}$, a family $\phi = (\phi_i)_{i \in \mathcal{I}}$ of morphisms $\phi_i : \Theta \to F(i)$ in $\mathcal{C}$ and a family $f = (f_{i, X})_{i \in \mathcal{I}, X \in \mathscr{D}_{\mathcal{I}}(i)}$ of D-morphisms $f_{i, X} : \Theta \rightarrowtriangle F(X)\{ \phi_i \}$ such that for any morphism $\mu : i \to j$ in $\mathcal{I}$ the diagram
\begin{diagram}
&&&&F(i) \\
&&&\ruTo(4, 4)^{\phi_i} \\
&&&&\dTo_{F(\mu)} \\
\\
\Theta&&\rTo^{\phi_j}&&F(j)
\end{diagram}
in $\mathcal{C}$ commutes, and for any D-morphism $x : i \rightarrowtriangle X$ in $\mathcal{I}$ the diagram
\begin{diagram}
&&&&&F(i) \\ 
&&&&\ruTo(5,5)^{\phi_i}& \dDepTo_{F(x)} \\ 
&&&&& \\
&&&&&F(X) \\
&&&&& \dImplies \\
\Theta&&\rDepTo(3, 2)^{f_{i, X}}&&&F(X)\{\phi_i\}
\end{diagram}
in $\mathcal{C}$ commutes.
\end{definition}

\begin{definition}[Categories of D-cones]
Given an FwD $F : \mathcal{I} \to \mathcal{C}$, the category $\mathsf{DCones}(F)$ is defined by:
\begin{itemize}

\item Objects are D-cones to $F$;

\item A morphism $(\Theta, \phi, f) \to (\Xi, \psi, g)$ is a morphism $\sigma : \Theta \to \Xi$ in $\mathcal{C}$ such that the diagrams 
\begin{diagram}
\Theta && \rTo^{\phi_i} && F(i) &&& \Theta && \rDepTo^{f_{i, X}} && F(X)\{ \phi_i \} \\
&&&\ruTo(4, 4)_{\psi_i}&&&&&&&\ruImplies(2, 2)& \\
\dTo^\sigma&&&& &&& \dTo^\sigma&&F(X)\{\psi_i\}&& \\
&&&&&&&&\ruDepTo(2, 2)_{g_{i, X}} \\
\Xi&&&&&&& \Xi 
\end{diagram}
in $\mathcal{C}$ commute for all $i \in \mathcal{I}$ and $X \in \mathscr{D}_{\mathcal{I}}(i)$;

\item Composition in $\mathsf{DCones}(F)$ is the composition in $\mathcal{C}$;

\item Identities in $\mathsf{DCones}(F)$ are the identities in $\mathcal{C}$.

\end{itemize}
\end{definition}

It is easy to see that $\mathsf{DCones}(F)$ is a well-defined category for any FwD $F : \mathcal{I} \to \mathcal{C}$.
We are now ready to define:
\begin{definition}[D-limits]
Let $\mathcal{C}$ be a CwD and $\mathcal{I}$ a small CwD. 
A \emph{\bfseries dependent (D-) limit} of shape $\mathcal{I}$ in $\mathcal{C}$ is the terminal object in the category $\mathsf{DCones}(F)$ for any FwD $F : \mathcal{I} \to \mathcal{C}$.
\end{definition}

Clearly, D-limits in a category $\mathcal{C}$ seen as a CwD $\mathsf{D}(\mathcal{C})$ coincide with limits in $\mathcal{C}$; in this sense, D-limits generalize limits. 
On the other hand, by the asymmetry of the domain and codomain of D-morphisms, what should be called \emph{dependent (D-) colimits} would be more complicated than D-limits.
For the lack of space, we leave the concept of D-limits as future work.

\begin{example}
Let $\emptyset$ denote the trivial CwD which has no objects. 
The D-limit of shape $\emptyset$ in a CwD $\mathcal{C}$ is just a terminal object of $\mathcal{C}$.
If we focus on a terminal object of the form $T . A \in \mathcal{C}$, where $T \in \mathcal{C}$ is terminal and $A \in \mathscr{D}_{\mathcal{C}}(T)$, then such D-objects $A$ coincides with unit D-objects:
\begin{enumerate}

\item Given an object $\Gamma \in \mathcal{C}$, there is a D-morphism $\Gamma \stackrel{!_\Gamma}{\to} T . A \stackrel{\pi_2}{\rightarrowtriangle} A\{\pi_1\} \Rightarrow A\{!_\Gamma\}$; 

\item Any D-morphism $f : \Gamma \rightarrowtriangle A\{!_\Gamma\}$ coincides with $\pi_2 \{ !_\Gamma \}$ because
\begin{align*}
f = \pi_2 \{\langle !_\Gamma, f \rangle\} = \pi_2 \{ !_\Gamma \}.
\end{align*}

\end{enumerate}
\end{example}

This pattern applies to $\Sigma$-spaces as well:
\begin{example}
Let $\mathcal{I}$ be an SCCwD with three objects $i, i.X, i.X.Y \in \mathcal{I}$ and two D-objects $X \in \mathscr{D}_{\mathcal{I}}(i)$ and $Y \in \mathscr{D}_{\mathcal{I}}(i.X)$.
The D-limit of shape $\mathcal{I}$ in a SCCwD $\mathcal{C}$ is any diagram 
\begin{diagram}
A\{\psi\} &&\lDepTo_{\varpi_1}&& \Theta &&\rDepTo_{\varpi_2}&& B\{\langle \psi, \varpi_1 \rangle\} \\
&&&&\dTo_\psi&&&& \\
&&&&\Gamma&&&&
\end{diagram}
in $\mathcal{C}$ such that for any diagram
\begin{diagram}
A\{\phi\} &&\lDepTo_{g}&& \Delta &&\rDepTo_{h}&& B\{\langle \phi, g \rangle\} \\
&&&&\dTo_\phi&&&& \\
&&&&\Gamma&&&&
\end{diagram}
in $\mathcal{C}$ there exists a unique morphism $\Upsilon(\phi, g, h) : \Delta \to \Theta$ such that the diagram
\begin{diagram}
A\{\phi\} && & \lDepTo^g && \Delta &  \rDepTo^{h}  & && B \{ \langle \phi, g \rangle \} \\
 \uImplies&&& & \ldTo(3, 2)^{\phi} &  & && & \uImplies \\
 &&\Gamma&&&\dDotsto_{\Upsilon(\phi, g, h)}&&&& \\
 &&&\luTo(3, 2)^{\psi}&&&&&& \\
A \{ \psi \} && \lDepTo^{\varpi_1} &&& \Theta && \rDepTo^{\varpi_2} & & B\{ \langle \psi, \varpi_1 \rangle \}
\end{diagram}
commutes.
Note that we have omitted semi-$\Sigma$-spaces and semi-dependent pairings in the above diagrams because they are redundant. 
If $\Theta$ is of the form $\Gamma . C$ for some $C \in \mathscr{D}_{\mathcal{C}}(\Gamma)$, then $\psi = \pi_1$, and thus the D-object $C$ coincides with the $\Sigma$-space $\Sigma(A, B)$ up to isomorphisms.
\end{example}

We may further study the concept of D-limits in its own right; for instance, similarly to finite limits, how can we characterize finite D-limits in terms of just a few instances of finite D-limits?
Nevertheless, for the lack of space, we leave it as future work.

\subsubsection{Cartesian Functors with Dependence}
It is now clear what would be morphisms between CCwDs:
\begin{definition}[CFwDs]
A \emph{\bfseries cartesian FwD (CFwD)} is an SCFwD $F : \mathcal{C} \to \mathcal{C'}$ between CCwDs $\mathcal{C}$ and $\mathcal{C'}$ such that:
\begin{itemize}

\item The D-object $F(1) \in \mathscr{D}_{\mathcal{C'}}(F(T))$ is unit in $\mathcal{C'}$ for each unit D-object $1 \in \mathscr{D}_{\mathcal{C}}(T)$;

\item The diagram $F(A)\{F(\pi_1)\} \stackrel{F(\varpi_1)}{\leftarrowtriangle} F(\Gamma . \Sigma(A, B)) \stackrel{F(\varpi_2)}{\rightarrowtriangle} F(B)\{ \iota^{F(\Gamma . A)}_{F(\Gamma) . F(A)} \}\{\langle F(\pi_1), F(\varpi_1) \rangle\}$ in $\mathcal{C'}$ is a $\Sigma$-space of $F(A) \in \mathscr{D}_{\mathcal{C'}}(F(\Gamma))$ and $F(B)\{ \iota^{F(\Gamma . A)}_{F(\Gamma) . F(A)} \} \in \mathscr{D}_{\mathcal{C'}}(F(\Gamma) . F(A))$ in $\mathcal{C'}$ for a $\Sigma$-space $A\{\pi_1\} \stackrel{\varpi_1}{\leftarrowtriangle} \Gamma . \Sigma(A, B) \stackrel{\varpi_2}{\rightarrowtriangle} B\{\langle \pi_1, \varpi_1 \rangle\}$ of any $A \in \mathscr{D}_{\mathcal{C}}(\Gamma)$ and $B \in \mathscr{D}_{\mathcal{C}}(\Gamma . A)$ in $\mathcal{C}$.

\end{itemize} 

Moreover, given that $\mathcal{C}$ and $\mathcal{C'}$ are both strict, written $\mathcal{C} = (\mathcal{C}, T, \_ . \_, \pi, 1, \Sigma, \varpi)$ and $\mathcal{C'} = (\mathcal{C'}, T', \_ .' \_, \pi', 1', \Sigma', \varpi')$, $F$ is \emph{\bfseries strict} iff it is a strict SCFwD, $F(1) = 1'$, and the diagram 
\begin{equation*}
F(A)\{F(\pi_1)\} \stackrel{F(\varpi_1)}{\leftarrowtriangle} F(\Gamma . \Sigma(A, B)) \stackrel{F(\varpi_2)}{\rightarrowtriangle} F(B)\{ \iota^{F(\Gamma . A)}_{F(\Gamma) . F(A)} \}\{\langle F(\pi_1), F(\varpi_1) \rangle\} 
\end{equation*}
in $\mathcal{C'}$ coincides with the $\Sigma$-space 
\begin{equation*}
F(A)\{\pi_1'\} \stackrel{\varpi_1'}{\leftarrowtriangle} F(\Gamma) .' \Sigma'(F(A), F(B)) \stackrel{\varpi_2'}{\rightarrowtriangle} F(B)\{\langle \pi_1', \varpi_1' \rangle\}
\end{equation*}
in $\mathcal{C'}$ for any $A \in \mathscr{D}_{\mathcal{C}}(\Gamma)$ and $B \in \mathscr{D}_{\mathcal{C}}(\Gamma . A)$ in $\mathcal{C}$.
\end{definition}

Given a CFwD $F : \mathcal{C} \to \mathcal{D}$, there is an isomorphism 
\begin{equation*}
(\mathit{id}_{F(\Gamma)}, \iota^{F(\Sigma(A, B))}_{\Sigma(F(A), F(B))}) : \Sigma(F(A), F(B)\{ \iota^{F(\Gamma . A)}_{F(\Gamma) . F(A)} \}) \stackrel{\sim}{\to} F(\Sigma(A, B)) 
\end{equation*}
between D-objects in $\mathcal{D}$ for any $\Sigma$-space $\Sigma(A, B) \in \mathscr{D}_{\mathcal{C}}(\Gamma)$ in $\mathcal{C}$.
It is easy to see that $F$ is strict iff $(\mathit{id}_{F(\Gamma)}, \iota^F_{A, B})$ is the identity for any $\Sigma$-space $\Sigma(A, B) \in \mathscr{D}_{\mathcal{C}}(\Gamma)$ in $\mathcal{C}$.

\begin{example}
A cartesian functor $F : \mathcal{C} \to \mathcal{D}$ seen as an SCFwD $\mathsf{D}(F) : \mathsf{D}(\mathcal{C}) \to \mathsf{D}(\mathcal{D})$ is a CFwD because unit D-objects and $\Sigma$-spaces are just finite products.
Let us call this kind of CFwDs \emph{\bfseries cartesian functors seen as CFwDs}.
Thus, CFwDs are generalized cartesian functors.
\end{example}

\begin{example}
Given a locally small CCwD $\mathcal{C}$ and an object $\Delta \in \mathcal{C}$, the SCFwD $\mathcal{C}(\Delta, \_) : \mathcal{C} \to \mathit{Sets}$ is cartesian.
Given a unit D-object $1 \in \mathscr{D}_{\mathcal{C}}(T)$, the D-set $\mathcal{C}(\Delta, 1)$ is unit in $\mathit{Sets}$ because it is a singleton D-set $\mathcal{C}(\Delta, 1) = (\mathcal{C}(\Delta, 1\{!_\Delta\}))_{!_\Delta \in \mathcal{C}(\Delta, T)}$, and the set $\mathcal{C}(\Delta, 1\{!_\Delta\})$ is a singleton set.
Moreover, given a $\Sigma$-space $A\{\pi_1\} \stackrel{\varpi_1}{\leftarrowtriangle} \Gamma . \Sigma(A, B) \stackrel{\varpi_2}{\rightarrowtriangle} B\{\langle \pi_1, \varpi_1 \rangle\}$ of any D-objects $A \in \mathscr{D}_{\mathcal{C}}(\Gamma)$ and $B \in \mathscr{D}_{\mathcal{C}}(\Gamma . A)$ in $\mathcal{C}$, it is not hard to see that the diagram 
\begin{equation*}
\mathcal{C}(\Delta, A)\{\mathcal{C}(\Delta, \pi_1)\} \stackrel{\mathcal{C}(\Delta, \varpi_1)}{\leftarrowtriangle} \mathcal{C}(\Delta, \Gamma . \Sigma(A, B)) \stackrel{\mathcal{C}(\Delta, \varpi_2)}{\rightarrowtriangle} \mathcal{C}(\Delta, B)\{\mathcal{C}(\Delta, \langle \pi_1, \varpi_1 \rangle)\}
\end{equation*}
is a $\Sigma$-space of D-sets $\mathcal{C}(\Delta, A) \in \mathscr{D}_{\mathit{Sets}}(\mathcal{C}(\Delta, \Gamma))$ and $\mathcal{C}(\Delta, B) \in \mathscr{D}_{\mathit{Sets}}(\mathcal{C}(\Delta, \Gamma . A))$ in $\mathit{Sets}$.
\end{example}

\subsubsection{The 2-Category of Cartesian Categories with Dependence}
It is now clear there is the following 2-category:
\begin{definition}[The 2-category $\mathbb{CC_D}$]
The 2-category $\mathbb{CC_D}$ is a sub-2-category of $\mathbb{SCC_D}$ whose 0-cells are small CCwDs and 1-cells are CFwDs.
\end{definition}

\begin{theorem}[Well-defined $\mathbb{CC_D}$]
The structure $\mathbb{CC_D}$ is a well-defined sub-2-category of $\mathbb{SCC_D}$.
\end{theorem}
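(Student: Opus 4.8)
The plan is to verify that $\mathbb{CC_D}$, as specified in Definition~\ref{...} (the 2-category $\mathbb{CC_D}$), genuinely constitutes a sub-2-category of the already-established 2-category $\mathbb{SCC_D}$. Since $\mathbb{SCC_D}$ is known to be a well-defined 2-category, the entire task reduces to three closure checks: that the 0-cells (small CCwDs) are closed under the identity 1-cells and under 1-cell composition; that the 1-cells (CFwDs) are closed under composition and contain the relevant identities; and that the 2-cells (NTwDs restricted to CFwDs) are closed under vertical and horizontal composition, with no new interchange-law verification needed because it is inherited from $\mathbb{SCC_D}$.

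First I would confirm that every small CCwD is in particular a small SCCwD and that every CFwD is in particular an SCFwD, so that $\mathbb{CC_D}$ sits inside $\mathbb{SCC_D}$ as a full-on-2-cells subcollection. The genuinely substantive point is the closure of CFwDs under composition. Given CFwDs $F : \mathcal{C} \to \mathcal{D}$ and $G : \mathcal{D} \to \mathcal{E}$, I must check that $G \circ F$ again preserves unit D-objects and $\Sigma$-spaces. For unit D-objects this is immediate: if $1 \in \mathscr{D}_{\mathcal{C}}(T)$ is unit, then $F(1)$ is unit in $\mathcal{D}$ by $F$ being cartesian, and $G(F(1))$ is unit in $\mathcal{E}$ by $G$ being cartesian; hence $G \circ F(1)$ is unit. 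For $\Sigma$-spaces, the argument is that $F$ sends a $\Sigma$-space $A\{\pi_1\} \leftarrowtriangle \Gamma . \Sigma(A, B) \rightarrowtriangle B\{\langle \pi_1, \varpi_1 \rangle\}$ to a $\Sigma$-space of $F(A)$ and $F(B)\{\iota\}$, and then $G$ sends that to a $\Sigma$-space of $G(F(A))$ and a suitable reindexing of $G(F(B))$; the only care needed is bookkeeping of the canonical isomorphisms $\iota$, whose composition law is precisely the identity $\iota^{G \circ F(\Gamma . A)}_{G \circ F(\Gamma) . G \circ F(A)} = G(\iota^{F(\Gamma . A)}_{F(\Gamma) . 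F(A)}) \circ \iota^{G(F(\Gamma) . F(A))}_{G(F(\Gamma)) . G(F(A))}$ already proved in the course of verifying the 2-functor $(\_)_{\mathscr{D}}$. I would also note that $\mathit{id}_{\mathcal{C}}$ is trivially a CFwD, and that in the strict case strictness is visibly preserved.

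For the remaining data I would observe that $\mathbb{CC_D}$ inherits its hom-categories as the full subcategories of $\mathbb{SCC_D}(\mathcal{C}, \mathcal{D})$ on the CFwDs, so vertical composition of NTwDs and the identity 2-cells land in the right place automatically. For horizontal composition one checks that the horizontal composite of NTwDs between CFwDs is again an NTwD between the (composite) CFwDs, which follows because the underlying 1-cells $G' \circ G$ and $F' \circ F$ are CFwDs by the composition closure just established, and NTwDs between any SCFwDs are already closed under horizontal composition in $\mathbb{SCC_D}$. The composition and unit functors of $\mathbb{SCC_D}$ therefore restrict to $\mathbb{CC_D}$, and associativity, unit laws, and the interchange law all descend by restriction.

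The main obstacle I anticipate is purely the coherence-isomorphism bookkeeping in the $\Sigma$-space half of the composition-closure argument: one must chase the codomain reindexings through $\iota$ carefully to see that $G \circ F$ really produces a $\Sigma$-space of the correctly reindexed D-objects $G \circ F(A)$ and $G \circ F(B)\{\cdot\}$, rather than merely an isomorphic variant in the wrong fibre. This is not conceptually deep — it is the same manipulation used in the well-definedness proof of $(\_)_{\mathscr{D}}$ and in Lemma~\ref{LemSCFwDLemma} — so I would cite those results and the $\iota$-composition identity rather than redo the chase in full, keeping the proof short. Everything else is routine restriction of an already-verified 2-categorical structure.
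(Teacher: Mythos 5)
Your proposal is correct, and it takes the approach the paper intends: the paper actually states this theorem without an explicit proof, but the proof it gives for the analogous result on $\mathbb{CCC_D}$ reduces everything to closure of the 1-cells under composition via the identity $G(\iota^{F(\Gamma . A)}_{F(\Gamma) . F(A)}) \circ \iota^{G(F(\Gamma) . F(A))}_{G(F(\Gamma)) . G(F(A))} = \iota^{G \circ F(\Gamma . A)}_{G \circ F(\Gamma) . G \circ F(A)}$, which is exactly the key step you isolate for the $\Sigma$-space half of the argument. Your additional observations (unit D-objects, identities, and the restriction of the 2-cell structure) are routine and correctly handled.
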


\subsection{Cartesian Closed Categories and Functors with Dependence}
\label{CCCwDs}
This is the last section on the basic theory of CwDs, in which we introduce a \emph{closed structure} on SCCwDs.
On the one hand it gives a categorical semantics of strict $\mathsf{\Pi}$-types in MLTTs, and on the other hand it provides a categorical generalization of the 2-category $\mathbb{CCC}$ of small CCCs.

\subsubsection{Cartesian Closed Categories with Dependence}
We would like to equip CCwDs with a \emph{closed structure} to form a generalization of CCCs.
Then, as a generalization of exponentials, the following construction seems appropriate:
\begin{definition}[Pseudo-$\Pi$-spaces]
Let $\mathcal{C}$ be an SCCwD, $\Gamma \in \mathcal{C}$, $A \in \mathscr{D}_{\mathcal{C}}(\Gamma)$ and $B \in \mathscr{D}_{\mathcal{C}}(\Gamma . A)$. 
A \emph{\bfseries pseudo-dependent map (pseudo-$\Pi$-) space} from $A$ to $B$ in $\mathcal{C}$ is a D-object 
\begin{equation*}
\Pi(A, B) \in \mathscr{D}_{\mathcal{C}}(\Gamma)
\end{equation*} 
equipped with a D-morphism 
\begin{equation*}
\mathit{dev}_{\Pi(A, B)} : \Gamma . \Pi(A, B) . A\{\pi_1\} \rightarrowtriangle B \{ \pi_1^{+A} \}
\end{equation*}
in $\mathcal{C}$, called the \emph{\bfseries dependent (D-) evaluation} of $\Pi(A, B)$, where 
\begin{equation*}
\pi_1^{+A} \stackrel{\mathrm{df. }}{=} \langle \pi_1 \circ \pi_1, \pi_2 \rangle : \Gamma . \Pi(A, B) . A\{\pi_1\} \to \Gamma . A
\end{equation*}
such that for any D-morphism in $\mathcal{C}$ of the form $f : \Gamma . A \rightarrowtriangle B$ there exists a unique D-morphism 
\begin{equation*}
\Lambda_{\Pi(A, B)}(f) : \Gamma \rightarrowtriangle \Pi(A, B)
\end{equation*}
in $\mathcal{C}$, called the \emph{\bfseries dependent (D-) currying} of $f$, that satisfies the equation
\begin{equation*}
\mathit{dev}_{\Pi(A, B)} \{ (\overline{\Lambda_{\Pi(A, B)} (f)})^{+A\{\pi_1\}} \} = f 
\end{equation*}
where 
\begin{align*}
\overline{\Lambda_{\Pi(A, B)} (f)} &\stackrel{\mathrm{df. }}{=} \langle \mathit{id}_{\Gamma}, \Lambda_{\Pi(A, B)} (f) \rangle : \Gamma \rightarrow \Gamma . \Pi(A, B) \\
(\overline{\Lambda_{\Pi(A, B)} (f)})^{+A\{\pi_1\}} &\stackrel{\mathrm{df. }}{=} \langle \overline{\Lambda_{\Pi(A, B)} (f)} \circ \pi_1, \pi_2 \rangle : \Gamma . A \rightarrow \Gamma . \Pi(A, B) . A\{\pi_1\}.
\end{align*}
\end{definition}

Pseudo-$\Pi$-spaces, D-evaluations and D-currying are, as their names suggest, intended to be generalizations of exponentials, evaluations and currying, respectively.  
To make this point explicit, let us draw the following commutative diagram that depicts the universal property of a pseudo-$\Pi$-space $\Pi(A, B) \in \mathscr{D}_{\mathcal{C}}(\Gamma)$:
\begin{diagram}
\Pi(A, B) & & & \Gamma . \Pi(A, B) . A\{\pi_1\} & && \rDepTo^{\mathit{dev}_{\Pi(A, B)}} & && B\{ \pi_1^{+A} \} \\ \\
\uDotsDepTo_{\Lambda_{\Pi(A, B)}(f)} & & & \uDotsTo_{(\overline{\Lambda_{\Pi(A, B)}(f)})^{+A\{\pi_1\}}} && && & & \dImplies \\ \\
\Gamma & & & \Gamma . A &&& \rDepTo^{f} & && B
\end{diagram}
which can be given in a CCC $\mathcal{C} = (\mathcal{C}, T, \times, p, \Rightarrow, \mathit{ev})$ by:
\begin{diagram}
A \Rightarrow B & & &(\Gamma \times (A \Rightarrow B)) \times A & & \rTo^{\mathit{ev}_{A, B} \circ \langle p_2 \circ p_1, p_2 \rangle} & & & B \\ \\
\uDotsTo_{\lambda_{A, B}(f)} & & & \uDotsTo_{\langle \mathit{id}_\Gamma, (\lambda_{A, B}(f) \rangle \times \mathit{id}_A} & & & & & \dEquals \\ \\
\Gamma & & & \Gamma \times A && \rTo^{f} & & & B
\end{diagram}

Unfortunately, however, pseudo-$\Pi$-spaces are \emph{not} unique up to isomorphisms because the canonical morphisms between pseudo-$\Pi$-spaces from the same D-object $A \in \mathscr{D}_{\mathcal{C}}(\Gamma)$ to the same D-object $B \in \mathscr{D}_{\mathcal{C}}(\Gamma . A)$ (specifically, they are $\imath_{A, B}$ and $\jmath_{A, B}$ in the proof of Proposition~\ref{PropUniquePi} below) are not inverses to each other.
By a similar mechanism, they do \emph{not} give rise to functors. 
In other words, pseudo-$\Pi$-spaces are a categorically incomplete structure. 

To overcome this point, let us define:
\begin{definition}[$\Pi$-spaces]
A pseudo-$\Pi$-space $\Pi(A, B)$ from $A \in \mathscr{D}_{\mathcal{C}}(\Gamma)$ to $B \in \mathscr{D}_{\mathcal{C}}(\Gamma . A)$ in an SCCwD $\mathcal{C}$ is a \emph{\bfseries dependent map ($\Pi$-) space} from $A$ to $B$ in $\mathcal{C}$ iff for any D-object $C \in \mathscr{D}_{\mathcal{C}}(\Gamma)$ the pair $\Pi(A, B) \{ \pi_1 \} = (\Pi(A, B) \{ \pi_1 \}, \mathit{dev}_{\Pi(A, B) \{ \pi_1 \}})$ such that
\begin{align*}
\Pi(A, B) \{ \pi_1 \} &\in \mathscr{D}_{\mathcal{C}}(\Gamma . C) \\
\mathit{dev}_{\Pi(A, B) \{ \pi_1 \}} \stackrel{\mathrm{df. }}{=} \mathit{dev}_{\Pi(A, B)} \{ \pi_1^{+\Pi(A, B) +A\{\pi_1\}} \} &: \Gamma . C . \Pi(A, B)\{\pi_1\} . A\{\pi_1\}\{\pi_1\} \rightarrowtriangle B\{ \pi_1^{+A} \} \{ \pi_1^{+\Pi(A, B) +A\{\pi_1\}} \}
\end{align*}
is a pseudo-$\Pi$-space from $A\{\pi_1\} \in \mathscr{D}_{\mathcal{C}}(\Gamma . C)$ to $B\{\pi_1^{\star A}\} \in \mathscr{D}_{\mathcal{C}}(\Gamma . C . A \{\pi_1\})$ in $\mathcal{C}$ that satisfies
\begin{align}
\label{UP2Pi}
\mathit{dev}_{\Pi(A, B)} \{ \langle \pi_1, \Lambda_{\Pi(A, B)\{\pi_1\}}(h) \rangle^{+A\{\pi_1\}} \} &= h \\
\label{UP3Pi}
\Lambda_{\Pi(A, B)\{\pi\}} (h \{ \langle \pi_1, g \rangle^{+A\{\pi_1\}}\}) &= \Lambda_{\Pi(A, B)\{\pi_1\}}(h)\{ \langle \pi_1, g \rangle \} 
\end{align}
for any $D \in \mathscr{D}_{\mathcal{C}}(\Gamma)$, $h : \Gamma . C . A\{ \pi_1 \} \rightarrowtriangle B\{ \pi_1^{\star A} \}$ and $g : \Gamma . D \rightarrowtriangle C\{\pi_1\}$ in $\mathcal{C}$, where
\begin{align*}
\pi_1^{+\Pi(A, B)} &\stackrel{\mathrm{df. }}{=} \langle \pi_1 \circ \pi_1, \pi_2 \rangle : \Gamma . C . \Pi(A, B)\{\pi_1\} \rightarrow \Gamma . \Pi(A, B) \\
\pi_1^{+\Pi(A, B) +A\{\pi_1\}} &\stackrel{\mathrm{df. }}{=} \langle \pi_1^{+\Pi(A, B)} \circ \pi_1, \pi_2 \rangle : \Gamma . C . \Pi(A, B)\{\pi_1\} . A\{\pi_1\}\{\pi_1\} \rightarrow \Gamma . \Pi(A, B) . A\{\pi_1\} \\
\pi_1^{+A} &\stackrel{\mathrm{df. }}{=} \langle \pi_1 \circ \pi_1, \pi_2 \rangle : \Gamma . \Pi(A, B) . A\{\pi_1\} \rightarrow \Gamma . A \\
\pi_1^{\star A} &\stackrel{\mathrm{df. }}{=} \langle \pi_1 \circ \pi_1, \pi_2 \rangle : \Gamma . C . A\{\pi_1\} \rightarrow \Gamma . A \\
\langle \pi_1, \Lambda_{\Pi(A, B)\{\pi_1\}}(h) \rangle^{+A\{\pi_1\}} &\stackrel{\mathrm{df. }}{=} \langle \langle \pi_1, \Lambda_{\Pi(A, B)\{\pi_1\}}(h) \rangle \circ \pi_1, \pi_2 \rangle : \Gamma . C . A\{\pi_1\} \rightarrow \Gamma . \Pi(A, B) . A\{\pi_1\} \\
\langle \pi_1, g \rangle^{+A\{\pi_1\}} &\stackrel{\mathrm{df. }}{=} \langle \langle \pi_1, g \rangle \circ \pi_1, \pi_2 \rangle : \Gamma . D . A\{\pi_1\} \rightarrow \Gamma . C . A\{\pi_1\}.
\end{align*}
\end{definition}

The additional universal properties (\ref{UP2Pi}) and (\ref{UP3Pi}) of a $\Pi$-space $\Pi(A, B) \in \mathscr{D}_{\mathcal{C}}(\Gamma)$ may be depicted as the following commutative diagrams:
\begin{diagram}
&&& \Pi(A, B)\{ \pi_1 \} & & \Gamma . \Pi(A, B) . A\{\pi_1\} & && \rDepTo^{\mathit{dev}_{\Pi(A, B)}} & && B\{ \pi_1^{+A} \} \\ 
&&\ruDotsDepTo(3, 4)^{\Lambda (h \{ \langle \pi_1, g \rangle^{+}\})}&&&&&&&&& \\
&&& \uDotsDepTo_{\Lambda_{\Pi(A, B)\{\pi_1\}}(h)} & & \uDotsTo_{\langle \pi_1, \Lambda_{\Pi(A, B)\{\pi_1\}}(h) \rangle^{+A\{\pi_1\}}} && && & & \dImplies \\ 
&&&&&&&&&&& \\
\Gamma . D && \rTo^{\langle \pi_1, g \rangle} & \Gamma . C & & \Gamma . C . A\{\pi_1\} &&& \rDepTo^{h} & && B\{ \pi_1^{+A} \}
\end{diagram}
where $\Lambda (h \{ \langle \pi_1, g \rangle^{+}\}) \stackrel{\mathrm{df. }}{=} \Lambda_{\Pi(A, B)\{\pi\}} (h \{ \langle \pi_1, g \rangle^{+A\{\pi_1\}}\})$.

Proposition~\ref{PropUniquePi} below verifies the uniqueness of $\Pi$-spaces up to isomorphisms, where the additional axioms (\ref{UP2Pi}) and (\ref{UP3Pi}) play essential roles. 
Also, it is straightforward to see that $\Pi$-spaces (and pseudo-$\Pi$-spaces) in a cartesian category seen as a CCwD coincide with exponentials, where note that D-evaluations do not depend on objects, and thus they are essentially the same as evaluations.

We then define the promised generalization of CCCs as follows:
\begin{definition}[SCCCwDs]
A \emph{\bfseries semi-cartesian closed CwD (SCCCwD)} is an SCCwD $\mathcal{C}$ that has for any triple of $\Gamma \in \mathcal{C}$, $A \in \mathscr{D}_{\mathcal{C}}(\Gamma)$ and $B \in \mathscr{D}_{\mathcal{C}}(\Gamma . A)$ a $\Pi$-space from $A$ to $B$ in $\mathcal{C}$.
A \emph{\bfseries strict SCCCwD} is a strict SCCwD $\mathcal{C}$ equipped with a family $\Pi = (\Pi(A, B))_{\Gamma \in \mathcal{C}, A \in \mathscr{D}_{\mathcal{C}}(\Gamma), B \in \mathscr{D}_{\mathcal{C}}(\Gamma . A)}$ of $\Pi$-spaces $\Pi(A, B) \in \mathscr{D}_{\mathcal{C}}(\Gamma)$ in $\mathcal{C}$ that is \emph{\bfseries coherent} in the sense that it satisfies the equations
\begin{align}
\label{DcompPi1}
\Pi(A, B)\{ \phi \} &= \Pi(A\{\phi\}, B\{ \phi^{+A} \}) \\
\label{DcompPi2}
\mathit{dev}_{\Pi(A, B)} \{ \phi^{+\Pi(A, B) +A\{\pi_1\}} \} &= \mathit{dev}_{\Pi(A\{\phi\}, B\{\phi^{+A}\})} 
\end{align}
for any $\Delta, \Gamma \in \mathcal{C}$, $A \in \mathscr{D}_{\mathcal{C}}(\Gamma)$, $B \in \mathscr{D}_{\mathcal{C}}(\Gamma . A)$ and $\phi : \Delta \to \Gamma$ in $\mathcal{C}$, where 
\begin{align*}
\phi^{+A} &\stackrel{\mathrm{df. }}{=} \langle \phi \circ \pi_1, \pi_2 \rangle : \Delta . A\{\phi\} \to \Gamma . A \\
\phi^{+\Pi(A, B)} &\stackrel{\mathrm{df. }}{=} \langle \phi \circ \pi_1, \pi_2 \rangle : \Delta . \Pi(A, B)\{\phi\} \to \Gamma . \Pi(A, B) \\
\phi^{+\Pi(A, B) +A\{\pi_1\}} &\stackrel{\mathrm{df. }}{=} \langle \phi^{+\Pi(A, B)} \circ \pi_1, \pi_2 \rangle : \Delta . \Pi(A, B)\{\phi\} . A\{ \phi \} \{ \pi_1 \} \to \Gamma . \Pi(A, B) .A\{\pi_1\}.
\end{align*}
\end{definition}

\begin{definition}[CCCwDs]
A \emph{\bfseries cartesian closed CwD (CCCwD)} is a CCwD that is semi-cartesian closed.
A \emph{\bfseries strict CCCwD} is a strict CCwD that is strictly semi-cartesian closed.
\end{definition}

Note that the coherence axioms (\ref{DcompPi1}) and (\ref{DcompPi2}) on strict SCCCwDs are necessary to interpret strict $\mathsf{\Pi}$-types in MLTTs; see the proof of Theorem~\ref{ThmPi} below.

Now, let us prove some of the expected properties of $\Pi$-spaces, including their uniqueness up to isomorphisms (Proposition~\ref{PropUniquePi}), as a generalization of exponentials.
\begin{lemma}[Transpositions in pseudo-$\Pi$-spaces]
\label{LemTranspositionInPseudoPiSpaces}
Let $\mathcal{C}$ be an SCCwD, $\Gamma \in \mathcal{C}$, $A \in \mathscr{D}_{\mathcal{C}}(\Gamma)$ and $B \in \mathscr{D}_{\mathcal{C}}(\Gamma . A)$. 
If $\Pi(A, B) \in \mathscr{D}_{\mathcal{C}}(\Gamma)$ is a pseudo-$\Pi$-space from $A$ to $B$ in $\mathcal{C}$, then there is a bijection 
\begin{equation*}
\mathscr{D}_{\mathcal{C}}(\Gamma . A, B) \cong \mathscr{D}_{\mathcal{C}}(\Gamma, \Pi(A, B)).
\end{equation*}
\end{lemma}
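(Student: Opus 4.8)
The plan is to exhibit the two obvious mutually inverse maps and read off bijectivity directly from the existence-and-uniqueness clause in the definition of a pseudo-$\Pi$-space. The forward map is D-currying, $f \mapsto \Lambda_{\Pi(A,B)}(f)$, which sends $\mathscr{D}_{\mathcal{C}}(\Gamma . A, B)$ into $\mathscr{D}_{\mathcal{C}}(\Gamma, \Pi(A,B))$ by definition. For the backward map I would send each $k : \Gamma \rightarrowtriangle \Pi(A,B)$ to the D-evaluation of its name,
\begin{equation*}
k^\sharp \stackrel{\mathrm{df.}}{=} \mathit{dev}_{\Pi(A,B)}\{(\overline{k})^{+A\{\pi_1\}}\},
\end{equation*}
where $\overline{k} \stackrel{\mathrm{df.}}{=} \langle \mathit{id}_\Gamma, k \rangle$ and $(\overline{k})^{+A\{\pi_1\}} \stackrel{\mathrm{df.}}{=} \langle \overline{k} \circ \pi_1, \pi_2 \rangle$, i.e. exactly the data from the defining equation of a pseudo-$\Pi$-space with $\Lambda_{\Pi(A,B)}(f)$ replaced by an arbitrary $k$.

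First I would check that $k^\sharp$ really lands in $\mathscr{D}_{\mathcal{C}}(\Gamma . A, B)$, i.e. that its codomain $B\{\pi_1^{+A}\}\{(\overline{k})^{+A\{\pi_1\}}\}$ collapses to $B$. This reduces to the identity $\pi_1^{+A} \circ (\overline{k})^{+A\{\pi_1\}} = \mathit{id}_{\Gamma . A}$, which follows from (\ref{ConsNat}) and (\ref{ConsId}): unfolding both pairings and using $\pi_1 \circ \overline{k} = \mathit{id}_\Gamma$ yields first component $\pi_1$ and second component $\pi_2$, whose pairing is $\mathit{id}_{\Gamma . A}$ by (\ref{ConsId}). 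Then $B\{\pi_1^{+A}\}\{(\overline{k})^{+A\{\pi_1\}}\} = B\{\pi_1^{+A} \circ (\overline{k})^{+A\{\pi_1\}}\} = B\{\mathit{id}_{\Gamma . A}\} = B$ by functoriality of D-composition.

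With both maps well-typed, the two round trips are immediate. The composite $(-)^\sharp \circ \Lambda_{\Pi(A,B)}$ is the identity because $(\Lambda_{\Pi(A,B)}(f))^\sharp = \mathit{dev}_{\Pi(A,B)}\{(\overline{\Lambda_{\Pi(A,B)}(f)})^{+A\{\pi_1\}}\} = f$ is precisely the defining equation of a pseudo-$\Pi$-space. Conversely, for $\Lambda_{\Pi(A,B)} \circ (-)^\sharp$, fix $k$ and put $f = k^\sharp$; then $k$ is a D-morphism $\Gamma \rightarrowtriangle \Pi(A,B)$ satisfying $\mathit{dev}_{\Pi(A,B)}\{(\overline{k})^{+A\{\pi_1\}}\} = f$, so the uniqueness clause of the universal property forces $k = \Lambda_{\Pi(A,B)}(f) = \Lambda_{\Pi(A,B)}(k^\sharp)$. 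Hence the two maps are mutually inverse, establishing the bijection.

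Since each step is formal, the only genuine bookkeeping is the well-typedness computation for $k^\sharp$; I expect that to be the one place where the nested weakenings $\pi_1^{+A}$ and $(\overline{k})^{+A\{\pi_1\}}$ must be manipulated with care, while everything else is a direct appeal to the universal property. Note that no part of the argument uses the additional axioms (\ref{UP2Pi}) or (\ref{UP3Pi}), so the lemma holds for pseudo-$\Pi$-spaces in general, as stated.
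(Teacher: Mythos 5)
Your proof is correct and follows exactly the paper's own argument: the paper exhibits the same two maps ($f \mapsto \Lambda_{\Pi(A,B)}(f)$ and $g \mapsto \mathit{dev}_{\Pi(A,B)}\{(\overline{g})^{+A\{\pi_1\}}\}$) and leaves the verification that they are mutually inverse to the reader, which is precisely the bookkeeping you carry out. Your observation that the additional axioms (\ref{UP2Pi}) and (\ref{UP3Pi}) are not needed is also accurate and consistent with the lemma being stated for pseudo-$\Pi$-spaces.
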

\begin{proof}
There is the function $\mathscr{D}_{\mathcal{C}}(\Gamma . A, B) \to \mathscr{D}_{\mathcal{C}}(\Gamma, \Pi(A, B))$ that maps $f \mapsto \Lambda_{\Pi(A, B)}(f)$, and also there is the inverse $\mathscr{D}_{\mathcal{C}}(\Gamma, \Pi(A, B)) \to \mathscr{D}_{\mathcal{C}}(\Gamma . A, B)$ that maps $g \mapsto \mathit{dev}_{\Pi(A, B)} \{ (\overline{g})^{+A\{\pi_1\}} \}$, where it is easy to verify that these maps are inverses to each other and thus left to the reader.
\end{proof}

\begin{notation}
By Lemma~\ref{LemTranspositionInPseudoPiSpaces}, it is legitimate to write $\Lambda_{\Pi(A, B)}^{-1} : \mathscr{D}_{\mathcal{C}}(\Gamma, \Pi(A, B)) \to \mathscr{D}_{\mathcal{C}}(\Gamma . A, B)$ for the map 
\begin{equation*}
g \mapsto \mathit{dev}_{\Pi(A, B)} \{ (\overline{g})^{+A\{\pi_1\}} \}.
\end{equation*}
\end{notation}

\begin{lemma}[First $\Pi$-lemma]
\label{LemFirstPiLemma}
Let $\mathcal{C}$ be an SCCwD, $\Gamma \in \mathcal{C}$, $A \in \mathscr{D}_{\mathcal{C}}(\Gamma)$ and $B \in \mathscr{D}_{\mathcal{C}}(\Gamma. A)$.
If $\Pi(A, B) \in \mathscr{D}_{\mathcal{C}}(\Gamma)$ is a $\Pi$-space from $A$ to $B$ in $\mathcal{C}$, then we have:
\begin{equation*}
\Lambda_{\Pi(A, B)\{\pi_1\}}(\mathit{dev}_{\Pi(A, B)}) = \pi_2 : \Gamma . \Pi(A, B) \rightarrowtriangle \Pi(A, B)\{\pi_1\}.
\end{equation*}
\end{lemma}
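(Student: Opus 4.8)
The plan is to read the statement as the assertion that the second projection $\pi_2^{\Gamma . \Pi(A,B)}$ is exactly the D-currying of the D-evaluation $\mathit{dev}_{\Pi(A,B)}$, and to prove it by running the defining universal property of $\Pi$-spaces with the special choice of base $C = \Pi(A,B)$. First I would instantiate the definition of a $\Pi$-space at $C = \Pi(A,B)$, so that $\Gamma . C = \Gamma . \Pi(A,B)$ and $\Pi(A,B)\{\pi_1\} \in \mathscr{D}_{\mathcal{C}}(\Gamma . \Pi(A,B))$ is a pseudo-$\Pi$-space from $A\{\pi_1\}$ to $B\{\pi_1^{\star A}\}$. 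An essential (if routine) check is that this choice forces $\pi_1^{\star A} = \pi_1^{+A}$, so that $\mathit{dev}_{\Pi(A,B)} : \Gamma . \Pi(A,B) . A\{\pi_1\} \rightarrowtriangle B\{\pi_1^{+A}\}$ is literally a legitimate argument $h$ for the D-currying $\Lambda_{\Pi(A,B)\{\pi_1\}}$. Consequently $\Lambda_{\Pi(A,B)\{\pi_1\}}(\mathit{dev}_{\Pi(A,B)})$ is a well-typed D-morphism $\Gamma . \Pi(A,B) \rightarrowtriangle \Pi(A,B)\{\pi_1\}$, which matches the type of $\pi_2^{\Gamma . \Pi(A,B)}$.

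Next I would invoke Lemma~\ref{LemTranspositionInPseudoPiSpaces} applied to this pseudo-$\Pi$-space, which makes $\Lambda_{\Pi(A,B)\{\pi_1\}}$ a bijection with inverse written $\Lambda_{\Pi(A,B)\{\pi_1\}}^{-1}$. Equation (\ref{UP2Pi}) of the $\Pi$-space definition, read through this bijection by substituting $m = \Lambda_{\Pi(A,B)\{\pi_1\}}(h)$ and $h = \Lambda_{\Pi(A,B)\{\pi_1\}}^{-1}(m)$, says precisely that $\Lambda_{\Pi(A,B)\{\pi_1\}}^{-1}(m) = \mathit{dev}_{\Pi(A,B)}\{\langle \pi_1, m\rangle^{+A\{\pi_1\}}\}$ for every D-morphism $m : \Gamma . \Pi(A,B) \rightarrowtriangle \Pi(A,B)\{\pi_1\}$. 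I would then evaluate this inverse at $m = \pi_2^{\Gamma . \Pi(A,B)}$: by equation (\ref{ConsId}) of Lemma~\ref{LemSemiSigmaPairings} we have $\langle \pi_1, \pi_2\rangle = \mathit{id}_{\Gamma . \Pi(A,B)}$, and the lifting $(-)^{+A\{\pi_1\}}$ sends this identity to $\langle \mathit{id} \circ \pi_1, \pi_2\rangle = \langle \pi_1, \pi_2\rangle = \mathit{id}_{\Gamma . \Pi(A,B) . A\{\pi_1\}}$, again by (\ref{ConsId}). Hence $\Lambda_{\Pi(A,B)\{\pi_1\}}^{-1}(\pi_2) = \mathit{dev}_{\Pi(A,B)}\{\mathit{id}\} = \mathit{dev}_{\Pi(A,B)}$.

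Finally, applying the bijection $\Lambda_{\Pi(A,B)\{\pi_1\}}$ to both sides of the equation $\Lambda_{\Pi(A,B)\{\pi_1\}}^{-1}(\pi_2) = \mathit{dev}_{\Pi(A,B)}$ yields $\pi_2 = \Lambda_{\Pi(A,B)\{\pi_1\}}(\mathit{dev}_{\Pi(A,B)})$, which is exactly the claim. The only genuinely delicate part is the notational bookkeeping: confirming that the specialization $C = \Pi(A,B)$ aligns all the weakening superscripts ($\pi_1^{\star A}$, $\pi_1^{+A}$, and the operation $(-)^{+A\{\pi_1\}}$) so that (\ref{UP2Pi}) applies verbatim, and that $\langle \pi_1, \pi_2\rangle$ is indeed the identity on the relevant semi-$\Sigma$-space. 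Everything else follows directly from the transposition bijection of Lemma~\ref{LemTranspositionInPseudoPiSpaces} and the identity law (\ref{ConsId}).
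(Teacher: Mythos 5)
Your proof is correct and follows essentially the same route as the paper's: both arguments reduce the claim to the fact that $\pi_2$ transposes back to $\mathit{dev}_{\Pi(A,B)}$, with the key computation being that $\langle \pi_1, \pi_2 \rangle$ and its weakening are identities (equation~(\ref{ConsId})). The only cosmetic difference is that you characterize $\Lambda_{\Pi(A,B)\{\pi_1\}}^{-1}$ via axiom~(\ref{UP2Pi}) and the transposition bijection, whereas the paper invokes the uniqueness clause of the pseudo-$\Pi$-space $\Pi(A,B)\{\pi_1\}$ directly and unfolds $\mathit{dev}_{\Pi(A,B)\{\pi_1\}}$; the resulting substitution calculus is the same.
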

\begin{proof}
By the universal property of the pseudo-$\Pi$-space $\Pi(A, B) \{ \pi_1 \} \in \mathscr{D}_{\mathcal{C}}(\Gamma . \Pi(A, B))$ in $\mathcal{C}$, it suffices to establish the equation
\begin{equation*}
\mathit{dev}_{\Pi(A, B)\{\pi_1\}}\{ (\overline{\pi_2})^{+A\{\pi_1\}\{\pi_1\}} \} = \mathit{dev}_{\Pi(A, B)}.
\end{equation*}
Then, observe that:
\begin{align*}
\mathit{dev}_{\Pi(A, B)\{\pi_1\}}\{ (\overline{\pi_2})^{+A\{\pi_1\}\{\pi_1\}} \} &= \mathit{dev}_{\Pi(A, B)}\{ \pi_1^{+\Pi(A, B) +A\{\pi_1\}} \} \{ (\overline{\pi_2})^{+A\{\pi_1\}\{\pi_1\}} \} \\
&= \mathit{dev}_{\Pi(A, B)}\{ \langle \langle \pi_1 \circ \pi_1, \pi_2 \rangle \circ \pi_1, \pi_2 \rangle \} \{ \langle \langle \pi_1, \pi_2\{\pi_1\} \rangle, \pi_2 \rangle \} \\
&= \mathit{dev}_{\Pi(A, B)}\{ \langle \langle \pi_1 \circ \pi_1, \pi_2 \rangle \circ \pi_1, \pi_2 \rangle \circ \langle \langle \pi_1, \pi_2\{\pi_1\} \rangle, \pi_2 \rangle \} \\
&= \mathit{dev}_{\Pi(A, B)}\{ \langle \langle \pi_1 \circ \pi_1, \pi_2\{\pi_1\} \rangle, \pi_2 \rangle \} \\
&= \mathit{dev}_{\Pi(A, B)}\{ \langle \langle \pi_1, \pi_2 \rangle \circ \pi_1, \pi_2 \rangle \} \\
&= \mathit{dev}_{\Pi(A, B)}\{ \langle \mathit{id}_{\Gamma . \Pi(A, B)} \circ \pi_1, \pi_2 \rangle \} \\
&= \mathit{dev}_{\Pi(A, B)}\{ \langle \pi_1, \pi_2 \rangle \} \\
&= \mathit{dev}_{\Pi(A, B)}\{ \mathit{id}_{\Gamma . \Pi(A, B) . A\{\pi_1\}} \} \\
&= \mathit{dev}_{\Pi(A, B)}
\end{align*}
which completes the proof.
\end{proof}

\begin{lemma}[Second $\Pi$-lemma]
\label{LemSecondPiLemma}
Let $\mathcal{C} = (\mathcal{C}, T, \_ . \_, \pi, \Pi, \mathit{dev})$ be a strict SCCCwD.
Given $\Delta, \Gamma \in \mathcal{C}$, $\phi : \Delta \to \Gamma$, $A \in \mathscr{D}_{\mathcal{C}}(\Gamma)$, $B \in \mathscr{D}_{\mathcal{C}}(\Gamma . A)$ and $f : \Gamma . A \rightarrowtriangle B$ in $\mathcal{C}$, we have:
\begin{equation*}
\Lambda_{\Pi(A, B)}(f)\{\phi\} = \Lambda_{\Pi(A\{\phi\}, B\{\phi^{+A}\})}(f\{\phi^{+A}\}) : \Gamma \rightarrowtriangle \Pi(A, B)\{\phi\}.
\end{equation*}
\end{lemma}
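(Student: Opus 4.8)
The plan is to prove the identity by reducing it, via the uniqueness of dependent currying, to a single equation, and then to verify that equation using the two coherence axioms of a strict SCCCwD together with the universal property of semi-$\Sigma$-spaces. Write $P := \Pi(A,B) \in \mathscr{D}_{\mathcal{C}}(\Gamma)$ and $L := \Lambda_{\Pi(A,B)}(f) : \Gamma \rightarrowtriangle P$, so that the left-hand side is $L\{\phi\} : \Delta \rightarrowtriangle P\{\phi\}$. By the coherence axiom (\ref{DcompPi1}) we have $P\{\phi\} = \Pi(A\{\phi\}, B\{\phi^{+A}\})$ as D-objects, and this D-object carries a genuine $\Pi$-space structure, so its dependent currying enjoys the uniqueness property built into the definition of (pseudo-)$\Pi$-spaces. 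Hence it suffices to check that $L\{\phi\}$ satisfies the equation characterising $\Lambda_{\Pi(A\{\phi\},B\{\phi^{+A}\})}(f\{\phi^{+A}\})$, namely $\mathit{dev}_{\Pi(A\{\phi\},B\{\phi^{+A}\})}\{(\overline{L\{\phi\}})^{+A\{\phi\}\{\pi_1\}}\} = f\{\phi^{+A}\}$.

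Next I would use the second coherence axiom (\ref{DcompPi2}) to rewrite $\mathit{dev}_{\Pi(A\{\phi\},B\{\phi^{+A}\})}$ as $\mathit{dev}_{P}\{\phi^{+\Pi(A,B)+A\{\pi_1\}}\}$, and then apply functoriality of D-composition ($a\{\psi\}\{\chi\} = a\{\psi\circ\chi\}$) to collapse the two substitutions into one. This reduces the whole verification to the single morphism identity in $\mathscr{U}(\mathcal{C})$
\[
\phi^{+\Pi(A,B)+A\{\pi_1\}}\circ(\overline{L\{\phi\}})^{+A\{\phi\}\{\pi_1\}} = (\overline{L})^{+A\{\pi_1\}}\circ\phi^{+A}
\]
as morphisms $\Delta . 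A\{\phi\} \to \Gamma . \Pi(A,B) . A\{\pi_1\}$. I would establish this identity by the universal property of the semi-$\Sigma$-space $\Gamma . \Pi(A,B) . A\{\pi_1\}$, i.e.\ by checking that both composites have equal first projection $\pi_1\circ(-)$ and equal second projection $\pi_2\{-\}$ and then invoking equation (\ref{ConsId}) of Lemma~\ref{LemSemiSigmaPairings}. For the first projections, repeated use of $\pi_1\circ\langle\psi,g\rangle=\psi$, together with $\pi_1\circ\overline{L}=\mathit{id}_\Gamma$ and the naturality law (\ref{ConsNat}), reduces both sides to $\langle\phi,L\{\phi\}\rangle\circ\pi_1$; for the second projections, repeated use of $\pi_2\{\langle\psi,g\rangle\}=g$ collapses both sides to $\pi_2^{\Delta.A\{\phi\}}$.

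Substituting this identity back and applying functoriality once more, the left-hand side becomes $\mathit{dev}_{P}\{(\overline{L})^{+A\{\pi_1\}}\}\{\phi^{+A}\}$, which equals $f\{\phi^{+A}\}$ directly by the defining equation of $L = \Lambda_{\Pi(A,B)}(f)$ as the dependent currying of $f$. This is precisely the equation characterising the right-hand side, so uniqueness of dependent currying yields $\Lambda_{\Pi(A,B)}(f)\{\phi\} = \Lambda_{\Pi(A\{\phi\},B\{\phi^{+A}\})}(f\{\phi^{+A}\})$. The main obstacle is purely bookkeeping: correctly tracking the several distinct weakening and lifting morphisms ($\phi^{+A}$, $\phi^{+\Pi(A,B)}$, their iterated versions, and $\overline{(-)}$, $(\overline{(-)})^{+A\{\pi_1\}}$) across a chain of semi-$\Sigma$-spaces sitting over different bases, where a single misaligned $\pi_1$ silently breaks the calculation. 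No new idea beyond (\ref{DcompPi2}) and the semi-$\Sigma$ universal property is needed, and it is exactly coherence (\ref{DcompPi2}) that legitimises substituting into $\mathit{dev}$, which is the crux of the argument.
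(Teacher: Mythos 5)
Your proposal is correct and follows essentially the same route as the paper's proof: reduce to the characterising equation for $\Lambda_{\Pi(A\{\phi\},B\{\phi^{+A}\})}(f\{\phi^{+A}\})$, invoke the coherence axiom (\ref{DcompPi2}) to replace $\mathit{dev}_{\Pi(A\{\phi\},B\{\phi^{+A}\})}$ by $\mathit{dev}_{\Pi(A,B)}\{\phi^{+\Pi(A,B)+A\{\pi_1\}}\}$, and establish the key identity $\phi^{+\Pi(A,B)+A\{\pi_1\}}\circ(\overline{L\{\phi\}})^{+A\{\phi\}\{\pi_1\}}=(\overline{L})^{+A\{\pi_1\}}\circ\phi^{+A}$ before concluding with the defining equation of $\Lambda_{\Pi(A,B)}(f)$. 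The only cosmetic difference is that the paper verifies this identity by directly rewriting the composite pairings with (\ref{ConsNat}), whereas you check the two projections and appeal to uniqueness; both rest on the same universal property of semi-$\Sigma$-spaces.
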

\begin{proof}
By the universal property of the coherent family $\Pi$ of the $\Pi$-spaces in $\mathcal{C}$, it suffices to show the equation
\begin{equation*}
\mathit{dev}_{\Pi(A\{\phi\}, B\{\phi^{+A}\})} \{ (\overline{\Lambda_{\Pi(A, B)}(f)\{ \phi \}})^{+A\{\phi\}\{\pi_1\}} \} = f \{ \phi^{+A} \}.
\end{equation*}
Then, we have:
\begin{align*}
&\mathit{dev}_{\Pi(A\{\phi\}, B\{\phi^{+A}\})} \{ (\overline{\Lambda_{\Pi(A, B)}(f) \{ \phi \}})^{+A\{\phi\}\{\pi_1\}} \} \\
= \ &\mathit{dev}_{\Pi(A, B)} \{ \phi^{+\Pi(A, B) +A\{\pi_1\}} \} \{ (\overline{\Lambda_{\Pi(A, B)}(f) \{ \phi \}})^{+A\{\phi\}\{\pi_1\}} \} \ \text{(because $\Pi$ is coherent)} \\
= \ &\mathit{dev}_{\Pi(A, B)} \{ \langle \langle \langle \phi \circ \pi_1 \rangle, \pi_2 \rangle \circ \pi_1, \pi_2 \rangle \circ \langle \overline{\Lambda_{\Pi(A, B)}(f) \{ \phi \}} \circ \pi_1, \pi_2 \rangle \} \\
= \ &\mathit{dev}_{\Pi(A, B)} \{ \langle \langle \phi \circ \pi_1, \Lambda_{\Pi(A, B)}(f) \{ \phi \circ \pi_1 \} \rangle, \pi_2 \rangle \} \\
= \ &\mathit{dev}_{\Pi(A, B)} \{ \langle \langle \mathit{id}_\Gamma, \Lambda_{\Pi(A, B)}(f) \rangle \circ \phi \circ \pi_1, \pi_2 \rangle \} \\
= \ &\mathit{dev}_{\Pi(A, B)} \{ \langle \langle \mathit{id}_\Gamma, \Lambda_{\Pi(A, B)}(f) \rangle \circ \pi_1, \pi_2 \rangle \circ \langle \phi \circ \pi_1, \pi_2 \rangle \} \\
= \ &\mathit{dev}_{\Pi(A, B)} \{ \langle \langle \mathit{id}_\Gamma, \Lambda_{\Pi(A, B)}(f) \rangle \circ \pi_1, \pi_2 \rangle \} \{ \langle \phi \circ \pi_1, \pi_2 \rangle \} \\
= \ &f \{ \phi^{+A} \}
\end{align*}
which completes the proof.
\end{proof}

\if0
\begin{lemma}[Third $\Pi$-lemma]
\label{LemThirdPiLemma}
Let $\mathcal{C}$ be an SCCwD, $\Gamma \in \mathcal{C}$, $A, C, D \in \mathscr{D}_{\mathcal{C}}(\Gamma)$, $B \in \mathscr{D}_{\mathcal{C}}(\Gamma. A)$, $g : \Gamma . D \rightarrowtriangle C\{\pi_1\}$ and $h : \Gamma . C . A\{\pi_1\} \rightarrowtriangle B \{ \pi_1^{\star A} \}$ in $\mathcal{C}$.
If $\Pi(A, B) \in \mathscr{D}_{\mathcal{C}}(\Gamma)$ is a $\Pi$-space from $A$ to $B$ in $\mathcal{C}$, then we have the equation
\begin{equation*}
\Lambda_{\Pi(A, B)\{\pi_1\}}(h) \{ \langle \pi_1, g \rangle \} = \Lambda_{\Pi(A, B)\{\pi_1\}}(h\{ \langle \pi_1, g \rangle^{+ A\{\pi_1\}} \}) : \Gamma . D \rightarrowtriangle \Pi(A, B)\{\pi_1\}.
\end{equation*}
\end{lemma}
\begin{proof}
By the universal property of the $\Pi$-space $\Pi(A, B) \{ \pi_1 \} \in \mathscr{D}_{\mathcal{C}}(\Gamma . D)$, it suffices to verify the equation
\begin{equation*}
\mathit{dev}_{\Pi(A, B)\{\pi_1\}} \{ (\overline{\Lambda_{\Pi(A, B)\{\pi_1\}} (h)\{ \langle \pi_1, g \rangle \}})^{+A\{\pi_1\}\{\pi_1\}} \} = h \{ \langle \pi_1, g \rangle^{+A\{\pi_1\}} \}.
\end{equation*}
Then, observe that:
\begin{align*}
&\mathit{dev}_{\Pi(A, B)\{\pi_1\}} \{ (\overline{\Lambda_{\Pi(A, B)\{\pi_1\}} (h)\{ \langle \pi_1, g \rangle \}})^{+A\{\pi_1\}\{\pi_1\}} \} \\
= \ &\mathit{dev}_{\Pi(A, B)} \{ \langle \langle \pi_1 \circ \pi_1, \pi_2 \rangle \circ \pi_1, \pi_2 \rangle \} \{ \langle \langle \pi_1, \Lambda_{\Pi(A, B)\{\pi_1\}} (h)\{ \langle \pi_1, g \rangle \} \{ \pi_1 \} \rangle, \pi_2 \rangle \} \\
= \ &\mathit{dev}_{\Pi(A, B)} \{ \langle \langle \pi_1 \circ \pi_1, \pi_2 \rangle \circ \pi_1, \pi_2 \rangle \circ \langle \langle \pi_1, \Lambda_{\Pi(A, B)\{\pi_1\}} (h)\{ \langle \pi_1, g \rangle \} \{ \pi_1 \} \rangle, \pi_2 \rangle \} \\
= \ &\mathit{dev}_{\Pi(A, B)} \{ \langle \langle \pi_1 \circ \pi_1, \Lambda_{\Pi(A, B)\{\pi_1\}} (h)\{ \langle \pi_1, g \rangle \} \{ \pi_1 \} \rangle, \pi_2 \rangle \} \\
= \ &\mathit{dev}_{\Pi(A, B)} \{ \langle \langle \pi_1 \circ \pi_1, \Lambda_{\Pi(A, B)\{\pi_1\}} (h\{ \langle \pi_1, g \rangle^{+A\{\pi_1\}} \}) \{ \pi_1 \} \rangle, \pi_2 \rangle \} \\
= \ &h\{ \langle \pi_1, g \rangle^{+A\{\pi_1\}} \}
\end{align*}
which completes the proof.
\end{proof}
\fi

\begin{proposition}[Uniqueness of $\Pi$-spaces]
\label{PropUniquePi}
In any SCCwD $\mathcal{C}$, every $\Pi$-space in $\mathcal{C}$ is unique up to isomorphisms.
More precisely, if $\Gamma \in \mathcal{C}$, $A \in \mathscr{D}_{\mathcal{C}}(\Gamma)$ and $B \in \mathscr{D}_{\mathcal{C}}(\Gamma . A)$, and $\Pi(A, B), \Pi'(A, B) \in \mathscr{D}_{\mathcal{C}}(\Gamma)$ are both $\Pi$-spaces from $A$ to $B$ in $\mathcal{C}$, then $\Pi(A, B) \cong \Pi'(A, B)$.
\end{proposition}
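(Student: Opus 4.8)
The plan is to exhibit a canonical pair of mutually inverse D-morphisms between the two $\Pi$-spaces inside the \emph{category of a fixed object} $\mathcal{C}_\Gamma$. First I would note that, since both $\Pi(A,B)$ and $\Pi'(A,B)$ lie over the same base $\Gamma$, an isomorphism $(\mathit{id}_\Gamma, \imath) : (\Gamma, \Pi(A,B)) \xrightarrow{\sim} (\Gamma, \Pi'(A,B))$ in $\mathcal{C}_{\mathscr{D}}$ is precisely an isomorphism $\imath : \Pi(A,B) \xrightarrow{\sim} \Pi'(A,B)$ in $\mathcal{C}_\Gamma$, because the $\mathcal{C}_{\mathscr{D}}$-composition of two morphisms with identity first components reduces to the $\mathcal{C}_\Gamma$-composition $g\{\langle \pi_1, f\rangle\}$ and $\mathit{id}_{(\Gamma, \Pi(A,B))} = (\mathit{id}_\Gamma, \pi_2)$. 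Hence it suffices to produce D-morphisms $\imath : \Gamma . \Pi(A,B) \rightarrowtriangle \Pi'(A,B)\{\pi_1\}$ and $\jmath : \Gamma . \Pi'(A,B) \rightarrowtriangle \Pi(A,B)\{\pi_1\}$ with $\jmath\{\langle \pi_1, \imath\rangle\} = \pi_2$ and $\imath\{\langle \pi_1, \jmath\rangle\} = \pi_2$.

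For the construction, I would instantiate the $\Pi$-space clause of $\Pi'(A,B)$ at the D-object $C = \Pi(A,B)$: this makes $\Pi'(A,B)\{\pi_1\}$ a pseudo-$\Pi$-space over $\Gamma . \Pi(A,B)$, and since $\pi_1^{\star A} = \pi_1^{+A}$ when $C = \Pi(A,B)$, the D-evaluation $\mathit{dev}_{\Pi(A,B)} : \Gamma . \Pi(A,B) . A\{\pi_1\} \rightarrowtriangle B\{\pi_1^{+A}\}$ has exactly the right type to be curried there. I then set $\imath := \Lambda_{\Pi'(A,B)\{\pi_1\}}(\mathit{dev}_{\Pi(A,B)})$ and, symmetrically, $\jmath := \Lambda_{\Pi(A,B)\{\pi_1\}}(\mathit{dev}_{\Pi'(A,B)})$. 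These are the canonical comparison maps already alluded to in the text.

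The verification of invertibility is where the extra $\Pi$-space axioms (\ref{UP2Pi}) and (\ref{UP3Pi}) do the real work. Applying the naturality of currying (\ref{UP3Pi}) with $h = \mathit{dev}_{\Pi'(A,B)}$ and $g = \imath$ moves the substitution inside the $\Lambda$, giving $\jmath\{\langle \pi_1, \imath\rangle\} = \Lambda_{\Pi(A,B)\{\pi_1\}}(\mathit{dev}_{\Pi'(A,B)}\{\langle \pi_1, \imath\rangle^{+A\{\pi_1\}}\})$; then (\ref{UP2Pi}), applied to $\imath = \Lambda_{\Pi'(A,B)\{\pi_1\}}(\mathit{dev}_{\Pi(A,B)})$, collapses the inner term to $\mathit{dev}_{\Pi(A,B)}$; and finally the First $\Pi$-lemma (Lemma~\ref{LemFirstPiLemma}) identifies $\Lambda_{\Pi(A,B)\{\pi_1\}}(\mathit{dev}_{\Pi(A,B)})$ with $\pi_2$. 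The symmetric argument gives $\imath\{\langle \pi_1, \jmath\rangle\} = \pi_2$, so $\imath$ and $\jmath$ are mutually inverse in $\mathcal{C}_\Gamma$ and hence $\Pi(A,B) \cong \Pi'(A,B)$.

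I expect the main obstacle to be the substitution bookkeeping: one must check that the weakening superscripts $(-)^{+A\{\pi_1\}}$, $\pi_1^{\star A}$ and $\pi_1^{+A}$ line up so that (\ref{UP2Pi}) and (\ref{UP3Pi}) apply verbatim, and in particular that the codomain D-objects agree under the relevant D-compositions. This is exactly the point at which mere pseudo-$\Pi$-spaces fail --- their comparison maps $\imath$, $\jmath$ are \emph{not} inverse --- so the proof must genuinely use (\ref{UP2Pi}) and (\ref{UP3Pi}) rather than only the transposition bijection of Lemma~\ref{LemTranspositionInPseudoPiSpaces}.
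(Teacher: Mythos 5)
Your proposal is correct and follows essentially the same route as the paper's proof: the same canonical comparison maps $\imath = \Lambda_{\Pi'(A,B)\{\pi_1\}}(\mathit{dev}_{\Pi(A,B)})$ and $\jmath = \Lambda_{\Pi(A,B)\{\pi_1\}}(\mathit{dev}_{\Pi'(A,B)})$, with invertibility established by exactly the same chain of (\ref{UP3Pi}), then (\ref{UP2Pi}), then Lemma~\ref{LemFirstPiLemma}. The only (harmless) difference is that you phrase the conclusion as an isomorphism in $\mathcal{C}_\Gamma$, while the paper additionally records that the isomorphism intertwines the two D-evaluations, which is not needed for the stated claim.
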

\begin{proof}
First, we have the following two D-morphisms in $\mathcal{C}$:
\begin{align*}
\imath_{A, B} &\stackrel{\mathrm{df. }}{=} \Lambda_{\Pi'(A, B)\{ \pi_1 \}}(\mathit{dev}_{\Pi(A, B)}) : \Gamma . \Pi(A, B) \rightarrowtriangle \Pi'(A, B) \{\pi_1\} \\
\jmath_{A, B} &\stackrel{\mathrm{df. }}{=} \Lambda_{\Pi(A, B)\{ \pi_1 \}}(\mathit{dev}_{\Pi'(A, B)}) : \Gamma . \Pi'(A, B) \rightarrowtriangle \Pi(A, B) \{\pi_1\}.
\end{align*}
It suffices to show that these D-morphisms are inverses to each other. 

Then, observe that:
\begin{align*}
\imath_{A, B} \{ \langle \pi_1, \jmath_{A, B} \rangle \} &= \Lambda_{\Pi'(A, B)\{ \pi_1 \}}(\mathit{dev}_{\Pi(A, B)}) \{ \langle \pi_1, \Lambda_{\Pi(A, B)\{ \pi_1 \}}(\mathit{dev}_{\Pi'(A, B)}) \rangle \} \\
&= \Lambda_{\Pi'(A, B)\{ \pi_1 \}}(\mathit{dev}_{\Pi(A, B)} \{ \langle \pi_1, \Lambda_{\Pi(A, B)\{ \pi_1 \}}(\mathit{dev}_{\Pi'(A, B)}) \rangle^{+A\{\pi_1\}} \}) \ \text{(by the axiom (\ref{UP3Pi}))} \\
&= \Lambda_{\Pi'(A, B)\{ \pi_1 \}}(\mathit{dev}_{\Pi(A, B)} \{ \langle \langle \pi_1 \circ \pi_1, \Lambda_{\Pi(A, B)\{ \pi_1 \}}(\mathit{dev}_{\Pi'(A, B)}) \{\pi_1\} \rangle, \pi_2 \rangle \}) \\
&= \Lambda_{\Pi'(A, B)\{ \pi_1 \}}(\mathit{dev}_{\Pi'(A, B)}) \ \text{(by the axiom (\ref{UP2Pi}))} \\
&= \pi_2 \ \text{(by Lemma~\ref{LemFirstPiLemma})}.
\end{align*}
By symmetry, we also have $\jmath_{A, B} \{ \langle \pi_1, \imath_{A, B} \rangle \} = \pi_2$. 

Moreover, the D-morphism
\begin{equation*}
\mathit{dev}_{\Pi(A, B)} \{ \langle \pi_1, \jmath_{A, B} \rangle^{+A\{\pi_1\}} \} : \Gamma . \Pi'(A, B) . A\{\pi_1\} \rightarrowtriangle B\{ \pi_1^{+A} \} 
\end{equation*}
where
\begin{equation*}
\langle \pi_1, \jmath_{A, B} \rangle^{+A\{\pi_1\}} \stackrel{\mathrm{df. }}{=} \langle \langle \pi_1, \jmath_{A, B} \rangle \circ \pi_1, \pi_2 \rangle : \Gamma . \Pi'(A, B) . A\{\pi_1\} \to \Gamma . \Pi(A, B) . A\{\pi_1\}
\end{equation*}
satisfies
\begin{align*}
\mathit{dev}_{\Pi(A, B)} \{ \langle \pi_1, \jmath_{A, B} \rangle^{+A\{\pi_1\}} \} &= \mathit{dev}_{\Pi(A, B)} \{ \langle \langle \pi_1, \Lambda_{\Pi(A, B)\{ \pi_1 \}}(\mathit{dev}_{\Pi'(A, B)}) \rangle \circ \pi_1, \pi_2 \rangle \} \\
&= \mathit{dev}_{\Pi'(A, B)} \ \text{(by the axiom (\ref{UP2Pi}))}.
\end{align*}
And again by symmetry, $\mathit{dev}_{\Pi'(A, B)} \{ \langle \pi_1, \imath_{A, B} \rangle^{+A\{\pi_1\}} \} = \mathit{dev}_{\Pi(A, B)}$, completing the proof. 
\end{proof}

At this point, we hope that the reader has been convinced that $\Pi$-spaces are a reasonable generalization of exponentials.
Then, let us define a generalization of CCCs:
\begin{definition}[SCCCwDs]
A \emph{\bfseries semi-cartesian closed CwD (SCCCwD)} is an SCCwD $\mathcal{C}$ that has $\Pi$-spaces, i.e., a $\Pi$-space $\Pi(A, B) \in \mathscr{D}_{\mathcal{C}}(\Gamma)$ for any $\Gamma \in \mathcal{C}$, $A \in \mathscr{D}_{\mathcal{C}}(\Gamma)$ and $B \in \mathscr{D}_{\mathcal{C}}(\Gamma . A)$.
An SCCCwD is \emph{\bfseries strict} if it is a strict SCCwD equipped with specified $\Pi$-spaces. 
\end{definition}

\begin{definition}[CCCwDs]
A \emph{\bfseries cartesian closed CwD (CCCwD)} is a CCwD that is semi-cartesian closed.
A CCCwD is \emph{\bfseries strict} if it is a strict CCwD equipped with specified $\Pi$-spaces. 
\end{definition}

\begin{example}
As observed above, a (resp. strict) cartesian category $\mathcal{C}$ seen as a (resp. strict) CCwD $\mathsf{D}(\mathcal{C})$ is closed iff so is $\mathcal{C}$.
Hence, (resp. strict) CCCwDs are a generalization of (resp. strict) CCCs.
Let us call this kind of CCCwDs \emph{\bfseries CCCs seen as CCCwDs}.
\end{example}

\begin{example}
The CCwD $\mathit{Sets}$ is cartesian closed. 
Given $X \in \mathit{Sets}$, $A \in \mathscr{D}_{\mathit{Sets}}(X)$ and $B \in \mathscr{D}_{\mathit{Sets}}(X . A)$, we define $\Pi(A, B)$ to be the D-set 
\begin{equation*}
(\{ g : A_x \to \textstyle \bigcup_{a \in A_x}B_{(x, a)} \mid \forall a_0 \in A_x . \ \! g(a_0) \in B_{(x, a_0)} \ \! \})_{x \in X}.
\end{equation*}

Given a D-function $f : X . A \rightarrowtriangle B$, its D-currying $\Lambda_{A, B}(f) : X \to \Pi(A, B)$ maps 
\begin{equation*}
(x \in X) \mapsto ((a \in A_x) \mapsto f(x, a) \in B_{(x, a)})
\end{equation*}
and the D-evaluation $\mathit{dev}_{A, B} : X . \Pi(A, B) . A\{\pi_1\} \rightarrowtriangle B\{\pi_1^{+A}\}$ maps 
\begin{equation*}
((x, g), a) \in X.\Pi(A, B).A\{\pi_1\} \mapsto g(a) \in B_{(x, a)}.
\end{equation*}

It is easy to see that these $\Pi$-spaces in $\mathit{Sets}$ satisfy the required axioms. 
\end{example}

\begin{example}
The CCwD $\mathcal{GPD}$ is cartesian closed.
Given a groupoid $\Gamma$, and D-groupoids $A : \Gamma \to \mathcal{GPD}$ and $B : \Gamma . A \to \mathcal{GPD}$, there is the D-groupoid $\Pi(A, B) : \Gamma \to \mathcal{GPD}$ defined by:
\begin{itemize}

\item First, as shown in \cite{hofmann1998groupoid}, the set $\mathscr{D}_{\mathcal{GPD}}(\Gamma, A)$ may be seen as the groupoid whose objects are D-groupoid morphisms $f : \Gamma \rightarrowtriangle A$, where note that $\overline{f} \stackrel{\mathrm{df. }}{=} \langle \mathit{id}_\Gamma, f \rangle : \Gamma \to \Gamma .A$ is a functor, and isomorphisms $f \to g$ are families $\eta = (\eta_\gamma)_{\gamma \in \Gamma}$ of isomorphisms $\eta_\gamma : f(\gamma) \to g(\gamma)$ in $A(\gamma)$ such that $\overline{\eta} = (\overline{\eta}_\gamma)_{\gamma \in \Gamma}$, where $\overline{\eta}_\gamma \stackrel{\mathrm{df. }}{=} (\mathit{id}_\gamma, \eta_\gamma) : \overline{f}(\gamma) \to \overline{g}(\gamma)$, forms an NT $\overline{f} \to \overline{g}$.

\item Then, again as shown in \cite{hofmann1998groupoid}, the D-groupoid $\Pi(A, B) : \Gamma \to \mathcal{GPD}$ maps each object $\gamma \in \Gamma$ to the groupoid 
\begin{equation*}
\mathscr{D}_{\mathcal{GPD}}(A(\gamma), B_\gamma)
\end{equation*}
and each isomorphism $\phi : \gamma \stackrel{\sim}{\to} \gamma'$ in $\Gamma$ to the functor 
\begin{equation*}
\Pi(A, B)(\phi) : \mathscr{D}_{\mathcal{GPD}}(A(\gamma), B_\gamma) \to \mathscr{D}_{\mathcal{GPD}}(A(\gamma'), B_{\gamma'})
\end{equation*}
that maps each D-groupoid homomorphism $f \in \mathscr{D}_{\mathcal{GPD}}(A(\gamma), B_\gamma)$ to another
\begin{align*}
\Pi(A, B)(\phi) (f) &\in \mathscr{D}_{\mathcal{GPD}}(A(\gamma'), B_{\gamma'}) \\
a' \in A(\gamma') &\mapsto B(\phi, \mathit{id}_{a'}) (f\{A(\phi^{-1})\}(a')) \in B_{\gamma'}(a') \\
\alpha' \in A(\gamma')(a'_1, a'_2) &\mapsto B(\phi, \mathit{id}_{a'_2}) (f\{A(\phi^{-1})\}(\alpha')) : B(\phi, \mathit{id}_{a'_2})(\Pi(A, B)(\phi) (f)(a_1')) \to \Pi(A, B)(\phi) (f)(a'_2)
\end{align*}
and each isomorphism $\eta : f \stackrel{\sim}{\to} g$ in $\mathscr{D}_{\mathcal{GPD}}(A(\gamma), B_\gamma)$ to the natural isomorphism 
\begin{equation*}
\Pi(A, B)(\phi)(\eta) : \Pi(A, B)(\phi) (f) \stackrel{\sim}{\to} \Pi(A, B)(\phi) (g)
\end{equation*}
whose component on each $a' \in A(\gamma')$ is given by:
\begin{equation*}
\Pi(A, B)(\phi)(\eta)_{a'} \stackrel{\mathrm{df. }}{=} B(\phi, \mathit{id}_{a'})(\eta_{A(\phi^{-1})(a')}).
\end{equation*}

\end{itemize}

The D-evaluation $\mathit{dev}_{A, B} : \Gamma . \Pi(A, B) . A\{\pi_1\} \rightarrowtriangle B\{\pi_1^{+A}\}$ in $\mathcal{GPD}$ is the D-groupoid homomorphism that maps 
\begin{align*}
((\gamma, f), a) \in \Gamma . \Pi(A, B) . A\{\pi_1\} &\mapsto f(a) \in B_\gamma(a) \\
((\phi, \eta), \alpha) \in \Gamma . \Pi(A, B) . A\{\pi_1\}(((\gamma, a), f), ((\gamma', a'), f')) &\mapsto \eta_{a'} \circ B(\phi, \mathit{id}_{a'})(f\{A(\phi^{-1})\}(\alpha)) : B(\phi, \alpha)(f(a)) \to f'(a')
\end{align*}
and the D-currying $\Lambda(h) : \Gamma \rightarrowtriangle \Pi(A, B)$ of a D-groupoid homomorphism $h : \Gamma . A \rightarrowtriangle B$ maps
\begin{align*}
(\gamma \in \Gamma) &\mapsto ((a \in A(\gamma)) \mapsto f(\gamma, a) \in B(\gamma, a)) \\
(\phi \in \Gamma(\gamma, \gamma')) &\mapsto ((\alpha \in A(\gamma')(A(\phi)(a), a')) \mapsto f(\phi, \alpha) \in B(\gamma', a')(B(\phi, \alpha)(f(\gamma, a)), f(\gamma', a'))).
\end{align*}

It is not hard to see that these $\Pi$-spaces in $\mathcal{GPD}$ satisfy the required axioms.
\end{example}

\begin{example}
The term model $\mathcal{T}(1, \Pi, \Sigma)$ is a CCCwD. $\Pi$-spaces are given by \textsc{$\Pi$-Form}, $\mathit{dev}_{A, B}$ by \textsc{$\Pi$-Elim} for terms $\mathsf{\Gamma, f : \Pi(A, B), x : A \vdash f(x) : B[f(x)]}$ and D-currying by \textsc{$\Pi$-Intro}.
\end{example}

Now, let us prove that a coherent family of $\Pi$-spaces interprets strict $\mathsf{\Pi}$-types in MLTTs:
\begin{theorem}[Coherent $\Pi$-spaces interpret $\mathsf{\Pi}$-types]
\label{ThmPi}
A strict SCCCwD induces a CwF that supports $\Pi$-types in the strict sense.
\end{theorem}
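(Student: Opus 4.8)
The plan is to take the underlying strict SCCwD of the given strict SCCCwD $\mathcal{C}$, which is already a CwF by the discussion following Lemma~\ref{LemSemiSigmaPairings}, and equip it with the semantic $\Pi$-type former $(\Pi, \lambda, \mathit{App})$ read off from the coherent family of $\Pi$-spaces. Concretely, I would take $\Pi$-Form to be the $\Pi$-space $\Pi(A,B) \in \mathscr{D}_{\mathcal{C}}(\Gamma)$, set $\lambda_{A,B}(f) \stackrel{\mathrm{df.}}{=} \Lambda_{\Pi(A,B)}(f)$ for $f : \Gamma . A \rightarrowtriangle B$, and define application by $\mathit{App}_{A,B}(h,a) \stackrel{\mathrm{df.}}{=} \mathit{dev}_{\Pi(A,B)}\{\langle \overline{h}, a \rangle\}$ with $\overline{h} = \langle \mathit{id}_\Gamma, h\rangle$. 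A first bookkeeping step is to check that this lands in $\mathit{Tm}(\Gamma, B\{\overline{a}\})$ and coincides with $\Lambda^{-1}_{\Pi(A,B)}(h)\{\overline{a}\}$ in the notation of Lemma~\ref{LemTranspositionInPseudoPiSpaces}; this is a one-line computation with the semi-$\Sigma$ equations, and having both descriptions available is convenient.

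The four non-uniqueness axioms then fall out with little effort. $\Pi$-Comp is immediate from the transposition bijection of Lemma~\ref{LemTranspositionInPseudoPiSpaces}, since $\mathit{App}(\lambda_{A,B}(f), a) = \Lambda^{-1}_{\Pi(A,B)}(\Lambda_{\Pi(A,B)}(f))\{\overline{a}\} = f\{\overline{a}\}$. $\Pi$-Subst is literally the coherence equation (\ref{DcompPi1}), after noting that the CwF weakening $\phi^+$ agrees with the $\phi^{+A}$ of the excerpt. $\lambda$-Subst is exactly Lemma~\ref{LemSecondPiLemma}. For App-Subst I would compute $\mathit{App}(h,a)\{\phi\} = \mathit{dev}_{\Pi(A,B)}\{\langle \overline{h}, a\rangle \circ \phi\}$, rewrite $\langle \overline{h}, a\rangle \circ \phi$ via naturality (\ref{ConsNat}) into the form $\phi^{+\Pi(A,B)+A\{\pi_1\}} \circ \langle \overline{h\{\phi\}}, a\{\phi\}\rangle$, and then apply coherence (\ref{DcompPi2}) to bring the outer $\mathit{dev}_{\Pi(A,B)}\{\phi^{+\Pi(A,B)+A\{\pi_1\}}\}$ down to $\mathit{dev}_{\Pi(A\{\phi\}, B\{\phi^{+A}\})}$, recognising the result as $\mathit{App}(h\{\phi\}, a\{\phi\})$.

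The genuine obstacle is the uniqueness rule $\lambda$-Uniq, namely $\lambda_{A,B}(\mathit{App}(k, \mathit{v}_A))\{\pi_1\} = k$ for every $k : \Gamma . A \rightarrowtriangle \Pi(A,B)\{\pi_1\}$; this is the only place where the full strength of the $\Pi$-space axioms and of strictness is needed. I would first apply Lemma~\ref{LemSecondPiLemma} with $\phi = \pi_1$ to rewrite $\Lambda_{\Pi(A,B)}(\mathit{App}(k,\mathit{v}_A))\{\pi_1\}$ as $\Lambda_{\Pi(A,B)\{\pi_1\}}(\mathit{App}(k,\mathit{v}_A)\{\pi_1^{+A}\})$, using coherence (\ref{DcompPi1}) to identify $\Pi(A,B)\{\pi_1\}$ with $\Pi(A\{\pi_1\}, B\{\pi_1^{+A}\})$. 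It then suffices, by the transposition bijection for the weakened $\Pi$-space $\Pi(A,B)\{\pi_1\}$, to prove that $\mathit{App}(k,\mathit{v}_A)\{\pi_1^{+A}\}$ is the uncurrying $\Lambda^{-1}_{\Pi(A,B)\{\pi_1\}}(k)$. Unfolding $\mathit{App}(k,\mathit{v}_A) = \Lambda^{-1}_{\Pi(A,B)\{\pi_1\}}(k)\{\overline{\mathit{v}_A}\}$ reduces this to the purely structural identity $\overline{\mathit{v}_A} \circ \pi_1^{+A} = \mathit{id}$ in the base category.

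The two sides of this identity agree on their first components automatically, and on their second, dependent components they differ precisely by the clash between the outer projection $\pi_2^{\Gamma.A}\{\pi_1^{\Gamma.A.A\{\pi_1\}}\}$ and the inner projection $\pi_2^{\Gamma.A.A\{\pi_1\}}$; this is exactly the strictness equation (\ref{EqStrictSCCwDs}), which forces $\pi_1^{+A} = \pi_1^{\Gamma.A.A\{\pi_1\}}$ and hence $\overline{\mathit{v}_A}\circ\pi_1^{+A} = \langle \pi_1, \pi_2\rangle = \mathit{id}$ by (\ref{ConsId}). With that identity the bijection closes the argument. I expect the book-keeping of the iterated weakenings $\pi_1^{+A}$, $\pi_1^{\star A}$, $\pi_1^{+\Pi(A,B)+A\{\pi_1\}}$ — keeping straight which semi-$\Sigma$-space each projection belongs to — to be the main source of friction, while the reduction to (\ref{EqStrictSCCwDs}) is the conceptual key; Lemma~\ref{LemFirstPiLemma} and the axioms (\ref{UP2Pi}), (\ref{UP3Pi}) enter through the uniqueness half of the transposition bijection for the weakened $\Pi$-space.
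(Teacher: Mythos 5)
Your proposal is correct and follows essentially the same route as the paper's proof: the same semantic $\Pi$-type former ($\lambda = \Lambda$, $\mathit{App}$ as $\mathit{dev}$ precomposed with the pairing, equivalently $\Lambda^{-1}(\_)\{\overline{a}\}$), with $\Pi$-Subst from the coherence axiom, $\lambda$-Subst from Lemma~\ref{LemSecondPiLemma}, and the strictness equation (\ref{EqStrictSCCwDs}) together with (\ref{ConsNat}) and (\ref{ConsId}) carrying the $\lambda$-Uniq case. Your packaging of $\lambda$-Uniq as the single identity $\overline{\mathit{v}_A}\circ\pi_1^{+A} = \mathit{id}$ is just a tidier organization of the paper's equational chain, not a different argument.
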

\begin{proof}
Let $\mathcal{C} = (\mathcal{C}, T, \_ . \_, \pi, \Pi, \mathit{dev})$ be a strict SCCCwD. 
We equip the corresponding CwF $\mathcal{C}$ with $\Pi$-types in the strict sense as follows:
\begin{itemize}

\item \textsc{($\Pi$-Form)} Given $\Gamma \in \mathcal{C}$, $A \in \mathscr{D}_{\mathcal{C}}(\Gamma)$ and $B \in \mathscr{D}_{\mathcal{C}}(\Sigma(\Gamma, A))$, we have:
\begin{equation*}
\Pi(A, B) \in \mathscr{D}_{\mathcal{C}}(\Gamma).
\end{equation*}
 
\item \textsc{($\Pi$-Intro)} Given $f \in \mathscr{D}_{\mathcal{C}}(\Gamma . A, B)$, we define:
\begin{equation*}
\lambda_{A, B}(f) \stackrel{\mathrm{df. }}{=} \Lambda_{\Pi(A, B)}(f) \in \mathscr{D}_{\mathcal{C}}(\Gamma, \Pi(A, B)).
\end{equation*}

\item \textsc{($\Pi$-Elim)} Recall that there is the inverse $\Lambda_{\Pi(A, B)}^{-1}$ of the D-currying map $\Lambda_{\Pi(A, B)}$ as shown in the proof of Lemma~\ref{LemTranspositionInPseudoPiSpaces}.
Given $g \in \mathscr{D}_{\mathcal{C}}(\Gamma, \Pi(A, B))$ and $a \in \mathscr{D}_{\mathcal{C}}(\Gamma, A)$, we define:
\begin{equation*}
\mathit{App}_{A, B} (g, a) \stackrel{\mathrm{df. }}{=}  \Lambda_{\Pi(A, B)}^{-1}(g) \{ \overline{a} \} \in \mathscr{D}_{\mathcal{C}}(\Gamma, B\{ \overline{a} \}) 
\end{equation*}
where $\overline{a} = \langle \mathit{id}_\Gamma, a \rangle : \Gamma \to \Sigma (\Gamma, A)$.

\item \textsc{($\Pi$-Comp)} It is easy to see that:
\begin{align*}
\mathit{App}_{A, B} (\lambda_{A, B} (f) , a) &= \mathit{App}_{A, B} (\Lambda_{\Pi(A, B)} (f) , a) \\
&= \Lambda_{A, B}^{-1}(\Lambda_{\Pi(A, B)}(f)) \{\overline{a} \} \\
&= f \{ \overline{a} \}.
\end{align*}

\item \textsc{($\Pi$-Subst)} By the axiom (\ref{DcompPi1}).

\item \textsc{($\lambda$-Subst)} By Lemma~\ref{LemSecondPiLemma}.
\if0
Observe that:
\begin{align*}
\mathit{dev}_{\Pi(A\{\phi\}, B\{\phi^+\})} \{ \overline{\Lambda_{A, B}(f)\{\phi\}\{\pi_1\}} \} &= \mathit{dev}_{A, B}\{ \phi^{++} \} \{ \langle \mathit{id}_{\Delta, A\{\phi\}}, \Lambda_{A, B}(f)\{\phi\}\{\pi_1\} \rangle \} \\
&= \mathit{dev}_{A, B} \{ \langle \phi^+, \Lambda_{A, B}(f)\{\phi \circ \pi_1\} \rangle \} \\
&= \mathit{dev}_{A, B} \{ \langle \mathit{id}_{\Gamma . A}, \Lambda_{A, B}(f)\{\pi_1\} \rangle \circ \phi^+ \} \\
&= \mathit{dev}_{A, B} \{ \overline{\Lambda_{A, B}(f)\{\pi_1\}} \} \{ \phi^+ \} \\
&= f\{\phi^+\}
\end{align*}
whence $\Lambda_{A, B}(f)\{\phi\} = \Lambda_{A\{\phi\}, B\{\phi^+\}}(f\{\phi^+\})$ by the universal property of the $\Sigma$-spaces.
\fi

\item \textsc{(App-Subst)} It is easy to see that:
\begin{align*}
\mathit{App}_{A, B} (g, a) \{ \phi \} &= \Lambda_{\Pi(A, B)}^{-1} (g) \{ \overline{a} \} \{ \phi \} \\
&= \Lambda_{\Pi(A, B)}^{-1} (g) \{ \langle \mathit{id}_\Gamma, a \rangle \circ \phi \} \\
&= \Lambda_{\Pi(A, B)}^{-1} (g) \{ \langle \phi, a \{ \phi \} \rangle \} \\
&= \Lambda_{\Pi(A, B)}^{-1} (g) \{ \phi^{+A} \circ \overline{a \{ \phi \}} \} \\
&= \Lambda_{\Pi(A, B)}^{-1} (g) \{ \phi^{+A} \} \{ \overline{a \{ \phi \}} \} \\
&= \Lambda_{\Pi(A\{ \phi \}, B\{ \phi^{+A} \})}^{-1} (g \{ \phi \}) \{ \overline{a \{ \phi \}} \} \ \text{(by $\lambda$-Subst shown above)} \\
&= \mathit{App}_{A\{ \phi \}, B\{ \phi^{+A} \}} (g \{ \phi \}, a \{ \phi \}) 
\end{align*}
where $\overline{a \{ \phi \}} \stackrel{\mathrm{df. }}{=} \langle \mathit{id}_\Delta, a \{ \phi \} \rangle : \Delta \to \Sigma(\Delta, A \{ \phi \})$.

\item \textsc{($\Pi$-Uniq)} Given $k \in \mathit{Tm}(\Gamma . A, \Pi(A, B)\{\pi_1\})$, observe that:
\begin{align*}
&\lambda_{A, B}(\mathit{App}_{A\{\pi_1\}, B\{\pi_1^{+A}\}}(k, \pi_2))\{\pi_1\} \\
= \ &\Lambda_{\Pi(A, B)}(\mathit{App}_{A\{\pi_1\}, B\{\pi_1^{+A}\}}(k, \pi_2))\{\pi_1\} \\
= \ &\Lambda_{\Pi(A\{\pi_1\}, B\{\pi_1^{+A}\})} (\mathit{App}_{A\{\pi_1\}, B\{\pi_1^{+A}\}}(k, \pi_2)\{\langle \pi_1^{\Gamma . A} \circ \pi_1^{\Gamma . A . A\{\pi_1\}}, \pi_2^{\Gamma . A . A\{\pi_1\}} \rangle\}) \\
= \ &\Lambda_{\Pi(A\{\pi_1\}, B\{\pi_1^{+A}\})} (\mathit{App}_{A\{\pi_1\}, B\{\pi_1^{+A}\}}(k, \pi_2)\{\langle \pi_1^{\Gamma . A} \circ \pi_1^{\Gamma . A . A\{\pi_1\}}, \pi_2^{\Gamma . A}\{\pi_1^{\Gamma . A . A\{\pi_1\}}\} \rangle\}) \ \text{(by the equation~(\ref{EqStrictSCCwDs}))} \\
= \ &\Lambda_{\Pi(A\{\pi_1\}, B\{\pi_1^{+A}\})} (\mathit{App}_{A\{\pi_1\}, B\{\pi_1^{+A}\}}(k, \pi_2) \{ \langle \pi_1^{\Gamma . A}, \pi_2^{\Gamma . A} \rangle \circ \pi_1^{\Gamma . A . A\{\pi_1\}} \}) \ \text{(by the equation~(\ref{ConsNat}))} \\
= \ &\Lambda_{\Pi(A\{\pi_1\}, B\{\pi_1^{+A}\})} (\mathit{App}_{A\{\pi_1\}, B\{\pi_1^{+A}\}}(k, \pi_2) \{ \mathit{id}_{\Gamma . A} \circ \pi_1^{\Gamma . A . A\{\pi_1\}} \}) \ \text{(by the equation~(\ref{ConsId}))} \\
= \ &\Lambda_{\Pi(A\{\pi_1\}, B\{\pi_1^{+A}\})} (\mathit{App}_{A\{\pi_1\}, B\{\pi_1^{+A}\}}(k, \pi_2)\{\pi_1^{\Gamma . A . A\{\pi_1\}}\}) \\
= \ &\Lambda_{\Pi(A\{\pi_1\}, B\{\pi_1^{+A}\})} (\Lambda^{-1}_{\Pi(A\{\pi_1\}, B\{\pi_1^{+A}\})}(k)\{\langle \mathit{id}_{\Gamma . A}, \pi_2^{\Gamma . A} \rangle\}\{\pi_1^{\Gamma . A . A\{\pi_1\}}\}) \\
= \ &\Lambda_{\Pi(A\{\pi_1\}, B\{\pi_1^{+A}\})} (\Lambda^{-1}_{\Pi(A\{\pi_1\}, B\{\pi_1^{+A}\})}(k)\{\langle \pi_1^{\Gamma . A . A\{\pi_1\}}, \pi_2^{\Gamma . A}\{\pi_1^{\Gamma . A . A\{\pi_1\}}\} \rangle\}) \\
= \ &\Lambda_{\Pi(A\{\pi_1\}, B\{\pi_1^{+A}\})} (\Lambda^{-1}_{\Pi(A\{\pi_1\}, B\{\pi_1^{+A}\})}(k)\{\langle \pi_1^{\Gamma . A . A\{\pi_1\}}, \pi_2^{\Gamma . A . A\{\pi_1\}} \rangle\}) \\
= \ &\Lambda_{\Pi(A\{\pi_1\}, B\{\pi_1^{+A}\})} (\Lambda^{-1}_{\Pi(A\{\pi_1\}, B\{\pi_1^{+A}\})}(k)\{\mathit{id}_{\Gamma . A . A\{\pi_1\}}\}) \\
= \ &\Lambda_{\Pi(A\{\pi_1\}, B\{\pi_1^{+A}\})} (\Lambda^{-1}_{\Pi(A\{\pi_1\}, B\{\pi_1^{+A}\})}(k)) \\
= \ &k
\end{align*}

\end{itemize}
which completes the proof.
\end{proof}

Hence, Theorems~\ref{ThmSoundnessOfCwFs}, \ref{ThmUnit}, \ref{ThmSigma} and \ref{ThmPi} implies:
\begin{corollary}[CCCwDs and CwFs]
A strict CCCwD induces a CwF that supports $1$-, $\Sigma$- and $\Pi$-types in the strict sense, and thus it gives a sound interpretation of $\mathsf{MLTT(1, \Pi, \Sigma)}$.
\end{corollary}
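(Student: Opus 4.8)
The plan is to assemble the three construction theorems on a common underlying CwF and then invoke soundness, since the statement is flagged as a consequence of Theorems~\ref{ThmSoundnessOfCwFs}, \ref{ThmUnit}, \ref{ThmSigma} and \ref{ThmPi}. First I would unfold the definitions: a strict CCCwD is by definition a strict CCwD that is strictly semi-cartesian closed, hence a strict SCCwD equipped simultaneously with (i) a unit D-object $1 \in \mathscr{D}_{\mathcal{C}}(T)$, (ii) a coherent family of $\Sigma$-spaces, and (iii) a coherent family of $\Pi$-spaces. By the remark following Lemma~\ref{LemSemiSigmaPairings}, the strict SCCwD structure \emph{is} a CwF, so there is a single, fixed CwF $\mathcal{C} = (\mathcal{C}, \mathit{Ty}, \mathit{Tm}, \_\{\_\}, T, \_.\_, \mathit{p}, \mathit{v}, \langle\_,\_\rangle)$ underlying all three pieces of data.

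Next I would apply the three theorems in turn to this one CwF. Theorem~\ref{ThmUnit} uses the unit D-object to equip $\mathcal{C}$ with $1$-type in the strict sense (setting $1_\Gamma \stackrel{\mathrm{df. }}{=} 1\{!_\Gamma\}$); Theorem~\ref{ThmSigma} uses the coherent $\Sigma$-spaces to equip $\mathcal{C}$ with $\Sigma$-types in the strict sense (via the induced $\mathit{Pair}_{A,B}$ and its inverse); and Theorem~\ref{ThmPi}, applicable because $\mathcal{C}$ is a strict SCCCwD, uses the coherent $\Pi$-spaces, the D-evaluation $\mathit{dev}$ and the D-currying $\Lambda$ to equip $\mathcal{C}$ with $\Pi$-types in the strict sense. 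Crucially, the three constructions read off structurally disjoint parts of the data and refer to the \emph{same} comprehension, projections, and substitution operations, so they can be imposed simultaneously without any coherence conflict, yielding a single CwF that supports $1$-, $\Sigma$- and $\Pi$-types all in the strict sense.

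Finally, with such a CwF in hand, the interpretation $\llbracket \_ \rrbracket$ of Definition~\ref{DefMLTTInCwFs} is defined, and Theorem~\ref{ThmSoundnessOfCwFs} immediately gives that this interpretation of $\mathsf{MLTT(1, \Pi, \Sigma)}$ is sound, i.e., judgmentally equal contexts, types and terms are sent to equal semantic data. This establishes the claim.

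I do not expect a serious obstacle here, as the corollary is genuinely a bookkeeping assembly of results already proved. The only point that requires a line of care is confirming that the specified terminal object and semi-$\Sigma$-spaces fixed by the strict CCCwD are exactly those used by each of Theorems~\ref{ThmUnit}, \ref{ThmSigma} and \ref{ThmPi}, so that there is no implicit clash between the terminal-object / comprehension choices across the three constructions; this holds because a \emph{strict} CCCwD pins down one such structure once and for all. Hence the main (minor) work is simply to verify that the three strict type-former structures coexist on the single fixed CwF, after which soundness is a direct citation.
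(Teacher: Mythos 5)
Your proposal is correct and matches the paper exactly: the corollary is stated there as an immediate consequence of Theorems~\ref{ThmSoundnessOfCwFs}, \ref{ThmUnit}, \ref{ThmSigma} and \ref{ThmPi}, with no further proof given, and your assembly of the three strict type-former structures on the single underlying CwF followed by the citation of soundness is precisely that argument. The one point you flag for care (that all three constructions use the same specified terminal object and semi-$\Sigma$-spaces fixed by strictness) is indeed the only thing to check, and it holds for the reason you give.
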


Also, since the term model $\mathcal{T}(1, \Pi, \Sigma)$ forms a strict CCCwD, we have:
\begin{theorem}[Completeness theorem]
\label{ThmCompletenessOfCwDs}
The interpretation of $\mathsf{MLTT(1, \Pi, \Sigma)}$ in CCCwDs is complete in the sense of Theorem~\ref{ThmCompletenessOfCwFs}.
\end{theorem}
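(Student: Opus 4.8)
The plan is to run the standard term-model argument for completeness, exploiting the fact that the term model $\mathcal{T}(1, \Pi, \Sigma)$ is itself a strict CCCwD. The one conceptual point to get right is that I will \emph{not} try to derive this from Theorem~\ref{ThmCompletenessOfCwFs} by comparing classes of models: strict CCCwDs induce only a subclass of all CwFs that support the type formers in the strict sense, so the hypothesis ``equal in every strict CCCwD'' is logically weaker than ``equal in every such CwF,'' and completeness for CCCwDs is therefore a \emph{stronger} statement than completeness for CwFs, not a consequence of it. Instead I exhibit a single strict CCCwD whose interpretation is faithful, namely the term model.

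First I would recall from the examples that $\mathcal{T}(1, \Pi, \Sigma)$ carries the structure of a strict CCCwD: its unit D-object is $\mathsf{\diamondsuit \vdash 1 \ type}$, its $\Sigma$-spaces are the $\Sigma$-types equipped with the term-level projections $\mathsf{\pi_1}, \mathsf{\pi_2}$, and its $\Pi$-spaces are the $\Pi$-types with D-evaluation and D-currying supplied by $\mathsf{\Pi}$-Elim and $\mathsf{\Pi}$-Intro. By the corollary preceding the statement, this strict CCCwD induces a CwF supporting $1$-, $\Sigma$- and $\Pi$-types in the strict sense. The key lemma I would then establish is that this induced CwF is literally the syntactic CwF of Proposition~\ref{WellDefinedSyntacticCwFs}, \emph{with the same semantic type formers}: one checks that the constructions of Theorems~\ref{ThmUnit}, \ref{ThmSigma} and \ref{ThmPi}, specialised to the term model, reproduce exactly the syntactic operators $\mathit{Pair}$, $\mathcal{R}^{\Sigma}$, $\lambda$ and $\mathit{App}$. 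Concretely, this amounts to unwinding the definitions of $\mathit{Pair}_{A, B}$, $\Lambda_{\Pi(A, B)}$ and $\mathit{dev}_{\Pi(A, B)}$ in $\mathcal{T}(1, \Pi, \Sigma)$ and matching them against the corresponding syntactic rules.

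Next I would invoke the description of the interpretation $\llbracket \_ \rrbracket$ of $\mathsf{MLTT(1, \Pi, \Sigma)}$ in this induced CwF. Because the induced CwF is the syntactic one, its underlying sets are equivalence classes of contexts, types and terms modulo judgmental equality, and an induction on derivations yields $\llbracket \mathsf{\vdash \Gamma \ ctx} \rrbracket = [\mathsf{\Gamma}]$, $\llbracket \mathsf{\Gamma \vdash A \ type} \rrbracket = [\mathsf{A}]$ and $\llbracket \mathsf{\Gamma \vdash a : A} \rrbracket = [\mathsf{a}]$; that is, the interpretation in the term model is the canonical quotient map. This is precisely the computation already underlying Theorem~\ref{ThmCompletenessOfCwFs}, so I would cite it rather than redo it.

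Finally the theorem follows immediately. Suppose $\llbracket \mathsf{\Gamma} \rrbracket = \llbracket \mathsf{\Delta} \rrbracket$ for the interpretation in every strict CCCwD, and likewise in the type and term cases. Specialising to the term model, the previous step gives $[\mathsf{\Gamma}] = [\mathsf{\Delta}]$, i.e.\ $\mathsf{\vdash \Gamma = \Delta \ ctx}$ in $\mathsf{MLTT(1, \Pi, \Sigma)}$; the type and term cases are identical. I expect the only genuine obstacle to be the lemma of the second paragraph — verifying that the CwF induced from the term-model CCCwD is, on the nose, the syntactic CwF together with its syntactic type formers — since everything else is either recalled from the examples or inherited verbatim from the CwF development.
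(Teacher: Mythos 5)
Your proposal is correct and follows essentially the same route as the paper, which proves this theorem simply by observing that the term model $\mathcal{T}(1, \Pi, \Sigma)$ forms a strict CCCwD and then appealing to the argument of Theorem~\ref{ThmCompletenessOfCwFs}. Your additional remarks — that completeness for the smaller class of models is the stronger statement and so requires exhibiting a faithful model rather than a class inclusion, and that the one real verification is that the CwF induced from the term-model CCCwD coincides on the nose with the syntactic CwF and its type formers — correctly identify what the paper leaves implicit.
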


\if0
\subsection{Identity types}
\label{IdTypes}
Next, although the generalization of CwDs does not provide an additional power for the ($1, \Pi, \Sigma$)-fragment of MLTT, it is no longer the case when we interpret \emph{non-constant} dependent types.
As a case study, we consider \emph{Id-types}:
\begin{align*}
&\frac{ \ \mathsf{\Gamma \vdash A \ type \ \ \Gamma \vdash a_1 : A \ \ \Gamma \vdash a_2 : A} \ }{ \ \mathsf{\Gamma \vdash a_1 =_A a_2 \ type} \ } (\textsc{=-Form}) \ \ \frac{ \ \mathsf{\Gamma \vdash A \ type \ \ \Gamma \vdash a : A} \ }{ \ \mathsf{\Gamma \vdash refl_a : a =_A a} \ } (\textsc{=-Intro}) \\ 
&\frac{ \ \begin{aligned} &\mathsf{\Gamma, x : A, y : A, p : x =_A y \vdash C \ type \ \ \Gamma \vdash a_i : A} \\ &\mathsf{\Gamma, z : A \vdash c : C[z/x, z/y, refl_z/p] \ \ \Gamma \vdash q : a_1 =_A a_2} \end{aligned} \ }{ \ \mathsf{\Gamma \vdash R^=(C, c, a_1, a_2, q) : C[a_1/x, a_2/y, q/p]} \ } (\textsc{=-Elim}) \\ 
&\frac{ \ \mathsf{\Gamma, x : A, y : A, p : x =_A y \vdash C \ type \ \ \Gamma, z : A \vdash c : C[z/x, z/y, refl_z/p] \ \ \Gamma \vdash a : A} \ }{ \ \mathsf{\Gamma \vdash R^=(C, c, a, a, refl_a) \equiv c[a/z] : C[a/x, a/y, refl_a/p]} \ } (\textsc{=-Comp})
\end{align*}

\if0
\item {\bfseries Empty type.}
\begin{align*}
&\frac{ \ \mathsf{\vdash \Gamma \ ctx} \ }{ \ \mathsf{\Gamma \vdash \bm{0} \ type_1} \ } (\text{{\bfseries 0}-I}) \\ &\frac{ \ \mathsf{\Gamma, x : \bm{0} \vdash C \ type \ \ \Gamma \vdash a : \bm{0}} \ }{ \ \mathsf{\Gamma \vdash R^{\bm{0}}(C, a) : C[a/x]} \ } (\text{{\bfseries 0}-E}) 
\end{align*}
\fi

\if0
\item {\bfseries Natural numbers type.}
\begin{align*}
&\frac{ \ \mathsf{\vdash \Gamma \ ctx} \ }{ \ \mathsf{\Gamma \vdash N \ type} \ } (\text{N-F}) \ \ \frac{ \ \mathsf{\vdash \Gamma \ ctx} \ }{ \ \mathsf{\Gamma \vdash 0 : N} \ } (\text{N-IZ}) \\ &\frac{ \ \mathsf{\Gamma \vdash n : N} \ }{ \ \mathsf{\Gamma \vdash  succ(n) : N} \ } (\text{N-IS}) \\ 
&\frac{ \ \begin{aligned} &\mathsf{\Gamma, x : N \vdash C \ type \ \ \Gamma \vdash c_0 : C[0/x]} \\ &\mathsf{\Gamma, x : N, y : C \vdash c_s : C[succ(x)/x] \ \ \Gamma \vdash n : N} \end{aligned} \ }{ \ \mathsf{\Gamma \vdash R^N(C, c_0, c_s, n) : C[n/x]} \ } (\text{N-E}) \\ 
&\frac{ \ \begin{aligned} &\mathsf{\Gamma, x : N \vdash C \ type \ \ \Gamma \vdash c_0 : C[0/x]} \\ &\mathsf{\Gamma, x : N, y : C \vdash c_s : C[succ(x)/x]} \end{aligned} \ }{ \ \mathsf{\Gamma \vdash R^N(C, c_0, c_s, 0) \equiv c_0 : C[0/x]} \ } (\text{N-CZ}) \\
&\frac{ \ \begin{aligned} &\mathsf{\Gamma, x : N \vdash C \ type_i \ \ \ \Gamma \vdash c_0 : C[0/x]} \\ &\mathsf{\Gamma, x : N, y : C \vdash c_s : C[succ(x)/x] \ \ \ \Gamma \vdash n : N} \end{aligned} \ }{ \ \begin{aligned} \mathsf{\Gamma \vdash} \ &\mathsf{R^N(C, c_0, c_s, succ(n))} \\ &\mathsf{\equiv c_s[n/x, R^N(C, c_0, c_s, n)/y] : C[succ(n)/x]} \end{aligned} \ } (\text{N-CS})
\end{align*}
\fi

Its interpretation in CwFs is as follows:
\begin{definition}[CwFs with identity types \cite{hofmann1997syntax}]
A CwF $\mathcal{C}$ is said to \emph{\bfseries support (intensional) Id-types} if:
\begin{itemize}

\item (\textsf{Id-Form}) For each $\Gamma \in \mathcal{C}$, $A \in \mathit{Ty}(\Gamma)$, there is a type $\mathsf{Id}_A \in \mathit{Ty}(\Gamma . A . A^+)$, where $A^+ \stackrel{\mathrm{df. }}{=} A\{\mathit{p}(A)\} \in \mathit{Ty}(\Gamma . A)$.

\item (\textsf{Id-Intro}) There is $\mathit{Refl}_A : \Gamma . A \to \Gamma . A . A^+ . \mathsf{Id}_A$ in $\mathcal{C}$.

\item (\textsf{Id-Elim}) For each $B \in \mathit{Ty}(\Gamma . A . A^+ . \mathsf{Id}_A)$, $\tau \in \mathit{Tm}(\Gamma . A, B\{\mathit{Refl}_A\})$, there is a term $\mathcal{R}^{\mathsf{Id}}_{A,B}(\tau) \in \mathit{Tm}(\Gamma . A . A^+ . \mathsf{Id}_A, B)$.

\item (\textsf{Id-Comp}) $\mathcal{R}^{\mathsf{Id}}_{A,B}(\tau)\{\mathit{Refl}_A\} = \tau$.

\item (\textsf{Id-Subst}) For any $\phi : \Delta \to \Gamma$ in $\mathcal{C}$, $\mathsf{Id}_A\{\phi^{++}\} = \mathsf{Id}_{A\{\phi\}} \in \mathit{Ty}(\Delta . A\{\phi\} . A\{\phi\}^+)$, where $A\{\phi\}^+ \stackrel{\mathrm{df. }}{=} A\{\phi\}\{\mathit{p}(A\{\phi\})\} \in \mathit{Ty}(\Delta . A\{\phi\})$, $\phi^+ \stackrel{\mathrm{df. }}{=} \langle \phi \circ \mathit{p}(A\{ \phi \}), \mathit{v}_{A\{\phi\}} \rangle_A : \Delta . A\{\phi\} \to \Gamma . A$, $\phi^{++} \stackrel{\mathrm{df. }}{=} \langle \phi^+ \circ \mathit{p}(A^+\{\phi^+\}), \mathit{v}_{A^+\{\phi^+\}} \rangle_{A^+} : \Delta . A\{\phi\} . A^+\{\phi^+\} \to \Gamma . A . A^+$.

\item (\textsf{Refl-Form}) $\mathit{Refl}_A \circ \phi^+ = \phi^{+++} \circ \mathit{Refl}_{A\{\phi\}} : \Delta . A\{\phi\} \to \Gamma . A . A^+ . \mathsf{Id}_A$, where $\phi^{+++} \stackrel{\mathrm{df. }}{=} \langle \phi^{++} \circ \mathit{p}(\mathsf{Id}_A\{\phi^{++}\}), \mathit{v}_{\mathsf{Id}_A\{\phi^{++}\}} \rangle_{\mathsf{Id}_A} : \Delta . A\{\phi\} . A^+\{\phi^+\} . \mathsf{Id}_{A\{\phi\}} \to \Gamma . A . A^+ . \mathsf{Id}_A$. 

\item (\textsf{$\mathcal{R}^{\mathsf{Id}}$-Subst}) $\mathcal{R}^{\mathsf{Id}}_{A,B}(\tau)\{\phi^{+++}\} = \mathcal{R}^{\mathsf{Id}}_{A\{\phi\}, B\{\phi^{+++}\}}(\tau\{\phi^+\})$.

\end{itemize}
\end{definition}

We do not describe the interpretation of Id-types in CwFs with Id-types as it should be obvious from Definition~\ref{DefMLTTInCwFs}.

Let us then introduce the corresponding structure in CwDs:
\begin{definition}[CwDs with Id-types]
A DCCC $\mathcal{C}$ is said to \emph{\bfseries support Id-types} if it assigns, to each $\Gamma \in \mathcal{C}$, $\gamma_1, \gamma_2 : \Gamma$, $A \in \mathscr{D}(\Gamma)$, an object $\mathsf{Id}_{\Gamma}(\gamma_1, \gamma_2) \in \mathcal{C}$ such that $\mathsf{Id}_A \stackrel{\mathrm{df. }}{=} \{ \mathsf{Id}_A(\sigma, \sigma') \ \! | \ \! \langle \langle \gamma, \sigma \rangle, \sigma' \rangle : \Sigma(\Sigma(\Gamma, A), A^+) \ \! \} \in \mathscr{D}(\Sigma(\Sigma(\Gamma, A), A^+))$ and an isomorphism $\mathit{Refl}_A : \Sigma(\Gamma, A) \to \Sigma(\Sigma(\Sigma(\Gamma, A), A^+), \mathsf{Id}_A)$.
\end{definition}

\begin{example}
In $\mathit{Sets}$, the assignment $\mathsf{Id}$ is defined by $\mathsf{Id}_X(x_1, x_2) \stackrel{\mathrm{df. }}{=} \begin{cases} \{ \bullet \} &\text{if $x_1 = x_2$} \\ \emptyset &\text{otherwise} \end{cases}$ where $\{ \bullet \}$ is any singleton set, for all $X \in \mathit{Sets}$, $x_1, x_2 \in X$. 
For each $A \in \mathscr{D}(X)$, $\mathit{Refl}_A$ maps as $(x, a) \mapsto (((x, a), a), \bullet)$ for all $(x, a) \in \Sigma(X, A)$.
\end{example}

As expected, it induces Id-types in CwFs:
\begin{theorem}[Id-types in CwDs as in CwFs]
An Id-type in a DCCC $\mathcal{C}$ induces an Id-type in the CwF $\mathcal{C}$.
\end{theorem}
\begin{proof}
Let $\Gamma \in \mathcal{C}$, $A \in \mathscr{D}(\Gamma)$.
We already have Id-F and Id-I: $\mathsf{Id}_A \in \mathscr{D}(\Sigma(\Sigma(\Gamma, A), A^+))$ and $\mathit{Refl}_A : \Sigma(\Gamma, A) \to \Sigma(\Sigma(\Sigma(\Gamma, A), A^+), \mathsf{Id}_A)$.
For Id-E, given $\tau : \widehat{\Pi}(\widehat{\Sigma}(\Gamma, A), B\{\mathit{Refl}_A\})$, we define $\mathcal{R}^{\mathsf{Id}}_{A,B}(\tau) \stackrel{\mathrm{df. }}{=} \tau \circ \mathit{Refl}_A^{-1} : \widehat{\Pi}(\widehat{\Sigma}(\widehat{\Sigma}(\widehat{\Sigma}(\Gamma, A_1), A_2^+), \mathsf{Id}_A), B)$, which clearly satisfies Id-C. The remaining axioms hold as in the case of $\Sigma$-types.
\end{proof}

This case study illustrates the increased generality of CwDs that may interpret (non-constant) dependent types.
\fi

Since $\Sigma$- and $\Pi$-spaces in CCCwDs are respectively generalizations of binary products and of exponentials in CCCs, respectively, it is expected that the former is left adjoint to the latter.
Let us see that it is in fact the case (up to the first projections) as follows.

First, let us consider $\Sigma$-spaces. 
Clearly, we have the correspondence 
\begin{diagram}[loose,height=.8em,width=0pt,l>=3em,midshaft]
& \ \ \ \Gamma . \Sigma(B, C) &        \rDepTo        & D\{\pi_1^{\Gamma . \Sigma(B, C)}\} \ \ \ \ & \\
& \rdLine &&& \\
&\Gamma . B . C & \rDepTo & D\{\pi_1^{\Gamma . B}\}\{\pi_1^{\Gamma . B . C}\} \ \ \ &
\end{diagram}
where $\Gamma \in \mathcal{C}$, $B, D \in \mathscr{D}_{\mathcal{C}}(\Gamma)$ and $C \in \mathscr{D}_{\mathcal{C}}(\Gamma . B)$, which induces an adjunction
\begin{diagram}
\mathcal{C}_{\Gamma . B} && \pile{\rTo^{\Sigma(B, \_)} \\ \perp \\ \lTo_{\_\{\pi_1^{\Gamma . B}\}}} && \mathcal{C}_{\Gamma}
\end{diagram}
where the functor $\_\{\pi_1^{\Gamma . B}\} : \mathcal{C}_\Gamma \to \mathcal{C}_{\Gamma . B}$ maps
\begin{align*}
A \in \mathscr{D}_{\mathcal{C}}(\Gamma) &\mapsto A\{\pi_1^{\Gamma . B}\} \in \mathscr{D}_{\mathcal{C}}(\Gamma . B) \\
g \in \mathscr{D}_{\mathcal{C}}(\Gamma . A, A'\{\pi_1\}) &\mapsto g \{ \langle \pi_1 \circ \pi_1, \pi_2 \rangle \} \in \mathscr{D}_{\mathcal{C}}(\Gamma . B . A\{\pi_1\}, A'\{\pi_1\}\{\pi_1\})
\end{align*}
and the functor $\Sigma(B, \_) : \mathcal{C}_{\Gamma . B} \to \mathcal{C}_\Gamma$ maps
\begin{align*}
C \in \mathscr{D}_{\mathcal{C}}(\Gamma . B) &\mapsto \Sigma(B, C) \in \mathscr{D}_{\mathcal{C}}(\Gamma) \\
h \in \mathscr{D}_{\mathcal{C}}(\Gamma . B . C, C'\{\pi_1\}) &\mapsto \Lbag \varpi_1, h \{ \mathit{Pair}_{B, C}^{-1} \} \Rbag \in \mathscr{D}_{\mathcal{C}}(\Gamma . \Sigma(B, C), \Sigma(B, C')\{\pi_1\})
\end{align*}
whose functorialities are both easy to verify. 

Next, let us think of $\Pi$-spaces.
Note that Lemma~\ref{LemTranspositionInPseudoPiSpaces} gives the bijective correspondence
\begin{diagram}[loose,height=.8em,width=0pt,l>=3em,midshaft]
& \Gamma . B & \rDepTo & C & \\
& \rdLine &&& \\
& \Gamma &        \rDepTo        & \Pi(B, C) \ \ \ \ \ &
\end{diagram}
for any $\Gamma \in \mathcal{C}$, $B \in \mathcal{C}_\Gamma$ and $C \in \mathcal{C}_{\Gamma . B}$, but it is not an adjunction.
Then, following the term model construction of DTTs in \cite{jacobs1999categorical}, we slightly modify this correspondence into
\begin{diagram}[loose,height=.8em,width=0pt,l>=3em,midshaft]
& \ \ \ \  \Gamma . B . A\{\pi_1^{\Gamma . B} \} & \rDepTo & C\{\pi_1^{\Gamma . B . A\{\pi_1^{\Gamma . B} \}}\} \ \ \ \ & \\
& \rdLine &&& \\
& \Gamma . A &        \rDepTo        & \Pi(B, C)\{ \pi_1^{\Gamma . A} \} \ \ \ \ \ & 
\end{diagram}
for each $A \in \mathcal{C}_\Gamma$, which gives an adjunction
\begin{diagram}
\mathcal{C}_{\Gamma} && \pile{\rTo^{\_\{\pi_1^{\Gamma . B}\}} \\ \perp \\ \lTo_{\Pi(B, \_)}} && \mathcal{C}_{\Gamma . B}
\end{diagram}
where $\Pi(B, \_) : \mathcal{C}_{\Gamma . B} \to \mathcal{C}_\Gamma$ maps
\begin{align*}
C \in \mathscr{D}_{\mathcal{C}}(\Gamma . B) &\mapsto \Pi(B, C) \in \mathscr{D}_{\mathcal{C}}(\Gamma) \\
f \in \mathscr{D}_{\mathcal{C}}(\Gamma . B . C, C'\{\pi_1\}) &\mapsto \Lambda_{\Pi(B\{\pi_1\}, C'\{\langle \pi_1 \circ \pi_1, \pi_2 \rangle\})} (f \{ \langle \langle \pi_1 \circ \pi_1, \pi_2 \rangle, \mathit{dev}_{\Pi(B, C)} \rangle \}) \in \mathscr{D}_{\mathcal{C}}(\Gamma . \Pi(B, C), \Pi(B, C')\{\pi_1\}).
\end{align*}
whose functoriality (specifically the preservation of composition) is by the axioms (\ref{UP2Pi}) and (\ref{UP3Pi}).

Therefore, we may compose the two adjunctions to obtain the adjunction: 
\begin{diagram}
\mathcal{C}_{\Gamma . B} && \pile{\rTo^{\Sigma(B, \_)\{\pi_1^{\Gamma . B}\}} \\ \perp \\ \lTo_{\Pi(B, \_\{\pi_1^{\Gamma . B}\})}} && \mathcal{C}_{\Gamma . B}
\end{diagram}
which particularly implies the correspondence 
\begin{diagram}[loose,height=.8em,width=0pt,l>=3em,midshaft]
& \ \ \ \Gamma . B . \Sigma(B, C)\{\pi_1\} &        \rDepTo        & D\{ \pi_1 \} & \\
& \rdLine &&& \\
&\ \ \ \Gamma . B . B\{\pi_1\} . C\{\pi_1\} & \rDepTo & D\{\pi_1\}\{\pi_1\} & \\
& \rdLine &&& \\
& \Gamma . B . C &        \rDepTo        & \Pi(B, D\{ \pi_1 \}) \{ \pi_1 \}\ \ \ \ &
\end{diagram}
between $\Sigma$ and $\Pi$ up to the first projections.

\subsubsection{Cartesian Closed Functors with Dependence}
Again, following the general recipe that morphisms are structure-preserving, let us define:
\begin{definition}[CCFwDs]
A \emph{\bfseries cartesian closed FwD (CCFwD)} is a CFwD $F : \mathcal{C} \to \mathcal{D}$ between CCCwDs $\mathcal{C}$ and $\mathcal{D}$ such that the diagram 
\begin{equation*}
F(\Gamma . \Pi(A, B) . A\{\pi_1\}) \stackrel{F(\mathit{dev}_{\Pi(A, B)})}{\rightarrowtriangle} F(B) \{ F(\pi_1^{+A}) \}
\end{equation*}
forms a $\Pi$-space from $F(A) \in \mathscr{D}_{\mathcal{D}}(F(\Gamma))$ to $F(B)\{\iota^{F(\Gamma . A)}_{F(\Gamma) . F(A)}\} \in \mathscr{D}_{\mathcal{D}}(F(\Gamma) . F(A))$ in $\mathcal{D}$ for any $\Gamma \in \mathcal{C}$, $A \in \mathscr{D}_{\mathcal{C}}(\Gamma)$ and $B \in \mathscr{D}_{\mathcal{C}}(\Gamma . A)$ in $\mathcal{C}$.
If $\mathcal{C}$ and $\mathcal{D}$ are both strict, $F$ is \emph{\bfseries strict} iff it is a strict CFwD, and the diagram
\begin{equation*}
F(\Gamma . \Pi(A, B) . A\{\pi_1\}) \stackrel{F(\mathit{dev}_{\Pi(A, B)})}{\rightarrowtriangle} F(B) \{ F(\pi_1^{+A}) \}
\end{equation*}
coincides with the $\Pi$-space 
\begin{equation*}
F(\Gamma) . \Pi(F(A), F(B)\{\iota^{F(\Gamma . A)}_{F(\Gamma) . F(A)}\}) . F(A)\{\pi_1\} \stackrel{\mathit{dev}_{\Pi(F(A), F(B)\{\iota^{F(\Gamma . A)}_{F(\Gamma) . F(A)}\})}}{\rightarrowtriangle} F(B)\{\iota^{F(\Gamma . A)}_{F(\Gamma) . F(A)}\}\{\pi_1^{+F(A)}\}
\end{equation*}
for any $\Gamma \in \mathcal{C}$, $A \in \mathscr{D}_{\mathcal{C}}(\Gamma)$ and $B \in \mathscr{D}_{\mathcal{C}}(\Gamma . A)$ in $\mathcal{C}$.
\end{definition}

\begin{example}
A (resp. strict) cartesian closed functor $F : \mathcal{C} \to \mathcal{D}$ seen as a CFwD $\mathsf{D}(F) : \mathsf{D}(\mathcal{C}) \to \mathsf{D}(\mathcal{D})$ is clearly (resp. strictly) closed.
\end{example}

\begin{example}
Given a locally small CCCwD $\mathcal{C}$ and an object $\Delta \in \mathcal{C}$, the CFwD $\mathcal{C}(\Delta, \_) : \mathcal{C} \to \mathit{Sets}$ is closed.
In fact, given a $\Pi$-space $\Gamma . \Pi(A, B) . A\{\pi_1\} \stackrel{\mathit{dev}_{\Pi(A, B)}}{\rightarrowtriangle} B \{ \pi_1^{+A} \}$ in $\mathcal{C}$, the diagram
\begin{equation*}
\mathcal{C}(\Delta, \Gamma . \Pi(A, B) . A\{\pi_1\}) \stackrel{\mathcal{C}(\Delta, \mathit{dev}_{\Pi(A, B)})}{\rightarrowtriangle} \mathcal{C}(\Delta, B \{ \pi_1^{+A} \})
\end{equation*}
in $\mathit{Sets}$ forms a $\Pi$-space from the D-set $\mathcal{C}(\Delta, A) \in \mathscr{D}_{\mathit{Sets}}(\mathcal{C}(\Delta, \Gamma))$ to another $\mathcal{C}(\Delta, B) \in \mathscr{D}_{\mathit{Sets}}(\mathcal{C}(\Delta, \Gamma.A))$.
\end{example}

\subsubsection{The 2-Category of Cartesian Closed Categories with Dependence}
Finally, let us summarize what we have established in this section as the following 2-category:
\begin{definition}[The 2-category $\mathbb{CCC_D}$]
The 2-category $\mathbb{CCC_D}$ is the sub-2-category of $\mathbb{CC_D}$ whose 0-cells are small CCCwDs and 1-cells are CCFwDs.
\end{definition}

\begin{theorem}[Well-defined $\mathbb{CCC_D}$]
$\mathbb{CCC_D}$ is a well-defined sub-2-category of $\mathbb{CC_D}$.
\end{theorem}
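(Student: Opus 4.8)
The plan is to verify the three defining conditions for $\mathbb{CCC_D}$ to be a sub-2-category of $\mathbb{CC_D}$: that its 0-cells form a subclass of those of $\mathbb{CC_D}$, that its 1-cells (CCFwDs) contain the identities and are closed under composition, and that its 2-cells assemble into subcategories of the hom-categories of $\mathbb{CC_D}$. The first point is immediate from the definitions, since every small CCCwD is by definition a small CCwD. For the 2-cells, I would take $\mathbb{CCC_D}(\mathcal{C}, \mathcal{D})$ to be the full subcategory of $\mathbb{CC_D}(\mathcal{C}, \mathcal{D})$ spanned by the CCFwDs $\mathcal{C} \to \mathcal{D}$: every CCFwD is in particular a CFwD, hence an SCFwD, so all NTwDs between CCFwDs are already available, and the identity NTwDs together with their vertical composites (computed in $\mathbb{SCC_D}$) again land among NTwDs with no extra closedness condition to check. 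Thus the only genuine work is on the 1-cells.

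Since the well-definedness of $\mathbb{CC_D}$ already guarantees that identities and composites of CFwDs preserve the terminal object, semi-$\Sigma$-spaces, the unit D-object and $\Sigma$-spaces, it remains only to check that the $\Pi$-space preservation clause in the definition of CCFwDs is closed under identities and composition. For the identity $\mathit{id}_{\mathcal{C}} : \mathcal{C} \to \mathcal{C}$ the canonical isomorphism $\iota$ is itself an identity, so the required diagram reduces verbatim to the given $\Pi$-space $\Gamma . \Pi(A, B) . A\{\pi_1\} \stackrel{\mathit{dev}_{\Pi(A, B)}}{\rightarrowtriangle} B\{\pi_1^{+A}\}$; hence $\mathit{id}_{\mathcal{C}}$ is a CCFwD.

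The substantive step is composition. Given CCFwDs $F : \mathcal{C} \to \mathcal{D}$ and $G : \mathcal{D} \to \mathcal{E}$, I would argue as follows. Because $F$ is closed, the image under $F$ of the chosen $\Pi$-space, namely $F(\Gamma . \Pi(A, B) . A\{\pi_1\}) \stackrel{F(\mathit{dev}_{\Pi(A, B)})}{\rightarrowtriangle} F(B)\{F(\pi_1^{+A})\}$, is a $\Pi$-space from $F(A)$ to $F(B)\{\iota^{F(\Gamma . A)}_{F(\Gamma) . F(A)}\}$ in $\mathcal{D}$. Feeding this into the closedness clause for $G$ and using that $G$, being an SCFwD, preserves the relevant semi-$\Sigma$-spaces and isomorphisms between D-objects, I obtain that its further image under $G$ is a $\Pi$-space in $\mathcal{E}$; it then remains to identify this image, together with the composite evaluation $G(F(\mathit{dev}_{\Pi(A, B)}))$, as a $\Pi$-space from $GF(A)$ to $GF(B)\{\iota^{G \circ F(\Gamma . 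A)}_{G \circ F(\Gamma) . G \circ F(A)}\}$. This identification is exactly the bookkeeping of canonical isomorphisms: using the factorisation $\iota^{G \circ F(\Gamma . A)}_{G \circ F(\Gamma) . G \circ F(A)} = G(\iota^{F(\Gamma . A)}_{F(\Gamma) . F(A)}) \circ \iota^{G(F(\Gamma) . F(A))}_{G(F(\Gamma)) . G(F(A))}$ (established, for the analogous situation, in the proof of well-definedness of $(\_)_{\mathscr{D}}$) together with the equations of Lemma~\ref{LemSCFwDLemma} to rewrite $G$ applied to $F(\pi_1^{+A})$ and to the semi-dependent pairings occurring in $\pi_1^{+A}$. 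Finally, Proposition~\ref{PropUniquePi} lets me conclude that the resulting data genuinely forms a $\Pi$-space determined up to the canonical isomorphism, so that $G \circ F$ satisfies the CCFwD clause.

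I expect the main obstacle to be precisely this last identification: tracking how the D-evaluation and the canonical isomorphisms $\iota$ transform along the composite $G \circ F$, and verifying that $G(F(\mathit{dev}_{\Pi(A, B)}))$ reindexed by the composite $\iota$ equals the D-evaluation of the putative $\Pi$-space from $GF(A)$ to $GF(B)\{\iota^{G \circ F(\Gamma . A)}_{G \circ F(\Gamma) . G \circ F(A)}\}$. This is a routine but lengthy diagram chase of the same flavour as the computation of $(G \circ F)_{\mathscr{D}} = G_{\mathscr{D}} \circ F_{\mathscr{D}}$; once it is in place, the well-definedness of $\mathbb{CCC_D}$ as a sub-2-category follows formally from that of $\mathbb{CC_D}$.
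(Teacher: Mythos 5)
Your proposal is correct and takes essentially the same route as the paper: the paper's proof likewise reduces everything to closure of CCFwDs under composition and rests on exactly the factorisation $G(\iota^{F(\Gamma . A)}_{F(\Gamma) . F(A)}) \circ \iota^{G(F(\Gamma) . F(A))}_{G(F(\Gamma)) . G(F(A))} = \iota^{G \circ F(\Gamma . A)}_{G \circ F(\Gamma) . G \circ F(A)}$ that you identify as the crux. The only difference is one of exposition: you spell out the routine checks (0-cells, 2-cells, identities) and the transport of the $\Pi$-space data along the canonical isomorphisms, which the paper compresses into a single sentence.
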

\begin{proof}
It suffices to show that the composition $G \circ F : \mathcal{C} \to \mathcal{E}$ of any CCFwDs $F : \mathcal{C} \to \mathcal{D}$ and $G : \mathcal{D} \to \mathcal{E}$ is again a CCFwD.
But then, it is immediate because the equation
\begin{equation*}
G(\iota^{F(\Gamma . A)}_{F(\Gamma) . F(A)}) \circ \iota^{G(F(\Gamma) . F(A))}_{G(F(\Gamma)) . G(F(A))} = \iota^{G \circ F(\Gamma . A)}_{G \circ F(\Gamma) . G \circ F(A)}
\end{equation*}
holds for any $\Gamma \in \mathcal{C}$ and $A \in \mathscr{D}_{\mathcal{C}}(\Gamma)$ in $\mathcal{C}$ by the universal property of semi-$\Sigma$-spaces.
\end{proof}

We have established a generalization of CCCs, whose strict version gives a categorical yet direct semantics of MLTTs.
In the rest of the paper, we shall make these relations with CCCs and with MLTTs more precise by establishing certain 2-equivalences.

\section{Equivalence between CtxCCCs and ConCtxCCCwDs}
\label{Equivalence}
This section is devoted to establish a 2-equivalence between the 2-category of \emph{contextual} CCCs and the 2-category of constant \emph{contextual} CCCwDs, where contextuality refers to strictness plus \emph{reachability} \cite{pitts2001categorical} of any object from a terminal object via binary products or semi-$\Sigma$-spaces.

The rest of the section proceeds roughly as follows.
We first define contextual CCCs and contextual CCCwDs in Section~\ref{ContextualCCCs} and Section~\ref{ContextualCCCwDs}, respectively.
And then, we establish the promised 2-equivalence (Theorem~\ref{ThmDSTheorem}) in Section~\ref{EqBetweenCtxCCCsAndConCtxCCCwDs}.

\subsection{Contextual Cartesian Closed Categories}
\label{ContextualCCCs}
Let us begin with introducing \emph{contextual} CCCs, which are similar to \emph{contextual categories} given by Cartmell \cite{cartmell1986generalized}:
\begin{definition}[CtxCCCs]
A \emph{\bfseries contextual CCC (CtxCCC)} is a strict CCC, i.e., a CCC $\mathcal{C}$ equipped with specified finite products $(T, \times, p)$ and exponentials $(\Rightarrow, \mathit{ev})$, together with a class $\mathscr{A}_{\mathcal{C}} \subseteq \mathsf{ob}(\mathcal{C})$ of objects in $\mathcal{C}$, whose elements are called \emph{\bfseries atomic objects} in $\mathcal{C}$, such that $T \in \mathscr{A}_{\mathcal{C}}$, and each object $\Delta \in \mathcal{C}$ has a unique finite sequence $\Delta_1, \Delta_2, \dots, \Delta_{\#(\Delta)} \in \mathscr{A}_{\mathcal{C}}^\ast$ of atomic objects in $\mathcal{C}$ that satisfies 
\begin{equation*}
\Delta = T \times \Delta_1 \times \Delta_2 \times \dots \times \Delta_{\#(\Delta)}
\end{equation*}
where $\times$ is left associative.
We call the natural number $\#(\Delta)$ the \emph{\bfseries length} of $\Delta$.
\if0
\item $\Delta \times \Gamma = T \times \Delta_1 \times \Delta_2 \times \dots \times \Delta_{\#(\Delta)} \times \Gamma_1 \times \Gamma_2 \times \dots \times \Gamma_{\#(\Gamma)}$ for any $\Delta, \Gamma \in \mathcal{C}$;

\item $\Delta \Rightarrow \Gamma = T \times (\Delta \Rightarrow \Gamma)$.
\fi
\end{definition}

Note that in STTs each context is of the form $\mathsf{\diamondsuit, x_1 : A_1, x_2 : A_2, \dots, x_k : A_k}$.
Atomic objects in contextual CCCs are a categorical abstraction of such atomic constituents $\mathsf{x_i : A_i}$ of contexts.

\begin{definition}[CtxCCFs]
A \emph{\bfseries contextual cartesian closed functor (CtxCCF)} is a strict cartesian closed functor $F : \mathcal{C} \to \mathcal{D}$ between CtxCCCs $\mathcal{C}$ and $\mathcal{D}$ that preserves atomic objects, i.e., 
\begin{equation*}
\Gamma \in \mathscr{A}_{\mathcal{C}} \Rightarrow F(\Gamma) \in \mathscr{A}_{\mathcal{C}}. 
\end{equation*}
\end{definition}

\begin{definition}[The 2-category $\mathsf{Ctx}\mathbb{CCC}$]
The 2-category $\mathsf{Ctx}\mathbb{CCC}$ is the sub-2-category of the 2-category $\mathbb{CCC}$ of small CCCs whose 0-cells are small CtxCCCs, 1-cells are CtxCCFs, and 2-cells are NTs.
\end{definition}

Clearly, $\mathsf{Ctx}\mathbb{CCC}$ is a well-defined 2-category.

\subsection{Contextual Cartesian Closed Categories with Dependence}
\label{ContextualCCCwDs}
Next, let us define the contextual analogue of SCCwDs:
\begin{definition}[CtxSCCwDs]
A \emph{\bfseries contextual SCCwD (CtxSCCwD)} is a strict SCCwD $\mathcal{C} = (\mathcal{C}, T, \_ . \_, \pi)$ such that: 
\begin{itemize}

\item Each constant D-object $A \in \mathscr{D}_{\mathcal{C}}(\Gamma)$ satisfies 
\begin{equation*}
A = \underline{A}\{ !_\Gamma \}
\end{equation*}
for a unique D-object $\underline{A} \in \mathscr{D}_{\mathcal{C}}(T)$, called the \emph{\bfseries core} of $A$; and 

\item Each object $\Delta \in \mathcal{C}$ has a unique finite sequence $D_1 \in \mathscr{D}_{\mathcal{C}}(T), D_2 \in \mathscr{D}_{\mathcal{C}}(T . D_1), \dots, D_{\sharp(\Delta)} \in \mathscr{D}_{\mathcal{C}}(T . D_1 . D_2 \dots D_{\sharp(\Delta)-1})$ of D-objects that satisfies 
\begin{equation*}
\Delta = T . D_1 . D_2 \dots D_{\#(\Delta)}
\end{equation*}
where we call the natural number $\#(\Delta)$ the \emph{\bfseries length} of $\Delta$.
\end{itemize}
\end{definition}

\begin{lemma}[CtxSCCwD-lemma]
\label{LemCtxSCCwDLemma}
Let $F : \mathcal{C} \to \mathcal{C'}$ be a strict SCFwD between CtxSCCwDs $\mathcal{C}$ and $\mathcal{C'}$. Then, we have: 
\begin{enumerate}

\item $A\{ \phi \} = \underline{A}\{!_\Delta\}$; 

\item $F(!_\Gamma) = \ !_{F(\Gamma)}$; and

\item $F(\underline{A}) = \underline{F(A)}$

\end{enumerate}
for any objects $\Delta, \Gamma \in \mathcal{C}$, morphism $\phi : \Delta \to \Gamma$ in $\mathcal{C}$ and constant D-object $A \in \mathscr{D}_{\mathcal{C}}(\Gamma)$.
\end{lemma}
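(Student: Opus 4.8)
The plan is to verify the three equations in order, since each builds on structural facts about contextual SCCwDs and strict SCFwDs. For equation~1, I would use the defining property of constant D-objects together with the core decomposition. Since $A \in \mathscr{D}_{\mathcal{C}}(\Gamma)$ is constant, by definition $A\{\phi_1\} = A\{\phi_2\}$ for all parallel $\phi_1, \phi_2 : \Delta \to \Gamma$; in particular, comparing $\phi$ with the canonical composite through the terminal object, and invoking $A = \underline{A}\{!_\Gamma\}$ from the definition of CtxSCCwD, I would compute $A\{\phi\} = \underline{A}\{!_\Gamma\}\{\phi\} = \underline{A}\{!_\Gamma \circ \phi\}$ by the D-composition axiom, and then $!_\Gamma \circ \phi = {!}_\Delta$ since both are morphisms into the terminal object $T$, giving $A\{\phi\} = \underline{A}\{!_\Delta\}$.

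For equation~2, I would observe that $F(!_\Gamma) : F(\Gamma) \to F(T)$, and since $F$ is a strict SCFwD we have $F(T) = T'$, the terminal object of $\mathcal{C'}$. Because $!_{F(\Gamma)} : F(\Gamma) \to T'$ is the \emph{unique} morphism into a terminal object, and $F(!_\Gamma)$ is also a morphism $F(\Gamma) \to T'$, uniqueness forces $F(!_\Gamma) = {!}_{F(\Gamma)}$. This step is essentially immediate once strictness pins down $F(T) = T'$.

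For equation~3, I would first note that $F(A)$ is constant in $\mathcal{C'}$: constancy is preserved since for parallel $\psi_1, \psi_2 : \Delta' \to F(\Gamma)$ one reduces to constancy of $A$ via the functoriality of $F_2$ and $F_1$ where applicable, or more directly one checks $F(A)\{\psi\}$ is independent of $\psi$ using equation~1 applied in $\mathcal{C'}$. Then $F(A)$ has a core $\underline{F(A)} \in \mathscr{D}_{\mathcal{C'}}(T')$ with $F(A) = \underline{F(A)}\{!_{F(\Gamma)}\}$. On the other hand, starting from $A = \underline{A}\{!_\Gamma\}$ and applying $F$, the FwD-axiom for D-composition gives $F(A) = F(\underline{A}\{!_\Gamma\}) = F(\underline{A})\{F(!_\Gamma)\} = F(\underline{A})\{!_{F(\Gamma)}\}$ by equation~2, where $F(\underline{A}) \in \mathscr{D}_{\mathcal{C'}}(F(T)) = \mathscr{D}_{\mathcal{C'}}(T')$. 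By the \emph{uniqueness} clause of the core in the definition of CtxSCCwD, which asserts the core over $T'$ is unique, I conclude $F(\underline{A}) = \underline{F(A)}$.

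The main obstacle I anticipate is establishing that $F(A)$ is genuinely constant in $\mathcal{C'}$ so that its core is well-defined; unlike equations~1 and~2, this is not purely formal and requires care. One must show that $F(A)\{\psi_1\} = F(A)\{\psi_2\}$ for arbitrary parallel morphisms in $\mathcal{C'}$, but the image of $F$ need not be all of $\mathcal{C'}$, so I cannot simply reduce to morphisms of the form $F(\phi)$. The cleanest route is to apply equation~1 (already proven) \emph{internally} in $\mathcal{C'}$: since $F(A) = F(\underline{A})\{!_{F(\Gamma)}\}$ exhibits $F(A)$ as a substitution of a D-object over $T'$ along the terminal map, any further substitution $F(A)\{\psi\} = F(\underline{A})\{!_{F(\Gamma)} \circ \psi\} = F(\underline{A})\{!_{\mathrm{dom}(\psi)}\}$ is manifestly independent of $\psi$, which is exactly constancy and simultaneously identifies the core. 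Thus the argument loops back cleanly, and the remaining verifications are routine applications of the D-composition axioms and terminal-object uniqueness.
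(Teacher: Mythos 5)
Your proof is correct and is exactly the argument the paper leaves implicit: the paper's own proof merely asserts that equations 1 and 2 are immediate and that 3 follows from 2. Your extra care in checking that $F(A)$ is genuinely constant in $\mathcal{C'}$ — so that the core $\underline{F(A)}$ is even defined before the uniqueness clause can be invoked — addresses a point the paper glosses over, and your resolution via $F(A) = F(\underline{A})\{!_{F(\Gamma)}\}$, which makes $F(A)\{\psi\}$ manifestly independent of $\psi$, is the right one.
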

\begin{proof}
The first and the second equations are immediate, and the third equation follows from the second. 
\end{proof}

\begin{definition}[CtxCCCwDs]
A \emph{\bfseries contextual CCCwD (CtxCCCwD)} is a strict CCCwD such that the underlying strict SCCwD is contextual. 
\end{definition}

\begin{definition}[The 2-category $\mathsf{Ctx}\mathbb{CCC_D}$]
The 2-category $\mathsf{Ctx}\mathbb{CCC_D}$ is the sub-2-category of $\mathbb{CCC_D}$ whose 0-cells are small CtxCCCwDs, 1-cells are strict CCFwDs, and 2-cells are NTwDs.
\end{definition}

\begin{definition}[The 2-category $\mathsf{ConCtx}\mathbb{CCC_D}$]
The 2-category $\mathsf{ConCtx}\mathbb{CCC_D}$ is the full sub-2-category of $\mathsf{Ctx}\mathbb{CCC_D}$ whose 0-cells are all constant.
\end{definition}

\subsection{Equivalence between CtxCCCs and ConCtxCCCwDs}
\label{EqBetweenCtxCCCsAndConCtxCCCwDs}
Finally, we define 2-functors $\mathsf{D} : \mathsf{Ctx}\mathbb{CCC} \rightleftarrows \mathsf{ConCtx}\mathbb{CCC_D} : \mathsf{S}$, and show that they constitute a 2-equivalence between the 2-categories. 

Let us begin with defining the functor $\mathsf{D} : \mathsf{Ctx}\mathbb{CCC} \to \mathsf{ConCtx}\mathbb{CCC_D}$, which just summarizes the operation $\mathsf{D}$ described so far: 
\begin{definition}[The 2-functor $\mathsf{D}$]
The 2-functor $\mathsf{D} : \mathsf{Ctx}\mathbb{CCC} \to \mathsf{ConCtx}\mathbb{CCC_D}$ maps: 
\begin{itemize}

\item Each CtxCCC $\mathcal{C} = (\mathcal{C}, \mathscr{A}_{\mathcal{C}}, T, \times, p, \Rightarrow, \mathit{ev})$ to the constant CtxCCCwD $\mathsf{D}(\mathcal{C})$ such that:
\begin{align*}
\mathscr{D}_{\mathsf{D}(\mathcal{C})}(\Gamma) &\stackrel{\mathrm{df. }}{=} \mathscr{A}_{\mathcal{C}} \\
\mathscr{D}_{\mathsf{D}(\mathcal{C})}(\Gamma, \Theta) &\stackrel{\mathrm{df. }}{=} \mathcal{C}(\Gamma, \Theta) \\
\Theta \{ \phi \}_{\mathsf{D}(\mathcal{C})} &\stackrel{\mathrm{df. }}{=} \Theta \\
\psi \{ \phi \}_{\mathsf{D}(\mathcal{C})} &\stackrel{\mathrm{df. }}{=} \psi \circ \phi \\
\Delta \stackrel{\pi_1}{\leftarrow}\Delta . \Gamma \stackrel{\pi_2}{\rightarrowtriangle} \Gamma \{ \pi_1 \} &\stackrel{\mathrm{df. }}{=} \Delta \stackrel{p_1}{\leftarrow} \Delta \times \Gamma \stackrel{p_2}{\rightarrow} \Gamma \\
1 &\stackrel{\mathrm{df. }}{=} T \\
\Theta\{\pi_1\} \stackrel{\varpi_1}{\leftarrowtriangle} \Gamma . \Sigma(\Theta . \Xi) \stackrel{\varpi_2}{\rightarrowtriangle} \Xi \{ \langle \pi_1, \varpi_1 \rangle \} &\stackrel{\mathrm{df. }}{=} \Theta \stackrel{p_1 \circ p_2}{\leftarrow} \Gamma \times (\Theta \times \Xi) \stackrel{p_2 \circ p_2}{\rightarrow} \Xi \\
\Gamma . \Pi(\Theta, \Xi) \Theta \{ \pi_1 \} \stackrel{\mathit{dev_{\Pi(\Theta, \Xi)}}}{\rightarrowtriangle} \Xi\{\pi_1^{+\Theta}\} &\stackrel{\mathrm{df. }}{=} \Gamma \times (\Theta \Rightarrow \Xi) \times \Theta \stackrel{\langle p_2 \circ p_1, p_2 \rangle}{\rightarrow} (\Theta \Rightarrow \Xi) \times \Theta \stackrel{\mathit{ev}_{\Theta, \Xi}}{\rightarrow} \Xi
\end{align*}
for all $\Delta, \Gamma \in \mathsf{D}(\mathcal{C})$, $\Theta \in \mathscr{D}_{\mathsf{D}(\mathcal{C})}(\Gamma)$, $\Xi \in \mathscr{D}_{\mathsf{D}(\mathcal{C})}(\Gamma . \Theta)$, $\psi \in  \mathscr{D}_{\mathsf{D}(\mathcal{C})}(\Gamma, \Theta)$ and $\phi : \Delta \to \Gamma$ in $\mathsf{D}(\mathcal{C})$; 

\item Each CtxCCF $F : \mathcal{C} \to \mathcal{D}$ to the strict CCFwD $\mathsf{D}(F) : \mathsf{D}(\mathcal{C}) \to \mathsf{D}(\mathcal{D})$ given by:
\begin{align*}
\mathsf{D}(F)_0 &\stackrel{\mathrm{df. }}{=} \mathsf{D}(F)_2 \stackrel{\mathrm{df. }}{=} F_0 \\
\mathsf{D}(F)_1 &\stackrel{\mathrm{df. }}{=} \mathsf{D}(F)_3 \stackrel{\mathrm{df. }}{=} F_1
\end{align*}

\item Each NT $\eta : F \Rightarrow G : \mathcal{C} \to \mathcal{D}$ to the NTwD $\mathsf{D}(\eta) \stackrel{\mathrm{df. }}{=} (\eta, t^{\mathsf{D}(\eta)}) : \mathsf{D}(F) \Rightarrow \mathsf{D}(G) : \mathsf{D}(\mathcal{C}) \to \mathsf{D}(\mathcal{D})$, where $t^{\mathsf{D}(\eta)}$ is given by:
\begin{equation*}
t^{\mathsf{D}(\eta)}_{\Gamma, \Theta} \stackrel{\mathrm{df. }}{=} F(\Gamma) \times F(\Theta) \stackrel{p_2}{\to} F(\Theta) \stackrel{\eta_\Theta}{\to} G(\Theta)
\end{equation*}
for all $\Gamma \in \mathsf{D}(\mathcal{C})$ and $\Theta \in \mathscr{D}_{\mathsf{D}(\mathcal{C})}(\Gamma)$.
\end{itemize}
\end{definition}

\begin{remark}
The domain of $\mathsf{D}$ must be $\mathsf{Ctx}\mathbb{CCC}$, not $\mathbb{CCC}$, since the domain of the functor $\mathsf{S}$ given below must be $\mathsf{ConCtx}\mathbb{CCC_D}$. 
\end{remark}

\begin{lemma}[Well-defined $\mathsf{D}$]
\label{WellDefinedD}
The 2-functor $\mathsf{D}$ is well-defined.
\end{lemma}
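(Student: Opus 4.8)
The plan is to verify the four things required for $\mathsf{D}$ to be a 2-functor: that $\mathsf{D}(\mathcal{C})$ is a genuine constant CtxCCCwD for each CtxCCC $\mathcal{C}$, that $\mathsf{D}(F)$ is a strict CCFwD, that $\mathsf{D}(\eta)$ is an NTwD, and that $\mathsf{D}$ preserves identities and both compositions. For the 0-cells I would first record that the CwD axioms and constancy are immediate: the substitution $\Theta\{\phi\}_{\mathsf{D}(\mathcal{C})} = \Theta$ is the identity and $\psi\{\phi\}_{\mathsf{D}(\mathcal{C})} = \psi \circ \phi$ is precomposition, so all four equations of a CwD reduce to associativity and unit laws in $\mathcal{C}$, and every D-object is constant by construction. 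The semi-cartesian structure, unit D-object, $\Sigma$-spaces and $\Pi$-spaces are defined to be the specified products, terminal object and exponentials of $\mathcal{C}$; their universal properties (Definitions of semi-$\Sigma$-spaces, unit D-objects, $\Sigma$-spaces and $\Pi$-spaces) unfold, after discarding the trivial D-object substitutions, to exactly the universal properties of $\times$, $T$ and $\Rightarrow$ in $\mathcal{C}$, so they hold. The coherence equations for strictness---equation~(\ref{EqStrictSCCwDs}), the $\Sigma$-coherence (\ref{Coh0Sigma})--(\ref{Coh2Sigma}) and the $\Pi$-coherence (\ref{DcompPi1})--(\ref{DcompPi2})---collapse in the constant setting (where $A\{\phi\}=A$ and the substitution functors are trivial) and follow from the on-the-nose strictness of the chosen products and exponentials in the strict CCC $\mathcal{C}$. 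Contextuality transfers verbatim: the unique atomic decomposition $\Delta = T \times \Delta_1 \times \cdots \times \Delta_{\#(\Delta)}$ guaranteed in $\mathcal{C}$ is precisely the required length-graded decomposition $\Delta = T . D_1 . \cdots . D_{\#(\Delta)}$ into D-objects, and each constant D-object is its own core over $T$.

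For the 1-cells I would use that $\mathscr{U}(\mathsf{D}(F)) = (F_0, F_1) = F$ is already a functor, so only the D-components need attention. Because a CtxCCF preserves atomic objects, the assignment $\mathsf{D}(F)_2 = F_0$ restricts to a map $\mathscr{A}_{\mathcal{C}} \to \mathscr{A}_{\mathcal{D}}$, so it is well-typed on D-objects, and $\mathsf{D}(F)_3 = F_1$ sends a D-morphism (a morphism of $\mathcal{C}$) to a D-morphism compatibly with substitution since $F$ preserves composition. Strict preservation of $T$, $\times$ and $\Rightarrow$ then yields strict preservation of terminal objects, semi-$\Sigma$-spaces, the unit D-object, $\Sigma$-spaces and $\Pi$-spaces, i.e. all the canonical isomorphisms $\iota$ are identities; here Lemma~\ref{LemCtxSCCwDLemma} is convenient for matching the cores and the canonical morphisms $!_\Gamma$.

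For the 2-cells, $\eta$ is by hypothesis an NT $\mathscr{U}(F) \Rightarrow \mathscr{U}(G)$, which is the first datum of an NTwD. Since $\mathsf{D}(F)$ and $\mathsf{D}(G)$ are strict, the remark following the definition of NTwDs shows the second component is forced by $t_{\Gamma,\Theta} = \pi_2\{\eta_{\Gamma.\Theta}\}$; I would check that this equals the prescribed $t^{\mathsf{D}(\eta)}_{\Gamma,\Theta} = \eta_\Theta \circ p_2$ using naturality of $\eta$ at the projection, so that $\mathsf{D}(\eta)$ is exactly the ``NT seen as an NTwD'' of the earlier example and the coherence axiom~(\ref{CohNTwDs}) holds automatically.

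Finally, functoriality: preservation of $1$-cell identities and composition is immediate from the componentwise definitions $\mathsf{D}(F)_i = F_i$ and Definition~\ref{TheCategoryCwDs}. Preservation of vertical composition of $2$-cells reduces to that of NTs on first components together with the forced formula for $t$, and preservation of $2$-cell identities is routine. The main obstacle I expect is preservation of \emph{horizontal} composition, i.e. establishing $\mathsf{D}(\eta' \odot \eta) = \mathsf{D}(\eta') \odot \mathsf{D}(\eta)$; this amounts to matching the second-component formula $t' \odot t$ against $t^{\mathsf{D}(\eta' \odot \eta)}$, which in general involves the isomorphisms $\iota$ and the interleaving of $F'$ with the D-compositions. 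The calculation should nonetheless go through cleanly here because strictness forces every $\iota$ to be an identity and constancy trivializes the substitutions, so the expression reduces to the ordinary horizontal composite $\eta'_{G(\Theta)} \circ F'(\eta_\Theta)$ postcomposed with a projection, and the remaining step is a single application of the naturality of $\eta'$. Assembling these verifications completes the proof.
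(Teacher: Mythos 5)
Your proposal is correct and follows the only available route, namely a direct verification of the 0-, 1- and 2-cell assignments and of functoriality; the paper's own proof of this lemma is literally the single word ``Straightforward,'' so you have simply supplied the details it omits. Your observations that strictness forces every canonical isomorphism $\iota$ to be an identity, that the second component of $\mathsf{D}(\eta)$ is forced to be $\pi_2\{\eta_{\Gamma.\Theta}\} = \eta_\Theta \circ p_2$ by naturality of $\eta$ at the projection, and that horizontal composition then reduces to the ordinary formula $\eta'_{G(\Theta)} \circ F'(\eta_\Theta)$ are exactly the points one needs to check.
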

\begin{proof}
Straightforward.
\end{proof}

Next, we need a preliminary operation before defining the 2-functor $\mathsf{S}$:
\begin{definition}[Right-shifting]
Let $\mathcal{C}$ be a constant CtxCCCwD, where $T \in \mathcal{C}$ is the specified terminal object.
Given an object $\Delta \in \mathcal{C}$ such that $\Delta \neq T$, we define a D-object $\mathscr{R}_{\mathcal{C}}(\Delta) \in \mathscr{D}_{\mathcal{C}}(T)$, called the \emph{\bfseries right-shifting} of $\Delta$, by induction on the length of $\Delta$:
\begin{equation*}
\mathscr{R}_{\mathcal{C}}(\Delta) \stackrel{\mathrm{df. }}{=} \begin{cases} C \ &\text{if $\Delta = T . C$}; \\ \Sigma(\mathscr{R}_{\mathcal{C}}(\Gamma . A), \underline{B}\{!_{T.\mathscr{R}_{\mathcal{C}}(\Gamma . A)}\}) &\text{if $\Delta = \Gamma . A . B$} \end{cases}
\end{equation*}
together with a morphism $r_{\mathcal{C}}(\Delta) : T . \mathscr{R}_{\mathcal{C}}(\Delta) \to \Delta$ and a D-morphism $\ell_{\mathcal{C}}(\Delta) : \Delta \rightarrowtriangle \mathscr{R}_{\mathcal{C}}(\Delta)\{!_\Delta\}$ given by:
\begin{align*}
r_{\mathcal{C}}(\Delta) &\stackrel{\mathrm{df. }}{=} \begin{cases} T.C \stackrel{\mathit{id}_{T.C}}{\to} T.C \ &\text{if $\Delta = T.C$}; \\ T.\Sigma(\mathscr{R}_{\mathcal{C}}(\Gamma.A), \underline{B}\{!_{T.\mathscr{R}_{\mathcal{C}}(\Gamma . A)}\}) \stackrel{\langle r_{\mathcal{C}}(\Gamma.A) \circ \langle \pi_1, \varpi_1 \rangle, \varpi_2 \rangle}{\to} \Gamma . A . B &\text{if $\Delta = \Gamma. A . B$} \end{cases} \\
\ell_{\mathcal{C}}(\Delta) &\stackrel{\mathrm{df. }}{=} \begin{cases} T.C \stackrel{\pi_2}{\to} C\{!_{T.C}\} \ &\text{if $\Delta = T.C$}; \\ \Gamma . A . B \stackrel{\Lbag \ell_{\mathcal{C}}(\Gamma.A) \circ \pi_1, \varpi_2 \Rbag}{\to} \Sigma(\mathscr{R}_{\mathcal{C}}(\Gamma.A), \underline{B}\{!_{T.\mathscr{R}_{\mathcal{C}}(\Gamma . A)}\})\{!_{\Gamma.A.B}\} &\text{if $\Delta = \Gamma. A . B$.} \end{cases}
\end{align*}
\end{definition}

\begin{notation}
We often omit the subscript $\mathcal{C}$ on $\mathscr{R}_{\mathcal{C}}$, $r_{\mathcal{C}}$ and $\ell_{\mathcal{C}}$ if it does not bring any confusion. 
\end{notation}

\begin{lemma}[Right-shifting-lemma]
Given a constant CtxCCCwD $\mathcal{C}$, where $T \in \mathcal{C}$ is the specified terminal object, the operations $\mathscr{R}_{\mathcal{C}}$, $r_{\mathcal{C}}$ and $\ell_{\mathcal{C}}$ are all well-defined, and we have:
\begin{align*}
r_{\mathcal{C}}(\Delta) \circ \langle !_\Delta, \ell_{\mathcal{C}}(\Delta) \rangle &=\mathit{id}_{\Delta} \\
\langle !_\Delta, \ell_{\mathcal{C}}(\Delta) \{ r_{\mathcal{C}}(\Delta) \} \rangle &= \mathit{id}_{T . \mathscr{R}_{\mathcal{C}}(\Delta)}
\end{align*}
for all $\Delta \in \mathcal{C}$ such that $\Delta \neq T$.
\end{lemma}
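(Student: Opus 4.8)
The plan is to argue by induction on the length $\#(\Delta)$, mirroring the recursive definitions of $\mathscr{R}_{\mathcal{C}}$, $r_{\mathcal{C}}$ and $\ell_{\mathcal{C}}$. Before touching the two equations I would settle well-definedness, i.e. that $\mathscr{R}_{\mathcal{C}}(\Delta)\in\mathscr{D}_{\mathcal{C}}(T)$ and that $r_{\mathcal{C}}(\Delta),\ell_{\mathcal{C}}(\Delta)$ carry the declared (co)domains at each recursion step. The only non-formal point is that, in the case $\Delta=\Gamma.A.B$, the codomain $B\{\phi\}$ of the intended second component must be identified with the $\Sigma$-space slot $\underline{B}\{!_{T.\mathscr{R}(\Gamma.A)}\}\{\langle\pi_1,\varpi_1\rangle\}$; both reduce to $\underline{B}\{!_{T.\Sigma(\dots)}\}$ once I invoke constancy ($B=\underline{B}\{!_{\Gamma.A}\}$) together with functoriality of D-composition ($\_\{\psi\circ\chi\}=\_\{\psi\}\{\chi\}$) and the terminality of $T$ (to collapse composites of $!$-maps). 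This is essentially the content of Lemma~\ref{LemCtxSCCwDLemma}.

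For the base case $\Delta=T.C$ the two equations are immediate: $r(\Delta)=\mathit{id}_{T.C}$ and $\ell(\Delta)=\pi_2$, so the first equation reads $\langle !_{T.C},\pi_2\rangle=\mathit{id}_{T.C}$, which holds because $\pi_1^{T.C}=!_{T.C}$ by terminality and $\langle\pi_1,\pi_2\rangle=\mathit{id}$ is equation~(\ref{ConsId}); the second follows the same way after $\pi_2\{\mathit{id}\}=\pi_2$.

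The inductive step for $\Delta=\Gamma.A.B$ is the substance. Writing $\Delta'=\Gamma.A$, $R'=\mathscr{R}(\Delta')$, $r'=r(\Delta')$, $\ell'=\ell(\Delta')$, I would first extract from the induction hypothesis the clean reformulation $\ell'\{r'\}=\pi_2^{T.R'}$ (since $\langle !_{\Delta'},\ell'\{r'\}\rangle=\mathit{id}_{T.R'}=\langle\pi_1,\pi_2\rangle$ with $\pi_1=!$ by terminality). To prove $r(\Delta)\circ\langle!_\Delta,\ell(\Delta)\rangle=\mathit{id}_\Delta$ I use the uniqueness half of the semi-$\Sigma$ universal property for $\Delta=\Delta'.B$: it suffices to show that postcomposing with $\pi_1^{\Delta}$ returns $\pi_1^{\Delta}$ and that the D-composition with $\pi_2^{\Delta}$ returns $\pi_2^{\Delta}$. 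The first reduces, via the projection equations for semi-$\Sigma$-spaces, equation~(\ref{ConsNat}) and the $\Sigma$-projection law~(\ref{UP1Sigma}), to $r'\circ\langle!_{\Delta'},\ell'\rangle\circ\pi_1=\pi_1$, which is the induction hypothesis; the second reduces, via the second projection equation and~(\ref{UP2Sigma}), to an identity of projections. Symmetrically, for $\langle!_\Delta,\ell(\Delta)\{r(\Delta)\}\rangle=\mathit{id}_{T.\mathscr{R}(\Delta)}$ I would show $\ell(\Delta)\{r(\Delta)\}=\pi_2^{T.\Sigma(R',B')}$; expanding $\ell(\Delta)$ as a dependent pairing, using naturality of $\Lbag\cdot,\cdot\Rbag$ under D-composition and substituting $\pi_1^{\Delta}\circ r(\Delta)=r'\circ\langle\pi_1,\varpi_1\rangle$ and $\pi_2^{\Delta}\{r(\Delta)\}=\varpi_2$, this collapses to checking the first dependent-pairing component $\ell'\{r'\}\{\langle\pi_1,\varpi_1\rangle\}=\varpi_1$, i.e. $\pi_2^{T.R'}\{\langle\pi_1,\varpi_1\rangle\}=\varpi_1$, by the reformulated hypothesis and the second projection equation.

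The main obstacle is not a single deep idea but the disciplined bookkeeping of three pairing/projection operations simultaneously — the semi-$\Sigma$ pairing $\langle\cdot,\cdot\rangle$ with $\pi_1,\pi_2$, the $\Sigma$ pairing $\Lbag\cdot,\cdot\Rbag$ with $\varpi_1,\varpi_2$, and the $!$-maps — while keeping every substituted D-object honest through constancy. The one auxiliary fact I would isolate and prove once is the naturality of dependent pairings, $\Lbag g,h\Rbag\{\psi\}=\Lbag g\{\psi\},h\{\psi\}\Rbag$, which is used repeatedly and follows cleanly from~(\ref{ConsNat}) together with the uniqueness clause of the $\Sigma$-space universal property~(\ref{UP1Sigma})--(\ref{UP2Sigma}).
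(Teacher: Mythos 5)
Your proposal is correct and follows exactly the route the paper takes: the paper's entire proof is ``By induction on the length of $\Delta$,'' and your argument is a faithful elaboration of that induction, with the base case $\Delta = T.C$ handled by terminality and equation~(\ref{ConsId}) and the step $\Delta = \Gamma.A.B$ handled by the uniqueness clauses of the semi-$\Sigma$- and $\Sigma$-space universal properties. The auxiliary facts you isolate (naturality of dependent pairings, and the type-matching via constancy and Lemma~\ref{LemCtxSCCwDLemma}) are precisely the bookkeeping the paper leaves implicit.
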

\begin{proof}
By induction on the length of $\Delta$.
\end{proof}

Now, we are ready to define:
\begin{definition}[The 2-functor $\mathsf{S}$]
The 2-functor $\mathsf{S} : \mathsf{ConCtx}\mathbb{CCC_D} \to \mathsf{Ctx}\mathbb{CCC}$ maps each constant CtxCCCwD $\mathcal{D} = (\mathcal{D}, T, \_.\_, \pi, 1, \Sigma, \varpi, \Pi, \mathit{dev})$ to the CtxCCC $\mathsf{S}(\mathcal{D})$ given by:
\begin{itemize}

\item The underlying category is $\mathcal{D}$ equipped with the terminal object $T \in \mathcal{D}$;

\item $\mathscr{A}_{\mathsf{S}(\mathcal{D})} \stackrel{\mathrm{df. }}{=} \{ T \} \cup \{ T . \underline{A} \mid A \in \mathsf{dob}(\mathcal{D}) \}$;

\item Given $\Delta, \Gamma \in \mathsf{S}(\mathcal{D})$, we define the product of $\Delta$ and $\Gamma$ by:
\begin{equation*}
\Delta \stackrel{p_1}{\leftarrow} \Delta \times \Gamma \stackrel{p_2}{\rightarrow} \Gamma  \stackrel{\mathrm{df. }}{=} \Delta \stackrel{\pi_1}{\leftarrow} \Delta . \mathscr{R}_{\mathcal{C}}(\Gamma)\{!_\Delta\} \stackrel{\langle !_{\Delta . \mathscr{R}_{\mathcal{C}}(\Gamma)\{!_\Delta\}}, \pi_2 \rangle}{\rightarrow} T . \mathscr{R}_{\mathcal{C}}(\Gamma) \stackrel{r_{\mathcal{C}}(\Gamma)}{\rightarrow} \Gamma 
\end{equation*}

\item Given $\Gamma, \Delta \in \mathsf{S}(\mathcal{D})$, we define the exponential $\Gamma^\Delta \times \Delta \stackrel{\mathit{ev}_{\Delta, \Gamma}}{\rightarrow} \Gamma$, where $\Gamma^\Delta \stackrel{\mathrm{df. }}{=} \Delta \Rightarrow \Gamma$, by induction on the length of $\Gamma$:
\begin{align*}
&\Gamma^\Delta \times \Delta \stackrel{\mathit{ev}_{\Delta, \Gamma}}{\rightarrow} \Gamma \\ \stackrel{\mathrm{df. }}{=} &\begin{cases} T \times \Delta \stackrel{!}{\rightarrow} T \ &\text{if $\Gamma = T$} \\ (\Gamma'^\Delta) . \Pi(\mathscr{R}(\Delta)\{!\}, \underline{A}\{!\}) . \mathscr{R}(\Delta)\{!\} \stackrel{\langle \mathit{ev}_{\Delta, \Gamma'} \circ \langle \pi_1 \circ \pi_1, \pi_2 \rangle, \mathit{dev}_{\mathscr{R}(\Delta)\{!\}, A\{!\}} \circ \langle \langle !, \pi_2 \rangle, \pi_2\{\pi_1\}\rangle \rangle}{\rightarrow} \Gamma' . A &\text{if $\Gamma = \Gamma' . A$} \end{cases}
\end{align*}

\end{itemize}
and maps each CtxCCF $F$ and NTwD $(\eta, t)$ to their respective restrictions 
\begin{align*}
\mathsf{S}(F) &\stackrel{\mathrm{df. }}{=} (F_0, F_1) \\
\mathsf{S}(\eta, t) &\stackrel{\mathrm{df. }}{=} \eta
\end{align*}
to 0-cells and 1-cells. 
\end{definition}

\begin{lemma}[Well-defined $\mathsf{S}$]
The 2-functors $\mathsf{S}$ is well-defined.
\end{lemma}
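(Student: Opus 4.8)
The plan is to unpack the definition of a $2$-functor and verify $\mathsf{S}$ in four stages: that it sends a $0$-cell $\mathcal{D}$ to a genuine CtxCCC $\mathsf{S}(\mathcal{D})$, a $1$-cell $F$ to a genuine CtxCCF $\mathsf{S}(F)$, a $2$-cell $(\eta,t)$ to an honest natural transformation $\eta$, and finally that it respects identities and both compositions. Most of the work sits in the first stage, and it is precisely here that the right-shifting machinery and the $\Pi$-space universal property are used.

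First I would check that $\mathsf{S}(\mathcal{D})$ is a CtxCCC. The underlying category is $\mathcal{D}$ with its terminal object $T$, so only the finite-product, exponential, and contextuality data need verification. For products, $\Delta \times \Gamma \stackrel{\mathrm{df.}}{=} \Delta . \mathscr{R}(\Gamma)\{!_\Delta\}$ is a semi-$\Sigma$-space, and given $f : X \to \Delta$, $g : X \to \Gamma$ the pairing is $\langle f, \ell(\Gamma)\{g\} \rangle$; constancy gives $\mathscr{R}(\Gamma)\{!_\Delta\}\{f\} = \mathscr{R}(\Gamma)\{!_X\} = \mathscr{R}(\Gamma)\{!_\Gamma\}\{g\}$ so the typing is correct, the two product equations follow from the semi-$\Sigma$ law (\ref{ConsNat}) together with the identities of the Right-shifting-lemma, and uniqueness follows from uniqueness of semi-$\Sigma$-pairings and the fact that $r(\Gamma)$ is an isomorphism. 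For exponentials I would argue by induction on the length of $\Gamma$: the base $T^\Delta = T$ is immediate since $T$ is terminal, and for $\Gamma'.A$ the $\Pi$-space $\Pi(\mathscr{R}(\Delta)\{!\}, \underline{A}\{!\})$ supplies, via the transposition bijection (Lemma~\ref{LemTranspositionInPseudoPiSpaces}) and the inductive hypothesis, the exponential adjunction, with naturality and strict compatibility of the data secured by Lemma~\ref{LemSecondPiLemma} and the coherence axioms (\ref{DcompPi1}) and (\ref{DcompPi2}). For contextuality, since $\mathcal{D}$ is a contextual SCCwD every object is uniquely $T.D_1.D_2 \cdots D_n$, and since $\mathcal{D}$ is constant each $D_i = \underline{D_i}\{!\}$; then $T \times (T.\underline{D_1}) = T.\mathscr{R}(T.\underline{D_1}) = T.\underline{D_1}$ and, inductively, $(T.D_1 \cdots D_k) \times (T.\underline{D_{k+1}}) = T.D_1 \cdots D_{k+1}$ because $\mathscr{R}(T.\underline{D_{k+1}}) = \underline{D_{k+1}}$ and the product reduces to context extension by a constant core. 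This exhibits the unique atomic decomposition with the same length, so $T \in \mathscr{A}_{\mathsf{S}(\mathcal{D})}$ and every object is uniquely a product of atomic objects.

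Next, $\mathsf{S}(F) = (F_0,F_1) = \mathscr{U}(F)$ is a functor; I would check it is a CtxCCF. It preserves atomic objects because $F(T) = T$ (strict SCFwD) and $F(T.\underline{A}) = F(T).\underline{F(A)} = T.\underline{F(A)}$ by part $3$ of Lemma~\ref{LemCtxSCCwDLemma} and strictness. Strict preservation of products and exponentials follows since a strict CCFwD strictly preserves semi-$\Sigma$-, $\Sigma$- and $\Pi$-spaces and, by an induction on length using strictness, commutes with the operations $\mathscr{R}$, $r$ and $\ell$; hence $F$ carries the product and exponential data of $\mathsf{S}(\mathcal{D})$ onto those of $\mathsf{S}(\mathcal{D}')$ on the nose (here the equations of Lemma~\ref{LemSCFwDLemma} discharge the bookkeeping). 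On $2$-cells, $\mathsf{S}(\eta,t) = \eta$ is by definition of an NTwD a natural transformation $\mathscr{U}(F) \Rightarrow \mathscr{U}(G)$, and since $\mathsf{S}(F) = \mathscr{U}(F)$, $\mathsf{S}(G) = \mathscr{U}(G)$ and $2$-cells of $\mathsf{Ctx}\mathbb{CCC}$ are plain NTs with no extra conditions, nothing further is required. Functoriality is then routine: $\mathscr{U}(G \circ F) = \mathscr{U}(G) \circ \mathscr{U}(F)$ and $\mathscr{U}(\mathit{id}) = \mathit{id}$ give preservation of $1$-cell composition and identities, while the vertical and horizontal compositions of NTwDs restrict on their first components to the ordinary vertical and horizontal compositions of NTs (as observed immediately after their definitions), so $\mathsf{S}$ preserves both $2$-cell compositions and the $2$-cell identities.

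The main obstacle is the inductive verification in the exponential case: establishing that the morphism defining $\mathit{ev}_{\Delta,\Gamma}$ satisfies the universal property of an exponential in $\mathsf{S}(\mathcal{D})$ requires threading the $\Pi$-space axioms (\ref{UP2Pi}) and (\ref{UP3Pi}) and the coherence equations (\ref{DcompPi1}), (\ref{DcompPi2}) through the substitutions $\pi_1^{+A}$, $\mathit{dev}$ and the dependent currying that appear in the inductive step, and simultaneously confirming that $\mathsf{S}(F)$ preserves this structure strictly. The product and contextuality parts, by contrast, reduce cleanly to the Right-shifting-lemma and the uniqueness of decompositions inherited from the contextual SCCwD, and the $2$-cell and functoriality parts are essentially formal.
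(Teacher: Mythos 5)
Your proposal is correct and takes essentially the same route as the paper: the substantive content is the verification that $\mathsf{S}(\mathcal{D})$ has binary products (via the pairing $\langle \phi, \ell_{\mathcal{D}}(\Gamma)\{\psi\}\rangle$ and the Right-shifting-lemma) and exponentials (by induction on the length of $\Gamma$, using the transposition bijection of Lemma~\ref{LemTranspositionInPseudoPiSpaces}), which is exactly what the paper's proof does. You additionally spell out the contextuality, $1$-cell, $2$-cell and functoriality checks that the paper dismisses as immediate, which is harmless and arguably more complete.
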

\begin{proof}
Let $\mathcal{D} = (\mathcal{D}, T, \_.\_, \pi, 1, \Sigma, \varpi, \Pi, \mathit{dev})$ be a constant CtxCCCwD. 
It suffices to show that the binary products and the exponentials in $\mathsf{S}(\mathcal{D})$ are well-defined since then it is clearly contextual.

Let us first consider binary products.
Given $\Theta, \Delta, \Gamma \in \mathsf{S}(\mathcal{D})$, $\phi : \Theta \to \Delta$ and $\psi : \Theta \to \Gamma$ in $\mathsf{S}(\mathcal{D})$, we define the pairing $(\phi, \psi) : \Theta \to \Delta \times \Gamma$ of $\phi$ and $\psi$ to be 
\begin{equation*}
\langle \phi, \ell_{\mathcal{D}}(\Gamma)\{\psi\} \rangle : \Theta \to \Delta . \mathscr{R}_{\mathcal{C}}(\Gamma)\{!_\Delta\}. 
\end{equation*}
Then, we have:
\begin{align*}
p_1 \circ \langle \phi, \ell_{\mathcal{D}}(\Gamma)\{\psi\} \rangle &= \pi_1 \circ \langle \phi, \ell_{\mathcal{D}}(\Gamma)\{\psi\} \rangle \\
&= \phi
\end{align*}
and
\begin{align*}
p_2 \circ \langle \phi, \ell_{\mathcal{D}}(\Gamma)\{\psi\} \rangle &= (r_{\mathcal{D}}(\Gamma) \circ \langle !, \pi_2 \rangle) \circ \langle \phi, \ell_{\mathcal{D}}(\Gamma)\{\psi\} \rangle \\
&= r_{\mathcal{D}}(\Gamma) \circ (\langle !, \pi_2 \rangle \circ \langle \phi, \ell_{\mathcal{D}}(\Gamma)\{\psi\} \rangle) \\
&= r_{\mathcal{D}}(\Gamma) \circ \langle !, \ell_{\mathcal{D}}(\Gamma)\{\psi\} \rangle \\
&= r_{\mathcal{D}}(\Gamma) \circ (\langle !, \ell_{\mathcal{D}}(\Gamma) \rangle \circ \psi) \\
&= (r_{\mathcal{D}}(\Gamma) \circ \langle !, \ell_{\mathcal{D}}(\Gamma) \rangle) \circ \psi \\
&= \mathit{id}_\Gamma \circ \psi \\
&= \psi.
\end{align*}
Moreover, given $\varphi : \Theta \to \Delta . \mathscr{R}_{\mathcal{C}}(\Gamma)\{!\}$ in $\mathsf{S}(\mathcal{D})$, we have:
\begin{align*}
\langle p_1 \circ \varphi, p_2 \circ \varphi \rangle &= \langle \pi_1 \circ \varphi, \ell_{\mathcal{D}}(\Gamma) \{ (r_{\mathcal{D}}(\Gamma) \circ \langle !, \pi_2 \rangle) \circ \varphi \} \rangle \\
&= \langle \pi_1 \circ \varphi, \ell_{\mathcal{D}}(\Gamma) \{ r_{\mathcal{D}}(\Gamma) \} \{ \langle !, \pi_2 \rangle \} \{ \varphi \} \rangle \\
&= \langle \pi_1, \ell_{\mathcal{D}}(\Gamma) \{ r_{\mathcal{D}}(\Gamma) \} \{ \langle !, \pi_2 \rangle \} \rangle \circ \varphi \\
&= \langle \pi_1, \pi_2 \{ \langle ! \circ r_{\mathcal{D}}(\Gamma), \ell_{\mathcal{D}}(\Gamma) \{ r_{\mathcal{D}}(\Gamma) \} \rangle \} \{ \langle !, \pi_2 \rangle \} \rangle \circ \varphi \\
&= \langle \pi_1, \pi_2 \{ \langle !, \ell_{\mathcal{D}}(\Gamma) \rangle \circ r_{\mathcal{D}}(\Gamma) \} \{ \langle !, \pi_2 \rangle \} \rangle \circ \varphi \\
&= \langle \pi_1, \pi_2 \{ \mathit{id}_{\Delta . \mathscr{R}_{\mathcal{C}}(\Gamma)\{!\}} \circ \langle !, \pi_2 \rangle \} \rangle \circ \varphi \\
&= \langle \pi_1, \pi_2 \rangle \circ \varphi \\
&= \varphi.
\end{align*}
Hence, the diagram $\Delta \stackrel{p_1}{\leftarrow} \Delta \times \Gamma \stackrel{p_2}{\rightarrow} \Gamma$ is a well-defined product of $\Delta$ and $\Gamma$.

Next, let us consider the exponential $\Delta \Rightarrow \Gamma$ from $\Delta$ to $\Gamma$.
Since the base case $\Gamma = T$ is trivial, assume $\Gamma = \Gamma' . A$ for some $\Gamma' \in \mathcal{D}$ and $A \in \mathscr{D}_{\mathcal{D}}(\Gamma')$.
Given $\vartheta : \Theta \times \Delta \to \Gamma' . A$ in $\mathsf{S}(\mathcal{D})$, we define:
\begin{equation*}
\lambda_{\Delta, \Gamma}(\vartheta) \stackrel{\mathrm{df. }}{=} \langle \lambda_{\Delta, \Gamma'}(\pi_1 \circ \vartheta), \Lambda (\pi_2 \{\vartheta \}) \{ \pi_1 \} \rangle : \Theta \times \Delta \rightarrow (\Delta \Rightarrow \Gamma') . \Pi(\mathscr{R}_{\mathcal{C}}(\Delta)\{!\}, \underline{A}\{!\})
\end{equation*}
such that:
\begin{align*}
&\mathit{ev}_{\Delta, \Gamma} \circ (\lambda_{\Delta, \Gamma}(\vartheta) \circ p_1, p_2) \\
= \ &\langle \mathit{ev}_{\Delta, \Gamma'} \circ \langle \pi_1 \circ \pi_1, \pi_2 \rangle, \mathit{dev}_{\mathscr{R}(\Delta)\{!\}, A\{!\}} \circ \langle \langle !, \pi_2 \rangle, \pi_2\{\pi_1\}\rangle \rangle \circ \langle \langle \lambda_{\Delta, \Gamma'}(\pi_1 \circ \vartheta) \circ p_1, \Lambda (\pi_2 \{\vartheta \}) \{ \pi_1 \} \rangle, \ell(\Delta)\{p_2\} \rangle \\
= \ &\langle \mathit{ev}_{\Delta, \Gamma'} \circ \langle \lambda_{\Delta, \Gamma'}(\pi_1 \circ \vartheta) \circ p_1, \ell(\Delta)\{p_2\} \rangle, \mathit{dev}_{\mathscr{R}(\Delta)\{!\}, A\{!\}} \circ \langle \langle !, \ell(\Delta)\{p_2\} \rangle, \Lambda(\pi_2\{\vartheta \}) \{ \pi_1\} \rangle \rangle \\
= \ &\langle \mathit{ev}_{\Delta, \Gamma'} \circ \langle \lambda_{\Delta, \Gamma'}(\pi_1 \circ \vartheta) \circ p_1, \ell(\Delta)\{r(\Delta)\{\langle !, \pi_2 \rangle\}\} \rangle, \mathit{dev}_{\mathscr{R}(\Delta)\{!\}, A\{!\}} \circ \langle \langle !, \ell(\Delta)\{p_2\} \rangle, \Lambda(\pi_2\{\vartheta \}) \{ \pi_1\} \rangle \rangle \\
= \ &\langle \mathit{ev}_{\Delta, \Gamma'} \circ \langle \lambda_{\Delta, \Gamma'}(\pi_1 \circ \vartheta) \circ p_1, \pi_2\{r(\Delta)\{\langle !, \ell(\Delta) \rangle\}\} \rangle, \mathit{dev}_{\mathscr{R}(\Delta)\{!\}, A\{!\}} \circ \langle \langle !, \ell(\Delta)\{p_2\} \rangle, \Lambda(\pi_2\{\vartheta \}) \{ \pi_1\} \rangle \rangle \\
= \ &\langle \mathit{ev}_{\Delta, \Gamma'} \circ \langle \lambda_{\Delta, \Gamma'}(\pi_1 \circ \vartheta) \circ p_1, \pi_2\{\mathit{id}_{T.\mathscr{R}(\Delta)}\} \rangle, \mathit{dev}_{\mathscr{R}(\Delta)\{!\}, A\{!\}} \circ \langle \langle !, \ell(\Delta)\{p_2\} \rangle, \Lambda(\pi_2\{\vartheta \}) \{ \pi_1\} \rangle \rangle \\
= \ &\langle \pi_1 \circ \vartheta, \mathit{dev}_{\mathscr{R}(\Delta)\{!\}, A\{!\}} \circ \langle \langle \pi_1, \pi_2 \rangle, \Lambda(\pi_2\{\vartheta \}) \{ \pi_1\} \rangle \rangle \\
= \ &\langle \pi_1 \circ \vartheta, \mathit{dev}_{\mathscr{R}(\Delta)\{!\}, A\{!\}} \circ \langle \mathit{id}_{T.\mathscr{R}(\Delta)}, \Lambda(\pi_2\{\vartheta \}) \{ \pi_1\} \rangle \rangle \\
= \ &\langle \pi_1 \circ \vartheta, \pi_2\{\vartheta \} \rangle \\
= \ &\langle \pi_1, \pi_2 \rangle \circ \vartheta \\
= \ &\mathit{id}_{T.\mathscr{R}(\Delta)} \circ \vartheta \\
= \ &\vartheta
\end{align*}
and for any $\kappa : \Theta \rightarrow (\Delta \Rightarrow \Gamma)$ in $\mathsf{S}(\mathcal{D})$ we have:
\begin{align*}
& \lambda_{\Delta, \Gamma}(\mathit{ev}_{\Delta, \Gamma} \circ (\kappa \circ p_1, p_2)) \\
= \ &\lambda_{\Delta, \Gamma}(\mathit{ev}_{\Delta, \Gamma} \circ \langle \kappa \circ \pi_1, \pi_2 \rangle) \\
= \ &\langle \lambda_{\Delta, \Gamma'} (\pi_1 \circ \mathit{ev}_{\Delta, \Gamma} \circ \langle \kappa \circ \pi_1, \pi_2 \rangle), \Lambda(\pi_2\{\mathit{ev}_{\Delta, \Gamma} \circ \langle \kappa \circ \pi_1, \pi_2 \rangle\})\{\pi_1\} \rangle \\
= \ &\langle \lambda_{\Delta, \Gamma'}(\mathit{ev}_{\Delta, \Gamma'} \circ \langle \pi_1 \circ \pi_1, \pi_2 \rangle \circ \langle \kappa \circ \pi_1, \pi_2 \rangle), \Lambda(\mathit{dev}_{\mathscr{R}(\Delta)\{!\}, A\{!\}} \{ \langle \langle !, \pi_2 \rangle, \pi_2\{\pi_1\} \rangle \} \{ \langle \kappa \circ \pi_1, \pi_2 \rangle \})\{ \pi_1 \} \rangle \\
= \ &\langle \lambda_{\Delta, \Gamma'}(\mathit{ev}_{\Delta, \Gamma'} \circ \langle \pi_1 \circ \kappa \circ \pi_1, \pi_2 \rangle), \Lambda(\mathit{dev}_{\mathscr{R}(\Delta)\{!\}, A\{!\}} \{ \langle \langle \pi_1, \pi_2 \rangle, \pi_2\{\kappa\}\{\pi_1\}\rangle\})\{\pi_1\} \rangle \\
= \ &\langle \pi_1 \circ \kappa, \Lambda(\mathit{dev}_{\mathscr{R}(\Delta)\{!\}, A\{!\}} \{ \langle \mathit{id}_{T.\mathscr{R}(\Delta)}, \pi_2\{\kappa\}\{\pi_1\}\rangle\})\{\pi_1\} \rangle \\
= \ &\langle \pi_1 \circ \kappa, \pi_2\{\kappa\} \rangle \ \text{(by Lemma~\ref{LemTranspositionInPseudoPiSpaces})} \\
= \ &\langle \pi_1, \pi_2 \rangle \circ \kappa \\
= \ &\mathit{id}_{\Gamma^\Delta} \circ \kappa \\
= \ &\kappa
\end{align*}
which establishes that the exponential $\Delta \Rightarrow \Gamma$ in $\mathsf{S}(\mathcal{D})$ is well-defined. 
\end{proof}

\begin{theorem}[DS-theorem]
\label{ThmDSTheorem}
There is the 2-equivalence: 
\begin{align*}
\mathsf{D} : \mathsf{Ctx}\mathbb{CCC} &\simeq \mathsf{ConCtx}\mathbb{CCC_D} : \mathsf{S}.
\end{align*}
\end{theorem}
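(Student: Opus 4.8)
The plan is to produce two invertible 2-natural transformations, one $\mathrm{Id}_{\mathsf{Ctx}\mathbb{CCC}} \cong \mathsf{S}\circ\mathsf{D}$ and one $\mathsf{D}\circ\mathsf{S} \cong \mathrm{Id}_{\mathsf{ConCtx}\mathbb{CCC_D}}$, and to conclude from them that $\mathsf{D}$ and $\mathsf{S}$ constitute a 2-equivalence. Since both 2-functors are strict and the well-definedness of each is already in hand (Lemma~\ref{WellDefinedD} and the companion lemma for $\mathsf{S}$), I expect all comparison 2-cells to be honest isomorphisms, so that only their naturality and the relevant structure-preservation must be verified. The conceptual content is a single dictionary: a \emph{constant} D-object $A$ over $\Gamma$ in a CtxCCCwD is determined by its core $\underline{A}\in\mathscr{D}_{\mathcal{C}}(T)$, while $\mathsf{S}$ turns such a core into the \emph{atomic} object $T.\underline{A}$; the two transformations are just the two directions of this correspondence, made functorial.

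First I would treat $\mathsf{S}\circ\mathsf{D}$. For a CtxCCC $\mathcal{C}$ the 0-cell $\mathsf{S}(\mathsf{D}(\mathcal{C}))$ has, by construction, the same underlying category and terminal object as $\mathcal{C}$. Its atomic objects are $\{T\}\cup\{T.\underline{A}\mid A\in\mathscr{A}_{\mathcal{C}}\}$; since in $\mathsf{D}(\mathcal{C})$ every D-object is constant with $\underline{A}=A$, and since contextuality of $\mathcal{C}$ gives the length-one decomposition $A=T\times A=T.\underline{A}$ for atomic $A$, these are exactly $\mathscr{A}_{\mathcal{C}}$. The point of the right-shifting operators is that they are trivial on atomic data, so the inductive product and exponential formulas defining $\mathsf{S}$ telescope: the semi-$\Sigma$-space $\Delta.\Gamma=\Delta\times\Gamma$ of $\mathsf{D}(\mathcal{C})$ is sent back to the specified product of $\mathcal{C}$, and the length-induction on exponentials, fed by the $\Pi$-spaces of $\mathsf{D}(\mathcal{C})$ which are by definition the exponentials and evaluations of $\mathcal{C}$, returns the specified exponentials of $\mathcal{C}$. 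On 1-cells and 2-cells $\mathsf{D}$ merely duplicates the object and arrow maps and $\mathsf{S}$ restricts them, whence $\mathsf{S}(\mathsf{D}(F))=F$ and $\mathsf{S}(\mathsf{D}(\eta))=\eta$; so this composite is the identity, and the comparison 2-cell may be taken to be the identity, the only thing to track being any residual coherence isomorphisms of the chosen products and exponentials.

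Next I would build the comparison $\theta\colon\mathsf{D}\circ\mathsf{S}\Rightarrow\mathrm{Id}$. For a constant CtxCCCwD $\mathcal{D}$, the 0-cell $\mathsf{D}(\mathsf{S}(\mathcal{D}))$ shares the underlying category and morphisms of $\mathcal{D}$, but its D-objects over $\Gamma$ are the atomic objects $\{T\}\cup\{T.\underline{A}\}$ of $\mathsf{S}(\mathcal{D})$ and its D-morphisms $\Gamma\rightarrowtriangle\Theta$ are plain morphisms $\Gamma\to\Theta$ of $\mathcal{D}$. I would let the component $\theta_{\mathcal{D}}\colon\mathsf{D}(\mathsf{S}(\mathcal{D}))\to\mathcal{D}$ be the identity on objects and morphisms, send a D-object $T.\underline{A}$ to $\underline{A}\{!_\Gamma\}\in\mathscr{D}_{\mathcal{D}}(\Gamma)$ (and $T$ to the unit D-object), and send a D-morphism $\phi\colon\Gamma\to T.\underline{A}$ to $\pi_2^{T.\underline{A}}\{\phi\}\colon\Gamma\rightarrowtriangle\underline{A}\{!_\Gamma\}$, with inverse sending $g\colon\Gamma\rightarrowtriangle\underline{A}\{!_\Gamma\}$ to $\langle!_\Gamma,g\rangle$. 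That $\theta_{\mathcal{D}}$ is an isomorphism of CCCwDs rests on the observation that the comprehension $\Delta\times(T.\underline{A})$ built by $\mathsf{S}$ equals the semi-$\Sigma$-space $\Delta.(\underline{A}\{!_\Delta\})$ of $\mathcal{D}$ on the nose (the right-shift of an atomic being trivial), so that $\theta_{\mathcal{D}}$ preserves $\pi_1$ and, using $\pi_2^{T.\underline{A}}\{\langle!,\pi_2\rangle\}=\pi_2$, also $\pi_2$; the unit- and $\Sigma$-space cases go the same way, and for $\Pi$-spaces I would invoke the transposition bijection of Lemma~\ref{LemTranspositionInPseudoPiSpaces} together with the coherence axioms (\ref{DcompPi1}) and (\ref{DcompPi2}). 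Naturality of $\theta$ in the 1-cell direction, $\theta_{\mathcal{D}'}\circ\mathsf{D}(\mathsf{S}(F))=F\circ\theta_{\mathcal{D}}$, follows from Lemma~\ref{LemCtxSCCwDLemma} (the identities $F(!_\Gamma)=\ !_{F(\Gamma)}$ and $F(\underline{A})=\underline{F(A)}$), and naturality against a 2-cell $(\eta,t)$ reduces, after translation by $\theta$, to the fact recorded for NTwDs between strict SCFwDs that $t$ is determined by $\eta$ via $t_{\Gamma,A}=\pi_2\{\eta_{\Gamma.A}\}$.

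The hard part will be the exponential bookkeeping inside $\theta_{\mathcal{D}}$. The $\Pi$-spaces of $\mathsf{D}(\mathsf{S}(\mathcal{D}))$ are, by definition, the exponentials of $\mathsf{S}(\mathcal{D})$, and these are assembled by induction on the length of the codomain out of the $\Pi$-spaces, D-evaluations and right-shiftings of $\mathcal{D}$. Showing that this nested construction, transported back along $\theta_{\mathcal{D}}$, reproduces exactly the original $\Pi$-spaces $\Pi(A,B)$ of $\mathcal{D}$ together with their D-evaluations and D-curryings is the one place where the length-induction, the coherence axioms (\ref{DcompPi1})--(\ref{DcompPi2}), and the strictness equation (\ref{EqStrictSCCwDs}) must all be marshalled at once; this is the step I would write out in full detail, since the remaining checks are essentially the computations already carried out in establishing that $\mathsf{D}$ and $\mathsf{S}$ are well-defined.
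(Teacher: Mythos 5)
Your proposal is correct and follows essentially the same route as the paper: the unit is the identity CtxCCF on each $\mathcal{C}$, and the counit component at $\mathcal{D}$ is exactly the paper's $\epsilon_{\mathcal{D}}$ (identity on objects and morphisms, $T.\underline{A}\mapsto\underline{A}\{!_\Gamma\}$, $f\mapsto\pi_2\{f\}$ with inverse $g\mapsto\langle!_\Gamma,g\rangle$), with naturality likewise reduced to Lemma~\ref{LemCtxSCCwDLemma} and the determination of $t$ by $\eta$ for strict SCFwDs. The only difference is that you propose to write out the exponential/right-shifting bookkeeping in full, which the paper delegates to the well-definedness lemma for $\mathsf{S}$ and leaves implicit here.
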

\begin{proof}
First, the 2-natural isomorphism $\eta : \mathit{id}_{\mathsf{Ctx}\mathbb{CCC}} \stackrel{\sim}{\to} \mathsf{S} \circ \mathsf{D}$ has the identity CtxCCF $\mathit{id}_{\mathcal{C}} : \mathcal{C} \stackrel{\sim}{\rightarrow} \mathcal{C}$ as the component $\eta_{\mathcal{C}} : \mathcal{C} \stackrel{\sim}{\to} \mathsf{S} \circ \mathsf{D}(\mathcal{C})$ for each CtxCCC $\mathcal{C}$, whose naturality is obvious.

Next, the 2-natural isomorphism $\epsilon : \mathsf{D} \circ \mathsf{S} \stackrel{\sim}{\rightarrow} \mathit{id}_{\mathsf{ConCtx}\mathbb{CCC_D}}$ has for each constant CtxCCCwD $\mathcal{D}$ as the component $\epsilon_{\mathcal{D}} : \mathsf{D} \circ \mathsf{S}(\mathcal{D}) \to \mathcal{D}$ the strict CCFwD $\mathsf{D} \circ \mathsf{S}(\mathcal{D}) \to \mathcal{D}$ that maps objects and morphisms in $\mathsf{D} \circ \mathsf{S}(\mathcal{D})$ to themselves, D-objects $T . \underline{A} \in \mathscr{D}_{\mathsf{D} \circ \mathsf{S}(\mathcal{D})}(\Gamma)$ (resp. $T \in \mathscr{D}_{\mathsf{D} \circ \mathsf{S}(\mathcal{D})}(\Gamma)$) to $\underline{A}\{!_\Gamma\} \in \mathscr{D}_{\mathcal{D}}(\Gamma)$ (resp. $1\{!_\Gamma\} \in \mathscr{D}_{\mathcal{D}}(\Gamma)$), and D-morphisms $f \in \mathscr{D}_{\mathsf{D} \circ \mathsf{S}(\mathcal{D})}(\Gamma, T.\underline{A})$ (resp. $!_\Gamma \in \mathscr{D}_{\mathsf{D} \circ \mathsf{S}(\mathcal{D})}(\Gamma, T)$) to $\pi_2\{f\} \in \mathscr{D}_{\mathcal{D}}(\Gamma, \underline{A}\{!_\Gamma\})$ (resp. $\bm{!}_\Gamma \in \mathscr{D}_{\mathcal{D}}(\Gamma, 1\{!_\Gamma\})$), which has the obvious inverse.
For naturality of $\epsilon$, let $\mathcal{D}, \mathcal{D'} \in \mathsf{ConCtx}\mathbb{CCC_D}$; then, it is easy to see that the diagram
\begin{diagram}
[\mathcal{D}, \mathcal{D'}] && \rTo^{\mathsf{D} \circ \mathsf{S}} && [\mathsf{D} \circ \mathsf{S} (\mathcal{D}), \mathsf{D} \circ \mathsf{S} (\mathcal{D'})] \\
&&&& \\
\dTo^{\mathit{id}_{\mathsf{ConCtx}\mathbb{CCC_D}}} &&&& \dTo_{\mathsf{ConCtx}\mathbb{CCC_D}(\mathsf{D} \circ \mathsf{S} (\mathcal{D}), \epsilon_{\mathcal{D'}})} \\
&&&& \\
[\mathcal{D}, \mathcal{D'}] && \rTo^{\mathsf{ConCtx}\mathbb{CCC_D}(\epsilon_{\mathcal{D}}, \mathcal{D'})} && [\mathsf{D} \circ \mathsf{S} (\mathcal{D}), \mathcal{D'}]
\end{diagram}
commutes by Lemma~\ref{LemCtxSCCwDLemma}, which completes the proof.
\end{proof}

In particular, the 2-natural isomorphism $\epsilon : \mathsf{D} \circ \mathsf{S} \stackrel{\sim}{\rightarrow} \mathit{id}_{\mathsf{ConCtx}\mathbb{CCC_D}}$ in the proof of Theorem~\ref{ThmDSTheorem} induces for each constant CtxCCCwD $\mathcal{D}$, an object $\Gamma \in \mathcal{D}$ and a D-object $A \in \mathscr{D}_{\mathcal{D}}(\Gamma)$ the bijective correspondence
\begin{equation*}
\mathscr{D}_{\mathcal{D}}(\Gamma, A)  \cong \mathcal{D}(\Gamma, T . \underline{A})
\end{equation*}
between D-morphisms $\Gamma \rightarrowtriangle A$ and morphisms $\Gamma \rightarrow T . \underline{A}$ in $\mathcal{D}$.
In the term models of STLCs, this explains why we may identify terms and singleton context morphisms. 

Combined with the main result in the next section, we may recover the conventional interpretation of STLCs in CCCs from our interpretation of them in CtxCCCwDs.

\section{Correspondence between MLTTs and CCCwDs}
\label{TheoryCategoryCorrespondenceBetweenMLTTsAndCCCwDs}
In this final section, analogous to the 2-categorical correspondence between CCCs and STLCs, we establish a 2-categorical correspondence between CCCwDs and MLTTs.

\subsection{Algebras for MLTTs}
\begin{definition}[Structures for MLTTs]
Let $\mathcal{T}$ be an MLTT and $\mathcal{C} = (\mathcal{C}, T, \_ . \_, \pi, 1, \Sigma, \varpi, \Pi, \mathit{dev})$ a strict CCCwD. 
A \emph{\bfseries structure} for $\mathcal{T}$ in $\mathcal{C}$ is an assignment $S$ of:
\begin{itemize}

\item A pair of an object $S(\mathscr{F}_{\mathcal{T}}(\mathsf{C})) \in \mathcal{C}$ and a D-object $S(\mathsf{C}) \in \mathscr{D}_{\mathcal{C}}(S(\mathscr{F}_{\mathcal{T}}(\mathsf{C})))$ to each type-constant $\mathsf{C} \in C_{\mathcal{T}}$;

\item A triple of an object $S(\mathscr{F}^{\mathit{dom}}_{\mathcal{T}}(\mathsf{F})) \in \mathcal{C}$, a D-object $S(\mathscr{F}^{\mathit{cod}}_{\mathcal{T}}(\mathsf{F})) \in \mathscr{D}_{\mathcal{C}}(S(\mathscr{F}^{\mathit{dom}}_{\mathcal{T}}(\mathsf{F})))$ and a D-morphism $S(\mathsf{F}) \in \mathscr{D}_{\mathcal{C}}(S(\mathscr{F}^{\mathit{dom}}_{\mathcal{T}}(\mathsf{F})),  S(\mathscr{F}^{\mathit{cod}}_{\mathcal{T}}(\mathsf{F})))$ to each term-constant $\mathsf{F} \in C_{\mathcal{T}}$.

\end{itemize}
\end{definition}

\begin{definition}[The partial interpretation of MLTTs by structures]
Let $\mathcal{T}$ be an MLTT and $\mathcal{C} = (\mathcal{C}, T, \_ . \_, \pi, 1, \Sigma, \varpi, \Pi, \mathit{dev})$ a strict CCCwD.
Any structure $S$ for $\mathcal{T}$ in $\mathcal{C}$ induces the partial interpretation $\llbracket \_ \rrbracket_S$ of $\mathsf{MLTT(1, \Pi, \Sigma)}$ in the induced CwF $\mathcal{C}$ that supports $1$-, $\Pi$- and $\Sigma$-types in the strict sense that extends Definition~\ref{DefMLTTInCwFs} to $\mathcal{T}$ by:
\begin{align*}
\llbracket \mathsf{C(\bm{\mathsf{f}})} \rrbracket_S &\stackrel{\mathrm{df. }}{\simeq} S(\mathsf{C}) \{ \llbracket \bm{\mathsf{f}} \rrbracket \}_{\mathcal{C}} \\
\llbracket \mathsf{F(\bm{\mathsf{f}})} \rrbracket_S &\stackrel{\mathrm{df. }}{\simeq} S(\mathsf{F}) \{ \llbracket \bm{\mathsf{f}} \rrbracket \}_{\mathcal{C}}
\end{align*}
for all type-constants $\mathsf{C} \in C_{\mathcal{T}}(\mathsf{A_1}, \mathsf{A_2}, \dots, \mathsf{A_k})$, term-constants $\mathsf{F} \in C_{\mathcal{T}}(\mathsf{A_1}, \mathsf{A_2}, \dots, \mathsf{A_k}; \mathsf{B})$ and pre-context morphisms $\bm{\mathsf{f}}$.
\end{definition}

\begin{remark}
It is partially defined because  $\bm{\mathsf{f}}$ may not be a well-defined context morphism with the appropriate domain and codomain.
\end{remark}

\begin{definition}[Algebras for MLTTs]
An \emph{\bfseries algebra} for an MLTT $\mathcal{T}$ (or a \emph{\bfseries $\bm{\mathcal{T}}$-algebra}) in a strict CCCwD $\mathcal{C}$ is a structure $S$ for $\mathcal{T}$ in $\mathcal{C}$ such that the induced partial interpretation $\llbracket \_ \rrbracket_S$ satisfies:
\begin{itemize}

\item If $\mathsf{C} \in C_{\mathcal{T}}(\mathsf{A_1}, \mathsf{A_2}, \dots, \mathsf{A_k})$ is a type-constant, and $\mathsf{\vdash_{\mathcal{T}} x_1 : A_1, x_2 : A_2, \dots, x_k : A_k \ ctx}$, then $\llbracket \mathsf{x_1 : A_1, x_2 : A_2, \dots, x_k : A_k} \rrbracket_S \simeq S(\mathscr{F}_{\mathcal{T}}(\mathsf{C}))$\footnote{It of course means that both sides are defined because so is the right-hand side. The same remark is applied to similar cases below.};

\item If $\mathsf{F} \in C_{\mathcal{T}}(\mathsf{A_1}, \mathsf{A_2}, \dots, \mathsf{A_k}; \mathsf{B})$ is a term-constant, and $\mathsf{x_1 : A_1, x_2 : A_2, \dots, x_k : A_k \vdash_{\mathcal{T}} B \ type}$, then $\llbracket \mathsf{x_1 : A_1, x_2 : A_2, \dots, x_k : A_k} \rrbracket_S \simeq S(\mathscr{F}^{\mathit{dom}}_{\mathcal{T}}(\mathsf{F}))$ and $\llbracket \mathsf{B} \rrbracket_S \simeq S(\mathscr{F}^{\mathit{cod}}_{\mathcal{T}}(\mathsf{F}))$;

\item If $\mathsf{A = A' : Ty} \in A_{\mathcal{T}}$, then $\llbracket \mathsf{A} \rrbracket_S \simeq \llbracket \mathsf{A'} \rrbracket_S$;

\item If $\mathsf{a = a' : A} \in A_{\mathcal{T}}$, then $\llbracket \mathsf{a} \rrbracket_S \simeq \llbracket \mathsf{a'} \rrbracket_S$.

\end{itemize}
\end{definition}

\begin{lemma}[Substitution lemma]
\label{LemSubstLem}
Let $\mathcal{T}$ be an MLTT.
Given an algebra $S$ for $\mathcal{T}$ in a strict CCCwD $\mathcal{C}$, the induced interpretation $\llbracket \_ \rrbracket_S$ of $\mathcal{T}$ in $\mathcal{C}$ satisfies:
\begin{align*}
\llbracket \mathsf{A} \rrbracket_S \{ \llbracket \bm{\mathsf{f}} \rrbracket_S \} &\simeq \llbracket \mathsf{A[\bm{\mathsf{f}}]} \rrbracket_S \\
\llbracket \mathsf{a} \rrbracket_S \{ \llbracket \bm{\mathsf{f}} \rrbracket_S \} &\simeq \llbracket \mathsf{a[\bm{\mathsf{f}}]} \rrbracket_S \\
\llbracket \bm{\mathsf{g}} \rrbracket_S \{ \llbracket \bm{\mathsf{f}} \rrbracket_S \} &\simeq \llbracket \bm{\mathsf{g}} \circ \bm{\mathsf{f}} \rrbracket_S
\end{align*}
for all pre-types $\mathsf{A}$, pre-terms $\mathsf{a}$ and pre-context morphisms $\bm{\mathsf{f}}$ and $\bm{\mathsf{f}}$ in $\mathcal{T}$.
\end{lemma}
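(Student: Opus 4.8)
The plan is to prove the three equations simultaneously by a single induction on the length of the pre-expression being substituted into (the pre-type $\mathsf{A}$, the pre-term $\mathsf{a}$, or the pre-context morphism $\bm{\mathsf{g}}$), since these three are defined by mutual recursion in the pre-syntax grammar. Because $\llbracket \_ \rrbracket_S$ is a \emph{partial} interpretation, each equation must be read as Kleene equality $\simeq$, so at every step I must argue that the left-hand side is defined if and only if the right-hand side is, and that they agree when defined. The key structural fact I will lean on throughout is that generalized substitution $\_[\bm{\mathsf{f}}]$ commutes with the syntactic constructors (it distributes over $\mathsf{\Pi}$, $\mathsf{\Sigma}$, application, pairing, $\lambda$-abstraction, constants, etc.), while the semantic D-composition $\_\{\_\}$ satisfies the corresponding functoriality axioms $A\{\phi \circ \varphi\} = A\{\phi\}\{\varphi\}$ and $a\{\phi \circ \varphi\} = a\{\phi\}\{\varphi\}$ from the definition of a CwD, together with the coherence axioms for $\Sigma$- and $\Pi$-spaces, namely (\ref{Coh0Sigma})--(\ref{Coh2Sigma}) and (\ref{DcompPi1})--(\ref{DcompPi2}).

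First I would handle the base cases. For a variable $\mathsf{x_j}$, the substitution $\mathsf{x_j}[\bm{\mathsf{f}}]$ selects the component $\mathsf{f_j}$, and semantically $\llbracket \mathsf{x_j} \rrbracket_S \{ \llbracket \bm{\mathsf{f}} \rrbracket_S \}$ reduces via the projection equations \textsc{(Cons-L)}/\textsc{(Cons-R)} (equivalently the semi-$\Sigma$-space universal property) to $\llbracket \mathsf{f_j} \rrbracket_S = \llbracket \mathsf{x_j}[\bm{\mathsf{f}}] \rrbracket_S$; the empty pre-context morphism $\bm{\epsilon}$ and the star $\mathsf{\star}$ are immediate from the unit D-object and terminal-object structure. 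For the inductive step on compound pre-types and pre-terms, I would treat each grammar clause in turn: for $\mathsf{\Pi_{x:A_1}A_2}$ and $\mathsf{\Sigma_{x:A_1}A_2}$ the argument is that $\llbracket \mathsf{\Pi_{x:A_1}A_2} \rrbracket_S\{\llbracket\bm{\mathsf{f}}\rrbracket_S\} = \Pi(\llbracket A_1\rrbracket_S, \llbracket A_2\rrbracket_S)\{\llbracket\bm{\mathsf{f}}\rrbracket_S\}$ equals $\Pi(\llbracket A_1\rrbracket_S\{\llbracket\bm{\mathsf{f}}\rrbracket_S\}, \llbracket A_2\rrbracket_S\{\llbracket\bm{\mathsf{f}}\rrbracket_S^{+}\})$ by coherence (\ref{DcompPi1}), and then the induction hypothesis applied to $A_1$ and $A_2$ (the latter under the weakened morphism $\llbracket\bm{\mathsf{f}}\rrbracket_S^{+}$) identifies this with $\llbracket \Pi_{x:A_1}A_2[\bm{\mathsf{f}}]\rrbracket_S$; the $\mathsf{\Sigma}$ case is identical using (\ref{Coh0Sigma}). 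The constant clauses $\mathsf{C}\bm{\mathsf{f'}}$ and $\mathsf{F}\bm{\mathsf{f'}}$ are where the third equation feeds back in: by definition $\llbracket \mathsf{C}\bm{\mathsf{f'}} \rrbracket_S \simeq S(\mathsf{C})\{\llbracket\bm{\mathsf{f'}}\rrbracket_S\}$, so $\llbracket \mathsf{C}\bm{\mathsf{f'}}\rrbracket_S\{\llbracket\bm{\mathsf{f}}\rrbracket_S\} \simeq S(\mathsf{C})\{\llbracket\bm{\mathsf{f'}}\rrbracket_S\}\{\llbracket\bm{\mathsf{f}}\rrbracket_S\} \simeq S(\mathsf{C})\{\llbracket\bm{\mathsf{f'}}\rrbracket_S \circ \llbracket\bm{\mathsf{f}}\rrbracket_S\}$ by CwD-functoriality, and the composition equation (third line of the lemma) together with $\mathsf{C}\bm{\mathsf{f'}}[\bm{\mathsf{f}}] \equiv \mathsf{C}(\bm{\mathsf{f'}}\circ\bm{\mathsf{f}})$ closes the case.

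The third equation, for pre-context morphisms, is proved by the parallel induction on the length of $\bm{\mathsf{g}}$ using the clause $\bm{\mathsf{g}} \equiv \bm{\mathsf{g'}} \ast \mathsf{a}$: here $\llbracket \bm{\mathsf{g'}}\ast\mathsf{a}\rrbracket_S = \langle \llbracket\bm{\mathsf{g'}}\rrbracket_S, \llbracket\mathsf{a}\rrbracket_S\rangle$, and composing with $\llbracket\bm{\mathsf{f}}\rrbracket_S$ on the right I would use the naturality of semi-dependent pairing, equation (\ref{ConsNat}) of Lemma~\ref{LemSemiSigmaPairings}, to push the composition inside the pairing, then invoke the induction hypotheses for $\bm{\mathsf{g'}}$ and for the term $\mathsf{a}$. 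I expect the main obstacle to be the bookkeeping of partiality and weakening: I must verify at each compound step that the domains and codomains of the semantic morphisms line up so that the weakened substitution $\llbracket\bm{\mathsf{f}}\rrbracket_S^{+}$ is the correct morphism appearing in the coherence axioms, and that definedness genuinely propagates in both directions (the subtle direction being that definedness of $\llbracket\mathsf{A}[\bm{\mathsf{f}}]\rrbracket_S$ forces definedness of the two factors $\llbracket\mathsf{A}\rrbracket_S$ and $\llbracket\bm{\mathsf{f}}\rrbracket_S$). This $\simeq$-tracking, rather than any single deep identity, is the genuinely delicate part; the algebraic identities themselves are routine consequences of the CwD axioms and the $\Sigma$/$\Pi$ coherence already established.
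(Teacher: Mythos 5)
Your proof is correct and follows essentially the same route as the paper: a mutual induction on the lengths of the pre-syntax, with the new constant cases $\mathsf{C}\bm{\mathsf{h}}$ and $\mathsf{F}\bm{\mathsf{h}}$ discharged exactly as you do --- via the definition of $\llbracket \_ \rrbracket_S$ on constants, the CwD composition axiom $X\{\phi \circ \varphi\} = X\{\phi\}\{\varphi\}$, and the induction hypothesis for the composition equation on pre-context morphisms. The only difference is that the paper delegates every case belonging to the core $\mathsf{MLTT(1, \Pi, \Sigma)}$ fragment (variables, $\mathsf{1}$, $\mathsf{\Pi}$, $\mathsf{\Sigma}$, and the $\bm{\mathsf{g}} \equiv \bm{\mathsf{g'}} \ast \mathsf{a}$ clause) to \cite{hofmann1997syntax} and writes out only the constant cases, whereas you re-derive those standard cases explicitly from the coherence axioms and Lemma~\ref{LemSemiSigmaPairings}.
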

\begin{proof}
Since the lemma has been established for $\mathsf{MLTT(1, \Pi, \Sigma)}$ by induction on the lengths of the pre-syntax $\mathsf{A}$, $\mathsf{a}$ and $\bm{\mathsf{g}}$ in \cite{hofmann1997syntax}, it suffices to consider the case $\mathsf{A} \equiv \mathsf{C}(\bm{\mathsf{h}})$ and $\mathsf{a} \equiv \mathsf{F}(\bm{\mathsf{h}})$ for some $\mathsf{C} \in C_{\mathcal{T}}^{\mathsf{Ty}}$ and $\mathsf{F} \in C_{\mathcal{T}}^{\mathsf{Tm}}$.
Then, observe that:
\begin{align*}
\llbracket \mathsf{C}(\bm{\mathsf{h}}) \rrbracket_S \{ \llbracket \bm{\mathsf{f}} \rrbracket_S \} &\simeq \llbracket \mathsf{C} \rrbracket_S \{ \llbracket \bm{\mathsf{h}} \rrbracket_S \} \{ \llbracket \bm{\mathsf{f}} \rrbracket_S \} \ \text{(by the definition of the induced interpretation $\llbracket \_ \rrbracket_S$)} \\
&\simeq \llbracket \mathsf{C} \rrbracket_S \{ \llbracket \bm{\mathsf{h}} \rrbracket_S \circ \llbracket \bm{\mathsf{f}} \rrbracket_S \} \ \text{(by an axiom of CwDs)} \\
&\simeq \llbracket \mathsf{C} \rrbracket_S \{ \llbracket \bm{\mathsf{h}} \circ \bm{\mathsf{f}} \rrbracket_S \} \ \text{(by the induction hypothesis)} \\
&\simeq \llbracket \mathsf{C(\bm{\mathsf{h}} \circ \bm{\mathsf{f}})} \rrbracket_S \ \text{(by the definition of $\llbracket \_ \rrbracket$)} \\
&\simeq \llbracket \mathsf{C(\bm{\mathsf{h}})[\bm{\mathsf{f}}]} \rrbracket_S \ \text{(by the definition of partial composition of pre-context morphisms)}
\end{align*}
for any pre-context morphism $\bm{\mathsf{f}}$ in $\mathcal{T}$.
In the same manner, we may show that $\llbracket \mathsf{F}(\bm{\mathsf{h}}) \rrbracket_S \{ \llbracket \bm{\mathsf{f}} \rrbracket_S \} \simeq \llbracket \mathsf{F(\bm{\mathsf{h}})[\bm{\mathsf{f}}]} \rrbracket_S$ for any pre-context morphism $\bm{\mathsf{f}}$ in $\mathcal{T}$.

Finally, $\llbracket \bm{\mathsf{g}} \rrbracket_S \{ \llbracket \bm{\mathsf{f}} \rrbracket_S \} \simeq \llbracket \bm{\mathsf{g}} \circ \bm{\mathsf{f}} \rrbracket_S$ for any pre-context morphisms $\bm{\mathsf{f}}$ and $\bm{\mathsf{g}}$ in $\mathcal{T}$ may be shown by induction on the length of $\bm{\mathsf{g}}$, completing the proof.
\end{proof}

\begin{theorem}[Soundness of algebras for MLTTs]
The interpretation $\llbracket \_ \rrbracket_S$ of an MLTT $\mathcal{T}$ in a strict CCCwD $\mathcal{C}$ induced by an algebra $S$ for $\mathcal{T}$ in $\mathcal{C}$ is sound.
\end{theorem}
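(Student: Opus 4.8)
The plan is to establish soundness in the standard two-fold sense of Streicher and Hofmann: the partial interpretation $\llbracket \_ \rrbracket_S$ is \emph{total and well-defined} on every derivable judgement of $\mathcal{T}$, and it \emph{preserves judgmental equality}. Concretely, I would prove by a single simultaneous induction the statements that, whenever $\mathsf{\vdash_{\mathcal{T}} \Gamma \ ctx}$, $\mathsf{\Gamma \vdash_{\mathcal{T}} A \ type}$ and $\mathsf{\Gamma \vdash_{\mathcal{T}} a : A}$, the values $\llbracket \mathsf{\Gamma} \rrbracket_S$, $\llbracket \mathsf{A} \rrbracket_S \in \mathscr{D}_{\mathcal{C}}(\llbracket \mathsf{\Gamma} \rrbracket_S)$ and $\llbracket \mathsf{a} \rrbracket_S \in \mathscr{D}_{\mathcal{C}}(\llbracket \mathsf{\Gamma} \rrbracket_S, \llbracket \mathsf{A} \rrbracket_S)$ are all defined, together with the matching clauses for the three equality judgements (e.g. $\mathsf{\vdash_{\mathcal{T}} \Gamma = \Gamma' \ ctx}$ forces $\llbracket \mathsf{\Gamma} \rrbracket_S = \llbracket \mathsf{\Gamma'} \rrbracket_S$).

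The essential reduction is that on the fragment $\mathsf{MLTT(1, \Pi, \Sigma)}$ the interpretation $\llbracket \_ \rrbracket_S$ coincides with Definition~\ref{DefMLTTInCwFs} in the CwF induced by the strict CCCwD $\mathcal{C}$, which supports $1$-, $\Sigma$- and $\Pi$-types in the strict sense by Theorems~\ref{ThmUnit}, \ref{ThmSigma} and \ref{ThmPi}. Thus Theorem~\ref{ThmSoundnessOfCwFs} already discharges every inductive case arising from the rules of Sections~\ref{Contexts}--\ref{DependentPairTypes}, and the only new cases are Type-Const, Term-Const, Type-Eq and Term-Eq.

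For Type-Const, given $\mathsf{C} \in C_{\mathcal{T}}(\mathsf{A_1}, \dots, \mathsf{A_k})$ with $\mathsf{\Delta} \equiv \mathsf{x_1 : A_1, \dots, x_k : A_k}$ and $\bm{\mathsf{f}} : \mathsf{\Gamma} \to \mathsf{\Delta}$, the induction hypothesis supplies $\llbracket \bm{\mathsf{f}} \rrbracket_S : \llbracket \mathsf{\Gamma} \rrbracket_S \to \llbracket \mathsf{\Delta} \rrbracket_S$, while the algebra condition forces $\llbracket \mathsf{\Delta} \rrbracket_S \simeq S(\mathscr{F}_{\mathcal{T}}(\mathsf{C}))$, which is precisely the base of $S(\mathsf{C})$; hence $\llbracket \mathsf{C\bm{\mathsf{f}}} \rrbracket_S = S(\mathsf{C})\{\llbracket \bm{\mathsf{f}} \rrbracket_S\}$ is a well-defined D-object over $\llbracket \mathsf{\Gamma} \rrbracket_S$. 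Term-Const is analogous, using the domain and codomain clauses of the algebra condition and invoking Lemma~\ref{LemSubstLem} to identify $\llbracket \mathsf{B}[\bm{\mathsf{f}}] \rrbracket_S$ with $\llbracket \mathsf{B} \rrbracket_S\{\llbracket \bm{\mathsf{f}} \rrbracket_S\}$, so that the typing of $S(\mathsf{F})\{\llbracket \bm{\mathsf{f}} \rrbracket_S\}$ matches. For Type-Eq and Term-Eq, the algebra gives $\llbracket \mathsf{A} \rrbracket_S \simeq \llbracket \mathsf{A'} \rrbracket_S$ and $\llbracket \mathsf{a} \rrbracket_S \simeq \llbracket \mathsf{a'} \rrbracket_S$ over the base context; applying the D-composition $\{ \llbracket \bm{\mathsf{f}} \rrbracket_S \}$ and using Lemma~\ref{LemSubstLem} transports the equalities along $\bm{\mathsf{f}}$ to yield $\llbracket \mathsf{A}[\bm{\mathsf{f}}] \rrbracket_S \simeq \llbracket \mathsf{A'}[\bm{\mathsf{f}}] \rrbracket_S$ and likewise for terms.

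The hard part will be the same coherence issue that forced Definition~\ref{DefMLTTInCwFs} to be phrased on pre-syntax: because of Ty-Con and Tm-Con a judgement may possess several derivations, so induction on derivation trees does not by itself show the interpretation is independent of the chosen derivation. I would handle this exactly as Hofmann does, by threading the totality and the equality-preservation statements through one and the same induction, so that the two conversion rules are absorbed by the equality clauses, and then reading off the stated soundness afterwards; with Lemma~\ref{LemSubstLem} available, the remaining work of matching the domains and codomains of the D-objects and D-morphisms produced by the constant rules is purely routine.
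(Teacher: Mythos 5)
Your proposal is correct and follows essentially the same route as the paper's proof: reduce to the four new rules Type-Const, Term-Const, Type-Eq and Term-Eq (the $\mathsf{MLTT(1,\Pi,\Sigma)}$ fragment being covered by the CwF soundness theorem), then discharge Type-Const via the induction hypothesis on $\bm{\mathsf{f}}$ together with the algebra condition $S(\mathscr{F}_{\mathcal{T}}(\mathsf{C})) = \llbracket \mathsf{\Delta} \rrbracket_S$, and handle Term-Const with Lemma~\ref{LemSubstLem}. Your explicit treatment of the derivation-independence issue arising from Ty-Con and Tm-Con is a welcome elaboration of what the paper leaves implicit, but it does not change the argument.
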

\begin{proof}
It suffices to consider the additional four rules: Type-Const, Term-Const, Type-Eq and Term-Eq.
Let us first consider a type $\mathsf{\Gamma \vdash C(\bm{\mathsf{f}}) \ type}$ generated by the rule Type-Const, where $\mathsf{C} \in C_{\mathcal{T}}(\mathsf{A_1}, \mathsf{A_2}, \dots, \mathsf{A_k})$.
Since $\mathcal{T}$ is well-formed, $\bm{\mathsf{f}}$ is a context morphism $\mathsf{\Gamma} \to \mathsf{\Delta}$ in $\mathcal{T}$, where $\mathsf{\Delta \stackrel{\mathrm{df. }}{\equiv} x_1 : A_1, x_2 : A_2, \dots, x_k : A_k}$.
By the induction hypothesis, $\llbracket \bm{\mathsf{f}} \rrbracket_S$ is a morphism $\llbracket \mathsf{\Gamma} \rrbracket_S \to \llbracket \mathsf{\Delta} \rrbracket_S$ in $\mathcal{C}$.
Also, $S(\mathscr{F}_{\mathcal{T}}(\mathsf{C})) = \llbracket \mathsf{\Delta} \rrbracket_S$ for $S$ is a $\mathcal{T}$-algebra.
Therefore, we have: 
\begin{equation*}
\llbracket \mathsf{C}(\bm{\mathsf{f}}) \rrbracket_S = S(\mathsf{C}) \{ \llbracket \bm{\mathsf{f}} \rrbracket_S \} \in \mathscr{D}_{\mathcal{C}}(\llbracket \mathsf{\Gamma} \rrbracket_S).
\end{equation*}

Similarly, with the help of Lemma~\ref{LemSubstLem}, we may handle the rule Term-Const.
Finally, the remaining two rules Type-Eq and Term-Eq are even simpler to deal with.
\end{proof}

\begin{definition}[Generic models of MLTTs]
Let $\mathcal{T}$ be an MLTT.
The \emph{\bfseries generic model} of $\mathcal{T}$ is a structure $G$ for $\mathcal{T}$ in the classifying CCCwD $\mathit{Cl}(\mathcal{T})$ defined by:
\begin{align*}
G(\mathsf{A_1}, \mathsf{A_2}, \dots, \mathsf{A_k})) &\stackrel{\mathrm{df. }}{=} \mathsf{\vdash x_1 : A_1, x_2 : A_2, \dots, x_k : A_k \ ctx} \\
G(\mathsf{C}) &\stackrel{\mathrm{df. }}{=} \mathsf{x_1 : A_1, x_2 : A_2, \dots, x_k : A_k \vdash C(x_1, x_2, \dots, x_k) \ type} \\
G(\mathsf{B}) &\stackrel{\mathrm{df. }}{=} \mathsf{x_1 : A_1, x_2 : A_2, \dots, x_k : A_k \vdash B[(x_1, x_2, \dots, x_k)] \ type} \\
G(\mathsf{F}) &\stackrel{\mathrm{df. }}{=} \mathsf{x_1 : A_1, x_2 : A_2, \dots, x_k : A_k \vdash F(x_1, x_2, \dots, x_k) : B[(x_1, x_2, \dots, x_k)]}
\end{align*}
for all type-constants $\mathsf{C} \in C_{\mathcal{T}}(\mathsf{A_1}, \mathsf{A_2}, \dots, \mathsf{A_k})$ and term-constants $\mathsf{F} \in C_{\mathcal{T}}(\mathsf{A_1}, \mathsf{A_2}, \dots, \mathsf{A_k}; \mathsf{B})$.
\end{definition}

\begin{lemma}[G-lemma]
The generic model $G$ of any MLTT $\mathcal{T}$ is an algebra for $\mathcal{T}$ in the classifying CCCwD $\mathit{Cl}(\mathcal{T})$. 
Moreover, the induced interpretation $\llbracket \_ \rrbracket_G$ of $\mathcal{T}$ in $\mathit{Cl}(\mathcal{T})$ by $G$ satisfies:
\begin{enumerate}

\item $\llbracket \mathsf{\vdash \Gamma \ ctx} \rrbracket_G = [\mathsf{\vdash \Gamma \ ctx}]$ for any context $\mathsf{\vdash \Gamma \ ctx}$ in $\mathcal{T}$;

\item $\llbracket \mathsf{\Gamma \vdash A \ type} \rrbracket_G = [\mathsf{\Gamma \vdash A \ type}]$ for any type $\mathsf{\Gamma \vdash A \ type}$ in $\mathcal{T}$;

\item $\llbracket \mathsf{\Gamma \vdash a : A} \rrbracket_G = [\mathsf{\Gamma \vdash a : A}]$ for any term $\mathsf{\Gamma \vdash a : A}$ in $\mathcal{T}$;

\item $\llbracket \mathsf{\Gamma \vdash A \ type} \rrbracket_G = \llbracket \mathsf{\Gamma \vdash A' \ type} \rrbracket_G$ iff $\mathsf{\Gamma \vdash A =A' \ type}$ is a theorem of $\mathcal{T}$ for any types $\mathsf{\Gamma \vdash A \ type}$ and $\mathsf{\Gamma \vdash A' \ type}$ in $\mathcal{T}$; and

\item $\llbracket \mathsf{\Gamma \vdash a : A} \rrbracket_G = \llbracket \mathsf{\Gamma \vdash a' : A} \rrbracket_G$ iff $\mathsf{\Gamma \vdash a = a' : A}$ is a theorem of $\mathcal{T}$ for any terms $\mathsf{\Gamma \vdash a : A}$ and $\mathsf{\Gamma \vdash a' : A}$ in $\mathcal{T}$.

\end{enumerate}
\end{lemma}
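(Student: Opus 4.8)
The plan is to prove the two numbered groups of claims in the order (1)--(3), then the algebra property, then (4)--(5), since each stage feeds the next. The guiding observation is that $\mathit{Cl}(\mathcal{T})$ is the term model of $\mathcal{T}$, so that every component of its strict-CCCwD structure --- D-composition as generalized substitution, the terminal object as $\mathsf{\diamondsuit}$, semi-$\Sigma$-spaces as context extension, the unit D-object as $\mathsf{1}$, and $\Sigma$- and $\Pi$-spaces as $\mathsf{\Sigma}$- and $\mathsf{\Pi}$-types (with dependent pairing, D-currying and D-evaluation given by pairing, $\mathsf{\lambda}$-abstraction and application) --- is defined to mirror the syntax, exactly as in Definition~\ref{DefTermModelT}. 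Consequently the interpretation clauses of Definition~\ref{DefMLTTInCwFs}, together with the two clauses for $\mathsf{C}\bm{\mathsf{f}}$ and $\mathsf{F}\bm{\mathsf{f}}$ defining $\llbracket\_\rrbracket_G$, should collapse to the identity on equivalence classes. Note there is no circularity in starting from $\llbracket\_\rrbracket_G$: the partial interpretation is defined for an arbitrary structure $G$, so I may compute with it before knowing that $G$ is an algebra.

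Concretely, I would prove (1)--(3), together with the companion statement $\llbracket\bm{\mathsf{f}}\rrbracket_G = [\bm{\mathsf{f}}]$ for context morphisms, by a single induction on the length of the pre-syntax, following Hofmann's method of working on pre-syntax and establishing totality on the valid fragment. All structural cases and all cases for the $\mathsf{1}$-, $\mathsf{\Pi}$- and $\mathsf{\Sigma}$-formers are literally the computation already carried out for $\mathcal{T}(1,\Pi,\Sigma)$ in Proposition~\ref{WellDefinedSyntacticCwFs}, since the relevant structure of $\mathit{Cl}(\mathcal{T})$ restricts to that of $\mathcal{T}(1,\Pi,\Sigma)$. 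The only genuinely new cases are the constants: for $\mathsf{\Gamma \vdash C\bm{\mathsf{f}} \ type}$ obtained by Type-Const, I would compute
\begin{align*}
\llbracket \mathsf{C\bm{\mathsf{f}}} \rrbracket_G &= G(\mathsf{C})\{ \llbracket \bm{\mathsf{f}} \rrbracket_G \} \\
&= [\mathsf{x_1 : A_1, \dots, x_k : A_k \vdash C(x_1, \dots, x_k) \ type}]\{ [\bm{\mathsf{f}}] \} \\
&= [\mathsf{C(x_1, \dots, x_k)[\bm{\mathsf{f}}]}] = [\mathsf{C\bm{\mathsf{f}}}]
\end{align*}
using the induction hypothesis $\llbracket \bm{\mathsf{f}} \rrbracket_G = [\bm{\mathsf{f}}]$, the definition of $G$, and the fact that D-composition in $\mathit{Cl}(\mathcal{T})$ is generalized substitution; the term-constant case $\mathsf{F}\bm{\mathsf{f}}$ is identical, invoking in addition Lemma~\ref{LemSubstLem} to match the substituted codomain. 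Totality on the valid fragment falls out of the same induction, since each clause produces a defined equivalence class.

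With (1)--(3) in hand, the four algebra conditions are immediate: for a type-constant $\mathsf{C}\in C_{\mathcal{T}}(\mathsf{A_1},\dots,\mathsf{A_k})$ one has $\llbracket \mathsf{x_1:A_1,\dots,x_k:A_k}\rrbracket_G = [\mathsf{x_1:A_1,\dots,x_k:A_k}] = G(\mathscr{F}_{\mathcal{T}}(\mathsf{C}))$ by (1) and the definition of $G$, and similarly for the domain and codomain formats of a term-constant by (1)--(2); for an axiom $\mathsf{A = A' : Ty}\in A_{\mathcal{T}}$, the rule Type-Eq instantiated at the identity context morphism makes $\mathsf{A = A' \ type}$ a theorem, so $[\mathsf{A}]=[\mathsf{A'}]$ and hence $\llbracket\mathsf{A}\rrbracket_G = \llbracket\mathsf{A'}\rrbracket_G$ by (2), the term case being analogous via Term-Eq and (3). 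Finally (4)--(5) combine soundness and completeness: the implication from judgmental equality to equal interpretations is the soundness theorem for algebras proved just above, while the converse is read off directly from (2) and (3), since $\llbracket\mathsf{A}\rrbracket_G = \llbracket\mathsf{A'}\rrbracket_G$ forces $[\mathsf{A}]=[\mathsf{A'}]$, i.e.\ $\mathsf{\Gamma\vdash A = A' \ type}$ is derivable, by the very definition of the equivalence classes constituting $\mathit{Cl}(\mathcal{T})$.

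I expect the main obstacle to be the bookkeeping in the simultaneous induction of the second paragraph rather than any conceptual difficulty: because the conversion rules Ty-Con and Tm-Con make derivations non-unique, the induction must be run on pre-syntax with a partiality invariant, and one must check at every type-former that the chosen structure of $\mathit{Cl}(\mathcal{T})$ --- in particular the coherence equations for strict $\Sigma$- and $\Pi$-spaces and the equation~(\ref{EqStrictSCCwDs}) --- is exactly what the syntactic side supplies, so that the semantic clauses of Definition~\ref{DefMLTTInCwFs} reduce to the corresponding syntactic constructors on the nose. Verifying this matching for the $\mathsf{\Pi}$-former (D-currying versus $\mathsf{\lambda}$-abstraction, D-evaluation versus application, and their interaction with substitution) is the most laborious step, but it is a direct transcription of the computations already established in Theorem~\ref{ThmPi} and Proposition~\ref{WellDefinedSyntacticCwFs}.
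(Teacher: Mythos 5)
Your proposal is correct and takes essentially the same approach as the paper: the paper's own proof merely observes that it suffices to check the four additional rules (\textsc{Type-Const}, \textsc{Term-Const}, \textsc{Type-Eq}, \textsc{Term-Eq}), all other cases being the term-model computation already carried out for $\mathsf{MLTT(1, \Pi, \Sigma)}$, and declares those cases straightforward. Your write-up supplies exactly the details the paper leaves implicit --- the induction on pre-syntax with the companion claim for context morphisms, the substitution computation for the constant cases, and the derivation of the algebra conditions and of (4)--(5) --- and does so correctly.
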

\begin{proof}
Again, it suffices to consider the four rules, but it is just straightforward.
\end{proof}

\begin{lemma}[Universal property of generic models]
Let $S$ be an algebra for an MLTT $\mathcal{T}$ in a CCCwD $\mathcal{C}$.
Then, there exists a strict CCFwD $U_S : \mathit{Cl}(\mathcal{T}) \to \mathcal{C}$ such that $U_S \circ \llbracket \_ \rrbracket_G = \llbracket \_ \rrbracket_S$, and such a CCFwD $U_S$ is unique up to NIwDs.
\end{lemma}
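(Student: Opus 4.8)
The plan is to define $U_S$ directly by the interpretation $\llbracket \_ \rrbracket_S$ and to show it has the required universal property. First I would set, on the four kinds of cell of $\mathit{Cl}(\mathcal{T})$,
\[
U_S([\mathsf{\vdash \Gamma \ ctx}]) \stackrel{\mathrm{df. }}{=} \llbracket \mathsf{\Gamma} \rrbracket_S, \quad U_S([\mathsf{\Gamma \vdash A \ type}]) \stackrel{\mathrm{df. }}{=} \llbracket \mathsf{A} \rrbracket_S,
\]
and likewise $U_S([\mathsf{\Gamma \vdash a : A}]) \stackrel{\mathrm{df. }}{=} \llbracket \mathsf{a} \rrbracket_S$ on D-morphisms and $U_S([\bm{\mathsf{f}}]) \stackrel{\mathrm{df. }}{=} \llbracket \bm{\mathsf{f}} \rrbracket_S$ on morphisms. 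Since $S$ is a $\mathcal{T}$-algebra, the induced interpretation is total on valid syntax, and by the soundness theorem for algebras it identifies judgmentally equal expressions; hence each of the four component maps is well-defined on the equivalence classes that constitute $\mathit{Cl}(\mathcal{T})$.

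Next I would verify that $U_S$ is a strict CCFwD. Preservation of composition and identities of context morphisms, together with the functoriality and D-composition-preservation conditions in the definition of an FwD, reduce to the substitution lemma (Lemma~\ref{LemSubstLem}), which supplies $\llbracket \mathsf{E} \rrbracket_S\{\llbracket \bm{\mathsf{f}} \rrbracket_S\} \simeq \llbracket \mathsf{E}[\bm{\mathsf{f}}] \rrbracket_S$ and $\llbracket \bm{\mathsf{g}} \rrbracket_S\{\llbracket \bm{\mathsf{f}} \rrbracket_S\} \simeq \llbracket \bm{\mathsf{g}} \circ \bm{\mathsf{f}} \rrbracket_S$. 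Strict preservation of all the CCCwD structure is then immediate from the clauses of Definition~\ref{DefMLTTInCwFs} (extended to $\mathcal{T}$): the empty context is interpreted by the specified terminal object, context extension by the specified semi-$\Sigma$-space, and $\mathsf{1}$-, $\mathsf{\Sigma}$- and $\mathsf{\Pi}$-types by the specified unit D-object, $\Sigma$-spaces and $\Pi$-spaces of $\mathcal{C}$, so the specified structure of $\mathit{Cl}(\mathcal{T})$ is sent on-the-nose to that of $\mathcal{C}$. The factorization $U_S \circ \llbracket \_ \rrbracket_G = \llbracket \_ \rrbracket_S$ is then essentially definitional: by the G-lemma, $\llbracket \_ \rrbracket_G$ sends every context, type, term and context morphism of $\mathcal{T}$ to its own equivalence class in $\mathit{Cl}(\mathcal{T})$, and $U_S$ sends that class back to its $\llbracket \_ \rrbracket_S$-value, so the composite is $\llbracket \_ \rrbracket_S$ on the nose.

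Finally, for uniqueness I would show that any strict CCFwD $U' : \mathit{Cl}(\mathcal{T}) \to \mathcal{C}$ satisfying $U' \circ \llbracket \_ \rrbracket_G = \llbracket \_ \rrbracket_S$ in fact equals $U_S$, whence the two are related by the identity NIwD. The argument is a structural induction on pre-syntax: strictness pins down $U'$ on $[\mathsf{\diamondsuit}]$, on context extensions $[\mathsf{\Gamma, x : A}] = [\mathsf{\Gamma}] . [\mathsf{A}]$, and on the type and term formers $\mathsf{1}$, $\mathsf{\Sigma}$, $\mathsf{\Pi}$, $\mathsf{\star}$, $\lambda$, pairing, projections and application, while for a constant application $[\mathsf{C}\bm{\mathsf{f}}] = G(\mathsf{C})\{[\bm{\mathsf{f}}]\}$ one uses that $U'(G(\mathsf{C})) = U'(\llbracket \mathsf{C} \rrbracket_G) = \llbracket \mathsf{C} \rrbracket_S = S(\mathsf{C}) = U_S(G(\mathsf{C}))$ together with Lemma~\ref{LemSubstLem} to conclude $U'([\mathsf{C}\bm{\mathsf{f}}]) = U_S([\mathsf{C}\bm{\mathsf{f}}])$, and symmetrically for term-constants. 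I expect the bookkeeping of this simultaneous induction — in particular reducing every constant application to a generic constant precomposed with a (pre-)context morphism and matching the two functors there via the substitution lemma — to be the main obstacle, the remaining clauses being forced by strictness.
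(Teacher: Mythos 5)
Your proposal is correct and follows essentially the same route as the paper's (very terse) proof: define $U_S$ on equivalence classes by $\llbracket \_ \rrbracket_S$, check it is a strict CCFwD via the substitution lemma, and obtain the factorization and uniqueness from the G-lemma. The only remark worth making is that your uniqueness induction can be short-circuited: since the G-lemma exhibits $\llbracket \_ \rrbracket_G$ as surjective onto the cells of $\mathit{Cl}(\mathcal{T})$, the on-the-nose equation $U' \circ \llbracket \_ \rrbracket_G = \llbracket \_ \rrbracket_S$ already forces $U' = U_S$ everywhere.
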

\begin{proof}
Since the proof just follows the proof of the corresponding statement in \cite{pitts2001categorical}, here we just give a proof sketch. 
Let us define the CCFwD $U_S : \mathit{Cl}(\mathcal{T}) \to \mathcal{C}$ by:
\begin{align*}
\mathsf{\vdash_{\mathcal{T}} \Gamma \ ctx} &\mapsto \llbracket \mathsf{\vdash_{\mathcal{T}} \Gamma \ ctx} \rrbracket_{S} \\
\mathsf{\Gamma \vdash_{\mathcal{T}} A \ ctx} &\mapsto \llbracket \mathsf{\Gamma \vdash_{\mathcal{T}} A \ type} \rrbracket_{S} \\
\mathsf{\Gamma \vdash_{\mathcal{T}} a : A} &\mapsto \llbracket \mathsf{\Gamma \vdash_{\mathcal{T}} a : A} \rrbracket_{S}
\end{align*}
It is easy to verify that $U_S$ is cartesian closed.
Also, we may show that $U_S \circ \llbracket \_ \rrbracket_G = \llbracket \_ \rrbracket_S$ by induction on derivations of judgements.
Finally, uniqueness of $U_S$ is straightforward to establish. 
\end{proof}

\if0
\begin{definition}[Morphisms between algebras]
Let $\mathcal{T}$ be an MLTT, and $S$ and $T$ $\mathcal{T}$-algebras in a strict CCCwD $\mathcal{C}$.
A \emph{\bfseries $\bm{\mathcal{T}}$-morphisms} from $S$ to $T$ is a pair $(\phi, f)$ of a family $\phi = (\phi_{\mathscr{F}_{\mathcal{T}}(C)})_{\mathsf{C} \in C_{\mathcal{T}}^{\mathsf{Ty}}}$ of morphisms $\phi_{\mathscr{F}_{\mathcal{T}}(C)} : S(\mathscr{F}_{\mathcal{T}}(C)) \to T(\mathscr{F}_{\mathcal{T}}(C))$ in $\mathcal{C}$ and a family $f = (f_{\mathscr{F}_{\mathcal{T}}^{\mathit{cod}}(\mathsf{F})})_{\mathsf{F} \in C^{\mathsf{Tm}}_{\mathcal{T}}}$ of D-morphisms $f_{\mathscr{F}_{\mathcal{T}}^{\mathit{cod}}(\mathsf{F})} :  S(\mathscr{F}_{\mathcal{T}}^{\mathit{dom}}(\mathsf{F})) . S(\mathscr{F}_{\mathcal{T}}^{\mathit{cod}}(\mathsf{F})) \rightarrowtriangle  T(\mathscr{F}_{\mathcal{T}}^{\mathit{cod}}(\mathsf{F})) \{ \phi_{\mathscr{F}_{\mathcal{T}}^{\mathit{dom}}(\mathsf{F})} \circ \pi_1 \}$ in $\mathcal{C}$ that satisfies 
\begin{equation}
\langle \phi_{\bm{\mathsf{A}}} \circ \pi_1, f_{\mathsf{B}} \rangle \circ \langle \mathit{id}_{\llbracket \bm{\mathsf{A}} \rrbracket_S}, \llbracket \mathsf{F} \rrbracket_S \rangle = \langle \mathit{id}_{\llbracket \bm{\mathsf{A}} \rrbracket_T}, \llbracket \mathsf{F} \rrbracket_T \rangle \circ \phi_{\bm{\mathsf{A}}}
\end{equation}
for any term-constant $\mathsf{F} \in C_{\mathcal{T}}(\bm{\mathsf{A}}; \mathsf{B})$.
\end{definition}
\fi

\begin{definition}[Morphisms between algebras for MLTTs]
Let $\mathcal{T}$ be an MLTT, and $S$ and $R$ $\mathcal{T}$-algebras in a strict CCCwD $\mathcal{C}$.
A \emph{\bfseries $\bm{\mathcal{T}}$-morphisms} from $S$ to $R$ is an NTwD $U_S \to U_R$.
\end{definition}

\begin{definition}[Categories of algebras for MLTTs]
Given an MLTT $\mathcal{T}$ and a strict CCCwD $\mathcal{C}$, the category $\mathcal{T}\text{-}\mathcal{ALG}(\mathcal{C})$ is given by:
\begin{itemize}

\item Objects are $\mathcal{T}$-algebras in $\mathcal{C}$;

\item Morphisms $S \to R$ are $\mathcal{T}$-morphisms from $S$ to $R$;

\item Composition is the vertical composition of NTwDs; and

\item Identities are identity NTwDs.

\end{itemize}
\end{definition}

Given an MLTT $\mathcal{T}$, if we regard $\mathcal{T}$-algebras $S$ in a strict CCCwD $\mathcal{C}$ as the strict CCFwDs $U_S : \mathit{Cl}(\mathcal{T}) \to \mathcal{C}$, then the category $\mathcal{T}\text{-}\mathcal{ALG}(\mathcal{C})$ is clearly a subcategory of $\mathbb{CCC_D}(\mathit{Cl}(\mathcal{T}), \mathcal{C})$.
It then turns out that this construction constitutes the following equivalence of categories:
\begin{corollary}[Equivalence between algebras and interpretations]
\label{CoroEqBetweenAlgebrasAndInterpretations}
Given an MLTT $\mathcal{T}$ and a strict CCCwD $\mathcal{C}$, we have $\mathcal{T}\text{-}\mathcal{ALG}(\mathcal{C}) \simeq \mathbb{CCC_D}(\mathit{Cl}(\mathcal{T}), \mathcal{C})$.
\end{corollary}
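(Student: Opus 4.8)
The plan is to establish the equivalence $\mathcal{T}\text{-}\mathcal{ALG}(\mathcal{C}) \simeq \mathbb{CCC_D}(\mathit{Cl}(\mathcal{T}), \mathcal{C})$ by exhibiting an explicit pair of functors and showing they are essentially inverse, leaning heavily on the universal property of the generic model proved just above. The central observation is that the assignment $S \mapsto U_S$ has already been shown to be well-defined on objects (every $\mathcal{T}$-algebra $S$ yields a strict CCFwD $U_S : \mathit{Cl}(\mathcal{T}) \to \mathcal{C}$ satisfying $U_S \circ \llbracket \_ \rrbracket_G = \llbracket \_ \rrbracket_S$), and that a $\mathcal{T}$-morphism was \emph{defined} to be an NTwD $U_S \Rightarrow U_R$. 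Thus the assignment $S \mapsto U_S$ is, by construction, the object part of a functor $\Phi : \mathcal{T}\text{-}\mathcal{ALG}(\mathcal{C}) \to \mathbb{CCC_D}(\mathit{Cl}(\mathcal{T}), \mathcal{C})$ that is the identity on hom-sets; composition and identities are preserved because $\mathcal{T}$-morphisms compose as NTwDs by definition.

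First I would verify that $\Phi$ is full and faithful. Faithfulness is immediate since a $\mathcal{T}$-morphism $S \to R$ literally \emph{is} an NTwD $U_S \Rightarrow U_R$, so $\Phi$ acts as the identity on morphisms and hence is injective (indeed bijective) on each hom-set; fullness follows for the same reason, since every NTwD $U_S \Rightarrow U_R$ qualifies as a $\mathcal{T}$-morphism. Next I would show $\Phi$ is essentially surjective: given an arbitrary strict CCFwD $F : \mathit{Cl}(\mathcal{T}) \to \mathcal{C}$, I would define a structure $S_F$ for $\mathcal{T}$ in $\mathcal{C}$ by setting $S_F(\mathsf{C}) \stackrel{\mathrm{df.}}{=} F(G(\mathsf{C}))$, $S_F(\mathscr{F}_{\mathcal{T}}(\mathsf{C})) \stackrel{\mathrm{df.}}{=} F(G(\mathscr{F}_{\mathcal{T}}(\mathsf{C})))$, and analogously for term-constants, where $G$ is the generic model. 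I would then check that $S_F$ is in fact a $\mathcal{T}$-algebra — this uses the G-lemma, which identifies $\llbracket \_ \rrbracket_G$ with the equivalence classes of the syntax and shows $G$ respects all constants and axioms of $\mathcal{T}$, so that applying the structure-preserving $F$ yields a structure still satisfying the algebra conditions. By the uniqueness clause of the universal property of generic models, the induced $U_{S_F}$ agrees with $F$ up to NTwD, giving $\Phi(S_F) \cong F$ in $\mathbb{CCC_D}(\mathit{Cl}(\mathcal{T}), \mathcal{C})$.

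The main obstacle I anticipate is the essential-surjectivity step, specifically verifying that the structure $S_F$ genuinely satisfies the four defining conditions of a $\mathcal{T}$-algebra, i.e. that its induced partial interpretation $\llbracket \_ \rrbracket_{S_F}$ behaves correctly on formats and axioms. The subtlety is that $\llbracket \_ \rrbracket_{S_F}$ is defined by induction on pre-syntax and is only \emph{partial}, so I must argue that $F \circ \llbracket \_ \rrbracket_G$ coincides with $\llbracket \_ \rrbracket_{S_F}$ wherever the latter is defined — this is a soundness-style argument requiring that $F$ commutes with the CwF and type-former structure (which holds since $F$ is a \emph{strict} CCFwD) together with an appeal to the substitution lemma (Lemma~\ref{LemSubstLem}) to handle the clauses $\llbracket \mathsf{C}(\bm{\mathsf{f}}) \rrbracket_{S_F} \simeq S_F(\mathsf{C})\{\llbracket \bm{\mathsf{f}} \rrbracket\}$. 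Once this compatibility is in hand, the algebra conditions for $S_F$ transfer directly from the fact that $G$ is an algebra (the G-lemma) by applying the strict functor $F$ and using that $F$ preserves the relevant equalities on-the-nose.

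Finally, I would remark that the equivalence is in fact close to an isomorphism of categories, since $\Phi$ is bijective on hom-sets and the correspondence $S \leftrightarrow U_S$ is essentially a tautology given the definition of $\mathcal{T}$-morphisms; the only reason the statement is phrased as an equivalence rather than an isomorphism is that the object map $S \mapsto U_S$ need not be surjective on the nose (distinct CCFwDs may arise only up to NTwD from algebras), which is exactly why essential surjectivity, rather than strict surjectivity on objects, is the appropriate formulation. I would therefore conclude by assembling the full-and-faithful and essentially-surjective verifications into the standard criterion for an equivalence of categories.
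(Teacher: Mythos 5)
Your proposal follows essentially the same route as the paper's proof: both define the functor $S \mapsto U_S$ (identity on hom-sets, so full and faithful by construction of $\mathcal{T}$-morphisms as NTwDs) and reduce the claim to essential surjectivity, recovering an algebra from an arbitrary strict CCFwD $F$ by restricting it to the constants, i.e.\ setting $S_F(\mathsf{C}) = F(G(\mathsf{C}))$, and invoking the uniqueness clause of the universal property to get $U_{S_F} \cong F$. Your write-up is more explicit about verifying that $S_F$ satisfies the algebra conditions, but the argument is the same.
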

\begin{proof}
Let us define the functor $U^{\mathcal{T}, \mathcal{C}}_{(\_)} : \mathcal{T}\text{-}\mathcal{ALG}(\mathcal{C}) \to \mathbb{CCC_D}(\mathit{Cl}(\mathcal{T}), \mathcal{C})$ that maps $\mathcal{T}$-algebras $S$ in $\mathcal{C}$ to the induced CCFwDs $U_S : \mathit{Cl}(\mathcal{T}) \to \mathcal{C}$, and 1- and 2-cells in $\mathcal{T}\text{-}\mathcal{ALG}(\mathcal{C})$ to themselves.
Then, it suffices to show that this functor $U^{\mathcal{T}, \mathcal{C}}_{(\_)}$  is essentially surjective on objects. 

Let $F : \mathit{Cl}(\mathcal{T}) \to \mathcal{C}$ be any CCFwD.
Note that $F$ is completely determined up to NIwDs by its restriction to types of the form $\mathsf{\Gamma \vdash_{\mathcal{T}} C(\bm{\mathsf{f}}) \ type}$, where $\mathsf{C} \in C_{\mathcal{T}}^{\mathsf{Ty}}$, and terms of the form $\mathsf{\Gamma \vdash_{\mathcal{T}} F(\bm{\mathsf{f}}) : B(\bm{\mathsf{f}})}$, where $\mathsf{F} \in C_{\mathcal{T}}^{\mathsf{Tm}}$.
But then, this restriction of $F$ clearly defines a $\mathcal{T}$-algebra $S$ in $\mathcal{C}$ such that $U_S \cong F$, which completes the proof.
\end{proof}

\subsection{Theory-Category Correspondence between MLTTs and CCCwDs}
\begin{definition}[The 2-category $\mathbb{MLTT}$]
The 2-category $\mathbb{MLTT}$ is given by:
\begin{itemize}

\item 0-cells are MLTTs;

\item The hom-category $\mathbb{MLTT}(\mathcal{T}, \mathcal{T'})$ for each pair $\mathcal{T}, \mathcal{T'} \in \mathbb{MLTT}$ is $\mathcal{T}\text{-}\mathcal{ALG}(\mathit{Cl}(\mathcal{T'}))$;

\item The composition functor $\mathbb{MLTT}(\mathcal{T}, \mathcal{T'}) \times \mathbb{MLTT}(\mathcal{T'}, \mathcal{T''}) \to \mathbb{MLTT}(\mathcal{T}, \mathcal{T''})$ for each triple $\mathcal{T}, \mathcal{T'}, \mathcal{T''} \in \mathbb{MLTT}$ is given by the composition of CCFwDs and the horizontal composition of NTwDs; and

\item The functor $1 \to \mathbb{MLTT}(\mathcal{T}, \mathcal{T})$ for each $\mathcal{T} \in \mathbb{MLTT}$ maps the unique object $\star \in 1$ to the generic model $G$ of $\mathcal{T}$ and the unique morphism $\mathit{id}_\star$ to the identity NTwD $\mathit{id}_{U_G}$.

\end{itemize}
\end{definition}

Note that the 2-category $\mathbb{MLTT}$ is essentially the sub-2-category of $\mathbb{CCC_D}$ whose 0-cells are the classifying CCCwDs of MLTTs, and 1-cells are the CCFwDs $U_S$ induced by algebras $S$ for MLTTs.
Thus, it is easy to see that $\mathbb{MLTT}$ is a well-defined 2-category. 
Moreover, we have:

\begin{theorem}[Theory-category correspondence between MLTTs and CCCwDs]
The 2-categories $\mathbb{MLTT}$ and $\mathbb{CCC_D}$ are 2-equivalent. 
More precisely, the 2-functor $\mathit{Cl} : \mathbb{MLTT} \to \mathbb{CCC_D}$ is essentially surjective on objects, and the functor $\mathit{Cl}_{\mathcal{T}, \mathcal{T'}} : \mathbb{MLTT}(\mathcal{T}, \mathcal{T'}) \to \mathbb{CCC_D}(\mathit{Cl}(\mathcal{T}), \mathit{Cl}(\mathcal{T'}))$ constitues an equivalence $\mathbb{MLTT}(\mathcal{T}, \mathcal{T'}) \simeq \mathbb{CCC_D}(\mathit{Cl}(\mathcal{T}), \mathit{Cl}(\mathcal{T'}))$ for all $\mathcal{T}, \mathcal{T'} \in \mathbb{MLTT}$. 
\end{theorem}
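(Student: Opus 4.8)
The plan is to prove the 2-equivalence $\mathbb{MLTT} \simeq \mathbb{CCC_D}$ by exhibiting the 2-functor $\mathit{Cl}$ and verifying the two conditions that characterize a 2-equivalence: essential surjectivity on 0-cells, and that each hom-functor $\mathit{Cl}_{\mathcal{T}, \mathcal{T'}}$ is an equivalence of categories. I would first make precise how $\mathit{Cl}$ acts: on 0-cells it sends an MLTT $\mathcal{T}$ to its classifying CCCwD $\mathit{Cl}(\mathcal{T})$; on 1- and 2-cells, using the observation already recorded in the excerpt that $\mathbb{MLTT}(\mathcal{T}, \mathcal{T'}) = \mathcal{T}\text{-}\mathcal{ALG}(\mathit{Cl}(\mathcal{T'}))$ is a subcategory of $\mathbb{CCC_D}(\mathit{Cl}(\mathcal{T}), \mathit{Cl}(\mathcal{T'}))$ via the assignment $S \mapsto U_S$, it sends a $\mathcal{T}$-algebra $S$ to the induced strict CCFwD $U_S$ and a $\mathcal{T}$-morphism (an NTwD) to itself. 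The 2-functoriality of $\mathit{Cl}$ follows from the \textbf{Universal property of generic models} lemma, which guarantees $U_S \circ \llbracket \_ \rrbracket_G = \llbracket \_ \rrbracket_S$ and uniqueness of $U_S$ up to NIwDs, so composition and identities are preserved up to coherent isomorphism.

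For essential surjectivity on 0-cells, I would show that every small strict CCCwD $\mathcal{C}$ is equivalent (in $\mathbb{CCC_D}$) to $\mathit{Cl}(\mathcal{T})$ for some MLTT $\mathcal{T}$. The natural candidate is the \emph{internal language} of $\mathcal{C}$: take $\mathcal{T}$ to be the GMLTT whose type-constants and term-constants are indexed by the D-objects and D-morphisms of $\mathcal{C}$, with formats read off from their bases, and whose axioms $A_{\mathcal{T}}$ record exactly the equalities holding in $\mathcal{C}$. The tautological structure $S$ assigning each constant to the D-object or D-morphism it names is then a $\mathcal{T}$-algebra (its partial interpretation validates every axiom by construction), and the \textbf{Universal property of generic models} produces $U_S : \mathit{Cl}(\mathcal{T}) \to \mathcal{C}$. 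One checks $U_S$ is an equivalence of CCCwDs by verifying it is full, faithful, and essentially surjective on objects and D-objects, which holds because every object, D-object, and D-morphism of $\mathcal{C}$ arises as the interpretation of some syntactic expression built from the chosen constants.

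The heart of the argument is that $\mathit{Cl}_{\mathcal{T}, \mathcal{T'}}$ is an equivalence of categories for all $\mathcal{T}, \mathcal{T'}$. This is essentially immediate from \textbf{Corollary}~\ref{CoroEqBetweenAlgebrasAndInterpretations}: by definition $\mathbb{MLTT}(\mathcal{T}, \mathcal{T'}) = \mathcal{T}\text{-}\mathcal{ALG}(\mathit{Cl}(\mathcal{T'}))$, and that corollary already establishes
\begin{equation*}
\mathcal{T}\text{-}\mathcal{ALG}(\mathit{Cl}(\mathcal{T'})) \simeq \mathbb{CCC_D}(\mathit{Cl}(\mathcal{T}), \mathit{Cl}(\mathcal{T'}))
\end{equation*}
via the functor $U^{\mathcal{T}, \mathit{Cl}(\mathcal{T'})}_{(\_)}$, which is precisely $\mathit{Cl}_{\mathcal{T}, \mathcal{T'}}$. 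The corollary's proof shows this functor is fully faithful (since $\mathcal{T}$-morphisms are by definition NTwDs between the $U_S$, so the hom-sets agree on the nose) and essentially surjective on objects (any CCFwD $F$ out of $\mathit{Cl}(\mathcal{T})$ is determined up to NIwD by its restriction to constant-types and constant-terms, and that restriction defines an algebra $S$ with $U_S \cong F$). Thus the second condition reduces to citing an already-proven result.

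The main obstacle I anticipate is the essential-surjectivity-on-0-cells step, specifically verifying that the internal-language MLTT $\mathcal{T}$ built from $\mathcal{C}$ is genuinely \emph{well-formed} in the sense required (there must be at least one applicable rule for each constant and axiom) and that the comparison $U_S$ is essentially surjective on D-objects as well as on objects. Unlike the hom-functor condition, this has no ready-made lemma in the excerpt, so it requires an induction showing that the interpretation $\llbracket \_ \rrbracket_S$ hits every D-object and D-morphism of $\mathcal{C}$ up to isomorphism, leaning on the fact that $\Sigma$-, $\Pi$-, unit- and semi-$\Sigma$-structure of $\mathcal{C}$ is all reflected syntactically. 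Care is also needed because $\mathit{Cl}(\mathcal{T})$ is \emph{strict} while a general small CCCwD $\mathcal{C}$ need only be strict by hypothesis here; the coherence isomorphisms $\iota$ must be tracked so that $U_S$ respects the chosen $\Sigma$- and $\Pi$-spaces up to the canonical NIwDs rather than on the nose, which is exactly the sense of equivalence (not isomorphism) we are claiming.
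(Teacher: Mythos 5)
Your proposal is correct and follows essentially the same route as the paper: both reduce the hom-category condition to Corollary~\ref{CoroEqBetweenAlgebrasAndInterpretations} and establish essential surjectivity on 0-cells via the internal-language (``theory-construction'') MLTT of a given CCCwD. The paper simply cites the standard construction $\mathit{Th}$ with $\mathit{Cl}(\mathit{Th}(\mathcal{C})) \cong \mathcal{C}$ where you spell out the well-formedness and surjectivity-on-D-objects checks, so your version is a more detailed rendering of the same argument.
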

\begin{proof}
By Corollary~\ref{CoroEqBetweenAlgebrasAndInterpretations}, it suffices to show that the 2-functor $\mathit{Cl}$ is essentially surjective on objects.
Then, given a CCCwD $\mathcal{C}$, the standard `theory-construction' $\mathit{Th}$ induces an MLTT $\mathit{Th}(\mathcal{C})$ such that $\mathit{Cl}(\mathit{Th}(\mathcal{C})) \cong \mathcal{C}$, which completes the proof.
\end{proof}

\section{Conclusion and Future Work}
\label{ConclusionAndFutureWork}
We have generalized CCCs, viz., CCCwDs, and proved that they give a categorical semantics of MLTTs in a true-to-syntax fashion. 
We have also analyzed the relation between the standard interpretation of STLCs in CCCs and ours in CCCwDs by establishing a 2-equivalence between CtxCCCs and constant CtxCCCwDs.
For these results, it would be fair to say that we have discovered the categorical counterpart of the path from STTs to DTTs.

As immediate future work, we would like to consider additional structures on CCCwDs to interpret other type constructions in MLTTs, e.g., identity types, well-founded tree types and universes. 
Moreover, it may be possible to extend the present framework as a semantics of \emph{homotopy type theory (HoTT)} \cite{hottbook}.
Finally, it would be interesting to develop a general theory of categories with dependence in its own right, generalizing major theorems in category theory.

\subparagraph*{Acknowledgements.}
The author was supported by Funai Overseas Scholarship. 
Also, he is grateful to Samson Abramsky for fruitful discussions.


\bibliographystyle{alpha}
\bibliography{GamesAndStrategies,TypeTheoriesAndProgrammingLanguages,CategoricalLogic,HoTT,LogicalCalculus}


\end{document}